\newcommand{\DD}{\mathscr{D}}
\newcommand{\NN}{\mathbb{N}}
\newcommand{\CC}{\mathbb{C}}
\renewcommand{\DD}{\mathbb{D}}
\newcommand{\ZZ}{\mathbb{Z}}
\newcommand{\LL}{\mathbb{L}}
\newcommand{\RR}{\mathbb{R}}
\newcommand{\BB}{\mathbb{B}}
\newcommand{\EE}{\mathbb{E}}
\def\A{\mathcal{A}}
\def\B{\mathcal{B}}
\def\C{\mathcal{C}}
\def\D{\mathcal{D}}
\def\G{\mathcal{G}}
\def\H{\mathcal{H}}
\def\F{\mathcal{F}}
\def\SS{\mathcal{S}}
\def\M{\mathcal{M}}
\def\N{\mathcal{N}}
\def\I{\mathcal{I}}
\def\cS{\mathcal{S}}
\def\T{\mathcal{T}}
\def\P{\mathcal{P}}
\def\R{\mathcal{R}}
\renewcommand{\L}{\mathcal{L}}
\def\wtl{\widetilde}
\def\HH{\mathcal{H}}
\def\E{\mathcal{E}}
\def\bS{\mathbf{S}}
\def\bT{\mathbf{T}}
\def\fC{\mathfrak{C}}
\def\rN{\mathrm{N}}
\def\rH{\mathrm{H}}
\def\K{\mathcal{K}}
\def\SS{\mathbb{S}}
\def\alp{{\alpha}}
\def\bet{{\beta}}
\def\eps{{\varepsilon}}
\def\sig{{\sigma}}
\def\Om{{\Omega}}
\def\Del{{\Delta}}
\def\Sig{{\Sigma}}
\def\Lam{{\Lambda}}
\def\vphi{\varphi}
\def\Beta{\mathfrak{P}}
\def\BDel{\Del\mkern-14mu\Del}
\newcommand{\HSwarrow}{\kern0.05ex\vcenter{\hbox{\Huge\ensuremath{\Swarrow}}}\kern0.05ex}
\newcommand{\hSwarrow}{\kern0.05ex\vcenter{\hbox{\huge\ensuremath{\Swarrow}}}\kern0.05ex}
\newcommand{\LLSwarrow}{\kern0.05ex\vcenter{\hbox{\LARGE\ensuremath{\Swarrow}}}\kern0.05ex}
\newcommand{\LSwarrow}{\kern0.05ex\vcenter{\hbox{\Large\ensuremath{\Swarrow}}}\kern0.05ex}
\newcommand{\HSearrow}{\kern0.05ex\vcenter{\hbox{\Huge\ensuremath{\Searrow}}}\kern0.05ex}
\newcommand{\hSearrow}{\kern0.05ex\vcenter{\hbox{\huge\ensuremath{\Searrow}}}\kern0.05ex}
\newcommand{\LLSearrow}{\kern0.05ex\vcenter{\hbox{\LARGE\ensuremath{\Searrow}}}\kern0.05ex}
\newcommand{\LSearrow}{\kern0.05ex\vcenter{\hbox{\Large\ensuremath{\Searrow}}}\kern0.05ex}
\newcommand{\HDownarrow}{\kern0.05ex\vcenter{\hbox{\Huge\ensuremath{\Downarrow}}}\kern0.05ex}
\newcommand{\hDownarrow}{\kern0.05ex\vcenter{\hbox{\huge\ensuremath{\Downarrow}}}\kern0.05ex}
\newcommand{\LLDownarrow}{\kern0.05ex\vcenter{\hbox{\LARGE\ensuremath{\Downarrow}}}\kern0.05ex}
\newcommand{\LDownarrow}{\kern0.05ex\vcenter{\hbox{\Large\ensuremath{\Downarrow}}}\kern0.05ex}
\newcommand{\HUparrow}{\kern0.05ex\vcenter{\hbox{\Huge\ensuremath{\Uparrow}}}\kern0.05ex}
\newcommand{\hUparrow}{\kern0.05ex\vcenter{\hbox{\huge\ensuremath{\Uparrow}}}\kern0.05ex}
\newcommand{\LLUparrow}{\kern0.05ex\vcenter{\hbox{\LARGE\ensuremath{\Uparrow}}}\kern0.05ex}
\newcommand{\LUparrow}{\kern0.05ex\vcenter{\hbox{\Large\ensuremath{\Uparrow}}}\kern0.05ex}
\numberwithin{equation}{section}
\newtheorem{thm}[equation]{Theorem}
\newtheorem{cor}[equation]{Corollary}
\newtheorem{lem}[equation]{Lemma}
\newtheorem{pro}[equation]{Proposition}
\newtheorem{obs}[equation]{Observation}
\theoremstyle{definition}
\newtheorem{define}[equation]{Definition}
\newtheorem{example}[equation]{Example}
\newtheorem{defn}[equation]{Definition}
\newtheorem{cons}[equation]{Construction}
\newtheorem{notn}[equation]{Notation}
\theoremstyle{remark}
\newtheorem{rem}[equation]{Remark}
\newtheorem{warning}[equation]{Warning}
\DeclareMathOperator{\hocolim}{hocolim}
\DeclareMathOperator{\Id}{Id}
\DeclareMathOperator{\id}{id}
\DeclareFontFamily{OT1}{pzc}{}
\DeclareFontShape{OT1}{pzc}{m}{it}{<-> s * [1.10] pzcmi7t}{}
\DeclareMathAlphabet{\mathpzc}{OT1}{pzc}{m}{it}
\DeclareMathOperator{\fib}{fib}
\DeclareMathOperator{\ad}{ad}
\DeclareMathOperator{\op}{op}
\DeclareMathOperator{\Map}{Map}
\DeclareMathOperator{\proj}{proj}
\DeclareMathOperator{\df}{def}
\DeclareMathOperator{\Set}{Set}
\DeclareMathOperator{\Cat}{Cat}
\DeclareMathOperator{\Adj}{Adj}
\DeclareMathOperator{\Fun}{Fun}
\DeclareMathOperator{\Obj}{Obj}
\DeclareMathOperator{\Un}{Un}
\DeclareMathOperator{\Ob}{Ob}
\DeclareMathOperator{\Sp}{Sp}
\DeclareMathOperator{\Ho}{Ho}
\DeclareMathOperator{\ev}{ev}
\DeclareMathOperator{\Lan}{Lan}
\DeclareMathOperator{\Ab}{Ab}
\DeclareMathOperator{\Env}{Env}
\DeclareMathOperator{\Tw}{Tw}
\DeclareMathOperator{\der}{h}
\renewcommand{\H}{H}
\DeclareMathOperator{\St}{St}
\DeclareMathOperator{\Reedy}{Reedy}
\DeclareMathOperator{\cov}{cov}
\DeclareMathOperator{\Lift}{Lift}
\DeclareMathOperator{\sca}{sc}
\DeclareMathOperator{\lcc}{lcc}
\DeclareMathOperator{\Joy}{Joy}
\DeclareMathOperator{\KQ}{KQ}
\DeclareMathOperator{\spl}{sp}
\DeclareMathOperator{\gp}{gp}
\DeclareMathOperator{\pt}{pt}
\DeclareMathOperator{\coop}{coop}
\DeclareMathOperator{\Torsors}{Torsors}
\DeclareMathOperator{\Free}{\mathcal{F}ree}
\DeclareMathOperator{\mar}{mark}
\DeclareMathOperator{\thin}{thin}
\DeclareMathOperator{\forget}{\mathcal{F}orget}
\DeclareMathOperator{\LaxCone}{LaxCone}
\DeclareMathOperator{\Cone}{Cone}
\DeclareMathOperator{\LFib}{LFib}
\def\x{\stackrel}
\def\Hom{\textrm{Hom}}
\def\<{\left<}
\def\>{\right>}
\def\({\left(}
\def\){\right)}
\newcommand{\tgpd}{\kern0.05ex\vcenter{\hbox{\footnotesize\ensuremath{2}}}\kern0.05ex\mathcal{G}pd} 
\def\lrar{\longrightarrow}
\def\llar{\longleftarrow}
\def\hrar{\hookrightarrow}
\newcommand{\adj}{\mathrel{\substack{\longrightarrow \\[-.6ex] \x{\upvdash}{\longleftarrow}}}}
\def\ovl{\overline}
\def\uline{\underline}
\newcommand{\pair}[4][10pt]{\ar@/^#1/[#2]^-{#3} \ar@/_#1/[#2]_-{#4}}
\renewcommand{\twocell}[5]{\ar@<#4ex>@{}[#1] \ar@<#4ex>@{=>}?(#3)+/dl #5cm/;?(#3)+/ur #5cm/^{#2}}
\setlist[itemize]{leftmargin=*}
\setlist[enumerate]{leftmargin=20pt}
\title{Quillen cohomology of $(\infty,2)$-categories}
\author{Yonatan Harpaz}
\email{harpaz@math.univ-paris13.fr}
\address{Institut Galil\'ee\\ Universit\'e Paris 13\\ 99 avenue J.B. Cl\'ement\\ 93430 Villetaneuse\\ France.}
\author{Joost Nuiten}
\email{j.j.nuiten@uu.nl}
\address{Mathematical Institute\\ Utrecht University\\ 3508 TA Utrecht\\ The
Netherlands.}
\author{Matan Prasma}
\email{mtnprsm@gmail.com}
\address{Faculty of Mathematics\\ University of Regensburg\\ Regensburg\\93040 \\Germany.}
\date{}
\begin{document}
\begin{abstract}
In this paper we study the homotopy theory of parameterized spectrum objects in the $\infty$-category of $(\infty, 2)$-categories, as well as the Quillen cohomology of an $(\infty, 2)$-category with coefficients in such a parameterized spectrum. More precisely, we construct an analogue of the twisted arrow category for an $(\infty,2)$-category $\CC$, which we call its twisted $2$-cell $\infty$-category. We then establish an equivalence between parameterized spectrum objects over $\CC$, and diagrams of spectra indexed by the twisted $2$-cell $\infty$-category of $\CC$. Under this equivalence, the Quillen cohomology of $\CC$ with values in such a diagram of spectra is identified with the two-fold suspension of its inverse limit spectrum. As an application, we provide an alternative, obstruction-theoretic proof of the fact that adjunctions between $(\infty,1)$-categories are uniquely determined at the level of the homotopy $(3, 2)$-category of $\Cat_\infty$. 
\end{abstract}

\maketitle
\tableofcontents

\section{Introduction}\label{s:intro}

This paper is part of an ongoing project whose goal is to understand the cohomology theory of higher categories. Our approach follows the framework developed by Quillen (\cite{Qui67}), and refined by Lurie (\cite{Lur14}), which gives a general recipe for defining cohomology in an abstract setting. In the case of spaces, this approach recovers generalized cohomology with coefficients in a local system of spectra. When spaces are replaced with $\infty$-categories, previous work of the authors \cite{part2} identifies the corresponding Quillen cohomology as the \textbf{functor cohomology} of diagrams of spectra, indexed by the associated twisted arrow category.

In this paper we take these ideas a step further by studying the Quillen cohomology of $(\infty,2)$-categories. Recall that in the abstract setting of Quillen and Lurie, if $\D$ is a presentable $\infty$-category and $X \in \D$ is an object, then the coefficients for the Quillen cohomology of $X$ are given by $\Om$-spectrum objects in the slice $\infty$-category $\D_{/X}$, which we call \textbf{parameterized spectra} over $X$. The Quillen cohomology groups of $X$ with coefficients in such a parameterized spectrum $M$ are given by the homotopy groups of the mapping spectrum
$$ \Map_{\Sp(\D_{/X})}(L_X,M) ,$$
where $L_X := \Sig^{\infty}_+(\Id_X)$ is the suspension spectrum of $\Id_X \in \D_{/X}$. The parameterized spectrum $L_X$ is also known in this general context as the \textbf{cotangent complex} of $X$. There is hence in principle no obstacle to defining Quillen cohomology of an $(\infty,2)$-category by considering the presentable $\infty$-category $\D = \Cat_{(\infty,2)}$ and following the above formalism. However, this will only yield a tractable theory if one can describe parameterized spectra over an $(\infty,2)$-category in a reasonably concrete way. 

When $\D = \Cat_{\infty}$ the main result of the previous paper~\cite{part2} identifies the $\infty$-category $\Sp((\Cat_{\infty})_{/\C})$ of parameterized spectra over an $\infty$-category $\C$ with the $\infty$-category of functors $\Tw(\C) \lrar \Sp$ from the twisted arrow category to spectra, and the cotangent complex $L_\C$ with the constant functor whose value is the $1$-shifted sphere spectrum $\SS[-1]$. This allows one to access and compute Quillen cohomology of $\infty$-categories in rather explicit terms.

Our goal in this paper is to give a similar description in the case of $(\infty,2)$-categories by constructing a suitable analogue of the twisted arrow category, which we call the \textbf{twisted $2$-cell $\infty$-category} of $\CC$. Informally speaking, the objects of the twisted $2$-cell $\infty$-category can be identified with the $2$-cells of $\CC$, and the morphisms are given via suitable factorizations of $2$-cells. To make this precise we use the scaled unstraightening construction of~\cite{goodwillie}, which allows one to present diagrams of $\infty$-categories indexed by an $(\infty,2)$-category by a suitable fibration of $(\infty,2)$-categories. More precisely, we first encode $\CC$ as a category enriched in marked simplicial sets and consider the $(\infty,2)$-category $\CC_{\Tw}$ obtained from $\CC$ by replacing each mapping object by its (marked) twisted arrow category. We then construct the twisted $2$-cell $\infty$-category of $\CC$ by applying the scaled unstraightening construction to the mapping category functor $\Map: \CC_{\Tw}^{\op} \times \CC_{\Tw} \lrar \Set^+_\Del$. This procedure yields a \textbf{scaled simplicial set} $\Tw_2(\CC)$, which we refer to as the \textbf{twisted $2$-cell $\infty$-bicategory} of $\CC$. Finally, the twisted $2$-cell $\infty$-category $\uline{\Tw}_2(\CC)$ is defined to be the $\infty$-category freely generated by $\Tw_2(\CC)$. 

This approach requires us to work simultaneously with two models for $(\infty,2)$-categories, namely, categories enriched in marked simplicial sets on the one hand, and scaled simplicial sets on the other. We recall the relevant preliminaries in~\S\ref{s:scaledsset} and~\S\ref{s:straightening}, while the construction itself is carried out in~\S\ref{s:twisted-2-cells}. Some concrete examples of interest are described in~\S\ref{s:examples}. 
In the case where $\CC$ is a strict $2$-category we can describe the twisted $2$-cell $\infty$-category more explicitly by replacing the scaled unstraightening procedure with the $2$-categorical Grothendieck construction. The equivalence of these two operations, which may be of independent interest, is proven in~\S\ref{s:discrete}. 
Finally, we use the construction of the twisted $2$-cell $\infty$-category in~\S\ref{s:main} to order to prove our main theorem (see Theorem~\ref{t:main}):
\begin{thm}\label{t:main-intro}
Let $\CC$ be an $(\infty,2)$-category. 
Then there is a natural equivalence of $\infty$-categories
$$ \Sp((\Cat_{(\infty,2)})_{/\CC}) \x{\simeq}{\lrar} \Fun(\uline{\Tw}_2(\CC),\Sp(\cS_\ast)) $$
from the $\infty$-category of parameterized spectrum objects over $\CC$ to the $\infty$-category of functors from $\Tw_2(\CC)$ to spectra. 
Furthermore, this equivalence identifies the cotangent complex $L_{\CC}$ with the constant functor whose value is the twice desuspended sphere spectrum $\SS[-2]$. 
\end{thm}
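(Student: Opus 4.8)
The strategy is to reduce the computation of parameterized spectra over $\CC \in \Cat_{(\infty,2)}$ to the already-known computation over $\infty$-categories from \cite{part2}, by exploiting the stabilization of a suitable fibration sequence of $\infty$-categories. Recall that stabilization commutes with taking slices: $\Sp((\Cat_{(\infty,2)})_{/\CC})$ is equivalent to the $\infty$-category of excisive functors from finite pointed spaces to $(\Cat_{(\infty,2)})_{/\CC}$, and such excisive functors factor through the image of the cotangent complex functor. The first step is therefore to identify the correct ``linearization'' of $\Cat_{(\infty,2)}$ near $\CC$. As in the $(\infty,1)$-categorical case, a small perturbation of a $2$-category decomposes into perturbations of its objects, its $1$-morphisms, and its $2$-morphisms; the latter are spaces, so their spectrum objects are parameterized spectra (local systems) indexed by a suitable category of $2$-cells. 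The role of $\uline{\Tw}_2(\CC)$ is precisely to be that indexing category: by construction it unstraightens the mapping-category functor $\Map\colon \CC_{\Tw}^{\op}\times \CC_{\Tw}\to \Set^+_\Del$, so a diagram on it records compatibly the data of a local system on each twisted arrow category $\Tw(\Map_\CC(x,y))$ together with functoriality in $x$ and $y$.

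Concretely I would proceed in stages. \emph{Stage 1:} Set up the tangent category $T_\CC\Cat_{(\infty,2)} := \Sp((\Cat_{(\infty,2)})_{/\CC})$ and, using that $\Cat_{(\infty,2)}$ is presented by categories enriched in marked simplicial sets (equivalently, via scaled simplicial sets), produce an explicit ``square-zero'' description: a spectrum object over $\CC$ is a choice, for every object, of a parameterized spectrum, for every mapping $\infty$-category a parameterized spectrum object compatible with composition, and so on. This is where the bulk of the homotopical bookkeeping lives and should be organized as a limit (a totalization over the simplicial/enriched structure) of tangent categories of the pieces. \emph{Stage 2:} Apply the main theorem of \cite{part2} mapping-object-wise: the tangent category of $(\Cat_\infty)_{/\Map_\CC(x,y)}$ is $\Fun(\Tw(\Map_\CC(x,y)),\Sp)$ with cotangent complex the constant functor on $\SS[-1]$. \emph{Stage 3:} Assemble these fiberwise identifications into a single statement by recognizing the totalization from Stage 1 as exactly the $\infty$-category of diagrams on the scaled unstraightening $\Tw_2(\CC)$ — i.e., invoke the straightening/unstraightening equivalence (recalled in \S\ref{s:straightening}) to turn ``compatible family of functors out of $\Tw(\Map_\CC(x,y))$'' into ``a single functor out of $\uline{\Tw}_2(\CC)$''. \emph{Stage 4:} Track the cotangent complex through all of this: $L_\CC$ corresponds under Stage 1 to the collection of cotangent complexes of the mapping $\infty$-categories, each of which is constant on $\SS[-1]$ by \cite{part2}; one further suspension is picked up from the object-level (the ``two-fold'' versus ``one-fold'' discrepancy, exactly as a $2$-category has one more layer than a $1$-category), yielding the constant functor on $\SS[-2]$.

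**The main obstacle.**

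The hardest step is Stage 3 — showing that the tangent category of $\Cat_{(\infty,2)}$ at $\CC$, built as a totalization of tangent categories of mapping $\infty$-categories with their composition constraints, is \emph{naturally} equivalent to $\Fun(\uline{\Tw}_2(\CC),\Sp)$. The subtlety is that the composition operations in $\CC$ are not strictly functorial on the nose at the level of the enriched model, so one must either rectify $\CC$ to a genuinely enriched category (as the excerpt's $\CC_{\Tw}$ construction does) and then match the totalization against the unstraightening construction, or argue directly that both sides corepresent the same functor on $\Cat_\infty$. A clean route is to check the equivalence is compatible with base change along functors $\CC'\to\CC$ and then reduce to generators (free $(\infty,2)$-categories on scaled simplicial sets), where $\uline{\Tw}_2$ is computed explicitly and both sides become manifestly diagram categories; the $2$-categorical Grothendieck-construction comparison of \S\ref{s:discrete} is the technical backbone for the strict case, and descent/left Kan extension propagates it to the general case. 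Verifying that the suspension degrees match — that the object-level contributes exactly one extra shift — is then a short bookkeeping argument using that the cotangent complex of a point (a single object with no morphisms) in $\Cat_{(\infty,2)}$ agrees with that in $\Cat_\infty$, shifted once more because $2$-cells sit one dimension higher.
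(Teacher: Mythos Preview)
Your high-level strategy is essentially the paper's: reduce to the $(\infty,1)$-categorical result of \cite{part2} applied to the mapping categories of $\CC$, and then reassemble via straightening/unstraightening into a single diagram category over $\uline{\Tw}_2(\CC)$. The cotangent complex computation also matches: one shift from passing from enriched categories to their mapping objects (\cite[Proposition~3.2.1]{part2}) and one shift from the $(\infty,1)$-result.

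Where your sketch diverges from the paper is in the technical execution of Stages~1 and~3. You phrase Stage~1 as a ``totalization over the enriched structure'' and propose to resolve Stage~3 by reducing to free $(\infty,2)$-categories on generators, with the Grothendieck-construction comparison of \S\ref{s:discrete} as the backbone. The paper does neither of these things. Instead, Stage~1 is handled by the precise statement \cite[Corollary~3.1.16]{part2}: $\T_{\CC}\Cat^+_\Del$ is Quillen equivalent to the model category of $\Set^+_\Del$-enriched \emph{lifts} of $\Map_\CC$ against the tangent bundle projection $\T\Set^+_\Del\to\Set^+_\Del$. Stage~3 is then an abstract model-categorical lemma (Proposition~\ref{p:not-there-yet-abstract}) showing that postcomposing with a fiberwise Quillen equivalence $\G:\N\to\M$ of relative model categories induces a Quillen equivalence on categories of enriched lifts; applied to $\R^{\Sp}:\T\Set^+_\Del\to\LFib_{\Sp}$, this yields lifts of $\Map_{\CC_{\Tw}}$ against $\LFib_{\Sp}\to\Set^+_\Del$, which Proposition~\ref{p:bousfield} identifies with the marked covariant model structure over $\Un^{\sca}(\Map_{\CC_{\Tw}})=\Tw_2(\CC)$. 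The comparison of \S\ref{s:discrete} plays no role in the proof of the main theorem; it is used only to compute $\Tw_2$ explicitly for strict $2$-categories. Your ``reduce to generators'' route is not obviously wrong, but it would require a separate argument that both sides are compatible with colimits in $\CC$, which is not immediate for tangent categories.
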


Theorem~\ref{t:main-intro} identifies the abstract notion of a parameterized spectrum object over an $(\infty,2)$-category $\CC$ with a concrete one: a diagram of spectra indexed by an $\infty$-category $\uline{\Tw}_2(\CC)$. A direct consequence of this is that the associated notion of Quillen cohomology becomes much more accessible:
\begin{cor}\label{c:twisted-arrow-quillen-intro}
Let $\F: \uline{\Tw}_2(\CC) \lrar \Sp$ be a diagram of spectra and let $M_{\F} \in \Sp((\Cat_{(\infty,2)})_{/\CC})$ be the corresponding parameterized spectrum object under the equivalence of Theorem~\ref{t:main-intro}. Then the Quillen cohomology group $\rH^n_Q(\CC;M_{\F})$ is naturally isomorphic to the $(-n-2)$'th homotopy group of the limit spectrum $\lim_{\uline{\Tw}_2(\CC)}\F$. 
\end{cor}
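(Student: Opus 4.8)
The plan is to deduce Corollary~\ref{c:twisted-arrow-quillen-intro} formally from Theorem~\ref{t:main-intro}, without re-entering the geometry of scaled simplicial sets. By definition, the Quillen cohomology groups of $\CC$ with coefficients in a parameterized spectrum $M \in \Sp((\Cat_{(\infty,2)})_{/\CC})$ are $\rH^n_Q(\CC;M) = \pi_{-n}\Map_{\Sp((\Cat_{(\infty,2)})_{/\CC})}(L_\CC, M)$, so the first step is simply to transport this mapping spectrum across the equivalence of Theorem~\ref{t:main-intro}. Writing $\Phi$ for that equivalence, and using that an equivalence of $\infty$-categories induces equivalences on mapping spectra in the stable setting, we get
\[
\Map_{\Sp((\Cat_{(\infty,2)})_{/\CC})}(L_\CC, M_\F) \;\simeq\; \Map_{\Fun(\uline{\Tw}_2(\CC),\Sp)}(\Phi(L_\CC), \F).
\]
The second ingredient supplied by Theorem~\ref{t:main-intro} is the identification $\Phi(L_\CC) \simeq \underline{\SS[-2]}$, the constant diagram on the twice-desuspended sphere spectrum.

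The second step is to compute the mapping spectrum out of a constant diagram. In the functor $\infty$-category $\Fun(\uline{\Tw}_2(\CC),\Sp)$, the constant-diagram functor $\Sp \to \Fun(\uline{\Tw}_2(\CC),\Sp)$ is left adjoint to the limit functor $\lim_{\uline{\Tw}_2(\CC)} \colon \Fun(\uline{\Tw}_2(\CC),\Sp) \to \Sp$, and this adjunction is enriched over spectra (mapping spectra in a stable functor category are computed pointwise, and the constant diagram on $\SS$ is the monoidal unit under the pointwise structure). Hence
\[
\Map_{\Fun(\uline{\Tw}_2(\CC),\Sp)}(\underline{\SS[-2]}, \F) \;\simeq\; \Map_{\Sp}(\SS[-2], \lim_{\uline{\Tw}_2(\CC)}\F) \;\simeq\; \Sig^2 \lim_{\uline{\Tw}_2(\CC)}\F,
\]
the last equivalence because $\SS[-2] = \Sig^{-2}\SS$ and $\SS$ is the sphere, so mapping out of it is the identity and shifting the source by $-2$ shifts the mapping spectrum by $+2$. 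Combining the two steps, $\Map_{\Sp((\Cat_{(\infty,2)})_{/\CC})}(L_\CC, M_\F) \simeq \Sig^2\lim_{\uline{\Tw}_2(\CC)}\F$.

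The third and final step is bookkeeping on homotopy groups: $\rH^n_Q(\CC;M_\F) = \pi_{-n}\bigl(\Sig^2 \lim_{\uline{\Tw}_2(\CC)}\F\bigr) = \pi_{-n-2}\bigl(\lim_{\uline{\Tw}_2(\CC)}\F\bigr)$, which is exactly the asserted statement. One should also check that these identifications are natural in $\F$ (equivalently in $M$), but this is automatic: each equivalence used is a natural transformation of functors of $\F$, being induced either by the fixed equivalence $\Phi$ or by the (constant $\dashv$ limit) adjunction.

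I do not anticipate a serious obstacle here; the corollary is a formal unwinding of definitions once Theorem~\ref{t:main-intro} is in hand. The only point requiring a modicum of care is the spectral enrichment of the constant/limit adjunction — i.e. that $\Map(\underline{\SS}, \F) \simeq \lim \F$ as \emph{spectra}, not merely as spaces — but this follows from the general fact that $\Fun(\K,\Sp)$ is a stable $\infty$-category whose mapping spectra are computed sectionwise, together with $\SS$ being the unit of the smash product on $\Sp$. Everything else is shifting and reindexing.
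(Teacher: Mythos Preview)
Your proof is correct and follows essentially the same route as the paper's own argument (given as the proof of Corollary~\ref{c:twisted-arrow-quillen}): transport the mapping spectrum across the equivalence of Theorem~\ref{t:main-intro}, use the identification of $L_\CC$ with the constant diagram $\underline{\SS[-2]}$, and then apply the constant/limit adjunction to extract $\pi_{-n-2}$ of the limit. The only cosmetic difference is that the paper phrases everything in terms of homotopy classes $[L_\CC, M_\F[n]]$ rather than $\pi_{-n}$ of a mapping spectrum, but these are equivalent formulations.
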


Quillen cohomology, and mostly its relative version (see \S\ref{s:recall}), is naturally suited to support an obstruction theory for the existence of lifts against a certain class of maps, known as \textbf{small extensions}. In the realm of spaces, a natural source of small extensions is given by the consecutive maps $P_{n+1}(X) \lrar P_n(X)$ in the \textbf{Postnikov tower} of $X$, for $n \geq 1$. This leads to the classical obstruction theory for spaces which is based on relative ordinary cohomology with local coefficients (a particular case of relative Quillen cohomology for spaces). The case of $(\infty,1)$-categories was studied by Dwyer and Kan in~\cite{DKS86} (in the framework of simplicial categories) who developed a similar obstruction theory based on the Postnikov filtration of mapping spaces, using a version of relative Quillen cohomology with coefficients in abelian group objects. A possible extension to $(\infty,n)$-categories using the Postnikov filtration of the spaces of $n$-morphisms was first suggested by Lurie in~\cite[\S 3.5]{tft}. We formally establish the existence of such a tower of small extensions in a companion paper~\cite{part4}, see also~\cite{Ng17}. This leads to an obstruction theory for $(\infty,n)$-categories which is based on relative Quillen cohomology. 

When $n=2$ this obstruction theory can be made explicit using our description of Quillen cohomology via the twisted $2$-cell $\infty$-category. In particular, the equivalence of Theorem~\ref{t:main-intro} leads to an explicit criteria for when all the relative Quillen cohomology groups of a map $\CC \lrar \DD$ of $(\infty,2)$-categories \textbf{vanish}, in terms of weak contractibility of certain comma categories. In \S\ref{s:adj} we apply this idea to the problem of \textbf{classification of adjunctions}. In particular, we show that the inclusion of $2$-categories $[1] \lrar \Adj$ from the walking arrow to the walking adjunction has trivial relative Quillen cohomology groups. 
The obstruction theory for $(\infty,2)$-categories then implies that a $1$-arrow $f$ in an $(\infty,2)$-category $\CC$ extends to an adjunction if and only if it extends to an adjunction in the truncated $(3,2)$-category $\Ho_{\leq 3}(\CC)$. In fact, the space of lifts in the square
$$ \xymatrix{
[1] \ar[r]\ar[d] & \CC \ar[d] \\
\Adj \ar[r]\ar@{-->}[ur] & \Ho_{\leq 3}(\CC) \\
}$$
is weakly contractible. This leads to a classification of adjunctions in terms of explicit low dimensional data. We note that the analogous contractibility statement for lifts of $[1] \lrar \Adj$ against $\CC \lrar \Ho_{\leq 2}(\CC)$ was established in~\cite{RV16}, by using a somewhat elaborate combinatorial argument and an explicit cell decomposition of $\Adj$. 
While we hope to convince the reader that the obstruction theoretic proof is simpler in comparison, it should be noted that it only applies to the tower of small extensions $\CC \lrar \Ho_{\leq 3}(\CC)$, yet leaves open the problem of classifying lifts of $[1] \lrar \Adj$ against $\Ho_{\leq 3}(\CC) \lrar \Ho_{\leq 2}(\CC)$. This particular piece of the puzzle can be done by hand, or by using the approach of~\cite{RV16}, but in any case only requires understanding the $3$-skeleton of $\Adj$. It also seems plausible that a suitable non-abelian cohomology approach can be applied in this case. This reflects the typical situation in Postnikov type obstruction theories: the cohomological argument can be used to reduce a homotopical problem (potentially involving an infinite web of coherence issues) to a finite dimensional problem, whose coherence constraints are bounded in complexity.

\subsection{Acknowledgments}
A significant part of this paper was written while the first author was a postdoctoral researcher at the Institut des Hautes \'Etudes Scientifiques, which graciously accommodated all three authors during several work sessions. The authors would like to thank the IHES for its hospitality and superb work environment which allowed for this paper to be born. The second author was supported by the NWO. The third author was supported by grant SFB 1085.

\section{Recollections}\label{s:scaled}
In this section we recall various preliminaries which we require in later parts of the paper. We begin in~\S\ref{s:scaledsset} by recalling various aspects of the theory of $(\infty, 2)$-categories, mostly using the models of scaled simplicial sets (as developed in~\cite{goodwillie}), and categories enriched in marked simplicial sets. In~\S\ref{s:straightening} we recall the straightening and unstraightening operations which allow one to encode a diagram of $\infty$-categories indexed by an $(\infty,2)$-category as a suitable fibration of $(\infty,2)$-categories. The particular case where the diagram takes its values in $\infty$-groupoids leads to the notion of a marked left fibration, which we spell out in~\S\ref{s:marked-left}. Finally, in~\S\ref{s:recall} we recall the notions of stabilization, abstract parameterized spectra and Quillen cohomology, whose specialization to the case of $(\infty,2)$-categories is our main interest in this paper. As in the previous papers,~\cite{part0},~\cite{part1} and~\cite{part2} we adopt the formalism of \textbf{tangent categories} and \textbf{tangent bundles}, which follow Lurie's abstract cotangent complex formalism developed in~\cite[\S 7.3]{Lur14}.

\subsection{Scaled simplicial sets}\label{s:scaledsset}
The homotopy theory of $(\infty, 1)$-categories admits various model-categorical presentations, e.g.\ in terms of the Bergner-Dwyer-Kan model structure on simplicial categories, the Joyal model structure on simplicial sets (with quasicategories as fibrant objects), or the categorical model structure on marked simplicial sets (with fibrant objects the quasicategories, marked by their equivalences). These model categories are related by Quillen equivalences
\begin{equation}\label{e:modelsfor1cat}\xymatrix{
\fC: \Set_\Del\ar@<1ex>[r]^-\sim & \Cat_\Del:\rN \ar@<1ex>[l]_-{\upvdash} & (-)^\flat: \Set^{\Joy}_\Del\ar@<1ex>[r]^-\sim & \Set_\Del^+:\forget,\ar@<1ex>[l]_-{\upvdash}
}\end{equation}
with right adjoints taking the coherent nerve, resp.\ forgetting the marked edges. Let us mention that the categorical model structure on marked simplicial sets is related to the usual Kan-Quillen model structure on simplicial sets by two Quillen adjunctions
\begin{equation}\label{e:gpdcomp}\xymatrix@C=1.8pc{
(-)^\sharp: \Set^{\KQ}_\Del \ar@<1ex>[r] & \Set_\Del^+:(-)^{\mar}\ar@<1ex>[l]_-{\upvdash} & |-|: \Set_\Del^+\ar@<1ex>[r] & \Set^{\KQ}_\Del:(-)^{\sharp}.\ar@<1ex>[l]_-{\upvdash}
}\end{equation}
Here $X^\sharp=(X, X_1)$ is the simplicial set $X$ with all edges marked, $|-|$ simply forgets marked edges and $(X,E_X)^{\mar}$ is the largest simplicial subset of $X$ whose edges are all in $E_X$. Since $|-|$ is a left adjoint, the object $X = |(X,E_X)| \in \Set_\Del^{\KQ}$ is a model for the free $\infty$-groupoid generated by the $\infty$-category $(X,E_X)$, or equivalently, a model for its \textbf{classifying space}.

In this paper we will use two analogous models for the theory of $(\infty,2)$-categories: the model category $\Cat^+_\Del$ of categories enriched in marked simplicial sets, which we will refer to as \textbf{marked-simplicial categories}, and the model category $\Set_\Del^{\sca}$ of \textbf{scaled simplicial sets}. Recall that a scaled simplicial set is a pair $(X,T_X)$ where $X$ is a simplicial set and $T_X$ is a collection of $2$-simplices in $X$ which contains all degenerate $2$-simplices. The $2$-simplices in $T_X$ are refered to as the \textbf{thin triangles}. In~\cite{goodwillie}, Lurie constructs a model structure on the category $\Set^{\sca}_{\Del}$ of scaled simplicial sets which is a model for the theory of $(\infty,2)$-categories. In particular, a scaled version of the coherent nerve construction yields a Quillen equivalence
$$ 
\fC^{\sca}: \Set^{\sca}_\Del \x{\simeq}{\adj} \Cat^+_{\Del}: \rN^{\sca} 
$$
between scaled simplicial sets and marked-simplicial categories (see~\cite[Theorem 4.2.7]{goodwillie}). Following~\cite{goodwillie} we will refer to weak equivalences in $\Set^{\sca}_\Del$ as \textbf{bicategorical equivalences}, and to fibrant objects in $\Set^{\sca}_{\Del}$ as \textbf{$\infty$-bicategories}.

Recall that a scaled simplicial set is called a \textbf{weak $\infty$-bicategory} if it satisfies the extension property with respect to the class of scaled anodyne maps described in~\cite[Definition 3.1.3]{goodwillie}. In particular, every $\infty$-bicategory is a weak $\infty$-bicategory. These extension conditions can be considered as analogous to the inner horn filling conditions of the Joyal model structure. For instance, an inner horn $\Lambda^2_1\lrar X$ admits a thin filler and an inner horn $\Lambda^n_i\lrar X$ with $n\geq 3$ admits a filler as soon as the 2-simplex $\Delta^{\{i-1, i, i+1\}}$ is thin. 

Just as $(\infty, 1)$-categories are related to $\infty$-groupoids via \eqref{e:gpdcomp}, $(\infty, 2)$-categories are related to $(\infty, 1)$-categories via the Quillen adjunctions
\begin{equation}\label{e:catcomp}\xymatrix{
(-)_\sharp: \Set^{\Joy}_\Del \ar@<1ex>[r] & \Set^{\sca}_\Del:(-)^{\thin}\ar@<1ex>[l]_-{\upvdash} & |-|_1: \Set^{\sca}_\Del\ar@<1ex>[r] & \Set^{\Joy}_\Del:(-)_{\sharp},\ar@<1ex>[l]_-{\upvdash}
}\end{equation}
where $X_\sharp = (X, X_2)$ is $X$ with all triangles being thin, $(X, T_X)^{\thin}$ is the maximal simplicial subset of $X$ whose triangles all belong to $T_X$ and $|-|_1$ forgets the thin triangles. Since $|-|_1$ is a left adjoint, the object $X = |(X,T_X)|_1 \in \Set_\Del^{\Joy}$ is a model for the $\infty$-category freely generated by an $\infty$-bicategory $(X,T_X)$.

\begin{rem}\label{r:freeoo1}
Let $\CC$ be a marked-simplicial category and let $\CC_{\mar}$ and $\CC_{|-|}$ be the simplicial categories obtained by applying the product-preserving functors $(-)^{\mar}$ and $|-|$ from \eqref{e:gpdcomp} to all mapping objects. Unraveling the definition of the scaled nerve \cite[Definition 3.1.10]{goodwillie}, one sees that there are natural isomorphisms
$$
\rN\big(\CC_{\mar}\big) \cong \big(\rN^{\sca}(\CC)\big)^{\thin} \qquad \qquad \rN\big(\CC_{|-|}\big)\cong |\rN^{\sca}(\CC)|_1.
$$
Informally speaking we may summarize the above isomorphisms as follows: the $\infty$-category freely generated from $\CC$ has as mapping spaces the $\infty$-groupoids freely generated from the mapping categories of $\CC$, and the maximal sub $\infty$-category of $\CC$ has as mapping $\infty$-groupoids the maximal sub $\infty$-groupoids of the mapping categories of $\CC$.
\end{rem}
A particularly important class of $(\infty,2)$-categories is given by the $(2,2)$-categories, namely, those $(\infty,2)$-categories whose spaces of $2$-cells are all discrete. It is well-known that every $(2,2)$-category can be represented by a (strict) $2$-category, i.e., a category enriched in categories. Given such a (strict) 2-category $\CC$, we can apply the marked nerve construction $\rN^+:\Cat \lrar \Set^+_\Del$ to every mapping category in $\CC$ to obtain a marked-simplicial category $\CC_{\rN^+}$. The scaled nerve of this marked-simplicial category is an $\infty$-bicategory, which can be described as follows.

Let $\BDel^n$ be the $2$-category whose objects are $0,...,n$ and where $\Map_{\BDel^n}(i,j)$ is the poset of subsets of $[n]$ whose minimal element is $i$ and maximal element is $j$. Given a $2$-category $\CC$ we define its \textbf{$2$-nerve} $\rN_2(\CC) \in \Set^{\sca}_{\Del}$ by the formula
$$ \rN_2(\CC)_n = \Fun_2(\BDel^n,\CC). $$
A triangle $\sig \in \rN_2(\CC)$ is thin if and only if the corresponding $2$-functor $\BDel^2 \lrar \CC$ sends the non-identity arrow of $\Map_{\BDel^2}(0,2)$ to an isomorphism.
\begin{rem}\label{r:nerve}
There is a natural isomorphism between the marked-simplicial categories $\BDel^n_{\rN^+}$ and $\fC^{\sca}(\Del^n_\flat)$, where $\Del^n_{\flat}$ is $\Del^n$ with thin triangles only the degenerate ones. It follows that there is a natural isomorphism $\rN_2(\CC) \cong \rN^{\sca}(\CC_{\rN^+})$. We also note that for completely general reasons $\rN_2$ admits a left adjoint $\fC_2: \Set^{\sca}_{\Del} \lrar \Cat_2$ whose value on the $n$-simplices is given by $\fC_2(\Del^n) = \BDel^n$. 
\end{rem}

\subsection{Scaled straightening and unstraightening}\label{s:straightening}
A key property of the model of scaled simplicial sets is that it admits a notion of \textbf{unstraightening}: diagrams of $\infty$-categories indexed by an $\infty$-bicategory $\CC$ can be modeled by certain fibrations $\DD \lrar \CC$. 

\begin{define}\label{d:T-lcc-fib}
Let $(S, T_S)$ be a scaled simplicial set and let $f: X\lrar S$ be a map of simplicial sets. We will say that $f$ is a \textbf{$T_S$-locally coCartesian fibration} if it is an inner fibration and for every thin triangle $\sig: \Del^2 \lrar S$, the base change $\sig^*f: X \times_S \Del^2 \lrar \Del^2$ is a coCartesian fibration.
\end{define}
\begin{define}\label{d:2-fib-a}
For $f: (X,T_X) \lrar (S,T_S)$ a map of scaled simplicial sets, we will say that $f$ is a \textbf{scaled coCartesian fibration} if the underlying map $X \lrar S$ is a $T_S$-locally coCartesian fibration in the sense of Definition~\ref{d:T-lcc-fib} and $T_X = f^{-1}(T_S)$. 
\end{define}

\begin{lem}\label{l:unstisweakbicat}
If $f: (X, T_X)\lrar (S, T_S)$ is a scaled coCartesian fibration and $(S, T_S)$ is a weak $\infty$-bicategory, then $(X, T_X)$ is a weak $\infty$-bicategory.
\end{lem}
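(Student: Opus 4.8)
The plan is to unwind the definition of ``weak $\infty$-bicategory'' and reduce the assertion to a lifting property. Recall that, by definition, a scaled simplicial set $(Y,T_Y)$ is a weak $\infty$-bicategory if and only if the map $(Y,T_Y)\lrar\Del^0$ to the terminal scaled simplicial set has the right lifting property against every scaled anodyne map. Since the class of maps having the right lifting property against a fixed class of maps is closed under composition, it is therefore enough to show that the scaled coCartesian fibration $f\colon(X,T_X)\lrar(S,T_S)$ has the right lifting property against every scaled anodyne map: composing with the map $(S,T_S)\lrar\Del^0$, which has this property because $(S,T_S)$ is a weak $\infty$-bicategory by hypothesis, then yields the same property for $(X,T_X)\lrar\Del^0$.

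To prove that $f$ has this lifting property, I would use only two features of a scaled coCartesian fibration: that the underlying map $X\lrar S$ of simplicial sets is an inner fibration, and that $T_X=f^{-1}(T_S)$, so that a triangle of $X$ is thin precisely when its image in $S$ is thin. (The hypothesis that the base change of $X\lrar S$ along each thin triangle be coCartesian plays no role in this statement; it is only needed when one wants to interpret the scaled straightening of $f$ as a genuine $\CC$-indexed diagram of $\infty$-categories.) Fix a generating scaled anodyne map $A\hookrightarrow B$ from \cite[Definition 3.1.3]{goodwillie} and a commuting square
\[
\xymatrix{
A \ar[r]\ar@{^{(}->}[d] & X \ar[d]^-{f}\\
B \ar[r] & S
}
\]
of scaled simplicial sets. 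Among Lurie's generators, only the scaled inner horn inclusions $\big(\Lambda^n_i,\{\Del^{\{i-1,i,i+1\}}\}|_{\Lambda^n_i}\big)\hookrightarrow\big(\Del^n,\{\Del^{\{i-1,i,i+1\}}\}\big)$ with $0<i<n$ are not bijective on underlying simplicial sets. For such a generator, the inner fibration $X\lrar S$ produces a filler $\Del^n\lrar X$ of underlying simplicial sets compatible with the square, and this filler automatically respects scalings: the unique non-degenerate triangle in the scaling of $\Del^n$, namely $\Del^{\{i-1,i,i+1\}}$, is sent by the bottom map $B=\Del^n\lrar S$ into $T_S$, so by commutativity of the square its image under $\Del^n\lrar X$ lies in $f^{-1}(T_S)=T_X$. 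For each of the remaining generators the underlying simplicial sets of $A$ and $B$ coincide, so the only candidate diagonal is the given map $A\lrar X$ itself; here the one thing to check is that the extra thin triangles of $B$ map into $T_X$, which is again immediate from $T_X=f^{-1}(T_S)$ together with the fact that $B\lrar S$ is a map of scaled simplicial sets. By weak saturation of the class of maps with a fixed right lifting property, $f$ then has the right lifting property against all scaled anodyne maps, and composing with $(S,T_S)\lrar\Del^0$ completes the proof.

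I do not expect a genuine obstacle here. The only step that requires any care is tracking thin triangles in the inner-horn case, and this is precisely what the condition $T_X=f^{-1}(T_S)$ in the definition of a scaled coCartesian fibration is designed to trivialize; everything else ultimately reduces to the defining lifting property of an inner fibration against inner horn inclusions. If one prefers not to reprove this, essentially the same closure property is used implicitly in the treatment of scaled unstraightening in \cite{goodwillie}, and the argument could be cited from there.
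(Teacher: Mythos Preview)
Your overall strategy is the same as the paper's: reduce to showing that $f$ itself has the right lifting property against scaled anodyne maps, and then check the generators of \cite[Definition~3.1.3]{goodwillie}. The handling of the type~(A) inner horn generators and the type~(B) generator (which is bijective on underlying simplicial sets) is correct.

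The gap is in your enumeration of the generators. You assert that ``only the scaled inner horn inclusions $\ldots$ are not bijective on underlying simplicial sets,'' but this overlooks the type~(C) generators
\[
\big(\Lambda^n_0 \textstyle\coprod_{\Del^{\{0,1\}}} \Del^0,\ \{\Del^{\{0,1,n\}}\}\big)\ \hookrightarrow\ \big(\Del^n \textstyle\coprod_{\Del^{\{0,1\}}} \Del^0,\ \{\Del^{\{0,1,n\}}\}\big),\qquad n\geq 2,
\]
which are \emph{outer} horn inclusions (with the $\{0,1\}$-edge collapsed) and are certainly not bijective on underlying simplicial sets. The inner-fibration hypothesis alone does not give fillers for such outer horns, so your argument does not cover these maps.

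This is exactly where the locally coCartesian hypothesis enters, contrary to your parenthetical claim that it ``plays no role.'' The paper dispatches type~(C) by invoking \cite[Lemma~3.2.28]{goodwillie}: a $\Lambda^n_0$-horn with thin $\Del^{\{0,1,n\}}$ can be filled against $f$ provided the $\{0,1\}$-edge is locally $f$-coCartesian, and here that edge is degenerate, hence automatically locally $f$-coCartesian. You should either cite this lemma for type~(C) or supply a direct argument; once you do, your proof is complete and essentially identical to the paper's.
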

\begin{proof}
It will suffice to show that if $f$ is a scaled coCartesian fibration then it satisfies the right lifting property with respect to scaled anodyne maps. To see this, observe that since $f$ is an inner fibration and $T_X = f^{-1}(T_S)$ the right lifting property with respect to maps of type (A) and (B) of~\cite[Definition 3.1.3]{goodwillie} is immediate, and the lifting property with respect to maps of type (C) follows from~\cite[Lemma 3.2.28]{goodwillie} since any degenerate edge of $X$ is locally $f$-coCartesian. 
\end{proof}

To study scaled coCartesian fibrations efficiently it is useful to employ the language of \textbf{categorical patterns} (see~\cite[Appendix B]{Lur14}). Let $S$ be a simplicial set, $E_S$ a collection of edges in $S$ containing all degenerate edges, and $T_S$ a collection of triangles in $S$ containing all degenerate triangles. The tuple $\Beta := (S,E_S,T_S)$ then determines a categorical pattern on $S$, to which one may associate a \textbf{model structure} on the category $(\Set^+_{\Del})_{/(S,E_S)}$ of marked simplicial sets over $(S,E_S)$ (see~\cite[Theorem B.0.20]{Lur14}). The cofibrations of this model structure are the monomorphisms and its fibrant objects are the so called \textbf{$\Beta$-fibered} objects (see~\cite[Definition B.0.19]{Lur14}).
Explicitly, an object $p:(X,E_X) \lrar (S,E_S)$ of $(\Set^+_{\Del})_{/(S,E_S)}$ is $\Beta$-fibered if it satisfies the following conditions:
\begin{enumerate}
\item
The map $p:X \lrar S$ is an inner fibration of simplicial sets.
\item
For every edge $e:\Del^1 \lrar S$ which belongs to $E_S$ the map $e^*p:X \times_S \Del^1 \lrar \Del^1$ is a coCartesian fibration, and the marked edges of $X$ which lie above $e$ are exactly the $e^*p$-coCartesian edges.
\item
Given a commutative diagram
$$ \xymatrix{
\Del^{\{0,1\}} \ar^{e}[r]\ar[d] & X \ar[d] \\
\Del^2 \ar^{\sig}[r] & S \\
}$$
if $e \in E_X$ and $\sig \in T_S$ then $e$ determines a $\sig^*p$-coCartesian edge of $X \times_S \Del^2$.
\end{enumerate}
As in \cite[Appendix B]{Lur14}, we will denote the resulting model category by $(\Set^+_\Del)_{/\Beta}$.
\begin{lem}\label{l:all-the-same}
Let $(S, T_S)\in \Set_\Del^{\sca}$, let $f: X\lrar S$ be an inner fibration and let $\Beta_{T_S}=(S,S_1,T_S)$. Let $E_X$ denote the collection of locally $f$-coCartesian edges and let $T_X = f^{-1}(T_S)$ denote the collection of triangles whose image in $S$ is thin. Then the following are equivalent:
\begin{enumerate}
\item
$f$ is a $T_S$-locally coCartesian fibration.
\item
$(X,E_X)$ is $\Beta_{T_S}$-fibered.
\item
$f: (X,T_X) \lrar (S,T_S)$ is a scaled coCartesian fibration.
\end{enumerate}
\end{lem}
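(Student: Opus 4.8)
The plan is to prove the equivalence of the three conditions by establishing the cycle $(1) \Rightarrow (2) \Rightarrow (3) \Rightarrow (1)$, unwinding the definitions and reducing each implication to a statement about locally coCartesian edges. The heart of the matter is to relate the abstract $\Beta$-fibered conditions of \cite[Definition B.0.19]{Lur14} to the more hands-on Definitions~\ref{d:T-lcc-fib} and~\ref{d:2-fib-a}, all of which are phrased relative to the \emph{given} inner fibration $f$.

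First I would treat $(1) \Leftrightarrow (2)$. Since $f$ is assumed to be an inner fibration from the outset, and $E_S = S_1$ consists of \emph{all} edges, condition (2) of the $\Beta_{T_S}$-fibered definition for the pattern $\Beta_{T_S}=(S, S_1, T_S)$ asks exactly that $e^*f$ be a coCartesian fibration over each $\Delta^1$ and that the marked edges over $e$ be precisely the $e^*f$-coCartesian ones. The key observation here is that a locally $f$-coCartesian edge lying over a \emph{single} nondegenerate edge $e$ of $S$ is the same thing as an $e^*f$-coCartesian edge of $X \times_S \Delta^1$; hence marking $X$ by the locally $f$-coCartesian edges $E_X$ makes condition (2) of the pattern automatic once $f$ is an inner fibration. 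What remains is condition (3) of the pattern: for a thin triangle $\sigma$, any edge in $E_X$ lying over $\Delta^{\{0,1\}} \subseteq \Delta^2$ must be $\sigma^*f$-coCartesian. I would argue that this is equivalent to $\sigma^*f$ being a coCartesian fibration, i.e.\ to condition (1). One direction is clear; for the converse, one uses that if $\sigma^*f$ is a coCartesian fibration then a locally coCartesian edge over $\Delta^{\{0,1\}}$, being coCartesian for the base change to that edge, is also $\sigma^*f$-coCartesian because coCartesian fibrations over $\Delta^2$ are characterized by their behavior over the edges together with compatibility over the $2$-simplex; this is where I would cite \cite[Lemma 3.2.28]{goodwillie} or the standard fact that a locally coCartesian fibration over $\Delta^2$ is coCartesian. (Conversely, the existence of enough such coCartesian edges, plus the inner fibration property, forces $\sigma^*f$ to be a coCartesian fibration.)

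Next, $(1) \Leftrightarrow (3)$ is essentially a matter of definitions: Definition~\ref{d:2-fib-a} says $f\colon (X, T_X) \to (S, T_S)$ is a scaled coCartesian fibration precisely when the underlying $X \to S$ is a $T_S$-locally coCartesian fibration \emph{and} $T_X = f^{-1}(T_S)$. Since in the statement of the lemma $T_X$ is \emph{defined} to be $f^{-1}(T_S)$, the second condition holds by fiat, so $(1)$ and $(3)$ say exactly the same thing. I would spell this out in a sentence.

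The step I expect to be the main obstacle is the subtle part of $(1) \Leftrightarrow (2)$, namely matching the marked edges: the pattern model structure wants the marked edges of $X$ to be \emph{exactly} the $e^*f$-coCartesian edges over each $e \in E_S$, whereas we have marked $X$ by \emph{all} locally $f$-coCartesian edges. One must check these coincide — that a locally $f$-coCartesian edge lying over a nondegenerate $e$ is the same as an $e^*f$-coCartesian edge of the pullback, and that locally $f$-coCartesian edges lying over degenerate edges are exactly the degenerate-like (equivalence) edges that $\Beta$-fibered-ness also marks. This is standard but requires care about the interaction between ``locally coCartesian over $S$'' and ``coCartesian over a pulled-back $\Delta^1$''. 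I would handle it by recalling that for an inner fibration $f$, an edge is locally $f$-coCartesian if and only if it is $f_e$-coCartesian where $f_e$ is the base change along its image edge, which is essentially the definition, and then noting that degenerate edges are always locally coCartesian, matching the convention that $E_X$ and the marked edges of a $\Beta$-fibered object both contain all degenerate edges. Once these identifications are in place, the three conditions visibly coincide.
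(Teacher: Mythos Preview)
Your approach is correct and follows the same route as the paper: both recognize that $(1)\Leftrightarrow(3)$ is immediate from the definitions (since $T_X$ is \emph{defined} to be $f^{-1}(T_S)$), that $(2)\Rightarrow(1)$ is essentially a matter of unwinding, and that the only real content lies in $(1)\Rightarrow(2)$. The paper dispatches this last implication in one line by citing \cite[Remark~2.4.2.13]{Lur09}, whereas you spell out the mechanism.

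Two small points of imprecision to clean up. First, your sentence ``marking $X$ by the locally $f$-coCartesian edges $E_X$ makes condition (2) of the pattern automatic once $f$ is an inner fibration'' is not quite right: condition (2) of $\Beta$-fiberedness also requires that $e^*f$ be a coCartesian fibration for \emph{every} edge $e$ of $S$, and this does not follow from $f$ being inner. It does follow from (1), by pulling back along a degenerate (hence thin) triangle containing $e$; you should say so. Second, the phrase ``the standard fact that a locally coCartesian fibration over $\Delta^2$ is coCartesian'' is false as stated---that is precisely what can fail. What you actually need, and what your argument correctly uses, is the converse: in a coCartesian fibration every locally coCartesian edge is already coCartesian. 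This is (one direction of) \cite[Proposition~2.4.2.8]{Lur09}, and it is exactly what makes $(1)\Rightarrow$ condition (3) of $\Beta$-fiberedness go through. The ``obstacle'' you flag about matching markings is not one: by definition an edge of $X$ is locally $f$-coCartesian if and only if it is $e^*f$-coCartesian in the pullback over its image edge $e$, so the two markings coincide tautologically.
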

\begin{proof}
The equivalence of (i) and (iii) and the implication (ii) $\Rightarrow$ (i) are immediate. The implication (i) $\Rightarrow$ (ii) follows from~\cite[Remark 2.4.2.13]{Lur09}.
\end{proof}
In light of Lemma~\ref{l:all-the-same} we will denote
$$
(\Set^+_{\Del})^{\lcc}_{/(S,T_S)}:=(\Set^+_{\Del})_{/\Beta_{T_S}}.
$$
The following lemma makes sure that the passage from a $T_S$-locally coCartesian fibration to the associated scaled coCartesian fibration is homotopically sound.
\begin{lem}\label{l:same}
Let $f: X \lrar Y$ be a weak equivalence between fibrant objects in $(\Set^+_{\Del})^{\lcc}_{/(S,T_S)}$ and let $T_X \subseteq X_2$ and $T_Y \subseteq Y_2$ be the subsets of triangles whose images in $S$ belong to $T_S$. Then the map of scaled simplicial sets $(X,T_X) \lrar (Y,T_Y)$ is a bicategorical equivalence.
\end{lem}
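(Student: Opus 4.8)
The plan is to reduce the statement to the fact that bicategorical equivalences are detected by the scaled mapping categories, and to exploit the already-established comparison between $T_S$-locally coCartesian fibrations and scaled coCartesian fibrations (Lemma~\ref{l:all-the-same}). First I would observe that by Lemma~\ref{l:unstisweakbicat} both $(X,T_X)$ and $(Y,T_Y)$ are weak $\infty$-bicategories; after taking a fibrant replacement in $\Set^{\sca}_\Del$, it suffices to check that $(X,T_X)\lrar(Y,T_Y)$ becomes an equivalence of $\infty$-bicategories, which (by Lurie's characterization in~\cite{goodwillie}) amounts to: (a) the map is essentially surjective on objects, and (b) for each pair of objects $x_0,x_1$ it induces an equivalence of mapping $(\infty,1)$-categories. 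Both properties can be read off fiberwise.

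The core of the argument is a fiberwise comparison. Over each vertex $s\in S$, the fiber $X_s$ (with the marking inherited from $E_X$, which over a point is just the collection of equivalences) is a quasicategory, and the map $f$ restricts to a map $X_s\lrar Y_s$. The key input is that a weak equivalence in $(\Set^+_\Del)^{\lcc}_{/(S,T_S)}$ between fibrant objects restricts, over each vertex, to a categorical equivalence of the marked fibers: this is the standard "fiberwise criterion" for weak equivalences between fibered objects with respect to a categorical pattern, which holds because the model structure $(\Set^+_\Del)_{/\Beta_{T_S}}$ detects equivalences between fibrant objects by evaluation on the fibers over vertices and the behavior along edges of $E_S=S_1$. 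More precisely, I would cite the analogue of~\cite[Theorem B.0.20]{Lur14} together with the localization description of the underlying $\infty$-category of $(\Set^+_\Del)_{/\Beta_{T_S}}$: a map of fibrant objects is a weak equivalence iff it is a "fiberwise equivalence", i.e.\ induces a categorical equivalence on fibers over each vertex. Granting this, essential surjectivity of $(X,T_X)\lrar(Y,T_Y)$ on objects follows because the map on fibers over each vertex $s$ is essentially surjective and $S$ has the same vertices on both sides; and for the mapping-category condition one identifies, via the scaled unstraightening yoga, the mapping $(\infty,1)$-category $\Map_{(X,T_X)}(x_0,x_1)$ with an appropriate (lax) limit over the category of thin-triangle-composable strings in $S$ from $s_0$ to $s_1$ of the fibers of $X$, functorially in such a way that a fiberwise categorical equivalence $X\to Y$ induces an equivalence on these mapping categories.

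The step I expect to be the main obstacle is precisely this last identification: extracting the mapping $(\infty,1)$-category of the scaled simplicial set $(X,T_X)$ in terms of the fibers of $f$ over $S$, in a way that is manifestly functorial enough to transport a fiberwise equivalence to an equivalence of mapping categories. One clean way to finesse this is to avoid computing mapping categories explicitly and instead argue $2$-out-of-$3$ style: one factors $f$ through a fibrant replacement and uses that both $(X,T_X)$ and $(Y,T_Y)$, being scaled coCartesian fibrations over the weak $\infty$-bicategory $(S,T_S)$, correspond under scaled unstraightening to diagrams $\Un^{-1}(X),\Un^{-1}(Y)\colon (S,T_S)\lrar \Cat_\infty$; the map $f$ then corresponds to a natural transformation which, by the fiberwise statement above, is a pointwise equivalence, hence an equivalence of diagrams. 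Since scaled unstraightening is an equivalence of homotopy theories (the scaled Grothendieck correspondence of~\cite{goodwillie}), $f$ must then be a bicategorical equivalence of the underlying (scaled) total spaces. I would carry out the proof in this order: (1) reduce to fibrant replacements and invoke Lemma~\ref{l:unstisweakbicat}; (2) prove/cite the fiberwise criterion for weak equivalences in $(\Set^+_\Del)^{\lcc}_{/(S,T_S)}$; (3) translate through the scaled straightening/unstraightening equivalence to conclude that $(X,T_X)\lrar(Y,T_Y)$ is an equivalence of scaled coCartesian fibrations, hence a bicategorical equivalence on total spaces.
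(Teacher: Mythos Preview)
Your approach is substantially more elaborate than the paper's, and step~(3) has a genuine gap. The scaled straightening/unstraightening of~\cite{goodwillie} is a Quillen equivalence between $(\Set^+_\Del)^{\lcc}_{/(S,T_S)}$ and a projective functor category; it tells you that weak equivalences in the former correspond to pointwise equivalences of diagrams. But it says nothing, a priori, about bicategorical equivalences of the \emph{scaled total spaces} $(X,T_X)$ and $(Y,T_Y)$ --- that passage is exactly the content of Lemma~\ref{l:same}. So when you write ``since scaled unstraightening is an equivalence of homotopy theories \ldots\ $f$ must then be a bicategorical equivalence of the underlying (scaled) total spaces'', you are assuming what you set out to prove. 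To close this you would need an independent identification of $(X,T_X)$ as, say, a lax colimit of the straightened diagram in $\Set^{\sca}_\Del$, and that is not available without substantial further work. (A secondary issue: your step~(1) invokes Lemma~\ref{l:unstisweakbicat}, which requires $(S,T_S)$ to be a weak $\infty$-bicategory; Lemma~\ref{l:same} makes no such hypothesis.)

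The paper's proof is a one-paragraph cylinder-object argument that sidesteps all of this. The model category $(\Set^+_\Del)^{\lcc}_{/(S,T_S)}$ is tensored over $\Set^{\Joy}_\Del$ via $K\otimes(X\to S)=(K^\flat\times X\to S)$, so a weak equivalence $f$ between fibrant (and automatically cofibrant) objects admits a homotopy inverse $g$ with homotopies of the form $J^\flat\times X\to X$ and $J^\flat\times Y\to Y$, where $J$ is a Joyal cylinder on $\Del^0$. But $\Set^{\sca}_\Del$ is \emph{also} tensored over $\Set^{\Joy}_\Del$, by $K\otimes(X,T_X)=(K\times X,\,K_2\times T_X)$, and the passage $X\mapsto(X,T_X)$ intertwines these two tensorings. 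Hence the same $g$ and the same homotopies exhibit $(X,T_X)\to(Y,T_Y)$ as a homotopy equivalence in $\Set^{\sca}_\Del$, which is therefore a bicategorical equivalence. No fiberwise analysis, no straightening, no mapping-category computation is needed.
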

\begin{proof}
We first note that the model category $(\Set^+_{\Del})^{\lcc}_{/(S,T)}$ is tensored over $\Set^+_\Del$ (see~\cite[Remark B.2.5]{Lur14}), where the action of $K \in \Set^+_\Del$ is given by $K \otimes (X \lrar S) = K \times X \lrar S$. Since the functor $K \mapsto K^{\flat}$ of \eqref{e:modelsfor1cat} is a product preserving left Quillen functor from $\Set^{\Joy}_\Del$ to $\Set^+_\Del$ we obtain an induced tensoring of $(\Set^+_{\Del})^{\lcc}_{/(S,T)}$ over $\Set^{\Joy}_\Del$.
In particular, if $f: X \lrar Y$ is a weak equivalence between fibrant (and automatically cofibrant) objects, then there exists an inverse map $g: Y \lrar X$ such that $f \circ g$ and $g \circ f$ are homotopic to the respective identities via homotopies of the form $J^{\flat} \times X \lrar X$ and $J^{\flat} \times Y \lrar Y$, where $J$ is a cylinder object for $\Del^0$ in $\Set^{\Joy}_{\Del}$. On the other hand, the model category $\Set^{\sca}_\Del$ is also tensored over $\Set^{\Joy}_\Del$; the action of $K \in \Set^{\Joy}_\Del$ is given by $K \otimes (X,T_X) = (K \times X, K_2 \times T_X)$. We conclude that if $f: X \lrar Y$ is a weak equivalence between fibrant objects in $(\Set^+_{\Del})^{\lcc}_{/(S,T)}$, then the induced map $(X,T_X) \lrar (Y,T_Y)$ of scaled simplicial set has an inverse up to homotopy and is therefore a bicategorical equivalence. 
\end{proof}

Given a map $\vphi: \fC(S,T_S) \lrar \CC$ of marked-simplicial categories, Lurie constructs in~\cite[\S 3.5]{goodwillie} a straightening-unstraightening Quillen adjunction
$$ \St^{\sca}_{\vphi}: (\Set^+_{\Del})^{\lcc}_{/(S,T_S)} \adj \Fun^+(\CC,\Set^+_\Del): \Un^{\sca}_{\vphi} $$
which is a Quillen equivalence when $\vphi$ is a weak equivalence (\cite[Theorem 3.8.1]{goodwillie}). Here the right hand side is the category of $\Set^+_\Del$-enriched functors with the projective model structure. In light of Lemma~\ref{l:all-the-same} one can therefore consider scaled coCartesian fibrations over $(S,T_S)$ as an unstraightened model for an $(\infty,2)$-functor $(S,T_S) \lrar \Cat_{\infty}$. 
\begin{notn}\label{n:variants}
Let $\F \in \Fun^+(\fC^{\sca}(S,T_S),\Set^+_\Del)$ be a functor. We will use the following variants of $\Un^{\sca}_{\vphi}(\F)$:
\begin{itemize}
\item
We will denote by $\uline{\Un}^{\sca}_{\vphi}(\F)$ the simplicial set underlying the marked simplicial set $\Un^{\sca}_{\vphi}(\F)$.
\item
We will denote by $\wtl{\Un}^{\sca}_{\vphi}(\F)$ the scaled simplicial set whose underlying simplicial set is $\uline{\Un}^{\sca}_{\vphi}(\F)$ and whose thin triangles are exactly those whose image in $S$ is thin.
\end{itemize}
\end{notn}
\begin{rem}\label{r:underlying}
When $\F: \CC \lrar \Set^+_\Del$ is a fibrant diagram, the object $\Un^{\sca}_{\vphi}(\F)$ is $\Beta_{T_S}$-fibered over $S$. It then follows from Lemma~\ref{l:all-the-same} that
$$ 
\uline{\Un}^{\sca}_{\vphi}(\F) \lrar S \qquad \text{and}\qquad  \wtl{\Un}^{\sca}_{\vphi}(\F) \lrar S
$$ 
are a $T_S$-locally coCartesian fibration and a scaled coCartesian fibration, respectively. In particular, if $(S,T_S)$ is a weak $\infty$-bicategory then $\wtl{\Un}^{\sca}_{\vphi}(\F)$ is a weak $\infty$-bicategory (see Lemma~\ref{l:unstisweakbicat}).
\end{rem}
\begin{notn}\label{n:counit}
When $\CC$ is fibrant and $\vphi: \fC^{\sca}(\rN^{\sca}(\CC)) \x{\simeq}{\lrar} \CC$ is the counit map we will omit $\vphi$ from the notation and denote $\St^{\sca}_{\vphi}$ and $\Un^{\sca}_{\vphi}$ simply by $\St^{\sca}$ and $\Un^{\sca}$. We will employ the same convention for the variants of Notation~\ref{n:variants}.
\end{notn}

The scaled unstraightening of a diagram of (ordinary) categories indexed by a (strict) 2-category can be understood in more concrete terms, using the 2-categorical \textbf{Grothendieck construction} (see, e.g., ~\cite{Buc14}). Explicitly, given a strict 2-functor $\F: \CC \lrar \Cat_1$, its Grothendieck construction $\int_\CC\F$ is the $2$-category whose
\begin{itemize}
\item objects are pairs $(A, X)$ with $A\in \CC$ and $X\in \F(A)$.
\item 1-morphisms $(A, X)\lrar (B, Y)$ are pairs $(f, \vphi)$, with $f: A\lrar B$ a morphism in $\CC$ and $\vphi: f_!X\lrar Y$ a morphism in $\F(B)$. Here $f_!: \F(A) \lrar \F(B)$ is the functor associated to $f$.
\item given two 1-morphisms $(f,\vphi)$ and $(g,\psi)$ from $(A,X)$ to $(B,Y)$, a $2$-morphism $(f,\vphi)\Rightarrow (g,\psi)$ is a 2-morphism $\sigma: f\Rightarrow g$ in $\CC$ such that $\vphi = \psi\circ \sigma_!(X): f_!X\lrar Y$, where $\sig_!: f_! \Rightarrow g_!$ is the natural transformation associated to $\sig$.
\end{itemize}
We then have the following result, whose proof will be deferred to \S\ref{s:discrete}:

\begin{pro}\label{p:grothendieck}
Let $\CC$ be a 2-category and let $\F: \CC\lrar \Cat_1$ be a 2-functor. Let $\rN^+\F: \CC_{\rN^+} \lrar \Set^+_\Del$ be the $\Set^+_\Del$-enriched functor given by $A \mapsto \rN^+(\F(A))$. Then there is a natural map of scaled coCartesian fibrations over $\rN_2(\CC)$
$$
\Theta_{\CC}(\F): \rN_2\left(\int_{\CC}\F\right) \lrar \wtl{\Un}^{\sca}\big(\rN^+\F\big)
$$
which is a bicategorical equivalence of scaled simplicial sets.
\end{pro}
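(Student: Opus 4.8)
The plan is to construct the comparison map $\Theta_\CC(\F)$ objectwise over the simplices of $\rN_2(\CC)$ and then check it is a bicategorical equivalence by reducing, via known compatibilities of the straightening machinery, to a statement about a single $n$-simplex. First I would set up the map: by the adjunction $\fC_2 \dashv \rN_2$ of Remark~\ref{r:nerve} and the universal property of the Grothendieck construction, a map $\rN_2\bigl(\int_\CC \F\bigr) \lrar Y$ over $\rN_2(\CC)$ is the same as a compatible family of $2$-functors $\BDel^n \lrar \int_\CC \F$ together with the fibration data; dually, a map into $\wtl\Un^{\sca}(\rN^+\F)$ is governed by the straightening adjunction $\St^{\sca} \dashv \Un^{\sca}$ of~\cite[\S 3.5]{goodwillie}. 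So I would instead produce a natural transformation of $\Set^+_\Del$-enriched functors
$$ \St^{\sca}\Bigl(\rN_2\bigl(\textstyle\int_\CC \F\bigr)\Bigr) \lrar \rN^+\F $$
over $\fC^{\sca}(\rN_2(\CC)) \simeq \CC_{\rN^+}$, and let $\Theta_\CC(\F)$ be its adjunct. Concretely this transformation is assembled from the evident projection $\int_\CC \F \lrar \CC$ together with, for each object $A$, the functor $\F(A) \lrar \F(A)$ coming from the fiber of the Grothendieck construction over $A$; the marked structure on $\rN_2(\int_\CC\F)$ is exactly $\pi^{-1}$ of the thin triangles, so the map lands in the category $(\Set^+_\Del)^{\lcc}_{/\rN_2(\CC)}$ and Lemma~\ref{l:all-the-same} identifies both sides as scaled coCartesian fibrations.

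Next I would verify that $\Theta_\CC(\F)$ is a \emph{map of scaled coCartesian fibrations}, i.e.\ compatible with the projections to $\rN_2(\CC)$ and sending locally coCartesian edges to locally coCartesian edges. Compatibility with the projections is built in; that coCartesian edges are preserved follows because in $\rN_2(\int_\CC\F)$ the coCartesian edge over $f\colon A\to B$ lying over $(A,X)$ is $(f, \id_{f_!X})\colon (A,X)\to (B,f_!X)$, and the straightening of $\rN_2(\int_\CC\F)$ is computed so that its transition functors along thin triangles are precisely the $f_!$'s — this is where the explicit description of $\BDel^n$ and of thinness in $\rN_2$ (a triangle is thin iff the $2$-cell $\Map_{\BDel^2}(0,2)$-arrow goes to an isomorphism) does the work.

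Finally, to see that $\Theta_\CC(\F)$ is a bicategorical equivalence, I would invoke Lemma~\ref{l:same}: it suffices to show that the corresponding map is a weak equivalence of fibrant objects in $(\Set^+_\Del)^{\lcc}_{/\rN_2(\CC)}$, and since weak equivalences there are detected on fibers over each vertex (by~\cite[Theorem B.0.20]{Lur14}, after base change along the vertices $\Del^0\to\rN_2(\CC)$), it is enough to show that over each $A\in\CC$ the induced map on fibers $\rN^+(\F(A)) \lrar \St^{\sca}(\rN_2(\int_\CC\F))(A)$, equivalently $\rN^+(\F(A)) \lrar \rN^+(\F(A))$, is the identity up to equivalence. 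Because straightening is a Quillen equivalence (\cite[Theorem 3.8.1]{goodwillie}), the fiber of $\Un^{\sca}(\rN^+\F)$ over $A$ is canonically $\rN^+(\F(A))$, and by construction $\Theta_\CC(\F)$ restricts to this identification on fibers.

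The main obstacle I expect is the bookkeeping needed to identify the transition functors of the straightening of $\rN_2(\int_\CC\F)$ with the functors $f_!$ coming from $\F$: the scaled straightening $\St^{\sca}$ is defined via a bar-type construction involving the marked-simplicial categories $\fC^{\sca}(\Del^n_\flat) \cong \BDel^n_{\rN^+}$ (Remark~\ref{r:nerve}), and one must trace through how a $2$-functor $\BDel^n \to \int_\CC\F$ decomposes to recover, cofinally, the data of $\F$ on the corresponding $n$-simplex of $\CC$. I would handle this by first treating the representable case $\CC = \BDel^n$ and $\F$ a strict $2$-functor $\BDel^n \to \Cat_1$, where $\int_{\BDel^n}\F$ can be computed by hand and compared directly with the explicit formula for $\Un^{\sca}$ on $\Del^n_\flat$, and then deducing the general case by naturality and the fact that every scaled simplicial set is a colimit of its simplices — this colimit argument is what gets deferred to~\S\ref{s:discrete}.
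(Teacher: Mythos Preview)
Your high-level plan matches the paper's: construct $\Theta_\CC(\F)$ by adjunction, check it preserves locally coCartesian edges, and verify equivalence by reducing to fibers over vertices (i.e.\ to $\CC = \ast$). But there are two genuine gaps.

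First, you have not actually constructed the map. Saying it is ``assembled from the evident projection together with the functor $\F(A) \to \F(A)$ from the fiber'' does not determine a natural transformation $\St^{\sca}\bigl(\rN_2(\int_\CC\F)\bigr) \Rightarrow \rN^+\F$ of $\Set^+_\Del$-enriched functors on $\CC_{\rN^+}$; that data lives only over objects, not over mapping objects. The paper works instead with the left adjoints on the \emph{other} variable: the composite $\Fun_2(\CC,\Cat_1) \to (\Cat_2)_{/\CC}$ (Grothendieck construction) has a left adjoint $\LL_1$, and the paper constructs a natural transformation $\Sig_\CC(X): \Ho_{\leq 1}\St^{\sca}(X^\flat) \Rightarrow \LL_1(\fC_2(X_\flat))$ for $X \in (\Set_\Del)_{/\uline\rN_2(\CC)}$. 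Since both sides preserve colimits in $X$, it suffices to build this on simplices, where it comes from an explicit comparison $\Psi_n: \fC_2(\Cone((\Del^n)^\flat)) \to \LaxCone(\BDel^n)$ of cone constructions (Lemma~\ref{l:truncatedcone}); this is the real combinatorial content, and your sketch does not supply it. Your fallback plan of treating $\CC = \BDel^n$ by hand and then passing to colimits runs into the problem that $\Un^{\sca}$ is a \emph{right} adjoint and does not commute with colimits in the base.

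Second, your fiber check is too quick. The fiber of $\Un^{\sca}(\rN^+\F)$ over $A$ is $\Un^{\sca}_\ast(\rN^+\F(A))$, not $\rN^+\F(A)$ on the nose; the Quillen equivalence only gives a comparison map $\rN^+\F(A) \to \Un^{\sca}_\ast(\rN^+\F(A))$, and you must check that \emph{your} map $\Theta_\CC(\F)|_A$ coincides with it. The paper settles this by observing that natural transformations $\St^{\sca}_\ast \Rightarrow \Id_{\Set^+_\Del}$ of left Quillen endofunctors are unique (\cite[\S 3.6]{goodwillie}), so the adjunct of $\Theta_\ast$ is forced to be the one known to be an equivalence by \cite[Proposition 3.6.1]{goodwillie}.
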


\subsection{Marked left fibrations}\label{s:marked-left}
Any $(\infty, 2)$-functor $(S, T_S)\lrar \cS$ with values in spaces can be considered as a functor with values in $\infty$-categories. Under unstraightening, such functors correspond to left fibrations over $S$. For technical reasons (see \S\ref{s:main}), it will be convenient to use the following marked variant of a left fibration:
\begin{define}\label{d:marked-left-1}
Let $p: (X,E_X) \lrar (S,T_S)$ be a map of marked simplicial sets. We will say that $p$ is a \textbf{marked left fibration} if it satisfies the following properties:
\begin{enumerate}
\item
The map $p: X \lrar S$ is a left fibration of simplicial sets.
\item
An edge of $X$ is marked if and only if its image in $S$ is marked.
\end{enumerate}
\end{define}
\begin{warning}\label{w:markedleft}
A marked simplicial set $(S,E_S)$ can be considered as representing an $\infty$-category via the categorical model structure on $\Set^+_\Del$. However, marked left fibrations in the above sense do \textbf{not} correspond to functors of the form $(S, E_S)\lrar \cS$. Instead, they corresponds to functors of the form $S \lrar \cS$, see Lemma~\ref{l:lcc-left} below.
\end{warning}
\begin{rem}\label{r:left-is-loc}
Let $S$ be a simplicial set equipped with a marking $E_S$ and a scaling $T_S$, and set $\Beta = (S,E_S,T_S)$ as above. Then any marked left fibration $p: (X,E_X) \lrar (S,E_S)$ constitutes a $\Beta$-fibered object of $(\Set^+_\Del)_{/(S,E_S)}$ (see \S\ref{s:scaledsset}): indeed, any left fibration is a coCartesian fibration and any edge in $X$ is $p$-coCartesian.
\end{rem}
Now let $(S,E_S)$ be a marked simplicial set. We will say that a map $f: (Y, E_Y)\lrar (X, E_X)$ in $(\Set^+_{\Del})_{/(S,E_S)}$ is a \textbf{marked covariant weak equivalence} if $Y\lrar X$ is a covariant weak equivalence in $(\Set_{\Del})_{/S}$. We will say that $f$ is a \textbf{marked covariant fibration} if $f: Y \lrar X$ is a covariant fibration in $(\Set_{\Del})_{/S}$ and $E_Y=f^{-1}(E_X)$.
\begin{lem}\label{l:lcc-left}
There exists a model structure on $(\Set^+_\Del)_{/(S,E_S)}$ whose weak equivalences are the marked covariant weak equivalence, whose fibrations are the marked covariant fibrations and whose cofibrations are the monomorphisms. Furthermore, the adjoint pair 
\begin{equation}\label{e:transferadj}
(-)^\flat: (\Set_{\Del})_{/S} \adj (\Set^+_{\Del})_{/(S,E_S)} : \forget
\end{equation}
whose right adjoint forgets the marking and left adjoint introduces trivial marking, yields a Quillen equivalence between this model structure and the covariant model structure on $(\Set_{\Del})_{/S}$.
\end{lem}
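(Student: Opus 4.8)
The plan is to obtain the asserted model structure by invoking Jeff Smith's recognition criterion for combinatorial model categories (see~\cite[\S A.2.6]{Lur09}): a naive right transfer of the covariant model structure along the adjunction $(-)^\flat\dashv\forget$ would not produce all monomorphisms as cofibrations, so instead I would prescribe the cofibrations to be the monomorphisms --- generated by the set $I^+$ of maps $(\partial\Del^n)^\flat\hrar(\Del^n)^\flat$ over all $n$-simplices of $S$ together with the marking maps $(\Del^1)^\flat\lrar(\Del^1)^\sharp$ over all edges of $S$ landing in $E_S$ --- and prescribe the weak equivalences to be the class $W$ of maps $f$ whose underlying map $\forget(f)$ is a covariant weak equivalence. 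The Quillen equivalence should then follow almost formally from the identity $\forget\circ(-)^\flat=\id$, so the real work is the existence of the model structure and the identification of its fibrations.

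To apply the recognition criterion I would check: $(\Set^+_\Del)_{/(S,E_S)}$ is presentable (a slice of a presheaf category); $W$ is closed under retracts and satisfies two-out-of-three because the covariant weak equivalences do and $\forget$ preserves and detects them; $W$ is accessible and accessibly embedded, being the inverse image along the accessible functor induced by $\forget$ of the accessible, accessibly embedded class of covariant weak equivalences (the covariant model structure being combinatorial by~\cite[\S 2.1.4]{Lur09}); and $W$ is closed under filtered colimits, while $W\cap\{\text{monomorphisms}\}$ is closed under pushouts and transfinite composition, because $\forget$ preserves colimits and these closure properties already hold in the covariant model structure. The only non-formal hypothesis is acyclicity: a map with the right lifting property against $I^+$ has underlying map with the right lifting property against all monomorphisms, hence is a trivial fibration --- in particular a weak equivalence --- in the covariant model structure, so it lies in $W$. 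This produces a combinatorial model structure with cofibrations the monomorphisms and weak equivalences $W$.

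Next I would identify the fibrations. For $E\subseteq E'$ the inclusion $(Y,E)\lrar(Y,E')$ is a monomorphism that is the identity on underlying simplicial sets, hence a trivial cofibration; lifting against all of these forces any fibration $f$ to satisfy $E_Y=f^{-1}(E_X)$. Since $(-)^\flat$ preserves monomorphisms and, by $\forget\circ(-)^\flat=\id$, sends covariant weak equivalences into $W$, it is left Quillen, so $\forget$ preserves fibrations and $\forget(f)$ is a covariant fibration; thus $f$ is a marked covariant fibration. Conversely, given a marked covariant fibration $f$ and a trivial cofibration $i$, a lift exists on underlying simplicial sets because $\forget(i)$ is a trivial cofibration in the covariant model structure, and this lift is automatically a map of marked simplicial sets since $E_Y=f^{-1}(E_X)$ and the maps into $(X,E_X)$ respect markings. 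For the Quillen equivalence, $(-)^\flat$ is left Quillen and, via $\forget\circ(-)^\flat=\id$, both preserves and reflects weak equivalences; the counit $(-)^\flat\forget(X,E_X)\lrar(X,E_X)$ is the identity on underlying simplicial sets, hence a weak equivalence; and every object is cofibrant --- so the standard criterion for a Quillen equivalence applies. I expect the one genuinely fiddly point to be the marking bookkeeping in the identification of the fibrations; everything else is formal or a direct appeal to the corresponding statement for $(\Set_\Del)_{/S}$.
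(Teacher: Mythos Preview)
Your proof is correct, but it takes a different and heavier route than the paper. The paper simply verifies the model category axioms directly: since the weak equivalences and fibrations are both defined in terms of the underlying simplicial map (together with the marking condition $E_Y=f^{-1}(E_X)$ for fibrations), every factorization and lifting problem can be solved by first solving it in the covariant model structure on $(\Set_\Del)_{/S}$ and then equipping the intermediate object with the marking pulled back from the target; the resulting lift automatically respects markings precisely because fibrations satisfy $E_Y=f^{-1}(E_X)$. This is a two-line argument with no appeal to Smith's theorem.

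Your approach via Smith's recognition theorem is more systematic and has the side benefit of giving combinatoriality for free, but the price is checking accessibility, closure under filtered colimits, and the pushout/transfinite-composition stability of $W\cap\{\text{monomorphisms}\}$---all of which the direct approach avoids. You also then have to \emph{identify} the fibrations after the fact, whereas the paper takes them as given from the start. One small remark: your observation that a naive right transfer along $(-)^\flat\dashv\forget$ would not give all monomorphisms as cofibrations is exactly right, and this is why the paper does not phrase things as a transfer either; the extra marking condition on fibrations is what makes the monomorphisms come out correctly. The Quillen equivalence argument is essentially identical in both proofs.
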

\begin{proof}
It is straightforward to verify that these classes of maps form a model structure: indeed, the lifting and factorization axioms all follow from the corresponding axioms for the covariant model structure on $(\Set_{\Del})_{/S}$. 
Furthermore, the adjunction \eqref{e:transferadj} is a Quillen pair by construction in which the right adjoint preserves and detects weak equivalences. To see that it is a Quillen equivalence, it therefore suffices to verify that the (underived) unit map is a weak equivalence. But this unit map is an isomorphism since the underlying simplicial set of $X^\flat$ is simply $X$.
\end{proof}

\begin{define}\label{d:lcc-left}
We will refer to the model category of Lemma~\ref{l:lcc-left} as the \textbf{marked covariant model structure} and denote it by $(\Set^+_{\Del})^{\cov}_{/(S,E_S)}$. 
\end{define}
\begin{rem}\label{r:fibrations}
Let $p: (X,E_X) \lrar (S,E_S)$ and $q:(Y,E_Y) \lrar (S,E_S)$ be two marked left fibrations. By~\cite[Corollary 2.2.3.14]{Lur09} a map $f:(X,E_X) \lrar (Y,E_Y)$ is a fibration in the marked covariant model structure if and only if it is a marked left fibration. In particular, the fibrant objects of $(\Set^+_{\Del})^{\cov}_{/(S,E_S)}$ are precisely the marked left fibrations. 
\end{rem}

\begin{rem}\label{r:local}
Let $\Beta:=(S,E_S,T_S)$ be a simplicial set $S$ equipped with a marking $E_S$ and a scaling $T_S$. By Remark~\ref{r:left-is-loc} and Remark~\ref{r:fibrations} every fibrant object of $(\Set^+_{\Del})^{\cov}_{/(S,E_S)}$ is also fibrant when considered as an object of $(\Set^+_\Del)_{/\Beta}$. Since these model structures have the same class of cofibrations we may deduce that the marked covariant model structure is a simplicial left Bousfield localization of the $\Beta$-fibered model structure. In this case, it is not hard to exhibit an explicit set $\bS$ of maps which induce the desired left Bousfield localization. Indeed, take $\bS$ to be the set of left horn inclusions $\Lam^n_i \subseteq \Del^n$ for every $0 \leq i < n$ and every $\Del^n \lrar S$, together with the maps $(\Del^1)^\flat\lrar (\Del^1)^\sharp$ for every marked edge of $S$. Then all the maps in $\bS$ are marked covariant weak equivalences and hence every marked left fibration is $\bS$-local. On the other hand, if a $\Beta$-fibered object is $\bS$-local, then certainly it has the right lifting property with respect to $\bS$, which consists of cofibrations. This means that it is a marked left fibration.
\end{rem}

\begin{rem}\label{r:local-2}
If $(S,E_S)$ is a fibrant marked simplicial set, then Lemma \ref{l:joyal} below asserts that the slice model structure on $(\Set^+_\Del)_{/(S, E_S)}$ arises from a certain categorical pattern $\Beta$. Remark~\ref{r:local} now shows that the marked covariant model structure is a simplicial left Bousfield localization of the slice model structure with respect to the set of maps $\bS$. In particular, any marked left fibration over a fibrant marked simplicial set is a categorical fibration of marked simplicial sets.
\end{rem}

\subsection{Stabilization and tangent bundles}\label{s:recall}
In this section we will recall the notion of \textbf{stabilization} and the closely related construction of \textbf{tangent bundles}. Recall that a model category is called \textbf{stable} if its homotopy category is pointed and the loop-suspension adjunction $\Sig: \Ho(\M) \adj \Ho(\M): \Om$ is an equivalence (equivalently, the underlying $\infty$-category of $\M$ is stable in the sense of~\cite[\S 1]{Lur14}). Given a model category $\M$ one may look for a universal stable model category $\M'$ related to $\M$ via a Quillen adjunction $\M \adj \M'$. When $\M$ is combinatorial the underlying $\infty$-category $\M_{\infty}$ is presentable, in which case a universal stable presentable $\infty$-category $\Sp(\M_{\infty})$ admitting a left functor from $\M_{\infty}$ indeed exists. When $\M$ is furthermore pointed and left proper there are various ways to realize $\Sp(\M_{\infty})$ as a certain model category of spectrum objects in $\M$ (see~\cite{Hov}). One such construction, which is particularly convenient for the applications in the current paper, was developed in \cite{part0}, based on ideas of Heller (\cite{Hel}) and Lurie (\cite{Lur06}): for a pointed, left proper combinatorial model category $\M$ we consider the left Bousfield localization $\Sp(\M)$ of the category of $(\NN\times \NN)$-diagrams in $\M$ whose fibrant objects are those diagrams $X: \NN\times\NN\lrar \M$ for which $X_{m, n}$ is weakly contractible when $m\neq n$ and for which each diagonal square
\begin{equation}\label{e:square_nn}
\vcenter{\xymatrix@R=1.5pc@C=1.5pc{
X_{n, n}\ar[r]\ar[d] & X_{n, n+1}\ar[d]\\
X_{n+1, n}\ar[r] & X_{n+1, n+1}
}}
\end{equation}
is homotopy Cartesian. The diagonal squares then determine equivalences $X_{n,n} \x{\simeq}{\lrar} \Om X_{n+1,n+1}$, and so we may view fibrant objects of $\Sp(\M)$ as $\Om$-spectrum objects. 
There is a canonical Quillen adjunction
$$ \Sig^{\infty}: \M \adj \Sp(\M): \Om^{\infty}, $$
where $\Om^{\infty}$ sends an $(\NN \times \NN)$-diagram $X_{\bullet\bullet}$ to $X_{0,0}$ and $\Sig^{\infty}$ sends an object $X$ to the constant $(\NN \times \NN)$-diagram with value $X$. 

When $\M$ is not pointed, its stabilization is the model category $\Sp(\M_\ast)$ of spectrum objects in its pointification $\M_\ast=\M_{\ast/}$. We then denote by $\Sig^{\infty}_+: \M \lrar \Sp(\M_{\ast})$ the composite left Quillen functor 
$$
\M \x{(-) \coprod \ast}{\lrar} \M_{\ast} \x{\Sig^{\infty}}{\lrar} \Sp(\M_{\ast}).
$$
Given an object $A \in \M$, we will denote by $\M_{A//A}:=\left(\M_{/A}\right)_\ast$ the category of pointed objects in the over-category $\M_{/A}$, endowed with its induced model structure. The stabilization of $\M_{/A}$ is the model category of spectrum objects in $\M_{A//A}$, which we will denote (as in \cite{part0}) by 
$$ \T_A\M \x{\df}{=} \Sp(\M_{A//A}) $$
and refer to as the \textbf{tangent model category} to $\M$ at $A$. We will refer to fibrant objects in $\T_A\M$ as \textbf{parameterized spectrum objects} over $A$. By~\cite[Lemma 3.20 and Proposition 3.21]{part0}, the $\infty$-category associated to the model category $\T_A\M$ is equivalent to the \textbf{tangent $\infty$-category} $\T_A(\M_\infty)$ defined in~\cite[\S 7.3]{Lur14}, at least if $A$ is fibrant or if $\M$ is right proper (so that $\M_{/A}$ models the slice $\infty$-category $(\M_\infty)_{/A}$).
 
\begin{define}[{(cf.~\cite[\S 7.3]{Lur14})}]\label{d:cotangent}
Let $\M$ be a left proper combinatorial model category. We will denote by
$$ L_A = \LL\Sig^{\infty}_+(A) \in \T_A\M $$
the derived suspension spectrum of $A$ and will refer to $L_A$ as the \textbf{cotangent complex} of $A$. The \textbf{relative cotangent complex} $L_{B/A}$ of a map $f:A \lrar B$ is the homotopy cofiber in $\T_B\M$
$$ \LL\Sig^{\infty}_+(f) \lrar L_B\lrar L_{B/A}. 
$$ 
\end{define}
As in~\cite[\S 2.2]{part2}, we will consider the following form of Quillen cohomology, which is based on the cotangent complex above:
\begin{defn}\label{d:Quillen-coh}
Let $\M$ be a left proper combinatorial model category and let $f:A \lrar X$ be a map in $\M$ with fibrant codomain. For $n \in \ZZ$ we define the \textbf{relative $n$'th Quillen cohomology} group of $X$ with coefficients in a parameterized spectrum object $M\in \T_X\M$ by the formula
$$ \rH_Q^n(X,A;M):=\pi_0 \Map^{\der}(L_{X/A},\Sig^n M) .$$  
where $L_{X/A}$ is the relative cotangent complex of the map $f$ (see Definition~\ref{d:cotangent}). When $f: \emptyset \lrar X$ is the initial map we also denote $\rH_Q^n(X;M) := \rH_Q^n(X,\emptyset;M)$ and refer to it simply as the Quillen cohomology $X$.
\end{defn} 
If $\C$ is a presentable $\infty$-category, then the functor $\C \lrar \Cat_{\infty}$ sending $A \in \C$ to $\T_A\C$ classifies a (co)Cartesian fibration $\T\C\lrar \C$ known as the \textbf{tangent bundle} of $\C$. 
A simple variation of the above model-categorical constructions can be used to give a model for the tangent bundle of a model category $\M$ as well, which furthermore enjoys the type of favorable formal properties one might expect (see \cite{part0}). 
More precisely, if $(\NN \times \NN)_\ast$ denotes the category obtained from $\NN \times \NN$ by \textbf{freely adding a zero object} and $\M$ is a left proper combinatorial model category, then one can define $\T\M$ as a left Bousfield localization of the Reedy model category $\M^{(\NN \times \NN)_{\ast}}_{\Reedy}$, where a Reedy fibrant object $X: (\NN \times \NN)_\ast\lrar \M$ is fibrant in $\T\M$ if and only if the map $X_{n,m} \lrar X_{\ast}$ is a weak equivalence for every $n \neq m$ and the square~\eqref{e:square_nn} is homotopy Cartesian for every $n \geq 0$. 

The projection $\ev_*: \T\M \lrar \M$ is then a (co)Cartesian fibration which exhibits $\T\M$ as a \textbf{relative model category} over $\M$ in the sense of~\cite{HP}: $\T\M$ has relative limits and colimits over $\M$ and factorization (resp.\ lifting) problems in $\T\M$ with a solution in $\M$ admit a compatible solution in $\T\M$. In particular, it follows that the projection is a left and right Quillen functor and that each fiber is a model category. When $A\in \M$ is a fibrant object, the fiber $(\T\M)_A$ can be identified with the tangent model category $\T_A\M$. Furthermore, the underlying map of $\infty$-categories $\T\M_\infty \lrar \M_{\infty}$ exhibits $\T\M_{\infty}$ as the tangent bundle of $\M_\infty$ (see~\cite[Proposition 3.25]{part0}). We refer the reader to~\cite{part0} for further details.

\section{The twisted $2$-cell $\infty$-category}\label{s:twisted-2-cells}
In this section we will introduce the notion of the \textbf{twisted $2$-cell $\infty$-category}, which plays a central role in this paper. This $\infty$-category will actually be derived from a suitable $\infty$-bicategory, which we will refer to as the \textbf{twisted $2$-cell $\infty$-bicategory}. To begin, let us recall the $(\infty,1)$-categorical counterpart of our construction, namely the twisted arrow category.

Let $F:\Del\lrar \Del$ be the functor given by $[n]\mapsto [n]^{\op} \ast [n]$, where $\ast$ denotes concatenation of finite ordered sets.  
When $\C\in \Set_{\Del}$ is an $\infty$-category, the simplicial set $\Tw(\C) := F^*\C$ is also an $\infty$-category, which is known as the \textbf{twisted arrow category} of $\C$.  
By definition the objects of $\Tw(\C)$ are the arrows of $\C$ and a morphism in $\Tw(\C)$ from $f: X \lrar Y$ to $g: Z \lrar W$ is given by a diagram in $\C$ of the form
\begin{equation}\label{e:arrowintw}\vcenter{\xymatrix@R=1.5pc@C=1.5pc{X\ar^{f}[r]& Y\ar[d] \\ Z\ar[u]\ar^{g}[r] & W.
}}\end{equation}
Note that the above convention regarding the direction of arrows is opposite to that of~\cite[\S 5.2.1]{Lur14}). When $\C$ is an ordinary category $\Tw(\C)$ is an ordinary category as well, and was studied in a variety of contexts. 
In fact, in this case one can write $\Tw(\C)$ using the classical Grothendieck construction as 
$$
\Tw(\C):=\int_{(x,y)\in\C^{\op}\times \C}\Map_\C(x,y).
$$
This property has an analogue in the $\infty$-categorical setting: by \cite[\S 5.2.1]{Lur14}, restriction along the inclusions $[n]\hrar [n]^{\op}\ast [n]$ and $[n]^{\op}\hrar [n]^{\op} \ast [n]$ induces a left fibration of $\infty$-categories $\Tw(\C) \lrar \C^{\op} \times \C$, which classifies the mapping space functor $\Map_{\C}: \C^{\op} \times \C \lrar \cS$ (where $\cS$ denotes the $\infty$-category of spaces). In particular, it follows that $\Tw(-)$ preserves equivalences between $\infty$-categories.

\begin{rem}\label{r:kan}
If $\C$ is a Kan complex then $\Tw(\C)$ is a Kan complex as well and the codomain projection $\Tw(\C) \lrar \C$ is a trivial Kan fibration.
\end{rem}

It will be useful to have a marked variant  $\Tw^+:\Set_{\Del}^+\lrar \Set_{\Del}^+$ of the twisted arrow category. Let $\C$ be a marked simplicial set. We define $\Tw^+(\C)$ to be the marked simplicial set whose underlying simplicial set is $\Tw(\C)$ and where a $1$-simplex \eqref{e:arrowintw} 
is marked if both $Z\lrar X$ and $Y\lrar W$ are marked in $\C$. When $\C$ is a fibrant marked simplicial set the map $\Tw^+(\C) \lrar \C^{\op} \times \C$ is a marked left fibration and in particular $\Tw^+(\C)$ is fibrant in $\Set^+_\Del$.

Let us now introduce an analogue of the above construction for $(\infty,2)$-categories. Let $\CC\in \Cat_{\Set_{\Del}^+}$ be a fibrant marked-simplicial category. We denote by $\CC_{\Tw}$ the marked-simplicial category with the same objects and mapping objects defined by $\CC_{\Tw}(x,y)=\Tw^+(\CC(x,y))$. 

\begin{defn}\label{d:twisted-2}
Let $\CC$ be a fibrant marked-simplicial category and let $\Map_{\Tw}: \CC_{\Tw}^{\op} \times \CC_{\Tw} \lrar \Set^+_\Del$ be the mapping space functor.
We define the \textbf{twisted $2$-cell $\infty$-bicategory} as
$$ \Tw_2(\CC) := \wtl{\Un}^{\sca}(\Map_{\Tw}), $$
where $\wtl{\Un}^{\sca}(-)$ is as in Notation~\ref{n:variants}.
We will also denote by $\uline{\Tw}_2(\CC) \in (\Set_\Del)^{\Joy}$ the underlying unscaled simplicial set of $\Tw_2(\CC)$. We will refer to any Joyal fibrant model of $\uline{\Tw}_2(\CC)$ as the \textbf{twisted $2$-cell $\infty$-category}. 
\end{defn}

\begin{rem}
By Lemma~\ref{l:unstisweakbicat} the scaled simplicial set $\Tw_2(\CC)$ is a weak $\infty$-bicategory. In fact, by a recent result of~\cite{higher} any weak $\infty$-bicategory is fibrant, i.e., an $\infty$-bicategory. In particular, $\Tw_2(\CC)$ is an $\infty$-bicategory.
\end{rem}

\begin{warning}
The simplicial set $\uline{\Tw}_2(\CC)$ is \textbf{not} Joyal fibrant in general. \end{warning}

\begin{rem}
As explained in \S\ref{s:scaledsset} we may consider $\uline{\Tw}_2(\CC) \simeq |\Tw_2(\CC)|_1$ as a model for the $\infty$-category freely generated from the $\infty$-bicategory $\Tw_2(\CC)$. This can be used to give a more explicit description of $\uline{\Tw}_2(\CC)$ in terms of $\Tw_2(\CC)$: indeed, the objects of $\uline{\Tw}_2(\CC)$ can be taken to be the same as the objects of $\Tw_2(\CC)$, and for each pair of objects $x,y$ the mapping space from $x$ to $y$ in any Joyal fibrant model for $\uline{\Tw}_2(\CC)$ is the classifying space of the $\infty$-category $\Map_{\Tw_2(\CC)}(x,y)$ (see Remark \ref{r:freeoo1}).
\end{rem}

\begin{example}\label{e:1-cat}
Let $\CC$ be a simplicial category in which every mapping object is a Kan complex and let $\CC_{\Tw}$ be the simplicial category obtained by applying the functor $\Tw$ to every mapping object. Let $\CC'$ be the marked-simplicial category obtained from $\CC$ by applying the functor $(-)^{\sharp}$ to all mapping objects and let $\CC'_{\Tw}$ be as above. Then all the triangles in $\rN^{\sca}(\CC'_{\Tw})$ and $\Tw_2(\CC')$ are thin and the underlying map of simplicial sets $\uline{\Tw}_2(\CC') \lrar \rN(\CC^{\op}) \times \rN(\CC)$ reduces to the left fibration classifying the Kan complex valued functor $(x,y) \mapsto \Tw(\Map_\C(x,y))$ (see Remark~\ref{r:kan}). On the other hand, the map $\CC_{\Tw} \lrar \CC$ induced by the codomain projection is a trivial fibration of simplicial categories by Remark~\ref{r:kan}, so we obtain a pair of equivalences $\uline{\Tw}_2(\CC') \simeq \Tw(\CC'_{\Tw}) \simeq \Tw(\CC)$. We may summarize the above discussion as follows: for an $(\infty,1)$-category the twisted $2$-cell $\infty$-bicategory $\Tw_2(\CC)$ is actually an $(\infty,1)$-category which is equivalent to the corresponding twisted \textbf{arrow} category. Similarly, if $\rN(\CC)$ is an $\infty$-groupoid then the twisted $2$-cell $\infty$-category of $\CC$ is equivalent to $\rN(\CC)$ itself. 
\end{example}

\begin{rem}\label{r:product}
If $\F: \DD \lrar \Set^+_\Del$ and $\G: \EE \lrar \Set^+_\Del$ are two $\Set^+_\Del$-enriched functors, then 
$$ \Un^{\sca}(p_\DD^*\F \times p_{\EE}^*\G) \cong (p_{\rN^{\sca}\DD}^*\Un^{\sca}(\F)) \times_{\rN^{\sca}(\DD) \times \rN^{\sca}(\EE)} (p_{\rN^{\sca}\EE}^*\Un^{\sca}(\G)) \cong \Un^{\sca}(\F) \times \Un^{\sca}(\G) $$
where $p_\DD: \DD \times \EE \lrar \DD$ and $p_{\EE}: \DD \times \EE \lrar \EE$ are the two projections and similarly for $p_{\rN^{\sca}\DD}$ and $p_{\rN^{\sca}\EE}$. This is because $\Un^{\sca}$ is right Quillen and is compatible with base change. Consequently, if  
$\CC,\CC'$ are two marked simplicial categories then
$$ 
\Tw_2(\CC \times \CC') \simeq \Tw_2(\CC) \times \Tw_2(\CC') \quad\quad \text{and} \quad\quad  \uline{\Tw}_2(\CC \times \CC') \simeq \uline{\Tw}_2(\CC) \times \uline{\Tw}_2(\CC') .$$
\end{rem}

When $\CC$ is a (strict) 2-category, Proposition \ref{p:grothendieck} shows that its twisted 2-cell bicategory is a strict 2-category as well:
\begin{pro}\label{p:2-cat}
For a 2-category $\CC$, there is a natural equivalence of $\infty$-bicategories
$$ \Tw_2(\CC_{\rN^+}) \simeq \rN_2\left(\int_{\CC_{\Tw}^{\op} \times \CC_{\Tw}} \Map_{\CC_{\Tw}}(-,-)\right). $$ 
\end{pro}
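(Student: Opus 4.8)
The plan is to deduce this directly from Proposition~\ref{p:grothendieck}; the only real content is the verification that the twisted arrow construction commutes, on the nose, with the marked nerve. Concretely, I would first produce a natural isomorphism of functors $\Tw^+\circ\rN^+\cong\rN^+\circ\Tw\colon\Cat_1\lrar\Set^+_\Del$. On underlying simplicial sets this is the classical identity $\Tw(\rN D)\cong\rN(\Tw D)$: writing $\Tw=F^\ast$ with $F([n])=[n]^{\op}\ast[n]$, one has $(\rN D)_{F([n])}=\Fun(F([n]),D)=\Fun([n],\Tw D)$. For the markings, a morphism of $\Tw D$ of the form \eqref{e:arrowintw} (with $\C=\rN D$) is the image of a pair $(h\colon Z\to X,\,k\colon Y\to W)$ under the projection $\Tw D\lrar D^{\op}\times D$, and it is invertible in $\Tw D$ precisely when both $h$ and $k$ are invertible in $D$ --- which is exactly the condition defining the marked edges of $\Tw^+(\rN^+ D)$. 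Since both composites are product-preserving and the isomorphism is manifestly compatible with composition in $D$, applying it to all mapping categories of a strict $2$-category $\CC$ upgrades to a natural isomorphism of marked-simplicial categories $(\CC_{\rN^+})_{\Tw}\cong(\CC_{\Tw})_{\rN^+}$. In particular $(\CC_{\rN^+})_{\Tw}$ is a fibrant marked-simplicial category (each mapping object has the form $\Tw^+(\rN^+(-))$, hence is fibrant in $\Set^+_\Del$), so Definition~\ref{d:twisted-2} applies to it.

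Under this identification the mapping-space functor $\Map_{(\CC_{\rN^+})_{\Tw}}$ corresponds to $\Map_{(\CC_{\Tw})_{\rN^+}}$, which in turn is simply $\rN^+$ applied to the mapping-category $2$-functor $\Map_{\CC_{\Tw}}\colon\CC_{\Tw}^{\op}\times\CC_{\Tw}\lrar\Cat_1$ of the strict $2$-category $\CC_{\Tw}$ --- the mapping object of $(\CC_{\Tw})_{\rN^+}$ between $x$ and $y$ being by construction $\rN^+(\Map_{\CC_{\Tw}}(x,y))$, and similarly on $1$- and $2$-cells. Abbreviating $\DD:=\CC_{\Tw}^{\op}\times\CC_{\Tw}$ and $\F:=\Map_{\CC_{\Tw}}\colon\DD\lrar\Cat_1$, Definition~\ref{d:twisted-2} then gives
$$ \Tw_2(\CC_{\rN^+})\;=\;\wtl{\Un}^{\sca}\big(\Map_{(\CC_{\rN^+})_{\Tw}}\big)\;\cong\;\wtl{\Un}^{\sca}\big(\rN^+\F\big), $$
and Proposition~\ref{p:grothendieck}, applied to $\DD$ and $\F$, supplies a natural bicategorical equivalence of scaled coCartesian fibrations over $\rN_2(\DD)=\rN^{\sca}(\DD_{\rN^+})$ (using Remark~\ref{r:nerve})
$$ \Theta_{\DD}(\F)\colon\rN_2\!\left(\int_{\CC_{\Tw}^{\op}\times\CC_{\Tw}}\Map_{\CC_{\Tw}}(-,-)\right)\;\lrar\;\wtl{\Un}^{\sca}\big(\rN^+\F\big)\;\cong\;\Tw_2(\CC_{\rN^+}). $$
This is the asserted equivalence of $\infty$-bicategories, and naturality in $\CC$ follows from the naturality of $\Theta$ together with that of the isomorphism $\Tw^+\circ\rN^+\cong\rN^+\circ\Tw$.

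I do not expect a serious obstacle here: no homotopy-theoretic input beyond Proposition~\ref{p:grothendieck} is required, and the work is essentially bookkeeping. The two points that need care are checking that the isomorphism $\Tw^+\circ\rN^+\cong\rN^+\circ\Tw$ respects the enriched composition maps (so that it genuinely upgrades to an isomorphism of marked-simplicial categories, and hence of the induced mapping-space functors), and keeping track of which copy of $\DD_{\rN^+}$ each unstraightening is formed over.
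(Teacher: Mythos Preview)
Your proposal is correct and follows exactly the approach the paper intends: the paper states this proposition immediately after the sentence ``When $\CC$ is a (strict) 2-category, Proposition~\ref{p:grothendieck} shows that its twisted 2-cell bicategory is a strict 2-category as well,'' and gives no further proof. You have simply spelled out the bookkeeping the paper leaves implicit --- namely the natural isomorphism $\Tw^+\circ\rN^+\cong\rN^+\circ\Tw$ and the resulting identification $(\CC_{\rN^+})_{\Tw}\cong(\CC_{\Tw})_{\rN^+}$ --- before invoking Proposition~\ref{p:grothendieck}.
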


\subsection{Examples}\label{s:examples}

Let $(A, \cdot)$ be an abelian monoid (in sets) and let $\BB^2A$ be the strict 2-category with a single object, a single 1-morphism and $A$ as 2-morphisms. Then the strict 2-category $(\BB^2A)_{\Tw}$ has a single object whose endomorphism category is the category 
$\Tw(\BB A)=A\backslash A /A$ whose objects are elements $a\in A$ and whose morphisms are given by $b_\pm = (b_-, b_+): a\lrar b_-ab_+$ for $b_-,b_+ \in A$. The composition is given by $b_{\pm} \circ b_{\pm}' = (bb')_{\pm}=(b_-b'_-,b_+b_+')$ and 
the multiplication in $A$ makes this a monoidal category. Using Proposition~\ref{p:2-cat} we may identify the twisted $2$-cell $\infty$-bicategory $\Tw_2(\BB^2A)$ as the strict 2-category with
\begin{enumerate}
 \item[(0)] objects $a\in A$.
 \item[(1)] morphisms $(b, c, d_\pm): a\lrar d_-(bac)d_+$, where $b,c \in A\backslash A/A$ 
and $d_\pm:bac \lrar d_-(bac)d_+$ is a morphism in $A\backslash A/A$. 
 \item[(2)] 2-morphisms $(e_\pm, f_\pm): (b, c, d_\pm)\lrar (e_-be_+, f_-cf_+, d'_\pm)$, where $e_\pm: b \lrar e_-be_+$ and $f_\pm: c \lrar f_-cf_+$ are morphisms in $A \backslash A / A$ such that
$$
 d_\pm = e_\pm d'_\pm f_\pm.
$$
We may suggestively depict a 2-morphism $(e_\pm, f_\pm): (b, c, d_\pm)\lrar (e_-be_+, f_-cf_+, d'_\pm) =: (b',c',d'_\pm)$ as a commuting diagram
$$\xymatrix{
\ar[r]^{d_+}\ar[d]_{e_+} & \ar[r]^{c} & \ar[d]^{f_-} \ar[r]^{d_-} & \ar[r]^{b} &\ar[d]^{e_+}\\
\ar[r]_{d'_+} & \ar[r]_{c'}\ar[u]^{f_+} & \ar[r]_{d'_-} & \ar[r]_{b'}\ar[u]_{e_-} &
}$$
\end{enumerate}
The twisted 2-cell $\infty$-category $\uline{\Tw}_2(\BB^2A)$ of $\BB^2 A$ is then the $\infty$-category freely generated by the above $2$-category $\Tw_2(\BB^2 A)$, i.e., its objects are the elements $a \in A$ and 
$$ \Map_{\uline{\Tw}_2(\BB^2 A)}(a,a') \simeq |\Map_{\Tw_2(\BB^2 A)}(a,a')| $$ 
is the classifying space of the mapping category from $a$ to $a'$ described above (see Remark~\ref{r:freeoo1}). To obtain a somewhat simpler description of $\Tw_2(\BB^2 A)$, let us consider the following construction:

\begin{cons}\label{con:D}
Let $\E$ be the category whose objects are pairs $(b,x) \in A^2$ and morphisms $(b, x)\lrar (b', x')$ are tuples $e_\pm\in A^2$ such that $b'=e_-be_+$ and $x=e_-x'e_+$. The product in $A$ endows $\E$ with the structure of a monoidal category. Let $\BB\E$ be the $2$-category with one object whose endomorphism category is $\E$ and consider the projection
$$ \pi: \DD_{A} := \int_{\BB\E}\F_A \lrar \BB\E $$
where $\F_A:\BB\E \lrar \Set \subseteq \Cat$ is the $2$-functor which sends the unique object of $\BB\E$ to the underlying set of $A$ and the morphism $(b,x)$ to the map $m_{bx}: A \lrar A$ sending $a \mapsto bax$. Unwinding the definition of the Grothendieck construction (see \S\ref{s:straightening}) we see that the $2$-category $\DD_{A}$ admits the following description:
the objects of $\DD_{A}$ are the elements $a\in A$ and the mapping category $\Map_{\DD_{A}}(a, a')$ has
\begin{enumerate}
\item[(0)] objects given by tuples $(b, x)\in A^2$ such that $bax=a'$.
\item[(1)] morphisms $(b, x)\lrar (b', x')$ given by tuples $e_\pm\in A^2$ such that $b'=e_-be_+$ and $x=e_-x'e_+$.
\end{enumerate}
All compositions are given by multiplication in $A$. We will use a commuting diagram
$$\xymatrix{
\ar[r]^x\ar[d]_{e_+} & \ar[r]^b & \ar[d]^{e_+}\\
\ar[r]_{x'} & \ar[u]_{e_-} \ar[r]_{b'} &
}$$
to depict a morphism $e_\pm:(b, x)\lrar (b', x')$ in $\Map_{\DD_{A}}(a, a')$.
\end{cons}
Let $\pi: \Tw_2(\BB^2 A)\lrar \DD_{A}$ be the $2$-functor which is the identity on objects and is given on mapping categories by the functors 
$$\xymatrix{
\pi_{a,a'}: \Map_{\Tw_2(\BB^2A)}(a, a')\ar[r] & \Map_{\DD_{A}}(a, a'); \hspace{4pt} (b, c, d_\pm)\ar@{|->}[r] & (b, d_-cd_+)
}$$
whose value on an arrow $(e_\pm, f_\pm): (b, c, d_\pm)\lrar (b', c', d'_\pm)$ is $e_\pm: (b, d_-cd_+)\lrar (b', d'_-c'd'_+)$.  We can depict the behavior on morphisms diagrammatically as
$$\xymatrix{
\ar[r]^{d_+}\ar[d]_{e_+} & \ar[r]^{c} & \ar[d]^{f_-} \ar[r]^{d_-} & \ar[r]^{b} &\ar[d]^{e_+}="s" & & \ar[r]^{d_-cd_+}\ar[d]_{e_+}="t" & \ar[r]^b & \ar[d]^{e_+}\\
\ar[r]_{d'_+} & \ar[r]_{c'}\ar[u]^{f_+} & \ar[r]_{d'_-} & \ar[r]_{b'}\ar[u]_{e_-} & & & \ar[r]_{d'_-c'd'_+} & \ar[u]_{e_-} \ar[r]_{b'} & \ar@{|->}"s"+<10pt, 0pt>;"t"-<10pt, 0pt>^{\pi_{a, a'}}
}$$
We claim that each $\pi_{a, a'}$ is cofinal. Indeed, observe that the functor $\pi_{a, a'}$ is a Cartesian fibration: given a tuple $(b', c', d'_\pm)$ and a map $e_\pm: (b, x)\lrar (b', d'_-c'd'_+)$, a Cartesian lift is given by the following picture:
$$\xymatrix{
\ar[r]^{d'_+e_+}\ar[d]_{e_+} & \ar[r]^{c'} & \ar[d]^{1} \ar[r]^{e_-d'_-} & \ar[r]^{b} &\ar[d]^{e_+}="s" & & \ar[r]^{x}\ar[d]_{e_+}="t" & \ar[r]^b & \ar[d]^{e_+}\\
\ar[r]_{d'_+} & \ar[r]_{c'}\ar[u]^{1} & \ar[r]_{d'_-} & \ar[r]_{b'}\ar[u]_{e_-} & & & \ar[r]_{d'_-c'd'_+} & \ar[u]_{e_-} \ar[r]_{b'} &.\ar@{|->}"s"+<10pt, 0pt>;"t"-<10pt, 0pt>^{\pi_{a, a'}}
}$$
It therefore suffices to show that the fiber of $\pi_{a, a'}$ over each $(b, x)\in \Map_{\DD_{A}}(a, a')$ has a weakly contractible classifying space. Unraveling the definitions, the fiber over $(b, x)$ is the category with 
\begin{enumerate}
\item[(0)] objects given by tuples $(c, d_\pm)\in A^{\times 3}$ such that $d_-cd_+=x$
\item[(1)] morphisms given by $f_\pm: (c, d_\pm)\lrar (c', d'_\pm)$ such that $d_+=f_+d'_+, c'= f_-cf_+$ and $d_-=d'_-f_-$.
\end{enumerate}
This category has a terminal object, given by $(c, d_-, d_+) = (x, 1, 1)$. We conclude that the fibers of $\pi_{a, a'}$ are weakly contractible, so that $\pi_{a, a'}$ in indeed cofinal.

We may now conclude that the twisted arrow category $\uline{\Tw}_2(\BB^2A)$ is equivalent to the $\infty$-category freely generated from the $2$-category $\DD_{A}$, i.e., the $\infty$-category whose objects are the elements $a \in A$ and whose mapping spaces are the classifying spaces $|\Map_{\DD_{A}}(a,a')|$ of the mapping categories of $\DD_{A}$. We note that the functor $F_A: \BB\E \lrar \Set$ used to construct $\DD_{A}$ clearly factors through the $\infty$-category $|\BB\E|_1 = \BB
|\E|$ freely generated from $\BB\E$, so that the twisted $2$-cell category admits a left fibration
$$ \uline{\Tw}_2(\BB^2A) \simeq |\DD_{A}|_1 \x{|\pi|_1}{\lrar} \BB|\E| $$
which is classified by the induced functor $\ovl{\F}_A:\BB|\E|\lrar \Set$.

\begin{rem}\label{r:env}
The monoid in spaces $|\E|$ and the functor $\ovl{\F}_A$ both admit conceptual descriptions. Indeed, the nerve of the category $\E$ is naturally isomorphic to the two-sided bar construction $\textrm{Bar}_{A^{\op}\times A}(A,A)$ which computes the Hochschild homology space $\int_{S^1}A = A \otimes_{A^{op} \times A} A$ of $A$ (see also~\cite[\S 5.5.3]{Lur14}). Since $A$ is commutative, we can consider it as an $\EE_2$-monoid in spaces. In this case, $\int_{S^1}A$ inherits a monoid structure and by~\cite[Theorem 3.16]{Fr13} we may identify $|\E| \simeq \int_{S^1}A$ with the \textbf{enveloping monoid} $\Env_{\EE_2}(A)$ of $A$. From this point of view the functor $\ovl{\F}_A: \BB|\E| = \BB\Env_{\EE_2}(A) \lrar \Set$ admits a very simple description: it is the functor which encodes the canonical action of $\Env_{\EE_2}(A)$ on $A$.
\end{rem}
 
\begin{example}
Suppose that $A$ is an abelian group. Then for every $a,a'\in A$, an element in $\Map_{\DD_{A}}(a, a')$ is determined uniquely by an (arbitrary) element $b\in A$. It follows that $\Map_{\DD_{A}}(a,a') = A \backslash A / A \simeq \BB A$ for every $a,a'\in A$ and hence $\Tw_2(\BB^2 A) \simeq \uline{\Tw}_2(\BB^2 A) \simeq \BB^2 A$ (see also Example~\ref{e:1-cat}).
\end{example}

\begin{example}\label{e:natural}
Consider the case where $(A, \cdot) = (\NN, +)$. We claim that the twisted 2-cell category of $\BB^2\NN$ can be identified with the $\infty$-category whose objects are elements $n\in\NN$ and whose mapping spaces are
$$
\Map_{\uline{\Tw}_2(\BB^2\NN)}(m, n) = \left\{\begin{array}{cc} 
                                      \emptyset & m>n\\
                                      \ast & m=n\\
                                      S^1=B\ZZ & m<n
                                      \end{array}\right.
$$
where all compositions arise from the multiplication of $S^1$. To see this, let $\DD_{\NN}$ be as the $2$-category constructed above for the monoid $A = \NN$, so that we can identify $\uline{\Tw}_2(\BB^2\NN)$ with the $\infty$-category obtained by replacing the mapping categories of $\DD_{\NN}$ by their classifying spaces. Now the mapping category $\Map_{\DD_{\NN}}(m, n)$ has
\begin{enumerate}
\item[(0)] objects $b\in\NN$ with $0\leq b\leq n-m$ (encoding the pair $(b, n-m-b)$ in Construction \ref{con:D}).
\item[(1)] morphism $b\lrar b'$ given by $e\in\NN$ with $0\leq e\leq b'-b$ (encoding the pair $e_{\pm}=(e,b'-b-e)$ in Construction \ref{con:D}), with composition given by addition.
\end{enumerate}
It is then clear that $\Map_{\DD_{\NN}}(m,n)$ is empty when $m > n$ and a point when $m=n$. Now consider the functor
$$\xymatrix{
\F :\Map_{\DD}(m, n)\ar[r] & \ZZ-\Torsors; \hspace{4pt} \Big(b\stackrel{e}{\lrar} b'\Big) \ar@{|->}[r] & \Big(\ZZ\stackrel{+e}{\lrar} \ZZ\Big).
}$$
Then $\F$ induces a map on classifying spaces $|\F|:|\Map_{\DD}(m, n)| \lrar |\ZZ-\Torsors| \simeq S^1$. We claim that $|\F|$ is a weak equivalence as soon as $m < n$. To see this, consider the corresponding principal $\ZZ$-bundle
$$\xymatrix{
\C:=\int_{\Map_{\DD}(m, n)}\F\ar[r] & \Map_{\DD}(m, n)
}$$
To show that $|\F|$ is a weak equivalence it will suffice to show that $|\C|$ is weakly contractible. Unraveling the definitions, one finds that $\C$ is the poset with
\begin{enumerate}
 \item[(0)] objects $(b, z)$ with $0\leq b\leq n-m$ and $z\in \ZZ$.
 \item[(1)] $(b, z)\leq (b', z')$ if and only if $0\leq (z'-z)\leq (b'-b)$.
\end{enumerate}
The projection $\C \lrar \Map_{\DD}(m, n)$ sends $(b, z)\leq (b', z')$ to the arrow $z'-z: b\lrar b'$. The functor
$$\xymatrix{
\C\ar@{^{(}->}[r] & (\ZZ, \leq)^{\times 2}; \hspace{4pt} (b, z)\ar@{|->}[r] & (b-z, z)
}$$
identifies $\C$ with the subposet of $\ZZ\times \ZZ$ of tuples $(p, q)$ with $0\leq p+q\leq n-m$.

Let $\C'$ be the subposet of tuples $(p, q)$ with $0\leq p+q\leq 1$, which is just an infinite zig-zag of spans
$$\xymatrix@R=-0.2pc{
\dots & & (0, 1) & & (1, 0) &  & \dots \\
& (-1, 1)\ar[ru]\ar[lu] & & (0, 0)\ar[lu]\ar[ru] & & (1, -1)\ar[lu]\ar[ru] & 
}$$
In particular, $\C'$ is weakly contractible. On the other hand, the inclusion $\C'\subseteq \C$ is coinitial: indeed, for every $(p, q)$, the comma category $\C'/(p, q)$ is a subposet of $\C'$, given by a finite composition of zig-zags
$$\xymatrix@R=-0.2pc{
& (1-q, q) & & \dots &  \\
(-q, q)\ar[ru] & & (1-q, q-1)\ar[lu]\ar[ru] & & (p, -p)\ar[lu]
}$$
which are weakly contractible posets. We may then conclude that $\C$ is weakly contractible and hence that $|\F|:|\Map_{\DD}(m,n)| \lrar S^1$ is a weak equivalence, as desired.
\end{example}

\begin{example}
Combining Example~\ref{e:natural} with Remark~\ref{r:product} we get that
the twisted 2-cell category of $\BB^2\NN^k$ can be identified with the $\infty$-category whose objects are elements $(n_1,...,n_k)\in\NN^k$ and whose mapping spaces are
$$
\Map_{\uline{\Tw}_2(\BB^2\NN)}((m_1,...,m_k), (n_1,...,n_k)) = 
\left\{\begin{array}{cc} 
                    \emptyset & \exists i | m_i>n_i \\
                    (S^1)^{\{i=1,...,k | m_i < n_i\}} & \forall i, m_i \leq n_i \\
\end{array}\right.
$$
\end{example}

\section{Quillen cohomology of $(\infty,2)$-categories}\label{s:main}
In this section we will prove the main theorem of this paper: given an $(\infty,2)$-category $\CC$ (see~\S\ref{s:recall}), we identify the $\infty$-category $\T_{\CC}\Cat_{(\infty,2)}$ of parameterized spectrum objects over $\CC$ 
with the $\infty$-category of functors $\uline{\Tw}_2(\CC) \lrar \Sp$ from the twisted $2$-cell $\infty$-category of $\CC$ to spectra. 

\begin{thm}\label{t:main}
Let $\CC$ be an $(\infty,2)$-category. Then there is a natural equivalence of $\infty$-categories
$$ \T_{\CC}(\Cat_{(\infty,2)}) \x{\simeq}{\lrar} \Fun(\uline{\Tw}_2(\CC),\Sp) $$
from the tangent $\infty$-category to $\Cat_{(\infty,2)}$ at $\CC$ to the $\infty$-category of functors from $\uline{\Tw}_2(\CC)$ to spectra.
\end{thm}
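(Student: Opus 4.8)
The plan is to reduce the theorem to the $(\infty,1)$-categorical case proved in~\cite{part2} by exploiting the presentation of $\Cat_{(\infty,2)}$ as categories enriched in $\Cat_\infty$ --- concretely, as the model category $\Cat^+_\Del$ of marked-simplicial categories --- together with the tangent bundle formalism of~\cite{part0}. Fix a fibrant marked-simplicial category $\CC$ with set of objects $O$. \textbf{Step 1: eliminate the object set.} The assignment $\CC\mapsto\Ob(\CC)$ exhibits $\Cat^+_\Del$ as lying over the (essentially discrete) category $\Set$, in a manner compatible with the tangent bundle construction. Since the tangent categories of $\Set$ are trivial, base change identifies $\T_\CC\Cat_{(\infty,2)}$ with the tangent category at $\CC$ of the $\infty$-category of marked-simplicial categories with fixed object set $O$, i.e.\ of associative monoids in the monoidal category of $O\times O$-indexed graphs of marked simplicial sets. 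This is the formal counterpart of the fact that a square-zero extension of an $(\infty,2)$-category alters only its mapping objects and not its space of objects.

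\textbf{Step 2: deformations of the mapping objects, via~\cite{part2}.} A first-order square-zero extension of such a monoid $\CC$ amounts to a deformation of each mapping $\infty$-category $\CC(x,y)$ --- an object of $\T_{\CC(x,y)}\Cat_\infty$ --- together with coherent ``Leibniz'' compatibilities with the composition maps $\CC(x,y)\times\CC(y,z)\lrar\CC(x,z)$. By the main theorem of~\cite{part2} we have $\T_{\C}\Cat_\infty\simeq\Fun(\Tw(\C),\Sp)$ for any $\infty$-category $\C$, so such a deformation is equivalently a spectrum attached to each $2$-cell of $\CC$, functorial in the twisted-arrow direction of each $\CC(x,y)$, and glued over composition of $1$-morphisms. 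These two layers of functoriality are exactly what the definition $\Tw_2(\CC)=\wtl{\Un}^{\sca}(\Map_{\Tw})$ in~\S\ref{s:twisted-2-cells} encodes: the replacement of the mapping objects $\CC(x,y)$ by $\Tw^+(\CC(x,y))$ in the passage to $\CC_\Tw$ records the twisted-arrow structure at the level of $2$-cells --- forced upon stabilization, since $\Sp$-valued deformations of an $\infty$-category are functors on its twisted arrow category --- while the outer scaled unstraightening over $\CC_\Tw^{\op}\times\CC_\Tw$ records the twisted-arrow structure at the level of $1$-morphisms.

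\textbf{Step 3: assemble the equivalence.} To turn the heuristic of Step 2 into a proof one realizes $\T_\CC\Cat_{(\infty,2)}$ as a left Bousfield localization of the category of $(\NN\times\NN)_\ast$-diagrams over $\CC$ (as in~\cite{part0}), and observes that each delooping making up a parameterized spectrum over $\CC$ is, in the relevant sense, ``$\infty$-groupoid-valued'', hence --- after applying scaled straightening --- corresponds to a marked left fibration over the scaled unstraightening of $\Map_{\Tw}$. This is the point at which the marked left fibrations of~\S\ref{s:marked-left} are needed: they are the technically convenient avatar, inside the scaled world, of $\cS$-valued functors on $\uline{\Tw}_2(\CC)$, and Lemmas~\ref{l:all-the-same},~\ref{l:same} and~\ref{l:lcc-left} guarantee that the comparisons involved are homotopically sound. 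Running through the $(\NN\times\NN)$-tower then converts ``compatible deformations of the mapping objects'' into ``$\Sp$-valued functors on $\uline{\Tw}_2(\CC)$'', where the passage from the $\infty$-bicategory $\Tw_2(\CC)$ to the $\infty$-category $\uline{\Tw}_2(\CC)=|\Tw_2(\CC)|_1$ is automatic because a $\Sp$-valued functor only depends on the classifying spaces of the mapping $\infty$-categories of $\Tw_2(\CC)$ (Remark~\ref{r:freeoo1}). Naturality in $\CC$ follows from the functoriality of $\Ob$, $\Tw^+$, scaled unstraightening and the tangent bundle; and transporting the constant functor at $\SS[-1]$ --- the cotangent complex of an $\infty$-category by~\cite{part2} --- through the extra desuspension contributed by the monoid/bar structure at the level of $1$-morphisms pins down $L_\CC$ as the constant functor at $\SS[-2]$, which is the additional assertion of Theorem~\ref{t:main-intro}.

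\textbf{The main obstacle} is Step 3: one must carry out the above simultaneously in the three models in play --- $\Cat^+_\Del$, $\Set^{\sca}_\Del$, and the $(\NN\times\NN)$-diagram model for spectra --- and show not merely that the two sides agree objectwise, but that the equivalence is compatible with the stabilization functors; in particular, that the model-categorical avatar of ``coherent deformations of the mapping objects of $\CC$'' is equivalent to the category of marked left fibrations over $\wtl{\Un}^{\sca}(\Map_{\Tw})$, and that stabilization commutes with this passage. The rigidity of the object-set direction underlying Step 1 also requires a genuine, if routine, verification.
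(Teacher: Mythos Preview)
Your proposal follows essentially the same approach as the paper: reduce to enriched lifts of the mapping-space functor against $\T\Set^+_\Del\to\Set^+_\Del$ via \cite[Corollary~3.1.16]{part2} (your Step~1 is exactly the paper's step~(0)), apply the $(\infty,1)$-result of \cite{part2} fiberwise to pass to $\CC_{\Tw}$ (your Step~2, the paper's steps~(1)--(2)), and assemble via scaled unstraightening and the marked covariant model structure (your Step~3, the paper's step~(3)). You correctly flag the fiberwise-to-global passage as the main obstacle; the paper isolates this as the abstract lift-comparison Proposition~\ref{p:not-there-yet-abstract} and completes the assembly with Proposition~\ref{p:bousfield}.
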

\begin{example}
Let $A$ be a discrete commutative monoid considered as an $\EE_2$-monoid in spaces and let $\Env_{\EE_2}(A)$ be its associated enveloping monoid (which is usually no longer discrete). As explained in Remark~\ref{r:env}, the twisted $2$-cell category $\uline{\Tw}_2(\BB^2A)$ is equivalent to the unstraightening of the functor $\BB \Env_{\EE_2}(A) \lrar \Set$ which encodes the canonical action of $\Env_{\EE_2}(A)$ on itself, or, alternatively, the canonical $\EE_2$-action of $A$ on itself. We may hence identify functors $\uline{\Tw}_2(\BB^2A)\lrar \Sp$ with $A$-indexed families $\{X_a\}_{a \in A}$ of spectra which are \textbf{$\Env_{\EE_2}(A)$-equivariant} with respect to the action of $\Env_{\EE_2}(A)$ on $A$ (or equivalently, which are \textbf{$A$-equivariant} with respect to the \textbf{$\EE_2$-action} of $A$ on itself).
\end{example}
Theorem~\ref{t:main} will be deduced from a more concrete statement, involving the model categorical presentations of abstract parameterized spectra discussed in \S\ref{s:recall}. We will present the $\infty$-category $\Cat_{(\infty,2)}$ by the model category $\Cat^+_\Del$ of marked-simplicial categories and the $\infty$-category $\Fun(\uline{\Tw}_2(\CC),\cS)$ in terms of the covariant model structure (see~\cite[\S 2]{Lur09}). To simplify the expressions appearing throughout this section, let us introduce the following notation:
\begin{notn}\label{n:diagrams}
Let $X$ be a marked simplicial set. We will denote by
$$
\Set_\Del^{X} := \big(\Set_\Del^+\big)_{/X}^{\cov} \qquad \text{and} \qquad \Sp^{X} := \Sp\big((\Set_\Del^X)_*\big)=\Sp\big((\Set_\Del^+)_{X//X}^{\cov}\big)
$$ 
the marked covariant model structure on marked simplicial sets (Definition \ref{d:lcc-left}) and the model category of spectrum objects therein, respectively. When $X$ is an unmarked simplicial set, we will use $\Set_\Del^X$ and $\Sp^X$ to denote $(\Set_\Del)_{/X}^{\cov}$ and the model category of spectrum objects therein.
\end{notn}
The above notation is meant to be suggestive of the fact that $\Sp^X$ is a model categorical presentation of the $\infty$-category of functors $X\lrar \Sp$, when $X$ is a simplicial set or a fibrant marked simplicial set (see also Warning \ref{w:markedleft}).
\begin{rem}\label{r:forgetmarking}
Let $X$ be a marked simplicial set and let $\uline{X}$ be the underlying simplicial set. Lemma \ref{l:lcc-left} provides Quillen equivalences $\Set_\Del^X\simeq \Set_\Del^{\uline{X}}$ and $\Sp^X\simeq \Sp^{\uline{X}}$.
\end{rem}
We will prove the following model-categorical reformulation of Theorem~\ref{t:main}:
\begin{thm}\label{t:main2}
For every fibrant marked-simplicial category $\CC$ there is a Quillen equivalence
$$ 
\xymatrix{
\F_{\CC}:\Sp^{\uline{\Tw}_2(\CC)}\ar@<1ex>[r]  &
 \T_\CC\Cat^+_\Del\ar@<1ex>[l]_-{\upvdash} : U_{\CC}\\
}
$$
which in natural in $\CC$ in the following sense: for every map $f: \CC \lrar \DD$ of fibrant marked-simplicial categories with induced map $\vphi: \uline{\Tw}_2(\CC) \lrar \uline{\Tw}_2(\DD)$ on twisted arrow $\infty$-categories there is a commuting square of right Quillen functors
\begin{equation}\label{e:square-tangent-2}
\vcenter{\xymatrix{
\T_\DD\Cat^+_\Del\ar[d]_-{f^*}\ar[r]^-{U_{\DD}} & \Sp^{\uline{\Tw}_2(\DD)} \ar[d]^-{\vphi^*} \\
\T_\CC\Cat^+_\Del\ar[r]_-{U_{\CC}} & \Sp^{\uline{\Tw}_2(\CC)}.
}}
\end{equation}
Here the functor $f^*$ takes the pullback of a parameterized spectrum object over $\DD$ along $f$ and $\vphi^*$ takes the pullback of a spectrum of left fibrations $S_{\bullet\bullet}\lrar \uline{\Tw}_2(\DD)$ along $\vphi$.
\end{thm}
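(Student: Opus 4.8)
The plan is to realise the equivalence of Theorem~\ref{t:main2} as a composite of a short chain of Quillen equivalences, each of which is natural in $\CC$, so that the naturality square \eqref{e:square-tangent-2} falls out at the end by functoriality of every step. \emph{Reduction to a fixed object set.} First I would reduce to marked-simplicial categories with a fixed object set. A pointed object of $(\Cat^+_\Del)_{/\CC}$ — and hence any parameterized spectrum object over $\CC$ — necessarily has the same set of objects as $\CC$, so stabilisation is computed inside the full subcategory $\Cat^+_\Del(S)\subseteq\Cat^+_\Del$ of marked-simplicial categories with object set $S:=\Ob(\CC)$; concretely, the inclusion should induce a Quillen equivalence $\T_\CC\Cat^+_\Del(S)\x{\simeq}{\lrar}\T_\CC\Cat^+_\Del$. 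This is the enriched-categorical counterpart of the reduction used for algebras over an operad in~\cite{part1}, carried out with the relative-model-category and tangent-bundle formalism of~\cite{part0}. A general map $f\colon\CC\to\DD$ is then handled by factoring the comparison through $\Cat^+_\Del(\Ob\DD)$. So it remains to identify $\T_\CC\Cat^+_\Del(S)$ with $\Sp^{\uline{\Tw}_2(\CC)}$, naturally in object-set-preserving maps.

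\emph{The tangent category of an enriched category.} Next I would use that $\Cat^+_\Del(S)$ is the category of algebras over a ($\Sigma$-cofibrant) $(S\times S)$-coloured operad in $\Set^+_\Del$ whose algebras are marked-simplicial categories with object set $S$. By the general description of tangent categories of operad-algebras — with loops and spectrum objects formed ``mapping-object-wise'', as in~\cite{part1} — a parameterized spectrum object over $\CC$ amounts to a family of parameterized spectrum objects over the mapping objects $\CC(x,y)\in\Set^+_\Del$, coherently compatible along the composition maps $\CC(x,y)\times\CC(y,z)\to\CC(x,z)$. Since the marked twisted-arrow functor $\Tw^+$ preserves products, this composition datum is precisely the mapping-space functor $\Map_{\CC_{\Tw}}\colon\CC_{\Tw}^{\op}\times\CC_{\Tw}\to\Set^+_\Del$ of the enriched category $\CC_{\Tw}$. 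I would thus obtain a Quillen equivalence
$$ \T_\CC\Cat^+_\Del(S) \;\x{\simeq}{\lrar}\; \big(\,\text{$\CC_{\Tw}$-bimodules valued in the tangent bundle } \T\Set^+_\Del\to\Set^+_\Del\,\big), $$
the right-hand side being the model category of marked-covariant sections of $\T\Set^+_\Del$ along $\Map_{\CC_{\Tw}}$.

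\emph{Feeding in the $(\infty,1)$-case and unstraightening.} Now I would invoke the main theorem of~\cite{part2}: for a fibrant marked simplicial set $K$ there is a Quillen equivalence $\T_K\Set^+_\Del\simeq\Sp^{\Tw^+(K)}$, natural in $K$, identifying the tangent bundle $\T\Set^+_\Del$ with the twisted-arrow-indexed family $K\mapsto\Sp^{\Tw^+(K)}$. Plugging this into the bimodule description, and using that $\CC_{\Tw}(x,y)=\Tw^+(\CC(x,y))$ with composition induced from that of $\CC$, the marked-covariant sections of $\T\Set^+_\Del$ along $\Map_{\CC_{\Tw}}$ become $\Sp$-valued covariant local systems over the underlying simplicial set $\uline{\Tw}_2(\CC)$ of the scaled unstraightening $\Tw_2(\CC)=\wtl{\Un}^{\sca}(\Map_{\CC_{\Tw}})$ of Definition~\ref{d:twisted-2}. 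Here I would use the compatibility of scaled unstraightening with base change and products (Remark~\ref{r:product}), the compatibility of unstraightening with stabilisation, and the fact (Lemma~\ref{l:lcc-left}) that $\Sp^X$ only depends on the underlying simplicial set of $X$. Composing the three steps gives the adjunction $\F_\CC\dashv U_\CC$, and naturality of \eqref{e:square-tangent-2} follows because each ingredient is functorial: an object-set-preserving $f\colon\CC\to\DD$ induces a map of coloured operads and an enriched functor $\CC_{\Tw}\to\DD_{\Tw}$, hence $\Map_{\CC_{\Tw}}\Rightarrow f^*\Map_{\DD_{\Tw}}$ and the map $\vphi\colon\uline{\Tw}_2(\CC)\to\uline{\Tw}_2(\DD)$, with respect to which every comparison above is natural.

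\emph{Expected main obstacle.} The principal difficulty will be this third step: matching, at the point-set level and compatibly with stabilisation, the ``sections of $\T\Set^+_\Del$'' description coming from the operadic analysis with the ``$\Sp$-valued local systems over $\uline{\Tw}_2(\CC)$'' description coming from scaled unstraightening. This is delicate because scaled unstraightening is only a Quillen \emph{equivalence}, so one must work derivedly and with fibrant replacements; because $\uline{\Tw}_2(\CC)$ is not Joyal fibrant, so keeping the covariant model structures homotopically sound over it is exactly where the marked left fibrations of~\S\ref{s:marked-left} are needed; and because stabilisation must be commuted past both scaled unstraightening and the passage to the free $\infty$-category $|\Tw_2(\CC)|_1\simeq\uline{\Tw}_2(\CC)$. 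By contrast the object-set reduction is routine given~\cite{part0} and~\cite{part1}, and the bimodule description of the tangent category of enriched categories is an instance of the operadic computation of~\cite{part1}. The complementary identification of the cotangent complex $L_\CC$ with the constant functor $\SS[-2]$, which is not part of Theorem~\ref{t:main2} itself, would then follow by tracking $\id_\CC$ through this chain, the extra desuspension beyond the $\SS[-1]$ of~\cite{part2} for the mapping objects coming from the enriched-category layer.
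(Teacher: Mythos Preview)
Your chain of reductions is exactly the paper's: enriched lifts of $\Map_\CC$ against $\T\Set^+_\Del\to\Set^+_\Del$ (via \cite[Corollary 3.1.16]{part2}), then the fiberwise equivalence $\T_K\Set^+_\Del\simeq\Sp^{\Tw^+(K)}$ assembled into a global right Quillen functor $\R^{\Sp}$, then scaled unstraightening to reach $\Sp^{\uline{\Tw}_2(\CC)}$. One slip worth flagging: in your second paragraph the lifts are along $\Map_\CC$, not $\Map_{\CC_{\Tw}}$ --- the passage to $\CC_{\Tw}$ happens only \emph{after} postcomposing with $\R^{\Sp}$, which covers $\Tw^+$ on the base and is \emph{not} $\Set^+_\Del$-enriched, so showing that this postcomposition induces a Quillen equivalence of lift categories (the paper's Proposition~\ref{p:not-there-yet-abstract}) is exactly the delicate point you anticipate.
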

Theorem~\ref{t:main} arises from a two-stage reduction: we first identify the tangent $\infty$-category $\T_{\CC}\Cat_{(\infty, 2)}$ in terms of the tangent $\infty$-categories to $\Cat_{(\infty, 1)}$, and then identify these further in terms of the tangent $\infty$-categories to $\Cat_{(\infty, 0)}\simeq \cS$. More precisely, given a fibrant marked-simplicial category $\CC$, we will produce the Quillen equivalence of Theorem \ref{t:main2} in several steps, as follows:
\begin{enumerate}
\item[(0)] By \cite[Corollary 3.1.16]{part2}, the tangent category $\T_{\CC}\Cat^+_\Del$ is Quillen equivalent to the model category of $\Set^+_\Del$-enriched lifts of the form
\begin{equation}\label{e:enriched lifts-1}
\vcenter{\xymatrix{&& \T\Set^+_\Del\ar[d] \\ \CC^{\op}\times \CC \ar@{-->}[urr]\ar[rr]_{\Map_{\CC}(-,-)} && \Set^+_\Del}}
\end{equation}
where $\T\Set^+_\Del \lrar \Set^+_\Del$ is the tangent bundle fibration of $\Set^+_\Del$.

\item[(1)] For each fibrant simplicial set $X$, the tangent category $\T_{X}\Set^+_\Del$ is Quillen equivalent to $\Sp^{\Tw^+(X)}$ by the results of \cite[\S 3.3]{part2} and Lemma \ref{l:lcc-left}. In \S\ref{s:marked-tangent}, we will describe a direct right Quillen functor $\R_X^{\Sp}: \T_{X}\Set^+_\Del\lrar \Sp^{\Tw^+(X)}$ exhibiting this equivalence and we will show that these Quillen functors assemble into a global right Quillen functor $\R^{\Sp}:\T\Set^+_\Del \lrar \int_{X}\Sp^{X}$.
\item[(2)]
In \S\ref{s:lifts} we show that postcomposition with the functor $\R^{\Sp}$ induces a Quillen equivalence between the model category of lifts as in~\eqref{e:enriched lifts-1} and the model category of enriched lifts of the form
\begin{equation}\label{e:lift-tw-1}
\vcenter{\xymatrix{&& \int_{X\in\Set^+_\Del} \Sp^X\ar[d] \\ \CC_{\Tw}^{\op}\times \CC_{\Tw}^{} \ar@{-->}[urr]\ar[rr]_-{\Map_{\CC_{\Tw}}(-,-)} && \Set^+_\Del.}}
\end{equation}
\item[(3)]
Finally, in \S\ref{s:proof-main} we identify the model category of enriched lifts as in~\eqref{e:lift-tw-1} with the stabilization of a certain model structure on marked-simplicially enriched functors $\CC_{\Tw}^{\op} \times \CC_{\Tw}\lrar \Set^+_\Del$ over the mapping space functor $\Map_{\CC_{\Tw}}$. In turn, this model category is equivalent (already before stabilization) to the model category $\Set_\Del^{\uline{\Tw}_2(\CC)}$ (Proposition~\ref{p:bousfield}), from which the result follows.
\end{enumerate}

\subsection{The tangent bundle of marked simplicial sets}\label{s:marked-tangent}
Our goal in this section is to prove Proposition \ref{p:fiberwise equivalence}, identifying the tangent bundle of the category $\Set^+_\Del$ of marked simplicial sets endowed with the categorical model structure. 

\begin{cons}\label{c:diagrams}
Consider the (co)Cartesian fibrations
$$
\ev_1: \left(\Set^+_\Del\right)^{[1]}\lrar \Set^+_\Del \qquad\text{and}\qquad \ev_*: \left(\Set^+_\Del\right)^{(\NN\times\NN)_\ast}\lrar \Set^+_\Del
$$
which classify the functors $X\mapsto (\Set^+_\Del)_{/X}$ and $X\mapsto ((\Set^+_\Del)_{X//X})^{\NN\times\NN}$. By \cite[Lemma 3.11]{part0}, these functors have the structure of relative model categories, where the domain carries the Reedy model structure induced by the categorical model structure on $\Set^+_\Del$ (here $[1]$ has the Reedy structure with only decreasing maps). Let us consider the following two left Bousfield localizations of these Reedy model structures:
\begin{itemize}
 \item Let $\LFib$ be the localization of $\left(\Set^+_\Del\right)^{[1]}$ whose local objects are the marked left fibrations $Y\lrar X$, where $X$ is a fibrant marked simplicial set. By Remark~\ref{r:local-2}, this can be obtained by localizing with respect to the set of maps
 $$
 h_{1}\times L\coprod_{h_1\times K} h_0 \times K\lrar h_0\times L
 $$
 where $h_i=\Map(i, -)$ is the corepresentable functor and $K\lrar L$ is either $(\Lambda^n_0)^\flat\lrar (\Delta^n)^\flat$ or $(\Del^1)^\flat\lrar (\Del^1)^\sharp$.
 \item Let $\LFib_{\Sp}$ be the localization of $\left(\Set^+_\Del\right)^{(\NN\times\NN)_\ast}$ whose local objects are the parameterized $\Om$-spectrum objects $X_{\bullet\bullet}\lrar X_*$ over a fibrant object $X_*$, where each $X_{m,n}\lrar X_*$ is a marked left fibration. Explicitly, this can be obtained by first localizing to get the model category $\T\Set^+_\Del$ (see \S\ref{s:recall} and \cite[Theorem 3.10]{part0}), and then localizing further at the maps
 $$
 h_{*}\times K\coprod_{h_*\times K} h_{m, n} \times K\lrar h_{m,n}\times L
 $$
 where $K\lrar L$ is as above.
\end{itemize}
It follows from \cite[Proposition 3.12]{part0} that the (co)Cartesian fibrations 
$$
\LFib\lrar \Set_\Del^+\qquad\qquad \text{and} \qquad\qquad \LFib_{\Sp}\lrar \Set_\Del^+
$$
are both relative model categories. The fibers over a fibrant object $\C\in \Set_\Del^+$ are the model categories $\Set_\Del^{\C}$ and $\Sp^{\C}$ of Notation \ref{n:diagrams}.
\end{cons}

\begin{pro}\label{p:fiberwise equivalence}
There is a commuting square of right Quillen functors 
\begin{equation}\label{e:fiberwise}
\vcenter{\xymatrix{
\T\Set^+_\Del \ar[r]^-{\R^{\Sp}}\ar[d] & \LFib_{\Sp}\ar[d]\\ \Set^+_{\Del}\ar[r]_{\Tw^+} & \Set^+_{\Del} }}
\end{equation}
where the top functor induces a Quillen equivalence $\T_\C\Set^+_\Del\lrar \Sp^{\Tw^+(\C)}$ between the fibers, for each fibrant marked simplicial set $\C$.
\end{pro}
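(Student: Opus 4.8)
The plan is to construct $\R^{\Sp}$ fibrewise over $\Tw^+$ and then invoke the relative model category formalism of~\cite{part0} to upgrade this to the square of Quillen functors in the statement. Thus the real work is the construction, for each fibrant marked simplicial set $\C$, of a right Quillen functor
$$
\R_\C\colon (\Set^+_\Del)_{\C//\C}\lrar \big(\Set_\Del^{\Tw^+(\C)}\big)_*
$$
from pointed objects over $\C$ to pointed objects in the marked covariant model structure over $\Tw^+(\C)$, subject to two requirements. First, $\R_\C$ should be strictly natural in $\C$: for $f\colon\C\lrar\D$ there should be a canonical identification $\R_\C\circ f^*\cong \Tw^+(f)^*\circ\R_\D$. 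Second, after applying $\R_\C$ spotwise to $(\NN\times\NN)_*$-diagrams and stabilizing, the induced functor $\R^{\Sp}_\C\colon\T_\C\Set^+_\Del\lrar\Sp^{\Tw^+(\C)}$ should realize the Quillen equivalence supplied by~\cite[\S 3.3]{part2} and Lemma~\ref{l:lcc-left}.

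To define $\R_\C$, recall that $\Tw^+(\C)=F^*\C$ for $F\colon\Del\lrar\Del$, $[n]\mapsto[n]^{\op}\ast[n]$, so that an $n$-simplex $\tau$ of $\Tw^+(\C)$ is the same as a map $\Del^{[n]^{\op}\ast[n]}\lrar\C$, and that the inclusions $[n],[n]^{\op}\hookrightarrow[n]^{\op}\ast[n]$ give two faces $\Del^{[n]},\Del^{[n]^{\op}}\subseteq\Del^{[n]^{\op}\ast[n]}$, natural in $[n]$. Given a pointed object $(p\colon Y\lrar\C,\ s\colon\C\lrar Y)$, let $\R_\C(Y,s)$ be the marked simplicial set over $\Tw^+(\C)$ whose $n$-simplices over $\tau$ are the lifts of $\tau$ along $p$ whose restrictions to $\Del^{[n]}$ and $\Del^{[n]^{\op}}$ coincide with $s$, an edge being marked exactly when its image in $\Tw^+(\C)$ is marked. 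The section $s$ endows $\R_\C(Y,s)$ with a tautological section over $\Tw^+(\C)$, so $\R_\C$ does land in pointed objects, and a horn-filling argument — the twisted-arrow analogue of the (nontrivial) proof, recalled before Definition~\ref{d:twisted-2}, that $\Tw^+(\C)\lrar\C^{\op}\times\C$ is a marked left fibration, now run relative to the categorical fibration $p$ — shows that $\R_\C(Y,s)\lrar\Tw^+(\C)$ is a marked left fibration whenever $(Y,s)$ is fibrant. Since the formula is a weighted limit in $(Y,s)$, the functor $\R_\C$ preserves limits, hence admits a left adjoint $\L_\C$ by presentability; it is right Quillen once one checks that $\L_\C$ carries the generating cofibrations and generating marked covariant trivial cofibrations of $\big(\Set_\Del^{\Tw^+(\C)}\big)_*$ — the pointed left horn inclusions and the marking maps $(\Del^1)^\flat\lrar(\Del^1)^\sharp$ over marked edges — to cofibrations, respectively trivial cofibrations, which is a bounded explicit check. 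Strict naturality in $\C$ is immediate from the defining formula, which is manifestly compatible with base change.

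Applying $\R_\C$ spotwise to $(\NN\times\NN)_*$-diagrams yields a right Quillen functor for the Reedy structures of Construction~\ref{c:diagrams}; as $\R_\C$ preserves the terminal object, homotopy pullbacks and weak equivalences between fibrant objects, it sends parameterized $\Om$-spectra over $\C$ to $\Om$-spectra of marked left fibrations over $\Tw^+(\C)$, hence descends to a right Quillen functor $\R^{\Sp}_\C\colon\T_\C\Set^+_\Del\lrar\Sp^{\Tw^+(\C)}$ between the relevant left Bousfield localizations. Combining strict naturality in $\C$ with~\cite[Proposition 3.12]{part0}, the $\R^{\Sp}_\C$ glue to a right Quillen functor $\R^{\Sp}\colon\T\Set^+_\Del\lrar\LFib_{\Sp}$ lying over $\Tw^+$, and the square~\eqref{e:fiberwise} commutes by construction. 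It then remains to check that each $\R^{\Sp}_\C$ is a Quillen equivalence: both sides being stable, it suffices that $\mathbb{R}\R^{\Sp}_\C$ be conservative and $\mathbb{L}\L^{\Sp}_\C$ fully faithful, and since $\Sp^{\Tw^+(\C)}$ is generated under colimits and desuspensions by the suspension spectra $\LL\Sig^\infty_+$ of the representable left fibrations $\Tw^+(\C)_{x/}\lrar\Tw^+(\C)$, it is enough to compute $\mathbb{L}\L^{\Sp}_\C$ on these and to identify the outcome, via~\cite[\S 3.3]{part2}, with the corresponding relative cotangent complexes in $\T_\C\Set^+_\Del$; a compatible computation at $\C=\ast$, where $\R^{\Sp}_\ast$ reduces to a single loop and recovers the identification $L_\ast\simeq\SS[-1]$ of~\cite{part2}, fixes the normalization.

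The hard part will be this last identification: checking that the explicitly-defined $\R^{\Sp}_\C$ genuinely implements the abstract Quillen equivalence of~\cite[\S 3.3]{part2}, rather than some twist of it, which requires unwinding that equivalence through its construction and comparing the two derived left adjoints on a generating family. Running a close second is the combinatorial input that the gadget defining $\R_\C(Y,s)$ — lifts of $\tau$ restricting to $s$ on the two faces $\Del^{[n]},\Del^{[n]^{\op}}$ — really yields a marked left fibration over $\Tw^+(\C)$: here the interplay between the join structure of $[n]^{\op}\ast[n]$ and the lifting properties of the categorical fibration $p$ must be handled carefully, mirroring the nontrivial argument that $\Tw^+(-)$ itself produces marked left fibrations.
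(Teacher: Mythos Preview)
Your construction of $\R_\C$ is exactly the paper's: unwinding your description of the $n$-simplices shows that
\[
\R_\C(Y,s)\;\cong\;\Tw^+(Y)\times_{Y^{\op}\times Y}\C^{\op}\times\C,
\]
where the map $\C^{\op}\times\C\lrar Y^{\op}\times Y$ is induced by the section $s$. The paper establishes the right Quillen property of $\R^+_\C$ not by analysing the left adjoint on generating cofibrations, but by a short direct argument (Lemma~\ref{l:twistedisrightquillen} and Proposition~\ref{p:R_C is right Quillen}) that $\R^+_\C$ preserves trivial fibrations and fibrations between fibrant objects, and then invokes~\cite[Proposition 8.5.4]{Hir}. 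The global assembly is handled similarly (Proposition~\ref{p:global R}). Your route here is different in flavour but not in substance.

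The real divergence is in the fibrewise Quillen equivalence, and here your plan has a gap. You propose to verify the equivalence by computing $\LL\L^{\Sp}_\C$ on the representables $\Sig^{\infty}_+(\Tw^+(\C)_{x/})$ and then ``identify the outcome, via~\cite[\S 3.3]{part2}, with the corresponding relative cotangent complexes''. But this identification is precisely the content of the proposition: the equivalence of~\cite[\S 3.3]{part2} is constructed from a \emph{different} right Quillen functor (built from unstraightening the mapping space diagram of a simplicial category), and to invoke it you must first show that your $\R^{\Sp}_\C$ agrees with that functor up to natural equivalence. The computation at $\C=\ast$ does not ``fix the normalization'' for general $\C$, because the generators $\Tw^+(\C)_{x/}$ vary with $\C$ and there is no evident induction reducing the general case to the terminal one. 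You also do not address conservativity of $\RR\R^{\Sp}_\C$.

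The paper closes this gap by a direct comparison. It reduces to the unmarked case and to $\C=\rN(\A)$ for a fibrant simplicial category $\A$, and then uses Lurie's natural map $\beta_{\B}:\Tw(\rN(\B))\lrar\Un(\Map_\B)$ (an equivalence of left fibrations over $\rN(\B)^{\op}\times\rN(\B)$ for fibrant $\B$) to produce, for each retract $\A\lrar\B\lrar\A$, a natural weak equivalence
\[
\R_{\rN(\A)}(\rN(\B))\;\x{\simeq}{\lrar}\;\beta_{\A}^*\Un\big(\Map_{\B}|_{\A^{\op}\times\A}\big)
\]
over $\Tw(\rN(\A))$. This exhibits $\R_{\rN(\A)}\circ\rN$ as right Quillen homotopic to the composite $(\Cat_\Del)_{\A//\A}\lrar\Fun(\A^{\op}\times\A,\Set_\Del)_{/\Map_\A}\lrar\Set_\Del^{\Tw(\rN(\A))}$, which~\cite[\S 3.3]{part2} already shows becomes a Quillen equivalence upon stabilization. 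No computation of the left adjoint on generators is needed. The missing idea in your sketch is this use of $\beta_\B$ to bridge the explicit pullback formula for $\R^+_\C$ and the unstraightening-based functor of~\cite{part2}.
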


The remainder of this section is devoted to the proof of Proposition~\ref{p:fiberwise equivalence}. Let us start by proving that the bottom horizontal arrow of~\eqref{e:fiberwise} is a right Quillen functor.
\begin{pro}\label{p:twisted is right Quillen}
The functor 
$$\Tw^+:\Set^+_{\Del}\lrar \Set^+_{\Del}$$ 
is a right Quillen functor with respect to the categorical model structure.
\end{pro}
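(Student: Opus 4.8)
To show $\Tw^+:\Set^+_\Del\lrar\Set^+_\Del$ is right Quillen for the categorical model structure, I will exhibit a left adjoint and check that $\Tw^+$ preserves fibrations and trivial fibrations; equivalently, that the left adjoint is a left Quillen functor, i.e.\ preserves cofibrations and trivial cofibrations. The left adjoint is the functor $L$ obtained by left Kan extension along the cosimplicial object $[n]\mapsto (\Delta^n)^\flat$ combined with the formula $\Tw(\C)=F^*\C$ for $F:\Del\to\Del$, $[n]\mapsto[n]^{\op}\ast[n]$; concretely, $L$ sends $(\Del^n)^\flat$ to the marked simplicial set $F_!(\Del^n)=\Delta^n\ast_{?}$ realized via the two half-inclusions, with an appropriate marking, and is extended colimit-preservingly. (This is the marked analogue of the well-known fact that $\Tw:\Set_\Del\to\Set_\Del$ has a left adjoint; cf.\ \cite[\S 5.2.1]{Lur14}.)

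\textbf{Key steps.} First I would set up the adjunction $L\dashv\Tw^+$ precisely, using that $\Tw^+$ preserves limits (it is defined levelwise by $F^*$ on the underlying simplicial sets, and the marking is defined by a pullback-type condition, so it commutes with the relevant limits) and that $\Set^+_\Del$ is presentable, so the adjoint functor theorem applies; but it is cleaner to give $L$ explicitly on representables and check the hom-set bijection. Second, $L$ preserves cofibrations: cofibrations in $\Set^+_\Del$ are generated by $(\partial\Del^n)^\flat\to(\Del^n)^\flat$ and $(\Del^1)^\flat\to(\Del^1)^\sharp$, so it suffices to check $L$ sends these to monomorphisms of marked simplicial sets, which is a direct combinatorial inspection of the concatenation functor $F$ (monomorphisms are stable under $F^*$'s left adjoint essentially because $F$ is injective on objects and faithful). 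Third, and this is the crux, $L$ preserves trivial cofibrations, equivalently $\Tw^+$ preserves categorical fibrations between fibrant objects and sends fibrant objects to fibrant objects. I would verify: (a) if $\C$ is a quasicategory marked by its equivalences then $\Tw^+(\C)\to\C^{\op}\times\C$ is a marked left fibration (this is stated in the excerpt just before Definition~\ref{d:twisted-2}), hence in particular $\Tw^+(\C)$ is fibrant in $\Set^+_\Del$ (again by the remark following that passage); (b) if $p:\C\to\DD$ is a categorical fibration of fibrant marked simplicial sets, then $\Tw^+(p):\Tw^+(\C)\to\Tw^+(\DD)$ has the right lifting property against all trivial cofibrations, which I would deduce by factoring $\Tw^+(p)$ through the pullback $\Tw^+(\DD)\times_{\DD^{\op}\times\DD}(\C^{\op}\times\C)$ and checking each factor: the base-change map is a categorical fibration because marked left fibrations are stable under base change and $p^{\op}\times p$ is a categorical fibration, while the comparison map $\Tw^+(\C)\to\Tw^+(\DD)\times_{\DD^{\op}\times\DD}(\C^{\op}\times\C)$ is itself a marked left fibration because both are marked left fibrations over $\C^{\op}\times\C$ via the codomain-and-domain projection (using the ``two-out-of-three''-type stability from \cite[Corollary 2.2.3.14]{Lur09}, cited in Remark~\ref{r:fibrations}).

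\textbf{The main obstacle.} The genuinely nontrivial point is showing that $\Tw^+$ lands in fibrant objects and preserves fibrations, i.e.\ step (a)--(b) above — in other words, that $\Tw^+$ of a quasicategory is again a quasicategory with the correct marking, and functorially so. For the underlying unmarked statement this is essentially \cite[Proposition 5.2.1.3]{Lur14} (twisted arrow construction of an $\infty$-category is an $\infty$-category, and the double projection is a left fibration); the extra work here is purely at the level of the marking, namely that a $1$-simplex of $\Tw^+(\C)$ is marked exactly when it is coCartesian for the left fibration $\Tw^+(\C)\to\C^{\op}\times\C$, which is immediate from the definition (a $1$-simplex \eqref{e:arrowintw} is marked iff both legs $Z\to X$ and $Y\to W$ are equivalences in $\C$, and any edge over an equivalence in a left fibration is coCartesian). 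Once this identification of marked edges with coCartesian edges is in hand, all the stability-under-base-change and fibration-preservation claims follow formally from the theory of (marked) left fibrations recalled in \S\ref{s:marked-left}, and one concludes that $L$ is left Quillen, hence $\Tw^+$ is right Quillen.
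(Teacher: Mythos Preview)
Your proposal is correct and takes essentially the same route as the paper. Both arguments reduce to showing that $\Tw^+$ preserves trivial fibrations and preserves fibrations between fibrant objects (invoking \cite[Proposition 8.5.4]{Hir}), and both do so by factoring $\Tw^+(p)$ through the pullback $\R_X^+(Y):=\Tw^+(Y)\times_{Y^{\op}\times Y}(X^{\op}\times X)$. The only substantive difference is in how the comparison map $\Tw^+(X)\to\R_X^+(Y)$ is shown to be a (marked) left fibration: you deduce it from the cancellation property of left fibrations (both source and target are left fibered over $X^{\op}\times X$, via \cite[Corollary 2.2.3.14]{Lur09}), whereas the paper argues directly with the left adjoint of the unmarked $\Tw$, reducing to the combinatorial verification already carried out in \cite[Proposition 5.2.1.3]{Lur14}. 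Your argument is slightly slicker at that step but requires knowing in advance that $\Tw^+(\DD)\to\DD^{\op}\times\DD$ is a marked left fibration for fibrant $\DD$; the paper's direct argument establishes the left-fibration property uniformly and then uses it as input elsewhere.
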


\begin{lem}\label{l:twistedisrightquillen}
Let $p: X\lrar Y$ be a map of marked simplicial sets and let 
$$
\R_{X}^+(Y) := \Tw^+(Y) \times_{Y^{\op} \times Y} X^{\op} \times X
$$
equipped with the natural maps $q: \Tw^+(X) \lrar \R_{X}^+(Y)$ and $q':\R_X^+(Y) \lrar \Tw^+(Y)$. Then the following assertions hold:
\begin{enumerate}
\item\label{i:1} If $p$ is a trivial fibration in $\Set_{\Del}^+$, then $q$ and $q'$ are trivial fibrations in $\Set_{\Del}^+$. 
\item\label{i:2} If $p$ is a fibration in $\Set_{\Del}^+$, then $q$ is a marked left fibration and $q'$ is a fibration in $\Set^+_\Del$. 
\end{enumerate}
\end{lem}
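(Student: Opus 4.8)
The plan is to reduce everything to understanding the behaviour of $\Tw^+$ on (co)products of the generating (trivial) cofibrations of $\Set^+_\Del$ and then to check the fibration/triviality conditions directly on the underlying simplicial sets together with the markings. First recall that $\Tw^+(Y)=F^*Y$ at the level of underlying simplicial sets, where $F([n])=[n]^{\op}\ast[n]$, and that the maps $\Tw^+(Y)\to Y^{\op}\times Y$ are induced by the two inclusions $[n]\hrar[n]^{\op}\ast[n]$ and $[n]^{\op}\hrar[n]^{\op}\ast[n]$; on markings an edge of $\Tw^+(Y)$ is marked iff both its images in $Y^{\op}$ and $Y$ are marked. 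Since the pullback defining $\R^+_X(Y)$ and the maps $q,q'$ are all formed functorially in $p$, both assertions can be tested by a lifting/anodyne analysis.

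For part~\eqref{i:1}, I would argue that when $p$ is a trivial fibration, i.e.\ has the right lifting property against all monomorphisms of marked simplicial sets, then so does $q'\colon\R^+_X(Y)\to\Tw^+(Y)$: a lifting problem for $q'$ against a monomorphism $K\to L$ unwinds, via the pullback square and the adjunction $(F^*\dashv \text{something})$ together with the product decomposition $Y^{\op}\times Y$, into a lifting problem for $p^{\op}\times p$ against a monomorphism, hence into two lifting problems for $p$; one then uses that trivial fibrations are closed under products and base change. The map $q\colon\Tw^+(X)\to\R^+_X(Y)$ is handled by the same bookkeeping, using that $\Tw^+(X)=\Tw^+(Y)\times_{Y^{\op}\times Y}(X^{\op}\times X)$ when... — more precisely one observes $q$ is the base change of $\Tw^+(p)\colon\Tw^+(X)\to\Tw^+(Y)$ along $q'$, and $\Tw^+(p)$ is itself obtained by applying $F^*$ to $p$, so it is a trivial fibration of simplicial sets; on markings, since $p$ reflects marked edges (trivial fibrations in $\Set^+_\Del$ do), $\Tw^+(p)$ does too, and hence $q$ is a trivial fibration of marked simplicial sets. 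Then $q=q'\circ(\text{iso})$-type reasoning, or rather the two-out-of-three pasting, gives the claim for $q$ and $q'$ simultaneously.

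For part~\eqref{i:2}, when $p$ is merely a (categorical) fibration between marked simplicial sets, the key point is that $q'\colon\R^+_X(Y)\to\Tw^+(Y)$ is a base change (along $\Tw^+(Y)\to Y^{\op}\times Y$) of $p^{\op}\times p\colon X^{\op}\times X\to Y^{\op}\times Y$, which is a categorical fibration since the categorical model structure is cartesian; hence $q'$ is a categorical fibration, as asserted. The substantive statement is that $q\colon\Tw^+(X)\to\R^+_X(Y)$ is a \emph{marked left} fibration. Here I would use the description of marked left fibrations from Remark~\ref{r:local}: it suffices to check the right lifting property against the left-horn inclusions $(\Lambda^n_i)^\flat\hrar(\Del^n)^\flat$ for $0\le i<n$ and against $(\Del^1)^\flat\to(\Del^1)^\sharp$. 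Pulling a left-horn lifting problem for $q$ back through the definitions and using that $F^*$ of such a horn inclusion is again built from inner horn inclusions of $\Del^{[n]^{\op}\ast[n]}$ together with the relevant outer horn on the "new" vertices, one reduces to the fact that $p$, being a categorical fibration, has the right lifting property against inner horns and against the edge-marking map, while the outer-horn part is absorbed because the relevant edges are $F^*$-images of the cone vertex and the projection $\Tw^+(X)\to\Tw^+(Y)$ is itself a left fibration when $p$ is a fibration (compare Remark~\ref{r:kan} and the discussion after Definition~\ref{d:twisted-2}). The marking condition $E_{\Tw^+(X)}=q^{-1}(E_{\R^+_X(Y)})$ follows directly from the definition of the marking on $\Tw^+$ and the fact that $p$ reflects marked edges along categorical fibrations.

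\medskip\noindent\emph{Main obstacle.} The hard part will be the combinatorial verification in part~\eqref{i:2} that the left-horn (and in particular the \emph{outer} left-horn) lifting problems for $q\colon\Tw^+(X)\to\R^+_X(Y)$ decompose correctly after applying $F^*$ — i.e.\ that $F$ sends left-horn inclusions to maps that, relative to the pullback $\R^+_X(Y)$, are built only out of the lifting properties actually available for a categorical fibration $p$ (inner horns, markings, and equivalences), with no genuinely new outer-horn obligation surviving. This is exactly the kind of "$F^*$ of a horn is a pushout-product of inner horns" computation that underlies the fact that $\Tw$ preserves $\infty$-categories, refined to the marked, relative, left-fibration setting; once that decomposition is pinned down, everything else is formal closure properties of (trivial) fibrations under base change and products.
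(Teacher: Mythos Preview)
Your overall strategy—handling $q'$ as the base change of $p^{\op}\times p$ along $\Tw^+(Y)\to Y^{\op}\times Y$, reading off the marking condition on $q$ directly from the definition of $\Tw^+$, and reducing the remaining claims about $q$ to lifting problems for $p$ via the left adjoint $F$ of $\Tw$—is exactly the paper's approach. The paper packages the adjunction by writing $G(K)=K^{\op}\coprod K$, so that a lifting problem for $\bar q$ against $A\hrar B$ unwinds to a lifting problem for $\bar p$ against $F(A)\coprod_{G(A)}G(B)\to F(B)$.

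There is, however, a genuine error in your part~(i) argument for $q$: the map $q$ is \emph{not} the base change of $\Tw^+(p)$ along $q'$. The fibre product $\R^+_X(Y)\times_{\Tw^+(Y)}\Tw^+(X)$ is strictly larger than $\Tw^+(X)$ whenever $p^{\op}\times p$ fails to be a monomorphism (for each $x\in\Tw^+(X)$ there may be many $a\in X^{\op}\times X$ lying over the image of $\Tw^+(p)(x)$ in $Y^{\op}\times Y$, not just the image of $x$ itself). Two-out-of-three does not rescue this either, since trivial fibrations are not stable under right cancellation. The paper instead checks directly that $F(\partial\Del^n)\coprod_{G(\partial\Del^n)}G(\Del^n)\to F(\Del^n)$ is a monomorphism, which is elementary.

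For part~(ii) your identification of the main obstacle is correct, and the paper proceeds just as you outline: one must show that $F(\Lam^n_i)\coprod_{G(\Lam^n_i)}G(\Del^n)\to F(\Del^n)=\Del^{2n+1}$ is inner anodyne for $0\le i<n$. The paper does not redo this combinatorics but cites the proof of~\cite[Proposition~5.2.1.3]{Lur14}, where precisely this map is analysed. Your aside that ``$\Tw^+(X)\to\Tw^+(Y)$ is itself a left fibration when $p$ is a fibration'' is not correct in general and is in any case circular here; drop it.
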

\begin{proof}
We first note that $q'$ is a base change of $X^{\op} \times X \lrar Y^{\op} \times Y$, so the claims concerning $q'$ are obvious. Furthermore, by construction the marked edges of $\Tw^+(X)$ are exactly the edges whose image in $\R_{X}^+(Y)$ is marked. Let $\ovl{p}$ and $\ovl{q}$ be the maps of simplicial sets underlying $p$ and $q$ respectively. It will hence suffice to show that (1), if $\ovl{p}$ is a trivial Kan fibration then $\ovl{q}$ is a trivial Kan fibration and that (2), if $\ovl{p}$ is a Joyal fibration then $\ovl{q}$ is a left fibration.
 
By construction the functor $\Tw: \Set_\Del \lrar \Set_\Del$ admits a left adjoint $F: \Set_\Del \lrar \Set_\Del$, given on simplices by $F(\Del^n) = (\Del^n)^{\op} \ast \Del^n$. Let $G: \Set_\Del \lrar \Set_\Del$ be the functor $G(X) = X^{\op} \coprod X$. Then the functor $F$ receives a natural transformation $G(X)\Rightarrow F(X)$ which is adjoint to the natural transformation $\Tw(X) \lrar X^{\op} \times X$. Claim (1) about $\ovl{q}$ is now equivalent to $F(\partial \Del^n) \coprod_{G(\partial \Del^n)}G(\Del^n) \lrar F(\Del^n)$ being a cofibration, which can be directly verified. Similarly, to prove Claim (2) about $\ovl{q}$ it suffices to show that $F(\Lam^n_i) \coprod_{G(\Lam^n_i)}G(\Del^n) \lrar F(\Del^n)$ is an inner fibration for $0 \leq i < n$. This part is indeed verified in the proof of~\cite[Proposition 5.2.1.3]{Lur14} (where the map in question is denoted $K \lrar \Del^{2n+1}$). 
\end{proof}

\begin{proof}[Proof of Proposition~\ref{p:twisted is right Quillen}]
By Lemma~\ref{l:twistedisrightquillen}(i) $\Tw^+$ preserves trivial fibrations and by Lemma~\ref{l:twistedisrightquillen}(ii) and Remark~\ref{r:local-2} it preserves fibrations between fibrant objects. The result then follows from \cite[Proposition 8.5.4]{Hir}.
\end{proof}
Given a marked simplicial set $X$, the construction of Lemma \ref{l:twistedisrightquillen} defines a functor 
$$\xymatrix@R=0pc{
\R^+_{X}:\left(\Set^+_{\Del}\right)_{X//X} \ar[r] & \Set_{\Del}^{\Tw^+(X)}; \hspace{8pt} \R_X^+(Y)= \Tw^+(Y)\times_{Y^{\op}\times Y} X^{\op}\times X.
}$$  
The map $\R_X^+(Y)\lrar \Tw^+(X)$ is induced by the structure map $Y\lrar X$.
\begin{pro}\label{p:R_C is right Quillen}
For any $X\in\Set^+_{\Del}$, the functor 
$$
\R^+_X:\left(\Set^+_{\Del}\right)_{X//X} \lrar \Set_{\Del}^{\Tw^+(X)}$$
is a right Quillen functor. 
\end{pro}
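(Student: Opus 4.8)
The plan is to deduce that $\R^+_X$ is right Quillen from the Brown-type criterion, exactly as in the proof of Proposition~\ref{p:twisted is right Quillen}. Since $\R^+_X$ preserves all small limits and is accessible (it is assembled from $\Tw^+$, from the limit-preserving functor $Y\mapsto Y^\op\times Y$, and from pullbacks), it admits a left adjoint, and hence by \cite[Proposition 8.5.4]{Hir} it suffices to show that $\R^+_X$ preserves all trivial fibrations and preserves fibrations between fibrant objects. I will use throughout that a fibrant object of $\left(\Set^+_\Del\right)_{X//X}$ is a diagram $X\xrightarrow{s}Y\xrightarrow{\pi}X$ with $\pi\circ s=\id$ and $\pi$ a fibration in $\Set^+_\Del$; that (trivial) fibrations in $\left(\Set^+_\Del\right)_{X//X}$ are detected on the underlying marked simplicial sets; that the trivial fibrations of $\Set_\Del^{\Tw^+(X)}$ coincide with those of $\Set^+_\Del$ (both model structures have the monomorphisms as cofibrations); and that by Remark~\ref{r:fibrations} a map between two marked left fibrations over $\Tw^+(X)$ is a fibration in $\Set_\Del^{\Tw^+(X)}$ if and only if it is itself a marked left fibration.

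The crux is a compatibility between the functor $\R^+_X$ and the construction of Lemma~\ref{l:twistedisrightquillen}. Given a morphism $g:Y\to Y'$ in $\left(\Set^+_\Del\right)_{X//X}$ (so $g$ is compatible with the sections $s,s'$ and retractions $\pi,\pi'$), set $R_g:=\Tw^+(Y')\times_{(Y')^\op\times Y'}Y^\op\times Y$ and let $q_g:\Tw^+(Y)\to R_g$ be the map attached by Lemma~\ref{l:twistedisrightquillen} to the underlying map of marked simplicial sets $g$ (i.e.\ taking ``$p$'' $=g$). I will establish a natural identification exhibiting $\R^+_X(g):\R^+_X(Y)\to\R^+_X(Y')$ as a base change of $q_g$. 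Concretely, base-changing the projection $R_g\to Y^\op\times Y$ along $s^\op\times s:X^\op\times X\to Y^\op\times Y$ and using $g\circ s=s'$ to cancel the fibre product over $Y^\op\times Y$ produces a natural isomorphism $R_g\times_{Y^\op\times Y}(X^\op\times X)\cong \Tw^+(Y')\times_{(Y')^\op\times Y'}(X^\op\times X)=\R^+_X(Y')$; feeding this into the pasting lemma for pullbacks then gives $\Tw^+(Y)\times_{R_g}\R^+_X(Y')\cong \Tw^+(Y)\times_{Y^\op\times Y}(X^\op\times X)=\R^+_X(Y)$, and one checks that the resulting projection to $\R^+_X(Y')$ is $\R^+_X(g)$. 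Two instances will be used: a general $g$; and the case where $Y'$ is the zero object $\id_X\in\left(\Set^+_\Del\right)_{X//X}$ and $g=\pi$, where $\R^+_X(\id_X)=\Tw^+(X)$ and the identification exhibits the structure map $\R^+_X(Y)\to\Tw^+(X)$ as the base change of $q_\pi$ along the natural map $\Tw^+(X)\to R_\pi$.

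With this in hand the two preservation properties follow formally from Lemma~\ref{l:twistedisrightquillen} and the stability of trivial fibrations and of marked left fibrations under base change. If $g$ is a trivial fibration then $q_g$ is a trivial fibration by Lemma~\ref{l:twistedisrightquillen}(i), hence so is the base change $\R^+_X(g)$, and thus $\R^+_X(g)$ is a trivial fibration in $\Set_\Del^{\Tw^+(X)}$. If $g:Y\to Y'$ is a fibration between fibrant objects, then $\pi$, $\pi'$ and $g$ are all fibrations in $\Set^+_\Del$; applying the identification above to $\pi$ and to $\pi'$ together with Lemma~\ref{l:twistedisrightquillen}(ii) shows that $q_\pi$ and $q_{\pi'}$ are marked left fibrations, whence the structure maps $\R^+_X(Y)\to\Tw^+(X)$ and $\R^+_X(Y')\to\Tw^+(X)$ are marked left fibrations and $\R^+_X(Y),\R^+_X(Y')$ are fibrant in $\Set_\Del^{\Tw^+(X)}$. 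Likewise, since $g$ is a fibration, $q_g$ is a marked left fibration by Lemma~\ref{l:twistedisrightquillen}(ii), and hence so is its base change $\R^+_X(g)$; being a marked left fibration between objects that are themselves marked left fibrations over $\Tw^+(X)$, the map $\R^+_X(g)$ is a fibration in $\Set_\Del^{\Tw^+(X)}$ by Remark~\ref{r:fibrations}. This verifies the two hypotheses of the criterion.

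The hard part will be the natural pullback identification of $\R^+_X(g)$ with a base change of $q_g$ from Lemma~\ref{l:twistedisrightquillen} — not the manipulation of fibre products via the pasting lemma, which is purely formal, but the bookkeeping that the markings on the two sides really agree (this is exactly the reason the marking in Lemma~\ref{l:twistedisrightquillen} is defined the way it is) and that the induced map on the pullbacks is $\R^+_X(g)$ on the nose, rather than merely an object isomorphic to $\R^+_X(Y)$ over $\Tw^+(X)$. Everything else reduces to Lemma~\ref{l:twistedisrightquillen} and stability under base change. I note also that the special case $Y'=\id_X$ is not logically separate: it is just the identification applied to the unique map $Y\to\id_X$, which is $\pi$.
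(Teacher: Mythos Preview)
Your proposal is correct and follows essentially the same route as the paper: the key pullback identification you establish is exactly the paper's observation that for $X\to Y\to Z\to X$ there is a pullback square
\[
\xymatrix{
\R_{X}^+(Y)\ar[d]\ar[r] & \Tw^+(Y)\ar[d]\\
\R_{X}^+(Z)\ar[r] & \R_Y^+(Z),
}
\]
where your $R_g$ is precisely $\R_Y^+(Z)$ and your $q_g$ is the right vertical map; the paper then invokes Lemma~\ref{l:twistedisrightquillen}, Remark~\ref{r:fibrations}, and \cite[Proposition 8.5.4]{Hir} just as you do. Your write-up is more explicit about the existence of the left adjoint and about the separate treatment of the structure maps to $\Tw^+(X)$, but these are elaborations of the same argument rather than a different approach.
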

\begin{proof}
Unwinding the definitions, one sees that for any map $X\lrar Y\lrar Z\lrar X$ in $\left(\Set^+_{\Del}\right)_{X//X}$, there is a pullback square of marked simplicial sets (over $\Tw^+(X)$)
$$\xymatrix{
\R_{X}^+(Y)\ar[d]\ar[r] & \Tw^+(Y)\ar[d]\\
\R_{X}^+(Z)\ar[r] & \R_Y^+(Z).
}$$
It then follows from Lemma~\ref{l:twistedisrightquillen} and Remark~\ref{r:fibrations} that $\R^+_{X}$ preserves trivial fibrations and fibrations between fibrant objects, so that the result follows from \cite[Proposition 8.5.4]{Hir}. 
\end{proof}
\begin{rem}\label{r:Rnatural}
Let $f: Y\lrar X$ be a map in $\Set^+_\Del$ and let $\vphi: \Tw^+(Y)\lrar \Tw^+(X)$ be the induced map. For any retractive object $X\lrar Z\lrar X$, there is a natural isomorphism
$$
\R^+_{Y}(Z\times_X Y) \cong \R^+_X(Z)\times_{\Tw^+(X)} \Tw^+(Y).
$$
In other words, there is a natural isomorphism $\R^+_{Y}\circ f^* \cong \phi^*\circ \R^+_{X}$.
\end{rem}
Let us now consider the functors $\R_X^{\Sp}=\Sp(\R_X^+): \Sp((\Set^+_\Del)_{X//X})\lrar\Sp^{\Tw^+(X)}$ arising from Proposition \ref{p:R_C is right Quillen}.
\begin{pro}\label{p:global R}
The functors $\R_X^{\Sp}$ assemble to a right Quillen functor 
$$
\R^{\Sp}:\T\Set_{\Del}^+\lrar \LFib_{\Sp};\qquad \R^{\Sp}(X)_{n,m} = \R^+_{X_\ast}(X_{n,m})
$$
covering the right Quillen functor $\Tw^+:\Set^+_{\Del}\lrar \Set^+_{\Del}$, where $\LFib_{\Sp}$ is as in Construction \ref{c:diagrams}.
\end{pro}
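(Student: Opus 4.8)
The plan is to bootstrap Proposition~\ref{p:global R} from the fibrewise statement (Proposition~\ref{p:R_C is right Quillen}, after applying $\Sp(-)$), together with the base-change compatibility recorded in Remark~\ref{r:Rnatural} and the fact that $\Tw^+$ is right Quillen (Proposition~\ref{p:twisted is right Quillen}). First one notes that $\R^{\Sp}$ is assembled levelwise out of the limit-preserving functors $\Tw^+$ and a fibre product, hence preserves all limits; since $\T\Set^+_\Del$ and $\LFib_{\Sp}$ are combinatorial, the adjoint functor theorem produces a left adjoint, and it remains to verify the Quillen condition. Following the pattern used for Propositions~\ref{p:twisted is right Quillen} and~\ref{p:R_C is right Quillen}, it suffices by \cite[Proposition 8.5.4]{Hir} to show that $\R^{\Sp}$ preserves trivial fibrations and fibrations between fibrant objects.

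For trivial fibrations, recall that left Bousfield localization does not alter the class of trivial fibrations, so a trivial fibration in $\T\Set^+_\Del$ or in $\LFib_{\Sp}$ is exactly a trivial fibration in the ambient Reedy model structure on $(\Set^+_\Del)^{(\NN\times\NN)_\ast}$, i.e.\ a map all of whose relative matching maps are trivial fibrations in $\Set^+_\Del$. Given such a map $f: X \lrar Y$, its component over the zero object is $\Tw^+(f_\ast): \Tw^+(X_\ast) \lrar \Tw^+(Y_\ast)$, which is a trivial fibration since $\Tw^+$ is right Quillen; for the remaining relative matching maps one rewrites them, using Remark~\ref{r:Rnatural} together with the fact that $\Tw^+$ and the functors $\R^+_{X_\ast}$ commute with the finite limits entering the definition of matching objects, in terms of base changes of the maps $\R^+_{X_\ast}(X_{n,m}) \lrar \R^+_{X_\ast}(Y_{n,m})$ (trivial fibrations by Proposition~\ref{p:R_C is right Quillen}) along products of the trivial fibration $X_\ast \lrar Y_\ast$, and hence one finds that they are again trivial fibrations.

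For fibrations between fibrant objects, one first checks that $\R^{\Sp}$ carries fibrant objects to fibrant objects. If $X$ is a parameterized $\Omega$-spectrum object over a fibrant $X_\ast$, then each structure map $\R^+_{X_\ast}(X_{n,m}) \lrar \Tw^+(X_\ast)$ is a marked left fibration by Lemma~\ref{l:twistedisrightquillen}(\ref{i:2}), and since the fibrewise functor $\Sp(\R^+_{X_\ast}): \T_{X_\ast}\Set^+_\Del \lrar \Sp^{\Tw^+(X_\ast)}$ is right Quillen — being obtained by applying $\Sp(-)$ to the right Quillen functor of Proposition~\ref{p:R_C is right Quillen} — it preserves weak equivalences and homotopy Cartesian squares between fibrant objects; consequently the contractibility of the off-diagonal terms and the homotopy Cartesianness of the diagonal squares~\eqref{e:square_nn} pass to $\R^{\Sp}(X)$, so that $\R^{\Sp}(X)$ is fibrant in $\LFib_{\Sp}$. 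A fibration between such objects is simply a Reedy fibration of $\Omega$-spectrum objects; its image under $\R^{\Sp}$ has component $\Tw^+(f_\ast)$ over the zero object, which is a fibration by Proposition~\ref{p:twisted is right Quillen}, while the remaining relative matching maps are controlled exactly as in the previous paragraph, now invoking Lemma~\ref{l:twistedisrightquillen}(\ref{i:2}) and Proposition~\ref{p:R_C is right Quillen} in place of the trivial-fibration statements.

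The step I expect to require the most care is precisely this bookkeeping with relative matching maps: because $\R^{\Sp}$ changes the base along $\Tw^+$ rather than acting fibrewise, one cannot simply quote Proposition~\ref{p:R_C is right Quillen} levelwise, and the role of Remark~\ref{r:Rnatural} is exactly to exhibit $\R^{\Sp}$ as compatible with the (co)Cartesian pushforwards of the relative model categories $\T\Set^+_\Del \lrar \Set^+_\Del$ and $\LFib_{\Sp} \lrar \Set^+_\Del$, which is what allows the global Quillen property to be deduced from the fibrewise one. (Conceptually, one may phrase the whole argument by citing the principle that a functor between relative model categories covering a right Quillen functor on the base, which is fibrewise right Quillen and preserves Cartesian morphisms, is itself right Quillen; the three inputs are then Proposition~\ref{p:R_C is right Quillen}, Remark~\ref{r:Rnatural} and Proposition~\ref{p:twisted is right Quillen}.)
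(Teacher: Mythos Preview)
Your approach is essentially the same as the paper's: reduce via \cite[Proposition 8.5.4]{Hir} to trivial fibrations and fibrations between fibrant objects, handle the $\ast$-component with Proposition~\ref{p:twisted is right Quillen}, and use Remark~\ref{r:Rnatural} to identify the remaining relative matching maps with images under $\R^+$ of the matching maps of $f$, so that Proposition~\ref{p:R_C is right Quillen} applies. The paper organizes this slightly differently---first proving $\R^{\Sp}$ is Reedy right Quillen and then checking it preserves local objects---but the content is the same.

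Two small points of precision. First, your description of the matching map as ``base changes of the maps $\R^+_{X_\ast}(X_{n,m}) \lrar \R^+_{X_\ast}(Y_{n,m})$'' is not quite right: with your convention $f:X\lrar Y$, the object $Y_{n,m}$ lives over $Y_\ast$, not $X_\ast$. The clean statement (which is what Remark~\ref{r:Rnatural} gives) is that the relative matching map of $\R^{\Sp}(f)$ at $(m,n)$ is exactly $\R^+_{X_\ast}$ applied to the relative matching map $X_{m,n}\lrar Y_{m,n}\times_{Y_\ast} X_\ast$ of $f$. Second, in the non-trivial fibration case there is a subtlety you glossed over: Proposition~\ref{p:R_C is right Quillen} (and Lemma~\ref{l:twistedisrightquillen}(\ref{i:2})) only tell you the resulting matching map is a fibration in the \emph{marked covariant} model structure over $\Tw^+(X_\ast)$, i.e.\ a marked left fibration---not a priori a categorical fibration in $\Set^+_\Del$, which is what Reedy fibrancy in $(\Set^+_\Del)^{(\NN\times\NN)_\ast}$ demands. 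The paper closes this gap by invoking Remark~\ref{r:local-2}: when the base $\Tw^+(X_\ast)$ is fibrant (which holds once $X$ and $Y$ are Reedy fibrant), marked left fibrations are categorical fibrations. You should make this step explicit.
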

\begin{proof}
Let us first verify that $\R^{\Sp}$ is a right Quillen functor for the Reedy model structures, of which both $\T\Set^+_\Del$ and $\LFib_{\Sp}$ are left Bousfield localizations. Recall that a map $f: Y\lrar X$ of $(\NN\times \NN)_\ast$-diagrams is a (trivial) Reedy fibration if $Y_*\lrar X_*$ is a (trivial) fibration and each matching map $M_{(m, n)}(f): Y_{m,n}\lrar X_{m, n}\times_{X_\ast} Y_\ast$ is a (trivial) fibration in $\Set^+_\Del$. If this is the case, then the map
$$
\R^{\Sp}(Y)_* = \Tw^+(Y_*)\lrar \Tw^+(X_*)=\R^{\Sp}(X)_*
$$
is a (trivial) fibration in $\Set^+_\Del$ by Proposition \ref{p:twisted is right Quillen}. Furthermore, for each $(m, n)$ we can use Remark \ref{r:Rnatural} to identify the matching map $\R^{\Sp}(Y)_{m, n}\lrar \R^{\Sp}(X)_{m, n}\times_{\R^{\Sp}(X)_*} \R^{\Sp}(Y)_*$ with the map
\begin{equation}\label{e:matching}\xymatrix@C=6pc{
\R^+_{Y_*}(Y_{m,n}) \ar[r]^-{\R^+_{Y_*}(M_{(m, n)}(f))} & \R^{\Sp}_{Y_*}(X_{m,n}\times_{X_*} Y_*).
}\end{equation}
This map is a (trivial) marked left fibration in $\Set^+_\Del$ by Proposition \ref{p:R_C is right Quillen}. By Remark \ref{r:local-2}, this marked left fibration \eqref{e:matching} is a categorical fibration in $\Set^+_\Del$ when $X$ and $Y$ are Reedy fibrant, so $\R^{\Sp}$ preserves trivial fibrations and fibrations between fibrant objects. This means that it is right Quillen for the Reedy model structure by \cite[Proposition 8.5.4]{Hir}.

To see that $\R^{\Sp}$ is right Quillen for the localized model structures, it remains to be shown (by \cite[Proposition 8.5.4]{Hir}) that it preserves local objects. Suppose that $X$ is a Reedy fibrant object which is local in $\T\Set^+_\Del$, i.e.\ $X_{\bullet\bullet}\lrar X_*$ is a parameterized $\Omega$-spectrum object. Since $\R^+_{X_*}$ is right Quillen by Proposition \ref{p:R_C is right Quillen}, its image $\R^+_{X_*}(X_{\bullet\bullet})\lrar \R^+_{X_*}(X_*)=\Tw^+(X_*)$ is an $\Omega$-spectrum $\Set_\Del^{\Tw^+(X_*)}$. By Remark \ref{r:local-2}, this is precisely a parameterized $\Omega$-spectrum of marked simplicial sets, each left fibered over $\Tw(X_\ast)$, i.e.\ a local object is $\LFib_{\Sp}$.
\end{proof} 

\begin{pro}\label{p:R-equiv}
Let $\C$ be a fibrant marked simplicial set. Then the right Quillen functor $\R^+_\C$ of Proposition \ref{p:R_C is right Quillen} induces a right Quillen equivalence
$$\R^{\Sp}_\C:=\Sp(\R^+_\C): \Sp\left(\left(\Set^+_{\Del}\right)_{\C//\C}\right)\lrar \Sp^{\Tw^+(\C)}=\Sp\big((\Set_\Del^{\Tw^+(\C)})_\ast\big).$$
\end{pro}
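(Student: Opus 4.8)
The plan is to verify that $\R^+_\C$ becomes a Quillen equivalence after stabilization by invoking the following general principle: if a right Quillen functor between pointed model categories is a "relative stabilization-compatible" equivalence on underlying $\infty$-categories, then applying $\Sp(-)$ to both sides yields a Quillen equivalence. Concretely, by \cite[\S 3.3]{part2} and Lemma \ref{l:lcc-left}, the tangent model category $\T_\C\Set^+_\Del = \Sp((\Set^+_\Del)_{\C//\C})$ is already known to be Quillen equivalent to $\Sp^{\Tw^+(\C)}$; what remains is to check that the specific right Quillen functor $\R^{\Sp}_\C = \Sp(\R^+_\C)$ constructed above is this equivalence (or at least an equivalence). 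So the real content is to show that $\R^+_\C:(\Set^+_\Del)_{\C//\C}\lrar \Set_\Del^{\Tw^+(\C)}$ induces an equivalence \emph{after stabilization}, even though it is typically not a Quillen equivalence before stabilization.

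The first step is to unwind what $\R^+_\C$ does on the relevant homotopy categories. An object of $(\Set^+_\Del)_{\C//\C}$ is a retractive marked simplicial set $\C \lrar Y \lrar \C$, and $\R^+_\C(Y) = \Tw^+(Y)\times_{Y^{\op}\times Y}\C^{\op}\times \C$. I would observe that the homotopy category of $(\Set^+_\Del)_{\C//\C}$ can be analyzed via the "parameterized" perspective: an object over $\C$ is, up to equivalence, a functor $\C \lrar \Set^+_\Del$ (by straightening/unstraightening, once $\C$ is fibrant), and the pointed objects correspond to functors valued in pointed $\infty$-categories equipped with a section. Stabilizing $(\Set^+_\Del)_{\C//\C}$ then amounts to taking a fiberwise stabilization, producing parameterized spectra over $\C$ — and these are classified by functors $\Tw^+(\C)\lrar \Sp$ by the $(\infty,1)$-categorical version of the main theorem (this is exactly \cite[Theorem from \S 3.3]{part2}). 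The key point to check is that $\R^+_\C$, after stabilization, implements precisely this classification, i.e.\ that the composite $\Sp((\Set^+_\Del)_{\C//\C}) \x{\R^{\Sp}_\C}{\lrar} \Sp^{\Tw^+(\C)}$ agrees with the equivalence already produced in \cite{part2}.

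The cleanest route is probably to compare both functors with a third, more manifestly invariant construction: the derived suspension spectrum functor. Since $\R^+_\C$ is right Quillen (Proposition \ref{p:R_C is right Quillen}), $\R^{\Sp}_\C$ is right Quillen, and it suffices to check it reflects weak equivalences between fibrant objects and that the derived unit (or counit) is a weak equivalence. For reflecting equivalences, I would use that an $\Om$-spectrum object in either category is determined by its underlying "infinite loop" object together with the deloopings, reducing to the corresponding statement for $\R^+_\C$ on infinitely-deloopable objects; here the low-dimensional fiberwise analysis (the $\infty$-category of excisive/reduced functors, or equivalently linear functors) makes $\R^+_\C$ an equivalence, since both sides compute the tangent $\infty$-category to $\Set^+_\Del$ at $\C$, which by \cite[\S 7.3]{Lur14} depends only on the underlying $\infty$-category and is insensitive to the difference between the retractive and twisted-arrow presentations once we pass to linear objects. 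For the derived unit, I would show that for a suspension spectrum $\Sig^\infty_+(Y)$ of a cofibrant retractive object, $\R^{\Sp}_\C(\Sig^\infty_+ Y)$ is the suspension spectrum of $\R^+_\C(Y)$ (using that $\R^+_\C$ is, up to the necessary derived corrections, compatible with the pointification and suspension), and then appeal to the universal property of stabilization together with the fact that $\R^+_\C$ induces the right thing on the level of abelianizations / first Goodwillie derivatives.

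The main obstacle, I expect, is precisely this last compatibility: $\R^+_\C$ is \emph{not} a Quillen equivalence before stabilization (the twisted-arrow construction collapses the difference between an object and its "twisted" version only after passing to spectra), so one cannot simply cite a general "stabilization of a Quillen equivalence" lemma. Instead one must identify what $\R^+_\C$ does to the \emph{linear part} of $(\Set^+_\Del)_{\C//\C}$ — i.e.\ check that the composite with $\Om^\infty$ on the relevant $n$-excisive approximations is an equivalence — and this requires knowing the explicit description of $\T_\C\Set^+_\Del$ from \cite{part2} and matching it term-by-term with the covariant model structure over $\Tw^+(\C)$. Once that identification is in place, the statement follows formally: a right Quillen functor between stable model categories which induces an equivalence on the (stable) homotopy categories is a Quillen equivalence, by checking that it preserves and reflects weak equivalences between fibrant objects and that the derived counit is invertible.
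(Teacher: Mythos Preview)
Your diagnosis of the shape of the problem is accurate: $\R^+_\C$ is not a Quillen equivalence before stabilization, and the content of the proposition is that the \emph{specific} right Quillen functor $\R^{\Sp}_\C$ realizes the equivalence already established in \cite{part2}. But your proposal never carries out the identification. Saying one should ``match term-by-term'' or check ``what $\R^+_\C$ does to the linear part'' is not yet an argument; the generic strategy of verifying reflection of equivalences and the derived unit does not by itself close the gap, because verifying the derived unit is an equivalence is precisely where the difficulty lies, and nothing you have written reduces it to a known computation.

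The paper's proof supplies exactly the missing bridge. After stripping markings and replacing $\C$ by the nerve of a fibrant simplicial category $\A$, it invokes Lurie's natural comparison map $\beta_\B:\Tw(\rN(\B))\lrar \Un(\Map_\B)$ from \cite[Proposition 5.2.1.11]{Lur14}, which is a weak equivalence of left fibrations over $\rN(\B)^{\op}\times\rN(\B)$ whenever $\B$ is fibrant. Pulling back along $\rN(\A)^{\op}\times\rN(\A)\lrar \rN(\B)^{\op}\times\rN(\B)$ and using naturality in $\B$ produces, for each fibrant retractive object $\A\lrar\B\lrar\A$, a weak equivalence $\gamma_\B$ identifying $\R_{\rN(\A)}(\rN(\B))$ with $\beta_\A^*\Un(\G^\A(\B))$, where $\G^\A(\B)$ is the restriction of $\Map_\B$ to $\A^{\op}\times\A$. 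This exhibits $\R_{\rN(\A)}\circ\rN$ as right-Quillen-homotopic to the composite $\G^\A$ followed by unstraightening and base change along $\beta_\A$; and that composite is precisely the functor shown in \cite[\S 3.3]{part2} to become a Quillen equivalence upon stabilization. The step you are missing, then, is this concrete comparison via $\beta_\B$: without it there is no way to connect the ad hoc pullback formula defining $\R^+_\C$ to the enriched-category machinery of \cite{part2}.
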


\begin{proof}
Let $\uline{\C}$ be the Joyal fibrant simplicial set underlying $\C$. Since forgetting the marking gives right Quillen equivalences (see Remark \ref{r:forgetmarking})
$$
\left(\Set^+_{\Del}\right)_{\C//\C}\lrar \left(\Set_{\Del}\right)^{\Joy}_{\uline{\C}//\uline{\C}}\qquad\quad \text{and}\quad \qquad \Sp^{\Tw^+(\C)}\lrar \Sp^{\Tw(\uline{\C})}
$$
it suffices to show that the unmarked analogue of $\R_{\C}^+$
$$
\R_{\uline{\C}}: \left(\Set_{\Del}\right)^{\Joy}_{\uline{\C}//\uline{\C}}\lrar \Set_{\Del}^{\Tw(\uline{\C})}; \hspace{8pt}\R_{\uline{\C}}(Y)= \Tw(Y)\times_{Y^{\op}\times Y} \uline{\C}^{\op}\times \uline{\C}
$$
induces a right Quillen equivalence after stabilization. Since the covariant (resp.\ slice-coslice) model structures over weakly equivalent quasicategories are Quillen equivalent, we may replace $\uline{\C}$ by an equivalent quasicategory and assume that $\uline{\C}=\rN(\A)$ for some fibrant simplicial category $\A$. It then suffices to show that the composite with the nerve (which is a Quillen equivalence)
\begin{equation}\label{e:twistnerve}\xymatrix{
(\Cat_\Del)_{\A//\A}\ar[r]^-{\rN}_-\sim & \left(\Set_{\Del}\right)^{\Joy}_{\rN(\A)//\rN(\A)}\ar[r]^-{\R_{\rN(\A)}} & \Set_{\Del}^{\Tw(\rN(\A))}
}\end{equation}
induces a right Quillen equivalence on stabilization. The right Quillen functor \eqref{e:twistnerve} is naturally equivalent (over fibrant objects) to a somewhat more accessible Quillen functor. To see this, recall the following construction from the proof of~\cite[Proposition 5.2.1.11]{Lur14}: for every simplicial category $\B$, there is a map of simplicial sets over $\rN(\B)\times\rN(\B^{\op})$
$$
\beta_{\B}: \Tw(\rN(\B))\lrar \Un(\Map_\B)
$$
from the twisted arrow category of $\rN(\B)$ to the unstraightening of the mapping space functor $\Map_\B: \B\times \B^{\op}\lrar \Set_\Del$. Furthermore, $\beta_\B$ is an equivalence of left fibrations over $\rN(\B)\times\rN(\B^{\op})$ whenever $\B$ is fibrant. Now $\beta_{\B}$ depends naturally on $\B$ and so for every retract diagram $\A\lrar \B\lrar \A$ there is a commuting square of simplicial sets over $\rN(\A)^{\op}\times\rN(\A)$ of the form
\begin{equation}\label{e:betas}\vcenter{\xymatrix{
\Tw(\rN(\B))\times_{\rN(\B^{\op})\times\rN(\B)} \rN(\A^{\op})\times \rN(\A)\ar[r]^-{\beta'_{\B}}\ar[d] & \Un(\Map_\B)\times_{\times_{\rN(\B^{\op})\times\rN(\B)}} \rN(\A^{\op})\times \rN(\A)\ar[d]\\
\Tw(\rN(\A))\ar[r]_{\beta_{\A}} & \Un(\Map_\A)
}}\end{equation}
where $\beta'_{\B}$ is simply the base change of $\beta_{\B}$. When $\B\lrar \A$ is a fibration the horizontal maps are equivalences of left fibrations over $\rN(\A)^{\op}\times \rN(\A)$. 

Note that that the left vertical map in \eqref{e:betas} is the map $\R_{\rN(\A)}(\rN(\B))\lrar \R_{\rN(\A)}(\rN(\A))$ obtained by applying $\R_{\rN(\A)}$ to $\rN(\B)\lrar \rN(\A)$. Furthermore, the naturality of the unstraightening \cite[Proposition 2.2.1.1]{Lur09} implies that the top right corner is naturally isomorphic to $\Un(\G^{\A}(\B))$, where $\G^{\A}(\B): \A\times\A^{\op}\lrar \Set_\Del$ is the restriction of $\Map_{\B}$ to $\A\times\A^{\op}$. The right vertical map is then obtained by applying $\Un$ to the projection $\G^{\A}(\B)\lrar \Map_\A$. In particular, we deduce that both vertical maps are fibrations when $\B\lrar \A$ is a fibration of simplicial categories.

The map into the pullback of \eqref{e:betas} therefore yields a map of simplicial sets over $\Tw(\rN(\A))$
$$
\gamma_{\B}: \R_{\rN(\A)}(\rN(\B))\lrar \beta_{\A}^*\big(\Un(\G^{\A}(\B))\big)
$$
which depends functorially on $\B\in (\Cat_\Del)_{\A//\A}$ and is a weak equivalence when $\B$ is fibrant over $\A$. In other words, $\gamma_\B$ determines a right Quillen homotopy from \eqref{e:twistnerve} to the composite right Quillen functor
$$
\xymatrix{
(\Cat_\Del)_{\A//\A} \ar^-{\G^{\A}}[r] & \Fun(\A^{\op} \times \A,\Set_\Del)_{/\Map_{\A}} \ar[r]^-\simeq & \big(\Set_{\Del}^{\rN(\A^{\op})\times\rN(A)}\big)_{/\Tw(\rN(\A))}\ar[r]^-{\simeq} & \Set_{\Del}^{\Tw(\rN(\A))}.
}
$$
The second functor takes the unstraightening over $\A^{\op}\times \A$ and pulls back along $\beta_{\A}:\Tw(\rN(\A)) \lrar \Un(\Map_\A)$ and the last right Quillen equivalence is given by the identity functor on the underlying categories (see the discussion in~\cite[\S 3.3]{part2}).
It therefore suffices to verify that this composite right Quillen functor induces a Quillen equivalence after stabilization. But this is precisely the content of~\cite[\S 3.3]{part2}, using~\cite[Theorem 3.1.14]{part2}.
\end{proof}

\begin{proof}[Proof of Proposition~\ref{p:fiberwise equivalence}]
Combine Proposition~\ref{p:twisted is right Quillen}, Proposition~\ref{p:global R} and Proposition~\ref{p:R-equiv}.
\end{proof}

\subsection{Categories of lifts}\label{s:lifts}
If $\CC$ is a marked-simplicial category, then \cite[Corollary 3.1.16]{part2} identifies the tangent category $\T_{\CC}\Cat^+_\Del$ with the model category of marked-simplicially enriched lifts of the form
\begin{equation}\label{e:enriched lifts-2}
\vcenter{\xymatrix{&& \T\Set^+_\Del\ar[d] \\ \CC^{\op}\times \CC \ar@{-->}[urr]\ar[rr]_{\Map_{\CC}(-,-)} && \Set^+_\Del}}
\end{equation}
At the same time, Proposition~\ref{p:fiberwise equivalence} identifies the tangent bundle projection $\T\Set^+_\Del\lrar \Set^+_\Del$ with the `homotopy pullback' of the projection
\begin{equation}\label{e:covproj}\xymatrix{
\LFib_{\Sp}\ar[r] & \Set^+_\Del
}\end{equation}
along the functor $\Tw^+: \Set^+_\Del\lrar \Set^+_\Del$: for every fibrant marked simplicial set $\C$, the fiber $\T_\C\Set^+_\Del$ is Quillen equivalent to the fiber of \eqref{e:covproj} over $\Tw^+(\C)$. However, since the functor $\R^{\Sp}: \T\Set^+_\Del\lrar \LFib_{\Sp}$ is \textbf{not} $\Set^+_\Del$-enriched, the image of an enriched lift as in \eqref{e:enriched lifts-2}  will not yield an enriched lift against \eqref{e:covproj} over $\CC^{\op}\times\CC$. Instead, a lift as in \eqref{e:enriched lifts-2} yields an enriched lift against \eqref{e:covproj} over the marked-simplicial category $\CC_{\Tw}$ obtained by applying $\Tw^+$ to the mapping objects of $\CC$ (see~\S\ref{s:twisted-2-cells}). Our goal is then to prove the following:
\begin{pro}\label{p:not-there-yet}
Let $\CC$ be a fibrant marked-simplicial category. Then postcomposition with the functor $\R^{\Sp}$ of \eqref{e:fiberwise} induces a right adjoint functor
\begin{equation}\label{e:compare-lift}\Lift_{\Map_{\CC}}\left(\CC^{\op} \times \CC, \T\Set^+_{\Del}\right) \lrar
\Lift_{\Map_{\CC_{\Tw}}}\left(\CC_{\Tw}^{\op} \times \CC^{}_{\Tw}, \LFib_{\Sp}\right) \end{equation}
between the categories of $\Set^+_\Del$-enriched lifts \eqref{e:enriched lifts-2} and of $\Set^+_\Del$-enriched lifts
\begin{equation}\label{e:lift-tw}
\vcenter{\xymatrix{&& \LFib_{\Sp}\ar[d] \\ \CC_{\Tw}^{\op}\times \CC_{\Tw}^{} \ar@{-->}[urr]\ar[rr]_-{\Map_{\CC_{\Tw}}(-,-)} && \Set^+_\Del.}}
\end{equation}
This right adjoint is a right Quillen equivalence when both categories of lifts are endowed with the projective model structure. In particular, the right hand side of \eqref{e:compare-lift} is a model for $\T_{\CC}\Cat^+_\Del$.
\end{pro}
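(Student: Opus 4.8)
The plan is to realize the comparison functor as $\Phi := \R^{\Sp}\circ(-)$, postcomposition with the functor $\R^{\Sp}$ of \eqref{e:fiberwise}, to show that $\Phi$ is well defined between the two categories of lifts and admits a left adjoint, and then to deduce that it is a right Quillen equivalence by bootstrapping from the fiberwise equivalences of Proposition~\ref{p:R-equiv}.

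First I would check that $\Phi$ lands in the right target. The functor $\Tw^+\colon\Set^+_\Del\to\Set^+_\Del$ preserves finite products (on underlying simplicial sets $\Tw(\C\times\D)\cong\Tw(\C)\times\Tw(\D)$, and the markings are easily seen to match), so it is a strong monoidal endofunctor of $(\Set^+_\Del,\times)$; hence it induces a change-of-enrichment functor $\Tw^+_\ast$ on marked-simplicial categories, and by construction $\CC_{\Tw}=\Tw^+_\ast\CC$ while $\Map_{\CC_{\Tw}}=\Tw^+_\ast(\Map_\CC)$. By Proposition~\ref{p:global R} the functor $\R^{\Sp}$ covers $\Tw^+$, and a direct inspection of the construction of $\R^+_X$ in Lemma~\ref{l:twistedisrightquillen} — again using that $\Tw^+$ preserves products, together with the naturality isomorphism $\R^+_Y\circ f^\ast\cong\vphi^\ast\circ\R^+_X$ of Remark~\ref{r:Rnatural} — shows that $\R^{\Sp}$ is $\Set^+_\Del$-enriched relative to $\Tw^+$. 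Consequently postcomposition with $\R^{\Sp}$ sends a $\Set^+_\Del$-enriched lift of $\Map_\CC$ against $\ev_\ast\colon\T\Set^+_\Del\to\Set^+_\Del$ to a $\Set^+_\Del$-enriched lift of $\Tw^+_\ast(\Map_\CC)=\Map_{\CC_{\Tw}}$ against $\ev_\ast\colon\LFib_{\Sp}\to\Set^+_\Del$, which is exactly the functor $\Phi$ of \eqref{e:compare-lift}. That $\Phi$ is a right adjoint then follows from the adjoint functor theorem: both categories of lifts are locally presentable, and limits of enriched lifts are computed objectwise, while on each object $\Phi$ is given by $\R^{\Sp}_{\Map_\CC(x,y)}=\Sp(\R^+_{\Map_\CC(x,y)})$, which preserves limits since $\R^+_{\Map_\CC(x,y)}$ is right Quillen (Proposition~\ref{p:R_C is right Quillen}).

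Next, that $\Phi$ is right Quillen: in the projective model structure on either category of lifts, fibrations and weak equivalences are detected objectwise — a map $L\to L'$ is a (trivial) fibration, respectively a weak equivalence, precisely when each $L(x,y)\to L'(x,y)$ is one in the fiber $\T_{\Map_\CC(x,y)}\Set^+_\Del$, respectively $\Sp^{\Tw^+(\Map_\CC(x,y))}$. Since $\Phi$ is objectwise the right Quillen functor $\R^{\Sp}_{\Map_\CC(x,y)}$ (Propositions~\ref{p:R_C is right Quillen} and~\ref{p:global R}), it preserves fibrations and trivial fibrations, hence is right Quillen.

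It remains to prove that $\Phi$ is a Quillen equivalence, and this is the main obstacle. Each $\R^{\Sp}_\C$ is a right Quillen equivalence by Proposition~\ref{p:R-equiv}, so it reflects weak equivalences between fibrant objects; together with the objectwise description of weak equivalences this shows that $\Phi$ does too. For the derived unit I would invoke the following general principle and flag its proof as the real work: a morphism $\E\to\E'$ of $\Set^+_\Del$-enriched (co)Cartesian fibrations over $\Set^+_\Del$ that covers a strong monoidal endofunctor $T$, is $\Set^+_\Del$-enriched relative to $T$, and is a fiberwise Quillen equivalence, induces a Quillen equivalence (for the projective model structures) between the category of $\Set^+_\Del$-enriched lifts of any $\Phi_0\colon\D\to\Set^+_\Del$ against $\E$ and the category of $\Set^+_\Del$-enriched lifts of $T_\ast\Phi_0$ against $\E'$; applied to $\E=\T\Set^+_\Del$, $\E'=\LFib_{\Sp}$, $T=\Tw^+$ and $\Phi_0=\Map_\CC$ this is precisely our situation. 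The delicate point is that $\Phi$ simultaneously changes the indexing base (from $\CC$ to $\CC_{\Tw}$) and the target fibration (from $\T\Set^+_\Del$ to $\LFib_{\Sp}$) — these changes are tied together by the homotopy pullback square \eqref{e:fiberwise} of Proposition~\ref{p:fiberwise equivalence} — so one must set up the comparison of enriched lifts in a way that makes the objectwise equivalences $\R^{\Sp}_{\Map_\CC(x,y)}$ visibly assemble to an equivalence of the total homotopy categories; passing to underlying $\infty$-categories, where both sides compute the limit of the classified diagram and $\Phi$ is the map on limits induced by these pointwise equivalences, is the natural way to organize this. Granting the Quillen equivalence, the final assertion is immediate: by~\cite[Corollary~3.1.16]{part2} the left-hand side of \eqref{e:compare-lift} models $\T_\CC\Cat^+_\Del$, hence so does the right-hand side.
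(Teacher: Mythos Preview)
Your overall strategy coincides with the paper's: construct $\Phi$ as postcomposition with $\R^{\Sp}$, verify it is right Quillen because fibrations and weak equivalences are detected objectwise, and then deduce the Quillen equivalence from the fiberwise equivalences $\R^{\Sp}_{\Map_\CC(x,y)}$ of Proposition~\ref{p:R-equiv}. The construction of $\Phi$ and the right Quillen claim are fine; what makes $\Phi$ land in the target is precisely the tensoring compatibility $\Tw^+(t)\times\R^{\Sp}(B)\cong\R^{\Sp}(t\times B)$ (your ``enriched relative to $\Tw^+$''), which indeed follows from $\Tw^+$ preserving finite limits. The paper isolates this as condition~(iii) in an abstract setup and verifies it in Remark~\ref{r:lotsofconditions}.

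The genuine gap is the Quillen equivalence. You correctly flag the derived unit as ``the real work'', but your proposed general principle is not proved, and the suggestion to pass to underlying $\infty$-categories and identify both sides as limits of the classified diagram is not a proof either: one would have to identify the $\infty$-category underlying the projective model structure on enriched lifts with such a limit and then argue that $\Phi$ models the map of limits induced by the pointwise equivalences, neither of which is immediate here. The paper instead proves the Quillen equivalence entirely within model categories (Proposition~\ref{p:not-there-yet-abstract}): since $\RR\G_*$ detects weak equivalences, it suffices to check the derived unit on the free generators $\LL\Free_\M(a_i)$, using the explicit formula $\Free_\M(a_i)(j)=\phi_\R(i,j)_!\big(\I_\R(i,j)\otimes a\big)$ from Lemma~\ref{l:projmod}. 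After commuting $\F_*$ past $\Free_\M$ this reduces to showing that the Beck--Chevalley map $\F_{t\otimes\phi(i)}\big(\R(t)\otimes a\big)\to t\otimes\F_{\phi(i)}(a)$ is an equivalence, which one checks by adjointing and using that $t\otimes(-)$ preserves weak equivalences and fibrant objects. This last hypothesis (condition~(ii) in the paper's setup) is what actually makes the argument go through, and it does not appear in your sketch; without it your ``general principle'' as stated need not hold.
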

It will be convenient to prove this result in a slightly more general setting, in order to avoid confusion between the two appearances of $\Set^+_\Del$, as the domain and codomain of the functor $\Tw^+$. Let $\bS, \bT$ be symmetric monoidal model categories and let $\R: \bT \lrar \bS$ be a symmetric monoidal right Quillen functor, with left adjoint $\L$. Consider a commuting square
\begin{equation}\label{e:BS}
\vcenter{\xymatrix@C=3pc{
\N\ar[d]_{\pi}\ar[r]_{\G} & \M\ar[d]^{\rho}\ar@/_0.8pc/@{..>}[l]_{\F}\\
\bT \ar[r]_{\R} & \bS\ar@/_0.8pc/@{..>}[l]_{\L}
}}
\end{equation} 
where $\pi$ and $\rho$ are (co)Cartesian fibrations that exhibit $\M$ and $\N$ as relative model categories over $\bS$ and $\bT$. In particular, the fibers of $\pi$ and $\rho$ are model categories and an arrow $\alpha: s\lrar s'$ induces a Quillen pair $\alpha_!: \M_s\adj \M_{s'}: \alpha^*$ between the fibers (see \cite[Lemma 3.6]{part0}). Let us assume that all fibers $\M_s$ and $\N_t$ are combinatorial and that the square has the following properties:
\begin{enumerate}

\item\label{iv} $\G$ is a right Quillen functor with left adjoint $\F$ and the Beck-Chevalley map $\L\circ \rho \Rightarrow \pi\circ  \F$ is a natural isomorphism. 
\item\label{ii}
The category $\M$ is tensored over $\bS$ in such a way that tensoring with a fixed object preserves coCartesian edges and $\rho$ preserves the tensoring. In other words, each object $s\in \bS$ induces functors $s\otimes (-): \M_{s'}\lrar \M_{s\otimes s'}$ for every $s'\in \bS$ and these functors commutes with the various $\alp_!$. In addition, we require that each functor $s\otimes (-): \M_{s'}\lrar \M_{s\otimes s'}$ is a left Quillen functor which preserves weak equivalences and fibrant objects. Similarly, $\N$ is tensored over $\bT$, with the same properties.
\item\label{iii}
The functor $\G$ preserves the tensoring in the sense that we have natural isomorphisms $\R(t)\otimes\G(B) \x{\cong}{\lrar} \G(t\otimes B)$ for $t\in \bT$, $B\in \N$, which satisfy the usual compatibility conditions with respect to the monoidal structure of $\bT$.
\end{enumerate}

\begin{rem}\label{r:relativequillen}
Condition \eqref{iv} implies that $\G$ preserves relative limits and $\F$ preserves relative colimits. In particular, $\G$ preserves Cartesian edges (and $\F$ preserves coCartesian edges) and induces right (Quillen) functors $\G_t: \N_t\lrar \M_{\R(t)}$ on fibers. We will denote by $\F_t: \M_{\R(t)}\lrar \N_t$ the corresponding left adjoint, which first applies $\F$ and then changes between fibers along the counit map via $(\epsilon_t)_!:\N_{\L\R(t)}\lrar \N_{t}$. 
\end{rem}

\begin{rem}\label{r:lotsofconditions}
The square \eqref{e:fiberwise} indeed satisfies the above conditions, where the actions of $\bT=\Set_\Del^+$ on $\N=\T\Set_\Del^+$ and of $\bS=\Set^+_\Del$ on $\M=\LFib_{\Sp}$ are both given by the levelwise Cartesian product $S\otimes X_{\bullet\bullet}=S\times X_{\bullet\bullet}$. Note that \eqref{iv} holds because $\R=\Tw^+$ and $\G=\R^{\Sp}$ commute with the right adjoints of $\pi$ and $\rho$, which send $X\in \Set^+_\Del$ to the constant $(\NN\times\NN)_*$-diagram on $X$.
\end{rem}

Now suppose that $\I$ is a fibrant $\bT$-enriched category and let $\phi:\I\lrar \bT$ be an enriched functor: for every $i \in \I$ we have an associated object $\phi(i) \in \bT$ and for every $i,j \in \I$ we have a structure map $\phi(i,j): \Map_{\phi}(i,j) \otimes \phi(i) \lrar \phi(j)$ such that the usual compatibility conditions hold. Applying the functor $\R$, we obtain an $\bS$-enriched functor $\phi_{\R}: \I_{\R}\lrar \bS$. Here $\I_{\R}$ is the $\bS$-enriched category with the same objects as $\I$ and mapping spaces $\I_{\R}(i, j)=\R\big(\I(i, j)\big)$. The functor $\phi_\R$ is given on objects by $\phi_\R(i)=\R\big(\phi(i)\big)$ and with structure maps $\phi_{\R}(i,j)$ given by
$$
\I_{\R}(i, j)\otimes \phi_\R(i) = \R\big(\I(i, j)\big)\otimes \R(\phi(i)) \x{\simeq}{\lrar} \R\big(\I(i, j)\otimes \phi(i)\big) \x{\R(\phi(i,j))}{\lrar} \R(\phi(j)) = \phi_{\R}(j).$$ 
Let $\Lift^{\bT}_{\phi}(\I, \N)$ and $\Lift^{\bS}_{\phi_{\R}}(\I_{\R}, \M)$ be the categories of $\bT$-enriched (resp. $\bS$-enriched) lifts
$$\xymatrix@C=3pc{
& \N\ar[d]^\pi & & & \M\ar[d]^\rho\\
\I\ar[r]_\phi\ar@{..>}[ru] & \bT & & \I_\R\ar[r]_{\phi_\R} \ar@{..>}[ru] & \bS.
}$$
There is a functor $\G_*: \Lift^{\bT}_{\phi}(\I, \N)\lrar \Lift^{\bS}_{\phi_{\R}}(\I_{\R}, \M)$, which applies the functor $\G$ pointwise. More precisely, if $f: \I\lrar \M$ is a $\bT$-enriched lift of $\phi$, then $\G_*(f)(i)=\G\big(f(i)\big)$ and for any $i, j\in \I_{\R}$, the action of maps is given by
$$
\xymatrix@C=1.5cm{
\I_{\R}(i, j)\otimes \G_*(f)(i) = \R(\I(i, j))\otimes \G(f(i)) \ar[r]^-{\cong}_-{\eqref{iii}} & \G\big(\I(i, j)\otimes f(i)\big)\ar^-{\G(f(i,j))}[r] & \G(f(j)).\\
}$$ 
In particular, $\G_*$ fits into a commuting square
\begin{equation}\label{d:monadic}\vcenter{\xymatrix@C=2pc{
\Lift^{\bT}_{\phi}(\I, \N)\ar[rr]^-{\G_*}\ar[d]_{\ev_{\N}}  & & \Lift^{\bS}_{\phi_{\R}}(\I_{\R}, \M)\ar[d]^{\ev_{\M}}\\
\prod_{i\in \I} \N_{\phi(i)} \ar[rr]_-{\G'_{\ast}} & & \prod_{i\in \I} \M_{\phi_\R(i)}.
}}\end{equation}
where $\G'_{\ast}= \prod_{i\in \I} \G_{\phi(i)}$ is given by pointwise applying the corresponding functors $\G_t$ (see Remark \ref{r:relativequillen}). The functors $\ev_\N$ and $\ev_\M$ evaluate a section on the objects of $\I$.

\begin{lem}\label{l:projmod}
The category $\Lift^{\bT}_{\phi}(\I, \N)$ carries a combinatorial model structure (the \emph{projective} model structure) such that
$$\xymatrix{
\ev_\N: \Lift^{\bT}_{\phi}(\I, \N)\ar[r] & \prod_{i\in \I} \N_{\phi(i)}
}$$
is both a left and a right Quillen functor, which preserves and detects weak equivalences and fibrations. Similarly for $\Lift^{\bS}_{\phi_{\R}}(\I_{\R}, \M)$.
\end{lem}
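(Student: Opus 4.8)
The plan is to obtain the \emph{projective} model structure on $\Lift^{\bT}_{\phi}(\I,\N)$ by right transfer (right induction) along $\ev_{\N}$, exactly as one builds the projective model structure on an enriched diagram category valued in a fibred model category; the only genuine input is to recognise the acyclicity condition of the transfer as a consequence of the structural hypotheses on the square \eqref{e:BS}. First I would make the category of lifts transparent. A $\bT$-enriched lift of $\phi$ is a family $\{f(i)\}_{i\in\I}$ with $f(i)\in\N_{\phi(i)}$, together with, for each pair $i,j$, a morphism $\I(i,j)\otimes f(i)\lrar f(j)$ of $\N$ lying over $\phi(i,j):\I(i,j)\otimes\phi(i)\lrar\phi(j)$ (it lies over $\phi(i,j)$ because $\pi$ preserves the tensoring, by \eqref{ii}), subject to the usual unit and associativity constraints. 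Transposing each structure morphism along the coCartesian pushforward turns it into a morphism $(\phi(i,j))_!\big(\I(i,j)\otimes f(i)\big)\lrar f(j)$ in the fibre $\N_{\phi(j)}$, so that $\Lift^{\bT}_{\phi}(\I,\N)$ is the category of algebras over the monad $T$ on $\prod_{i\in\I}\N_{\phi(i)}$ with $(TX)_j=\coprod_{i}(\phi(i,j))_!\big(\I(i,j)\otimes X_i\big)$. Every building block of $T$ --- the coproduct over the objects of $\I$, the endofunctors $\I(i,j)\otimes(-)$, and the pushforwards $(\phi(i,j))_!$ --- is a left adjoint, so $T$ preserves all colimits and is in particular accessible. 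Hence $\Lift^{\bT}_{\phi}(\I,\N)$ is locally presentable and $\ev_{\N}$, being the forgetful functor of an accessible monad, preserves all limits and all colimits; by the adjoint functor theorem it therefore has a left adjoint $F$ (the free lift), with $F(X)_j=(\phi(i_0,j))_!\big(\I(i_0,j)\otimes X\big)$ for $X\in\N_{\phi(i_0)}$ placed at $i_0$, and a right adjoint (the cofree lift).

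Next I would declare a map of $\Lift^{\bT}_{\phi}(\I,\N)$ to be a weak equivalence, resp.\ a fibration, exactly when $\ev_{\N}$ sends it to one in $\prod_{i}\N_{\phi(i)}$, and take the cofibrations to be the maps with the left lifting property against the trivial fibrations, and then verify the hypotheses of the transfer theorem for cofibrantly generated model categories (see, e.g., \cite{Hir}). Since each $\N_{\phi(i)}$ is combinatorial, $\prod_{i}\N_{\phi(i)}$ is cofibrantly generated, with (trivial) generators obtained by placing generators of the factors, and applying $F$ produces candidate generating (trivial) cofibrations for $\Lift^{\bT}_{\phi}(\I,\N)$. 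The one non-formal point is the acyclicity condition: every relative cell complex built from maps $F(j)$, with $j$ a generating trivial cofibration of some $\N_{\phi(i)}$, must be sent by $\ev_{\N}$ to a weak equivalence. This is where the hypotheses enter: $\ev_{\N}\,F(j)$ is, at each object $k$ of $\I$, the map $(\phi(i,k))_!\big(\I(i,k)\otimes j\big)$, which is a trivial cofibration of $\N_{\phi(k)}$ because $\I(i,k)\otimes(-)$ is left Quillen by \eqref{ii} and $(\phi(i,k))_!$ is left Quillen by \cite[Lemma 3.6]{part0}; and since $\ev_{\N}$ preserves colimits, the image of such a cell complex is, levelwise, a cell complex of trivial cofibrations, hence a levelwise trivial cofibration and a fortiori a levelwise weak equivalence. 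The transfer theorem then delivers the projective model structure, which is combinatorial because the underlying category is locally presentable.

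Finally I would read off the remaining assertions. By construction $\ev_{\N}$ creates the weak equivalences and the fibrations, so it preserves and detects both, and it is right Quillen with left adjoint $F$. It is also left Quillen: it preserves colimits and retracts, and it sends each generating cofibration $F(c)$ (with $c$ a generating cofibration of some $\N_{\phi(i)}$) to the map whose $k$-th component is $(\phi(i,k))_!\big(\I(i,k)\otimes c\big)$, a cofibration of $\N_{\phi(k)}$ by the same two facts; hence it carries every cofibration of $\Lift^{\bT}_{\phi}(\I,\N)$ --- a retract of such a cell complex --- to a levelwise cofibration, and since it also preserves weak equivalences it preserves trivial cofibrations. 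So $\ev_{\N}$ is left Quillen as well, with right adjoint the cofree lift. The statement for $\Lift^{\bS}_{\phi_{\R}}(\I_{\R},\M)$ follows by the identical argument, since $\bS$, $\M$ and $\phi_{\R}$ satisfy the hypotheses used above with the $\bS$-tensoring $s\otimes(-)$ on $\M$ in place of the $\bT$-tensoring. The main obstacle is precisely the acyclicity verification, and it reduces exactly to the two facts that tensoring with a fixed object is a left Quillen endofunctor of the fibres (\eqref{ii}) and that coCartesian pushforward along a morphism of the base is left Quillen (\cite[Lemma 3.6]{part0}); everything else is a routine instance of right transfer along a functor that creates limits and colimits.
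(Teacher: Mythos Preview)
Your proposal is correct and follows essentially the same approach as the paper: both arguments obtain the projective model structure by right transfer along $\ev_{\N}$, describe the free lift explicitly as $\Free_{\N}(a_i)(j)=\phi(i,j)_!\big(\I(i,j)\otimes a\big)$, and verify the acyclicity condition using that $\I(i,j)\otimes(-)$ and $\phi(i,j)_!$ are left Quillen. Your version is somewhat more detailed (the monadic description, the explicit check that $\ev_{\N}$ is left Quillen, the citation of \cite[Lemma 3.6]{part0} for pushforward being left Quillen), but the substance is the same.
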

\begin{proof}
The functor $\ev_\N$ can be identified with the functor that restricts a lift along the inclusion $\Ob(\I)\lrar \I$. Consequently, it admits both a left and a right adjoint, given by (enriched) left and right Kan extension relative to $\phi$. Let us denote the left adjoint by $\Free_\N$. 

To describe this left adjoint, let $i\in \I$, $a\in \N_{\phi(i)}$ and let us write $a_i\in \prod_{i\in \I} \N_{\phi(i)}$ for the tuple $(\dots, \emptyset, a, \emptyset, \dots)$ given by $a$ at $i$ and initial objects for all $j\neq i$. Then the lift $\Free_\N(a_i)$ is given by
\begin{equation}\label{e:freeevaluated}
\Free_\N(a_i)(j) = \phi(i, j)_!\big(\I(i, j)\otimes a\big)
\end{equation}
where $\phi(i, j)_!: \N_{\I(i, j)\otimes \phi(i)}\lrar \N_{\phi(j)}$. 

Note that the union of all maps $a_i\lrar b_i$ arising from generating (trivial) cofibrations $a\lrar b$ in some $\N_{\phi(i)}$ serve as generating (trivial) cofibrations in $\prod_{i\in \I} \N_{\phi(i)}$. Since the functors $\phi(i, j)_!$ and $\I(i, j)\otimes (-)$ are left Quillen (assumption \eqref{ii}), it follows that $\ev_{\N}\circ \Free_{\N}:\prod_{i\in \I} \N_{\phi(i)} \lrar \prod_{i\in \I} \N_{\phi(i)}$ preserves (trivial) cofibrations. The result now follows from the usual transfer argument.
\end{proof}
In light of Proposition \ref{p:fiberwise equivalence} and Remark \ref{r:lotsofconditions}, Proposition \ref{p:not-there-yet} is now a special case of the following assertion:
\begin{pro}\label{p:not-there-yet-abstract}
The functor 
$$
\G_*: \Lift^{\bT}_{\phi}(\I, \N)\lrar \Lift^{\bS}_{\phi_{\R}}(\I_{\R}, \M)
$$
is a right Quillen functor, where both sides are endowed with the projective model structure. Furthermore, if the Quillen adjunctions $\F_t\dashv \G_t$ are Quillen equivalences for all $t\in \bT$ of the form $\phi(i)$ or $\I(i, j)\otimes \phi(i)$, then $\G_*$ is a Quillen equivalence.
\end{pro}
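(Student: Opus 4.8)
The plan is to read off that $\G_*$ is right Quillen from the square~\eqref{d:monadic}, and then to promote this to a Quillen equivalence via the standard criterion: $\F_*\dashv\G_*$ is a Quillen equivalence as soon as $\G_*$ reflects weak equivalences between fibrant objects and the derived unit $X\to\mathbb{R}\G_*\mathbb{L}\F_*X$ is a weak equivalence for every cofibrant $X$. For the first point, $\ev_\N$ and $\ev_\M$ preserve and detect weak equivalences and fibrations (Lemma~\ref{l:projmod}), $\G'_*=\prod_{i\in\I}\G_{\phi(i)}$ is a product of right Quillen functors (Remark~\ref{r:relativequillen}), and $\ev_\M\circ\G_*=\G'_*\circ\ev_\N$, so $\G_*$ preserves fibrations and trivial fibrations at once; since $\G$ preserves relative limits (Remark~\ref{r:relativequillen}) the functor $\G_*$ preserves the objectwise-computed limits of the lift categories and is accessible, hence admits a left adjoint $\F_*$ by the adjoint functor theorem, and is therefore right Quillen. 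I will use below that the strict identity $\ev_\M\G_*=\G'_*\ev_\N$ yields, by uniqueness of adjoints, a canonical isomorphism $\F_*\circ\Free_\M\cong\Free_\N\circ\F'_*$ with $\F'_*=\prod_i\F_{\phi(i)}$, and that $\ev_\M\mathbb{R}\G_*\cong\mathbb{R}\G'_*\ev_\N$ (because $\ev_\M,\ev_\N$ preserve all weak equivalences and right derived functors compose).

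Now assume each $\F_t\dashv\G_t$ with $t$ of the form $\phi(i)$ or $\I(i,j)\otimes\phi(i)$ is a Quillen equivalence, i.e.\ each $\mathbb{R}\G_t$ is an equivalence of homotopy categories. That $\G_*$ reflects weak equivalences between fibrant objects is immediate: if $X,Y$ are fibrant in $\Lift^{\bT}_{\phi}(\I, \N)$ then so are $\ev_\N X,\ev_\N Y$, each $\G_{\phi(i)}$ reflects weak equivalences between fibrant objects, and $\ev_\M$ detects weak equivalences, so a map $f$ with $\G_*f$ a weak equivalence has $\ev_\M\G_*f=\G'_*\ev_\N f$, hence $\ev_\N f$, hence $f$, a weak equivalence. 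For the derived unit I would reduce to free cells. The functors $\mathbb{L}\F_*=\F_*$ and $\ev_\N$ are left Quillen, hence preserve homotopy colimits; $\mathbb{R}\G'_*=\prod_i\mathbb{R}\G_{\phi(i)}$ is (componentwise) an equivalence of homotopy categories, so it preserves homotopy colimits as well; and $\ev_\M$ detects weak equivalences while $\ev_\M\mathbb{R}\G_*\cong\mathbb{R}\G'_*\ev_\N$. It follows that the class of cofibrant $X$ for which $\ev_\M$ of the derived unit is a weak equivalence is closed under homotopy colimits, transfinite composition of cofibrations, coproducts and retracts. Since every cofibrant object of $\Lift^{\bS}_{\phi_{\R}}(\I_{\R}, \M)$ is a retract of a cell complex assembled from maps $\Free_\M(a_i)\to\Free_\M(b_i)$, it suffices to treat free cells $\Free_\M(a_i)$ with $a\in\M_{\phi_\R(i)}$ cofibrant (in the setting of Remark~\ref{r:lotsofconditions} all objects are cofibrant, so the domains of the generating cofibrations may be taken cofibrant).

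Finally, for $X=\Free_\M(a_i)$ with $a$ cofibrant one has $\mathbb{L}\F_*X=\Free_\N(\F_{\phi(i)}(a)_i)$, and evaluating at an object $k\in\I$ the derived unit becomes, via $\ev_\M\mathbb{R}\G_*\cong\mathbb{R}\G'_*\ev_\N$ and~\eqref{e:freeevaluated}, the map
$$ \phi_\R(i,k)_!\big(\R(\I(i,k))\otimes a\big)\lrar\mathbb{R}\G_{\phi(k)}\Big(\phi(i,k)_!\big(\I(i,k)\otimes\F_{\phi(i)}(a)\big)\Big). $$
Applying $\R(\I(i,k))\otimes(-)$ — which preserves weak equivalences and fibrant objects, condition~\eqref{ii} — to the derived unit of the Quillen equivalence $\F_{\phi(i)}\dashv\G_{\phi(i)}$ and invoking condition~\eqref{iii} identifies $\R(\I(i,k))\otimes a$ with $\mathbb{R}\G_{\I(i,k)\otimes\phi(i)}\big(\I(i,k)\otimes\F_{\phi(i)}(a)\big)$; since $\F_{\I(i,k)\otimes\phi(i)}\dashv\G_{\I(i,k)\otimes\phi(i)}$ is also a Quillen equivalence, applying its inverse $\mathbb{L}\F_{\I(i,k)\otimes\phi(i)}$ yields $\I(i,k)\otimes\F_{\phi(i)}(a)\simeq\mathbb{L}\F_{\I(i,k)\otimes\phi(i)}\big(\R(\I(i,k))\otimes a\big)$. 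Combining this with the Beck--Chevalley isomorphism $\phi(i,k)_!\circ\F_{\I(i,k)\otimes\phi(i)}\cong\F_{\phi(k)}\circ\phi_\R(i,k)_!$ — obtained by passing to left adjoints in $\G_{\I(i,k)\otimes\phi(i)}\circ\phi(i,k)^*\cong\phi_\R(i,k)^*\circ\G_{\phi(k)}$, valid because $\G$ preserves Cartesian edges (condition~\eqref{iv}, Remark~\ref{r:relativequillen}) — the target rewrites as $\mathbb{R}\G_{\phi(k)}\mathbb{L}\F_{\phi(k)}\big(\phi_\R(i,k)_!(\R(\I(i,k))\otimes a)\big)$ and the displayed map is identified with the derived unit of the Quillen equivalence $\F_{\phi(k)}\dashv\G_{\phi(k)}$ at the cofibrant object $\phi_\R(i,k)_!(\R(\I(i,k))\otimes a)$, hence a weak equivalence, which completes the argument. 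I expect the main obstacle to be exactly this last identification: checking that, once all of the above isomorphisms and the Beck--Chevalley map are threaded together, the map actually produced by the adjunction $\F_*\dashv\G_*$ coincides with the derived unit of $\F_{\phi(k)}\dashv\G_{\phi(k)}$. This is a routine but somewhat laborious chase through the coherence data of~\eqref{ii} and~\eqref{iii} and the various mate transformations; everything else is formal manipulation with homotopy colimits and Quillen equivalences.
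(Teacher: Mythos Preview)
Your proposal is correct and follows essentially the same approach as the paper: right Quillen via the square~\eqref{d:monadic} and the adjoint functor theorem, detection of weak equivalences, reduction of the derived unit to free cells via closure under homotopy colimits, and then an explicit computation on $\Free_\M(a_i)$ using the Beck--Chevalley isomorphism and the fiberwise Quillen equivalences. The only organizational difference is in the final step. You rewrite the target as $\RR\G_{\phi(k)}\LL\F_{\phi(k)}$ applied to the source and then claim the map is the derived unit of $\F_{\phi(k)}\dashv\G_{\phi(k)}$, flagging the coherence chase as the residual work. The paper instead takes the adjoint of the displayed map under $\F_{\phi(j)}\dashv\G_{\phi(j)}$, uses $\F_{\phi(j)}\circ\phi_\R(i,j)_!\cong\phi(i,j)_!\circ\F_{t\otimes\phi(i)}$ to recognize the resulting map as $\phi(i,j)_!$ applied to the Beck--Chevalley transformation $\F_{t\otimes\phi(i)}(\R(t)\otimes a)\to t\otimes\F_{\phi(i)}(a)$ of an explicit commuting square, and then takes the adjoint once more under $\F_{t\otimes\phi(i)}\dashv\G_{t\otimes\phi(i)}$ to identify it with $\R(t)\otimes\eta$ composed with the isomorphism of condition~\eqref{iii}. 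This two-step passage to adjoints makes the ``laborious chase'' you anticipate entirely explicit and short: the coherence amounts to unwinding the definition of a single Beck--Chevalley map, rather than tracking how several equivalences interact with the original unit.
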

\begin{proof}
Clearly $\G_*$ preserves fibrations and weak equivalences, since it is given pointwise by the right Quillen functors $\G_{t}$. Since $\G_*$ is accessible and preserves limits, the adjoint functor theorem provides a left adjoint $\F_*$, so that $\G_*$ is right Quillen. Furthermore, if all the $\G_t$ are Quillen equivalences, then the right derived functor $\RR\G_*$ detects weak equivalences (which are determined pointwise). It therefore suffices to show that the derived unit map $\id\lrar \RR\G_*\LL\F_*$ is an equivalence. 

Since the evaluation functor $\ev_\M: \Lift^{\bT}_{/\phi_{\R}}(\I_\R, \M)\lrar \prod_{i\in \I} \M_{\phi_{\R}(i)}$ detects weak equivalences, it suffices to show that the natural transformation
$$
\RR\ev_\M\lrar \RR\ev_\M\RR\G_*\LL\F_*
$$
is an equivalence. Let $\K$ be the class of objects $f$ in $\Lift^{\bS}_{\phi_{\R}}(\I_{\R}, \M)$ for which this map is an equivalence. Since $\RR \G_*$ and $\RR \ev_\M$ preserve homotopy colimits (which are computed pointwise by Lemma \ref{l:projmod}), the class $\K$ is closed under all homotopy colimits.

Since every object arises (up to weak equivalence and retracts) from a transfinite composition of homotopy pushouts of maps $\LL \Free_{\M}(a_i)\lrar \LL \Free_{\M}(b_i)$, for cofibrations $a\lrar b$ in various $\M_{\phi_{\R}(i)}$, it suffices to show that the class $\K$ contains all $\LL \Free_{\M}(a_i)$. Let $a\in \M_{\phi_\R(i)}$ be a cofibrant object and let $a_i\in\prod_{i\in \I} \M_{\phi_{\R}(i)}$ be the induced object. The square \eqref{d:monadic} induces a commuting square of left adjoints, so that there is an isomorphism of (cofibrant) lifts of $\phi$
$$
\F_*\big(\Free_\M(a_i)\big)\cong \Free_\N\big(\F'_{*}(a_i)\big)
$$
where $\F'_*$ is the left adjoint of $\G'_*$, given by pointwise applying $\F_{\phi(i)}$. Using formula \eqref{e:freeevaluated}, we have to verify that for every $j\in \I$, the map
$$\xymatrix{
\phi_{\R}(i, j)_!\big(\I_\R(i, j)\otimes a)\ar[r] & \RR\G_{\phi(j)}\left(\phi(i, j)_!\otimes \F_{\phi(i)}(a)\right)
}$$
is a weak equivalence. Let us denote $t:=\I(i, j)$, so that $\I_{\R}(i, j)=\R(t)$. Since $\G_{\phi(i)}$ is a Quillen equivalence, the above map is an equivalence if its derived adjoint
\begin{equation}\label{e:unitatpoint}\xymatrix{
\F_{\phi(j)}\left(\phi_{\R}(i, j)_!\big(\R(t)\otimes a)\right)\ar[r] & \phi(i, j)_!\big(t\otimes \F_{\phi(i)}(a)\big)
}\end{equation}
is an equivalence (note that all objects involved are cofibrant, since $a$ is cofibrant and $\R(t)\otimes (-)$ is left Quillen by assumption \eqref{ii}). It follows from Remark \ref{r:relativequillen} that
$$
\F_{\phi(j)}\circ \phi_{\R}(i, j)_! \cong \phi(i, j)_!\circ \F_{t\otimes \phi(i)}.
$$
Under this isomorphism, the map \eqref{e:unitatpoint} is the image under $\phi(i, j)_!$ of the map between cofibrant objects
$$\xymatrix{
\F_{t\otimes \phi(i)}\big(\R(t)\otimes a)\ar[r] & t\otimes \F_{\phi(i)}(a).
}$$
It therefore suffices to verify that this map is a weak equivalence in $\N_{t\otimes \phi(i)}$. Note that this is the Beck-Chevalley transformation of the square
$$ \xymatrix@C=3cm{
\N_{\phi(i)} \ar^{\G_{\phi(i)}}[r]\ar_{t \otimes (-)}[d] & \M_{\phi_\R(i)} \ar^{\R(t) \otimes (-)}[d] \\
\N_{\I(i,j) \otimes \phi(i)} \ar_-{\G_{t \otimes \phi(i)}}[r] & \M_{\R(t) \otimes \phi_\R(i)}. \\
}$$
Since $\F_{t\otimes \phi(i)}$ is a left Quillen equivalence, it suffices to verify that the derived adjoint map is a weak equivalence. Unwinding the definitions, this derived adjoint can be identified with the composite
\begin{equation}\label{e:derivedadj}\xymatrix{
\R(t)\otimes a\ar[r]^-{\R(t)\otimes \eta} & \R(t)\otimes \G_{\phi(i)}\left(\F_{\phi(i)}(a)^{\fib}\right) \ar[r]^-{\cong} & \G_{t\otimes \phi(i)}\left(t\otimes \F_{\phi(i)}(a)^{\fib}\right).
}\end{equation}
Note that the codomain of this map indeed computes $\RR\G_{t\otimes \phi(i)}\big(t\otimes \F_{\phi(i)}(a)\big)$, because $t\otimes (-)$ preserves fibrant objects (see \eqref{ii}). The second map is the isomorphism from \eqref{iii} and the first map is the image under $t\otimes (-)$ of the derived unit map $\eta$ of the Quillen equivalence $\F_{\phi(i)}\dashv \G_{\phi(i)}$. Since $t\otimes (-)$ preserves all weak equivalences (by \eqref{ii}), it follows that \eqref{e:derivedadj} is a weak equivalence, which concludes the proof.
\end{proof}

\subsection{Families of marked left fibrations}\label{s:markedfibfam}
Proposition \ref{p:not-there-yet} identifies the tangent model category $\T_{\CC}\Cat^+_\Del$ with a certain model category of enriched lifts of a diagram of marked simplicial sets against $\LFib_{\Sp}\lrar \Set_{\Del}^+$. Informally, one can think of an enriched lift of such a diagram $\F: \I\lrar \Set_\Del^+$ as a collection of $\infty$-functors $g_i:\F(i)\lrar \Sp$ for each $i\in \I$, together with a coherent family of natural transformations
$$\xymatrix{
\I(i, j)\times \F(i)\ar[r]^-{\pi_2}\ar[d] & \F(i)\ar[d]^{g_i} \ar@{=>}[0, 0]-<16pt, 14pt>;[1, -1]+<16pt, 14pt>\\
\F(j)\ar[r]_{g_j} & \Sp.
}$$
To prove Theorem \ref{t:main2}, we will show that the data of such a family of diagrams of spectra is equivalent to the data of a diagram of spectra over the unstraightening of $\F$. This section is devoted to a proof of a preliminary unstable analogue of this result:
\begin{pro}\label{p:sect}
Let $\I$ be a marked-simplicial category and let $\F: \I\lrar \Set_\Del^+$ be a projectively fibrant diagram. Then there is a Quillen equivalence
$$
\St^{\cov} : \Set_\Del^{\Un^{\sca}(\F)} \x{\simeq}{\adj} \Lift_{\F}\big(\I, \LFib\big): \Un^{\cov}
$$
between the marked covariant model structure over the scaled unstraightening of $\F$ and the projective model structure on enriched lifts of $\F$ against $\LFib\lrar \Set^+_\Del$, as in Lemma \ref{l:projmod}.
\end{pro}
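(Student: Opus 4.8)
The plan is to deduce the statement from the scaled straightening--unstraightening Quillen equivalence of~\cite[Theorem~3.8.1]{goodwillie} by passing to slice categories and comparing left Bousfield localizations, in the spirit of the unmarked argument of~\cite[\S 3.3]{part2}.

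\emph{Step 1: reinterpret the right-hand side.} An enriched lift of $\F$ against $\ev_1\colon\LFib\to\Set^+_\Del$ is, after forgetting that its value at $i$ is a fibration over $\F(i)$, an enriched natural transformation $G_0\Rightarrow\F$ of functors $\I\to\Set^+_\Del$, that is, an object of the slice model category $\big(\Fun^+(\I,\Set^+_\Del)_{\proj}\big)_{/\F}$; and such a transformation underlies a lift against $\LFib$ precisely when each $G_0(i)\to\F(i)$ is a marked left fibration. Since $\LFib$ is, by Construction~\ref{c:diagrams}, a fibrewise left Bousfield localization of the relative model category $(\Set^+_\Del)^{[1]}$ (whose fibre over $X$ is the slice model structure on $(\Set^+_\Del)_{/X}$), a routine comparison of generating trivial cofibrations via Lemma~\ref{l:projmod} identifies $\Lift_\F(\I,\LFib)$ with the left Bousfield localization of $\big(\Fun^+(\I,\Set^+_\Del)_{\proj}\big)_{/\F}$ whose fibrant objects are the natural transformations $G_0\Rightarrow\F$ that are objectwise marked left fibrations.

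\emph{Step 2: transport along scaled unstraightening, then localize.} Since $\I$ is fibrant, the counit $\vphi\colon\fC^{\sca}(\rN^{\sca}(\I))\to\I$ is a bicategorical equivalence, so by~\cite[Theorem~3.8.1]{goodwillie} the adjunction
$$\St^{\sca}\colon (\Set^+_\Del)^{\lcc}_{/(\rN^{\sca}(\I),T)}\adj \Fun^+(\I,\Set^+_\Del)\colon\Un^{\sca}$$
is a Quillen equivalence, $T$ denoting the scaling of $\rN^{\sca}(\I)$. As $\F$ is fibrant and slicing a Quillen equivalence over a fibrant object of its codomain again yields a Quillen equivalence, $\Un^{\sca}$ restricts to a Quillen equivalence between $\big(\Fun^+(\I,\Set^+_\Del)_{\proj}\big)_{/\F}$ and the iterated slice $\big((\Set^+_\Del)^{\lcc}_{/(\rN^{\sca}(\I),T)}\big)_{/\Un^{\sca}(\F)}$. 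It then remains to identify this iterated slice, together with the localization from Step~1 transported across it, with $\Set_\Del^{\Un^{\sca}(\F)}$. For this I would invoke a relative recognition principle for fibrations in the $\lcc$ model structure, in the spirit of~\cite[Remark~2.4.2.13]{Lur09} and~\cite[Appendix~B]{Lur14}: since $\Un^{\sca}(\F)\to\rN^{\sca}(\I)$ is a $T$-locally coCartesian fibration, slicing $(\Set^+_\Del)^{\lcc}_{/(\rN^{\sca}(\I),T)}$ over it yields the $\Beta$-fibered model structure for the categorical pattern on $\Un^{\sca}(\F)$ whose marked edges are the locally coCartesian ones and whose thin triangles lie over $T$. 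Under this identification the transported localization is exactly the one of Remark~\ref{r:local}, i.e.\ localizing at the left horn inclusions over $\Un^{\sca}(\F)$ and at $(\Del^1)^\flat\to(\Del^1)^\sharp$ on its marked edges, whose result is the marked covariant model structure $\Set_\Del^{\Un^{\sca}(\F)}$; here one may disregard the precise marking on $\Un^{\sca}(\F)$ by Remark~\ref{r:forgetmarking}. Composing the Quillen equivalences of Steps~1 and~2 yields the assertion, and tracing through the identifications shows that the resulting right adjoint takes an enriched lift $G$ to the scaled unstraightening $\Un^{\sca}(G_0)\to\Un^{\sca}(\F)$ of its domain, which explains the notation $\Un^{\cov}$.

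\emph{Main obstacle.} The delicate point is the last identification: proving the relative recognition principle for $\lcc$-fibrations over the $\Beta$-fibered base $\Un^{\sca}(\F)$, and checking that the localizing set carried across by scaled unstraightening is genuinely the one cutting out marked left fibrations over $\Un^{\sca}(\F)$ --- equivalently, that being a marked left fibration over $\Un^{\sca}(\F)$ can be detected fibrewise over $\I$. The remaining ingredients --- compatibility of slicing with Quillen equivalences and with fibrewise localizations (cf.~\cite[\S 3]{part0}), and the bookkeeping of the two markings on $\Un^{\sca}(\F)$ --- are routine if somewhat tedious.
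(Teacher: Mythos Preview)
Your proposal is correct and follows essentially the same route as the paper: reinterpret $\Lift_\F(\I,\LFib)$ as the left Bousfield localization $\Fun^+(\I,\Set^+_\Del)^{\cov}_{/\F}$ of the projective slice, transport via the sliced scaled straightening--unstraightening equivalence, and then match localizations. The ``relative recognition principle'' you flag as the main obstacle is exactly what the paper isolates as Lemma~\ref{l:joyal}, and the fibrewise detection of marked left fibrations over $\Un^{\sca}(\F)$ is precisely the content of Proposition~\ref{p:bousfield}, proved there via~\cite[Propositions~2.4.2.4, 2.4.2.8, 2.4.2.11]{Lur09} together with a reduction to $\I=\ast$ using~\cite[Proposition~3.6.1]{goodwillie}; the paper compares local objects directly (via~\cite[Theorem~3.3.20]{Hir}) rather than transporting localizing sets, but this is a tactical rather than strategic difference.
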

Let us start by describing the projective model structure on $\Lift_{\F}\big(\I, \LFib\big)$ in a bit more detail. Since the projection $\LFib\lrar \Set_\Del^+$ is simply given by the codomain fibration $\ev_1: \big(\Set_\Del^+\big)^{[1]}\lrar \Set_\Del^+$ at the level of categories, there is an equivalence of categories
$$
\Lift_{\F}\big(\I, \LFib\big)\simeq \Fun^+(\I,\Set^+_\Del)_{/\F}
$$
between the category of lifts of $\F$ and the category of enriched functors $\I\lrar \Set^+_\Del$ over $\F$. If $f: \tilde{\F}\lrar \tilde{\F}'$ is a map of lifts of $\F$, then $f$ is a weak equivalence (fibration) if each $f_i: \tilde{\F}(i)\lrar \tilde{\F}'(i)$ is a weak equivalence (fibration) in the marked covariant model structure on $(\Set^+_{\Del})_{/\F(i)}$. Under the above equivalence of categories, the projective model structure therefore corresponds to the following model structure on $\Fun^+(\I,\Set^+_\Del)_{/\F}$:

\begin{define}\label{d:marked-left2}
Let $\F: \I\lrar \Set_\Del^+$ be a projectively fibrant enriched functor. We will denote by $\Fun^+(\I,\Set^+_\Del)^{\cov}_{/\F}$ the model category of enriched functors over $\F$, in which a map $\G\lrar \HH\lrar \F$ is a weak equivalence (fibration) if and only if each $\G(i)\lrar \HH(i)\lrar \F(i)$ is a weak equivalence (fibration) in the marked covariant model structure on $(\Set^+_\Del)_{/\F(i)}$. We note that $\Fun^+(\I,\Set^+_\Del)^{\cov}_{/\F}$ has the same trivial fibrations and more weak equivalences than $\Fun^+(\I,\Set^+_\Del)^{\proj}_{/\F}$, and is hence a left Bousfield localization of the latter.
\end{define}

Given a projectively fibrant functor $\F: \I\lrar \Set_\Del^+$, the straightening-unstraightening equivalence of \cite{goodwillie} (recalled in \S\ref{s:straightening}) induces a Quillen equivalence on slice model categories
\begin{equation}\label{e:useful-1}
\St^{\sca}: \left((\Set^+_\Del)^{\lcc}_{/\rN^{\sca}(\I)}\right)_{/\Un^{\sca}(\F)} \x{\simeq}{\adj} \Fun^+(\I, \Set^+_\Del)^{\proj}_{/\F}: \Un^{\sca}.
\end{equation}
It will be useful to describe the left hand side of~\eqref{e:useful-1} in terms of a suitable categorical pattern model structure. For this we will make use of the following general lemma concerning categorical pattern model structures:

\begin{lem}\label{l:joyal}
Let $\Beta = (\uline{S}, E_S,T_S)$ be as in \S\ref{s:straightening} and let $S$ be the marked simplicial set $(\uline{S}, E_S)$. For each $\Beta$-fibered object $p: X=(\uline{X}, E_X)\lrar S$, the slice model structure on $\left((\Set^+_{\Del})_{/\Beta}\right){}_{/X}$ gives a model structure on the equivalent category $(\Set^+_{\Del})_{/X}$. This model structure coincides with the model structure associated to the categorical pattern $p^*\Beta = (\uline{X}, E_X,p^{-1}(T_S))$ on $\uline{X}$. 
\end{lem}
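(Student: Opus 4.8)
The plan is to compare the two model structures directly on their common underlying category $(\Set^+_\Del)_{/X}$: on the one hand the slice model structure $\M_1 := \big((\Set^+_\Del)_{/\Beta}\big)_{/X}$ --- an object of which is a marked simplicial set $Z$ equipped with a factorization $Z \lrar X \lrar S$ of its structure map, and since the map $Z \lrar S$ is recovered from $Z \lrar X$ this category is canonically $(\Set^+_\Del)_{/X}$ --- and on the other hand the categorical pattern model structure $\M_2 := (\Set^+_\Del)_{/p^*\Beta}$. I will check that $\M_1$ and $\M_2$ have the same cofibrations and the same fibrant objects, and then conclude $\M_1 = \M_2$ using that a model structure on a category is determined by its class of cofibrations together with its class of fibrant objects.

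The cofibrations agree on the nose: by \cite[Theorem B.0.20]{Lur14} the cofibrations of $\M_2$ are the monomorphisms of marked simplicial sets, while the cofibrations of the slice $\M_1$ are created by the forgetful functor to $(\Set^+_\Del)_{/\Beta}$, whose cofibrations are again the monomorphisms. The substance of the lemma is therefore the comparison of fibrant objects. Since $X$ is a $\Beta$-fibered object it is fibrant in $(\Set^+_\Del)_{/\Beta}$ (indeed the terminal object $\id_S$ of $(\Set^+_\Del)_{/S}$ is readily checked to be $\Beta$-fibered), so the fibrant objects of $\M_1$ are exactly the fibrations $q \colon Z \lrar X$ of $(\Set^+_\Del)_{/\Beta}$, whereas the fibrant objects of $\M_2$ are by definition the $p^*\Beta$-fibered maps $q \colon Z \lrar X$. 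Everything thus comes down to showing that a map $q \colon (\underline Z, E_Z) \lrar (\underline X, E_X)$ over $(S, E_S)$, with $X$ a $\Beta$-fibered object, is a fibration in $(\Set^+_\Del)_{/\Beta}$ if and only if it is $p^*\Beta$-fibered.

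For the ``only if'' direction I would use that a fibration $q$ into the fibrant object $X$ has $\Beta$-fibered domain $Z$, and then verify conditions (1)--(3) of the definition of a $p^*\Beta$-fibered object: condition (1), that $\underline Z \lrar \underline X$ is an inner fibration, holds since $q$ is in particular an inner fibration of the underlying simplicial sets (inner horn inclusions being $\Beta$-anodyne); and conditions (2)--(3) follow by transporting through $q$ the coCartesian structures over marked edges (resp.\ thin triangles) of $S$ carried by $Z$ and $X$, using that $q$ preserves markings, that for a $\Beta$-fibered object the marked edges lying over a marked edge of $S$ are precisely the coCartesian ones, and that $q$ is a fibration (the latter being what forces the ``exactly'' clause in condition (2)). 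For the ``if'' direction, since $(\Set^+_\Del)_{/\Beta}$ is combinatorial it is enough to check that a $p^*\Beta$-fibered $q$ has the right lifting property against a set of generating trivial cofibrations; taking these to be built from the $\Beta$-anodyne morphisms of \cite[Appendix B]{Lur14}, each such lifting problem that lies over $X$ is solved directly by one of the defining lifting properties of a $p^*\Beta$-fibered object, together with the lifting of inner anodyne maps against the inner fibration $\underline Z \lrar \underline X$. I expect the hard part to be exactly this equivalence between ``fibration into the fibrant object $X$ in $(\Set^+_\Del)_{/\Beta}$'' and ``$p^*\Beta$-fibered'': \cite[Appendix B]{Lur14} does not record an explicit description of all fibrations of a categorical pattern model structure, so one must either carry out the routine but somewhat delicate bookkeeping matching a generating set of trivial cofibrations against the lifting conditions defining a $p^*\Beta$-fibered object, or else extract from \cite[Appendix B]{Lur14} the general principle that categorical pattern model structures are stable under slicing over a fibrant object, with the pattern pulling back along the structure map --- which is a restatement of the lemma.
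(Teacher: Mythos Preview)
Your overall strategy --- same cofibrations, compare fibrant objects --- matches the paper, but the ``if'' direction has a genuine gap that you yourself flag without resolving. You write that it suffices to lift against ``a set of generating trivial cofibrations; taking these to be built from the $\Beta$-anodyne morphisms''. This is exactly what is \emph{not} available: the $\Beta$-anodyne maps are contained in the trivial cofibrations but do not generate them in general (the categorical pattern model structure is a left Bousfield localization, and after localization there are typically many more trivial cofibrations than the original anodyne maps). So ``$q$ has the right lifting property against $\Beta$-anodyne maps'' is a priori strictly weaker than ``$q$ is a fibration in $(\Set^+_\Del)_{/\Beta}$'', and bridging this gap is the actual content of the lemma, not routine bookkeeping.

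The paper closes this gap with a mapping space argument. Given a trivial cofibration $i:A\lrar B$ in $(\Set^+_\Del)_{/\Beta}$, one considers the map
\[
\tau:\Map^{\sharp}_S(B,Y)\lrar \Map^{\sharp}_S(B,X)\times_{\Map^{\sharp}_S(A,X)}\Map^{\sharp}_S(A,Y).
\]
Since $X$ and $Y$ are both $\Beta$-fibered over $S$, the composite $\pi_2\tau$ and the projection $\pi_2$ are trivial Kan fibrations. The key step is that $\tau$ is a \emph{left} fibration: for any left anodyne $j:C\lrar D$, the map $j^{\sharp}$ is $\Beta$-anodyne, so the pushout-product of $i$ with $j^{\sharp}$ is $\Beta$-anodyne, and $q$ lifts against it by hypothesis. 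A left fibration with contractible fibers is a trivial fibration, so $\tau$ is surjective on vertices and the lifting problem is solved. This argument uses only the right lifting property against $\Beta$-anodyne maps together with the simplicial enrichment, and avoids any claim about generating trivial cofibrations. Your ``only if'' direction is also more laborious than needed: once you observe that a map over $X$ is $p^*\Beta$-anodyne iff its image in $(\Set^+_\Del)_{/S}$ is $\Beta$-anodyne, that direction is immediate since fibrations lift against all trivial cofibrations.
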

\begin{proof}
Since both model structures have the same cofibrations, it suffices to show they have the same fibrant objects. In other words, we need to show that a map $q:Y \lrar X$ of marked simplicial sets over $S$ is a fibration in $(\Set^+_{\Del})_{/\Beta}$ if and only if is has the right lifting property with respect to all $p^*\Beta$-anodyne maps in $(\Set^+_{\Del})_{/X}$. By the construction of $p^*\Beta$ we see that a map is $p^*\Beta$-anodyne if and only if it forgets to a $\Beta$-anodyne map in $(\Set^+_{\Del})_{/S}$. It therefore suffices to show that $q$ is a fibration in $(\Set^+_{\Del})_{/\Beta}$ if and only if is has the right lifting property with respect to all $\Beta$-anodyne maps. 

One direction is clear, since every $\Beta$-anodyne map is a trivial cofibration in $(\Set^+_{\Del})_{/\Beta}$. To prove the other direction, assume that $q:Y \lrar X$ has the right lifting property with respect to all $\Beta$-anodyne maps. We wish to show that $q$ is a fibration in $(\Set^+_{\Del})_{/\Beta}$. Let $i: A \lrar B$ be a trivial cofibration in $(\Set^+_{\Del})_{/\Beta}$ and consider the diagram of mapping spaces
$$\xymatrix{
\Map^{\sharp}_S(B,Y) \ar[r]^-{\tau} & \Map^{\sharp}_S(B,X) \times_{\Map^{\sharp}_S(A,X)} \Map_S^{\sharp}(A,Y)\ar[r]^-{\pi_2} & \Map_S^{\sharp}(A,Y).
}$$
It suffices to verify that $\tau$ is a trivial Kan fibration. Since $X$ and $Y$ are both $\Beta$-fibered over $S$, it follows that the map $\pi_2$ and the composite $\pi_2\tau$ are trivial Kan fibrations. 

On the other hand, the map $\tau$ is a left fibration: indeed, this follows from the fact that for every left anodyne map $j: C\lrar D$, the map $j^\sharp: C^\sharp\lrar D^\sharp$ is $\Beta$-anodyne, so that the pushout-product of $i$ and $j^\sharp$ is $\Beta$-anodyne as well. Since $\pi_2$ is a trivial fibration, the fibers of $\tau$ are equivalent to the fibers of $\pi_2\tau$ and are hence contractible. We conclude that the left fibration $\tau$ is a trivial fibration.
\end{proof}

Using Lemma~\ref{l:joyal} we can reformulate~\eqref{e:useful-1} as follows.
Let $\Beta=(\Un^{\sca}(\F), E, T)$, where $E$ is the set of marked edges of $\Un^{\sca}(\F)$ and $T$ is the set of triangles which map to thin triangles in $\rN^{\sca}(\I)$. Combining~\eqref{e:useful-1} with Lemma~\ref{l:joyal} we then obtain a Quillen equivalence
\begin{equation}\label{e:useful-2}
\St^{\sca}: (\Set^+_{\Del})_{/\Beta} \x{\simeq}{\adj} \Fun^+(\I, \Set^+_\Del)^{\proj}_{/\F} :\Un^{\sca}.
\end{equation}
In light of the above discussion, Proposition \ref{p:sect} can now be reformulated as follows:
\begin{pro}\label{p:bousfield}
The Quillen equivalence~\eqref{e:useful-2} descends to a Quillen equivalence
$$
\St^{\cov} : \Set_\Del^{\Un^{\sca}(\F)}=(\Set^+_\Del)^{\cov}_{/\Un^{\sca}(\F)} \x{\simeq}{\adj} \Fun^+(\I, \Set^+_\Del)^{\cov}_{/\F}: \Un^{\cov}
$$
between the model categories of Definition \ref{d:lcc-left} and Definition \ref{d:marked-left2}.
\end{pro}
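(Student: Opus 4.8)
The plan is to realize both model categories in the statement as left Bousfield localizations of the two sides of the Quillen equivalence~\eqref{e:useful-2}, and then to apply the general principle that a Quillen equivalence between left proper combinatorial model categories descends to a Quillen equivalence of left Bousfield localizations, provided the two classes of local objects correspond under the derived adjunction. On the left, Remark~\ref{r:local} (applied with $S = \Un^{\sca}(\F)$ carrying its marking $E$ and scaling $T$) identifies $(\Set^+_\Del)^{\cov}_{/\Un^{\sca}(\F)}$ with the left Bousfield localization of $(\Set^+_\Del)_{/\Beta}$ at the explicit set $\bS$ of left horn inclusions $(\Lam^n_i)^\flat \hrar (\Del^n)^\flat$ over simplices of $\Un^{\sca}(\F)$ (for $0 \leq i < n$) together with the maps $(\Del^1)^\flat \lrar (\Del^1)^\sharp$ over marked edges of $\Un^{\sca}(\F)$; its fibrant objects are exactly the marked left fibrations over $\Un^{\sca}(\F)$. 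On the right, Definition~\ref{d:marked-left2} already records that $\Fun^+(\I,\Set^+_\Del)^{\cov}_{/\F}$ is a left Bousfield localization of $\Fun^+(\I,\Set^+_\Del)^{\proj}_{/\F}$, and by Lemma~\ref{l:projmod} together with Remark~\ref{r:fibrations} its fibrant objects are precisely the lifts $\tilde\F$ of $\F$ for which each $\tilde\F(i) \lrar \F(i)$ is a marked left fibration.

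Granting this, the general localization machinery (as in~\cite[Ch.~3]{Hir}) shows that $\St^{\sca} \dashv \Un^{\sca}$ descends to a Quillen equivalence between $(\Set^+_\Del)^{\cov}_{/\Un^{\sca}(\F)}$ and the localization of $\Fun^+(\I,\Set^+_\Del)^{\proj}_{/\F}$ at the derived image $\LL\St^{\sca}(\bS)$, and that a fibrant lift $\tilde\F$ is $\LL\St^{\sca}(\bS)$-local if and only if $\Un^{\sca}(\tilde\F)$ is $\bS$-local, i.e.\ a marked left fibration over $\Un^{\sca}(\F)$. Comparing the two descriptions of fibrant objects, the proposition reduces to the single assertion $(\star)$: \emph{for a fibrant lift $\tilde\F$ of $\F$ in $\Fun^+(\I,\Set^+_\Del)^{\proj}_{/\F}$, the map $\Un^{\sca}(\tilde\F) \lrar \Un^{\sca}(\F)$ is a marked left fibration if and only if each $\tilde\F(i) \lrar \F(i)$ is a marked left fibration.} The functors $\St^{\cov}$ and $\Un^{\cov}$ of the statement are then simply $\St^{\sca}$ and $\Un^{\sca}$, now regarded between the localized model structures.

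The ``only if'' half of $(\star)$ is formal: the base change of $\Un^{\sca}(\tilde\F) \lrar \Un^{\sca}(\F)$ along the inclusion of $\F(i)$ as the fibre over $i$ of $\Un^{\sca}(\F) \lrar \rN^{\sca}(\I)$ is the map $\tilde\F(i) \lrar \F(i)$, and marked left fibrations are stable under base change along maps of marked simplicial sets; one needs only check, directly from the construction of the scaled unstraightening in~\cite{goodwillie}, that the fibre $\tilde\F(i)$ inherits its native marking and that $\F(i) \hrar \Un^{\sca}(\F)$ reflects marked edges. The ``if'' half is the heart of the matter, and is the $(\infty,2)$-categorical counterpart of the classical fact that the unstraightening of a diagram valued in $\infty$-groupoids is a left fibration. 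I would prove it in two steps. First, that $\Un^{\sca}(\tilde\F) \lrar \Un^{\sca}(\F)$ has the right lifting property against all left-anodyne maps over $\Un^{\sca}(\F)$: via the adjunction $\St^{\sca} \dashv \Un^{\sca}$ this amounts to checking that $\St^{\sca}$ sends such maps to lifts that are levelwise marked covariant trivial cofibrations, against which $\tilde\F$, being levelwise a marked left fibration, has the lifting property. Second, that an edge of $\Un^{\sca}(\tilde\F)$ is marked exactly when its image in $\Un^{\sca}(\F)$ is marked; this is a direct inspection of which edges of a scaled unstraightening are declared marked, using that in a left fibration every edge is locally coCartesian.

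I expect the ``if'' direction of $(\star)$ to be the main obstacle: although it is patterned on the unstraightening theorem for left fibrations of~\cite{Lur09} and on the scaled straightening equivalence of~\cite{goodwillie}, it does not follow formally from either, and carrying it out requires a somewhat delicate combinatorial analysis of the scaled unstraightening functor --- or, alternatively, a reduction to the known $(\infty,1)$-categorical statement by restricting along the cells $\BDel^n$, exploiting that $\St^{\sca}$ restricts to ordinary unstraightening over each simplex $\Del^n$.
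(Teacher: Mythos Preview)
Your reduction is correct and matches the paper exactly: both model structures are left Bousfield localizations of the two sides of~\eqref{e:useful-2}, and by~\cite[Theorem 3.3.20]{Hir} the Quillen equivalence descends once one verifies that fibrant objects correspond, i.e.\ your assertion $(\star)$. The ``only if'' direction via base change along the fibre inclusions is also essentially the paper's argument.

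Where you and the paper diverge is the ``if'' direction of $(\star)$. Your proposal is to verify directly, via the adjunction, that $\St^{\sca}$ carries left-anodyne maps to levelwise marked-covariant trivial cofibrations; you rightly flag this as the delicate step and leave it open. The paper avoids this combinatorial computation entirely by a two-step reduction. First, it proves a purely fibration-theoretic lemma: a $\Beta$-fibered map $p: Y \lrar \Un^{\sca}(\F)$ is a marked left fibration if and only if its restriction $p_{\{i\}}$ over each object $i \in \I$ is one. The nontrivial implication uses that $p$, being $\Beta$-fibered, is already locally coCartesian over $\rN^{\sca}(\I)$, so~\cite[Proposition 2.4.2.11]{Lur09} upgrades the fiberwise hypothesis to $p$ being locally coCartesian over $\Un^{\sca}(\F)$; since the fibres are Kan complexes, every edge is locally coCartesian and~\cite[Propositions 2.4.2.8 and 2.4.2.4]{Lur09} force $p$ to be a left fibration. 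Second, the compatibility of scaled unstraightening with restriction along $\{i\} \hrar \I$ reduces $(\star)$ to the case $\I = \ast$, which follows from the natural equivalence $\Id \Rightarrow \Un^{\sca}_{\ast}$ of~\cite[Proposition 3.6.1]{goodwillie}: this identifies $\Un^{\sca}_{\ast}(p)$ with $p$ up to equivalence in the arrow category, so locality in $\LFib$ is preserved.

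The advantage of the paper's route is that it never touches the internals of $\St^{\sca}$: the $\Beta$-fiberedness of $\Un^{\sca}(\tilde\F) \lrar \Un^{\sca}(\F)$ comes for free from $\Un^{\sca}$ being right Quillen, and the rest is the general theory of locally coCartesian fibrations from~\cite{Lur09}. Your direct approach would work in principle, but establishing that $\St^{\sca}$ sends each $(\Lam^n_0)^\flat \hrar (\Del^n)^\flat$ to a pointwise covariant equivalence is, as you anticipated, a genuine computation in the scaled straightening. Your suggested alternative ``reduction along cells $\BDel^n$'' is closer in spirit to what the paper does, though the paper restricts along \emph{objects} of $\I$ rather than along simplices of $\rN^{\sca}(\I)$.
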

\begin{proof}
Both model structures are left Bousfield localizations of the slice model categories appearing in \eqref{e:useful-2}, by Remark \ref{r:local-2}. By \cite[Theorem 3.3.20]{Hir} it suffices to verify that a slice fibrant object $\G \lrar \F$ is local with respect to the left Bousfield localization on the right hand side if and only if $\Un^{\sca}(\G) \lrar \Un^{\sca}(\F)$ is local with respect to the left Bousfield localization on the left hand side.

Let us first show that a $\Beta$-fibered $p: Y\lrar \Un^{\sca}(\F)$ is fibrant in $(\Set^+_\Del)^{\cov}_{/\Un^{\sca}(\F)}$ if and only if $p_{\{i\}}:Y \times_{\rN^{\sca}(\I)} \{i\} \lrar \Un^{\sca}(\F) \times_{\rN^{\sca}(\I)} \{i\}$ is a marked left fibration for each $i \in \I$. 
Indeed, each $p_{\{i\}}$ is a marked left fibration if $p$ is. Conversely, if $p:Y \lrar \Un^{\sca}(\F)$ is $\Beta$-fibered and each $p_{\{i\}}$ is a marked left fibration, then $Y \lrar \Un^{\sca}(\F)$ is a locally coCartesian fibration by~\cite[Proposition 2.4.2.11]{Lur09}. In addition, the fibers of $p$ are Kan complexes, so that all edges are locally coCartesian and $p$ is a left fibration by~\cite[Proposition 2.4.2.8]{Lur09} and~\cite[Proposition 2.4.2.4]{Lur09}. 

Now, for each $i\in \I$, there is a commuting square (see~\cite[Remarks 3.5.16,~3.5.17]{goodwillie})
$$ \xymatrix{
\Fun(\I,\Set^+_\Del) \ar^{\Un^{\sca}}[r]\ar_{\G \mapsto \G(i)}[d] & (\Set^+_\Del)^{\lcc}_{/\rN^{\sca}(\I)} \ar^{Y \mapsto Y \times_{\rN^{\sca}(\{i\})} \{i\}}[d] \\
\Fun(\{i\},\Set^+_\Del) \ar^{\Un^{\sca}_{\{i\}}}[r] & (\Set^+_\Del)^{\lcc}_{/\rN^{\sca}(\{i\})}. \\
}$$
It follows from the previous paragraph that $\Un^{\sca}(\G)\lrar \Un^{\sca}(\F)$ is a marked left fibration if and only if $\Un^{\sca}_{\{i\}}(\G(i)) \lrar \Un^{\sca}_{\{i\}}(\F(i))$ is a marked left fibration for each $i$. It remains to verify that this is equivalent to $\G(i)\lrar \F(i)$ being a marked left fibration for each $i$. In particular, it suffices to prove the claim for the case $\I = \ast$. 

In this case we may identify both $\Fun(\ast,\Set^+_\Del)$ and $(\Set^+_\Del)_{/\rN^{\sca}(\ast)}$ with $\Set^+_\Del$ (equipped with the categorical model structure) and consider $\Un^{\sca}_{\ast}$ as a right Quillen functor from $\Set^+_\Del$ to itself. By \cite[Proposition 3.6.1]{goodwillie}, there is a natural transformation $\Id \Rightarrow \Un^{\sca}_{\ast}$ which is a weak equivalence on fibrant objects, so that every fibration $p:Y \lrar X$ between fibrant objects in $\Set^+_\Del$ fits into a commutative diagram
\begin{equation}\label{e:un}
\vcenter{\xymatrix{
Y \ar^-{\simeq}[r]\ar_{p}[d] & \Un^{\sca}_{\ast}(Y) \ar^{\Un^{\sca}_{\ast}(p)}[d] \\
X \ar^-{\simeq}[r] & \Un^{\sca}_{\ast}(X). \\
}}
\end{equation}
We can think of this map as a weak equivalence between fibrant objects in the arrow category $(\Set^+_\Del)^{[1]}$, so that $p$ is a local object in the left Bousfield localization $\LFib$ of Construction \ref{c:diagrams} if and only if $\Un^{\sca}_{\ast}(p)$ is a local object. The local objects of $\LFib$ are precisely the marked left fibrations over fibrant marked simplicial sets, so the result follows.
\end{proof}
\begin{cor}\label{c:last}
There is a right Quillen equivalence
$$ \uline{\Un}^{\cov}:\Lift_{\F}\big(\I, \LFib\big)\simeq \Fun^+(\I, \Set^+_\Del)^{\cov}_{/\F} \x{\simeq}{\lrar} (\Set_{\Del})^{\cov}_{/\uline{\Un}^{\sca}(\F)}. $$
\end{cor}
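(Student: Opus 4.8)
The plan is to obtain the asserted equivalence as a composite of three equivalences, only one of which has any real content. First, recall the discussion preceding Definition~\ref{d:marked-left2}: since the projection $\LFib \lrar \Set^+_\Del$ is, at the level of underlying categories, the codomain fibration $\ev_1\colon (\Set^+_\Del)^{[1]} \lrar \Set^+_\Del$, a lift of $\F$ against it is the same datum as an enriched functor $\I \lrar \Set^+_\Del$ together with a natural transformation to $\F$, so there is an equivalence of categories $\Lift_{\F}(\I,\LFib) \simeq \Fun^+(\I,\Set^+_\Del)_{/\F}$. Under this equivalence a map of lifts is a weak equivalence (resp.\ fibration) in the projective model structure of Lemma~\ref{l:projmod} precisely when it is so objectwise in the marked covariant model structure; these are exactly the weak equivalences (resp.\ fibrations) of $\Fun^+(\I,\Set^+_\Del)^{\cov}_{/\F}$ from Definition~\ref{d:marked-left2}. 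Hence this first step is an isomorphism of model categories $\Lift_{\F}(\I,\LFib) \cong \Fun^+(\I,\Set^+_\Del)^{\cov}_{/\F}$.

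Next I would invoke the Quillen equivalence of Proposition~\ref{p:bousfield} (equivalently Proposition~\ref{p:sect}), namely
$$ \St^{\cov}\colon \Set_\Del^{\Un^{\sca}(\F)} = (\Set^+_\Del)^{\cov}_{/\Un^{\sca}(\F)} \x{\simeq}{\adj} \Fun^+(\I,\Set^+_\Del)^{\cov}_{/\F} \colon \Un^{\cov}, $$
whose right adjoint $\Un^{\cov}$ is a right Quillen equivalence back to the marked covariant model structure over the scaled unstraightening $\Un^{\sca}(\F)$. Finally I would forget the marking on the base: by Notation~\ref{n:variants} the simplicial set $\uline{\Un}^{\sca}(\F)$ is by definition the one underlying the marked simplicial set $\Un^{\sca}(\F)$, so Lemma~\ref{l:lcc-left} (see also Remark~\ref{r:forgetmarking}) supplies a Quillen equivalence
$$ (-)^\flat\colon (\Set_\Del)^{\cov}_{/\uline{\Un}^{\sca}(\F)} \adj (\Set^+_\Del)^{\cov}_{/\Un^{\sca}(\F)} \colon \forget $$
whose right adjoint $\forget$ (forgetting the marking on total spaces, over the unmarked base) is a right Quillen equivalence.

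The functor $\uline{\Un}^{\cov}$ of the statement is then the composite of the isomorphism of the first step with $\forget \circ \Un^{\cov}$; as a composite of right Quillen equivalences it is itself a right Quillen equivalence, which is the claim. I do not anticipate any genuine obstacle here: all the substance already sits in Proposition~\ref{p:bousfield}, and what remains is the bookkeeping of matching the three model structures and checking that this composite really is ``scaled-unstraighten, then forget the marking'', i.e.\ the functor denoted $\uline{\Un}^{\cov}$.
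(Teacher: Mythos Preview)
Your proposal is correct and follows exactly the paper's approach: the paper's proof is simply ``Compose the Quillen equivalences of Proposition~\ref{p:bousfield} and Lemma~\ref{l:lcc-left},'' and your three steps (the identification with $\Fun^+(\I,\Set^+_\Del)^{\cov}_{/\F}$, then Proposition~\ref{p:bousfield}, then Lemma~\ref{l:lcc-left}) unpack precisely this composite.
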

\begin{proof}
Compose the Quillen equivalences of Proposition~\ref{p:bousfield} and Lemma~\ref{l:lcc-left}.
\end{proof}

\subsection{Proof of the main theorem}\label{s:proof-main}
To conclude the proof of Theorem \ref{t:main2}, we need a spectral analogue of Corollary \ref{c:last}. More precisely, let $\F: \I\lrar \Set^+_\Del$ be an enriched functor and consider the category of enriched lifts of $\F$ against $\LFib_{\Sp}\lrar \Set_\Del^+$, endowed with the projective model structure of Lemma \ref{l:projmod}. Recall from Construction \ref{c:diagrams} that the underlying functor of $\LFib_{\Sp}\lrar \Set_\Del^+$ is given by the projection
$$
\ev_\ast: \left(\Set^+_\Del\right)^{(\NN\times \NN)_\ast}\lrar \Set^+_\Del.
$$
It follows that the category of enriched lifts of $\F$ is equivalent (as an ordinary category) to the category of enriched functors $\tilde{\F}: \I\times (\NN\times \NN)_\ast\lrar \Set^+_\Del$ whose restriction to $\I\times \{\ast\}$ is $\F$. In turn, this category is equivalent to the category of $\NN\times\NN$-diagrams in $\Fun^+(\I, \Set^+_\Del)_{\F//\F}$.
\begin{lem}\label{l:step2}
Let $\F: \I\lrar \Set^+_\Del$ be a projectively fibrant enriched functor. Then the equivalence of categories described above provides an identification
\begin{equation}\label{e:step2}
\Lift_{\F}\left(\I, \LFib_{\Sp}\right) \x{\simeq}{\lrar} \Sp\left(\Fun^+(\I, \Set^+_\Del)^{\cov}_{\F//\F}\right)
\end{equation}
between the projective model structure on lifts and the stabilization of the model structure of Definition \ref{d:marked-left2}.
\end{lem}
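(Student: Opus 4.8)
The plan is as follows. The equivalence of underlying categories is the one recalled just before the statement: since the underlying functor of $\LFib_{\Sp}\to\Set^+_\Del$ is the projection $\ev_\ast\colon(\Set^+_\Del)^{(\NN\times\NN)_\ast}\to\Set^+_\Del$ and $(\NN\times\NN)_\ast$ is $\NN\times\NN$ with a zero object freely adjoined, a $\Set^+_\Del$-enriched lift of $\F$ is the same datum as an $(\NN\times\NN)$-diagram in $\Fun^+(\I,\Set^+_\Del)_{\F//\F}$. Everything then comes down to matching the two model structures, and I would do this by exhibiting both of them as left Bousfield localizations of one common model structure $\mathcal{B}$ on this shared underlying category and checking that the two localizing sets have the same local objects. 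Since a left Bousfield localization is determined by its cofibrations together with its fibrant objects, and both localizations will have the cofibrations of $\mathcal{B}$, this yields the asserted identification of model categories. (This is formally parallel to the identification of $\T_\CC\Cat^+_\Del$ with enriched lifts against $\T\Set^+_\Del$ in \cite[Corollary 3.1.16]{part2}, of which the present lemma is a ``localized'' refinement, so one could also try to adapt that argument directly.)

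For the common base I would equip the shared underlying category with the diagram (Reedy) model structure built on $\Fun^+(\I,\Set^+_\Del)^{\proj}_{\F//\F}$; since the indexing directions $\I$ and $(\NN\times\NN)_\ast$ are independent, this $\mathcal{B}$ coincides with the projective model structure on lifts of $\F$ against the relative model category $(\Set^+_\Del)^{(\NN\times\NN)_\ast}\to\Set^+_\Del$ provided by Lemma~\ref{l:projmod} (which applies, as this projection and its localization $\LFib_{\Sp}\to\Set^+_\Del$ are relative model categories with combinatorial fibres, by Construction~\ref{c:diagrams}). Now Construction~\ref{c:diagrams} presents $\LFib_{\Sp}\to\Set^+_\Del$ as a left Bousfield localization of $(\Set^+_\Del)^{(\NN\times\NN)_\ast}\to\Set^+_\Del$ compatible with the relative structure, at a set of maps splitting as the maps producing $\T\Set^+_\Del$ (off-diagonal terms weakly contractible, diagonal squares homotopy Cartesian) together with the maps $h_\ast\times K\coprod_{h_\ast\times K}h_{m,n}\times K\to h_{m,n}\times L$ for $K\to L$ either $(\Lambda^n_0)^\flat\to(\Delta^n)^\flat$ or $(\Del^1)^\flat\to(\Del^1)^\sharp$. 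Because $\ev$ is both left and right Quillen for the projective model structure on lifts (Lemma~\ref{l:projmod}), its left adjoint $\Free$ carries generating trivial cofibrations of this fibrewise localization to trivial cofibrations in $\Lift_\F(\I,\LFib_{\Sp})$; hence $\Lift_\F(\I,\LFib_{\Sp})$ is the left Bousfield localization of $\mathcal{B}$ at $S_{\mathrm L}:=\{\Free(s):s\text{ one of the above maps},\ i\in\I\}$. On the other side, $\Fun^+(\I,\Set^+_\Del)^{\cov}_{\F//\F}$ is by Definition~\ref{d:marked-left2} (and Remark~\ref{r:local-2}) the left Bousfield localization of $\Fun^+(\I,\Set^+_\Del)^{\proj}_{\F//\F}$ at the maps imposing that the value at each $i\in\I$ be a marked left fibration over $\F(i)$, so passing to $(\NN\times\NN)$-diagrams and then stabilizing as in \S\ref{s:recall} presents $\Sp\big(\Fun^+(\I,\Set^+_\Del)^{\cov}_{\F//\F}\big)$ as the left Bousfield localization of $\mathcal{B}$ at $S_{\mathrm R}$, the union of the spectrum-object maps with one copy of the marked-left-fibration maps in each spot $(m,n)$.

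It then remains to check that $S_{\mathrm L}$ and $S_{\mathrm R}$ have the same local objects. Since weak equivalences, homotopy limits and homotopy colimits in the projective model structure on lifts are computed objectwise over $\I$ (Lemma~\ref{l:projmod}), and weak equivalences in the relevant model structures on $\Fun^+(\I,\Set^+_\Del)$ are objectwise over $\I$ by Definition~\ref{d:marked-left2}, a lift $\tilde\F$ is local with respect to $\Free$ of the $\T\Set^+_\Del$-localizing maps precisely when every $\tilde\F(i)$ has weakly contractible off-diagonal terms and homotopy Cartesian diagonal squares, i.e.\ precisely when $\tilde\F$ is an $\Omega$-spectrum object in $\Fun^+(\I,\Set^+_\Del)^{\cov}_{\F//\F}$; so these maps and the spectrum-object maps have the same local objects. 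Similarly, unwinding formula \eqref{e:freeevaluated} for $\Free$ shows that $\Free\big(h_\ast\times K\coprod_{h_\ast\times K}h_{m,n}\times K\to h_{m,n}\times L\big)$ is exactly the pushout-product detecting that $\tilde\F_{m,n}\to\F$ is a marked left fibration over $\F$ in the sense of Definition~\ref{d:marked-left2}, and hence has the same local objects as the marked-left-fibration maps in spot $(m,n)$. Therefore $S_{\mathrm L}$ and $S_{\mathrm R}$ define the same localization of $\mathcal{B}$, which finishes the argument.

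The hard part will be the commutation of $\Lift_\F(\I,-)$ with fibrewise left Bousfield localization, together with the bookkeeping it entails: one must verify that $\mathcal{B}$ — built either as a diagram model structure over $\Fun^+(\I,\Set^+_\Del)^{\proj}_{\F//\F}$ or as the projective lift model structure against $(\Set^+_\Del)^{(\NN\times\NN)_\ast}\to\Set^+_\Del$ — really is one and the same model category with matching sets of generating (trivial) cofibrations, and that under the equivalence of underlying categories the $\Free$-images of the corepresentable localizing maps of Construction~\ref{c:diagrams} become precisely the localizing maps of Definition~\ref{d:marked-left2} placed in the individual spots $(m,n)$. This is where Lemma~\ref{l:projmod} (with its explicit description of $\Free$ via \eqref{e:freeevaluated}), Construction~\ref{c:diagrams} and Definition~\ref{d:marked-left2} must be dovetailed carefully; once that is done, the remainder is formal manipulation of left Bousfield localizations.
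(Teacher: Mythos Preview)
Your approach is correct and is, at its core, the same as the paper's: both model structures live on the same underlying category, both are left Bousfield localizations of a common model structure with the same cofibrations, and so it suffices to match up fibrant objects. The paper, however, executes this far more directly. Rather than explicitly constructing the intermediate model category $\mathcal{B}$, naming the localizing sets $S_{\mathrm L},S_{\mathrm R}$, and tracking $\Free$ of the corepresentable maps, the paper simply checks two things: (a) the trivial fibrations coincide (both sides are left Bousfield localizations of model structures whose trivial fibrations are the objectwise-in-$(i,m,n)$ trivial fibrations of $\Set^+_\Del$), hence the cofibrations coincide; and (b) the fibrant objects coincide, by the one-line observation that weak zero-objects and homotopy Cartesian squares in $\Fun^+(\I,\Set^+_\Del)^{\cov}_{\F//\F}$ are detected objectwise in $\I$, so an object is fibrant on the right iff each $\G_{\bullet\bullet}(i)$ is an $\Omega$-spectrum of marked left fibrations over $\F(i)$, which is exactly fibrancy on the left. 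Since a model structure is determined by its cofibrations and its fibrant objects (here both being Bousfield localizations), this finishes the proof in two short paragraphs. Your ``hard part''---commuting $\Lift_\F(\I,-)$ with fibrewise localization and matching generating sets---is entirely bypassed.
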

\begin{proof}
It suffices to show that both sides have the same trivial fibrations and fibrant objects. Let us represent an object in either of these two categories by a functor $\G: \I \times (\NN \times \NN)_{*} \lrar \Set^+_\Del$ whose restriction to $\I \times \{\ast\}$ coincides with $\F$, and let us denote the value of $\G$ at $(i,n,m)$ by $\G_{n,m}(i)$. Since trivial fibrations are unchanged by left Bousfield localization we have that a map $\HH \lrar \G$ between such functors is a trivial fibration in either the left or right hand side if and only if $\HH_{n,m}(i) \lrar \G_{n,m}(i)$ is a trivial fibration of marked simplicial sets for every $i\in \I$ and $n,m \in \NN$. 

Weak zero-objects and homotopy Cartesian squares in $\Fun^+(\I, \Set^+_\Del)^{\cov}_{\F//\F}$ are detected in $\Fun^+(\I,\Set^+_\Del)^{\proj}_{/\F}$. An object $\G$ is on the right hand side is therefore fibrant if and only if $\G_{\bullet\bullet}(i)$ is an $\Omega$-fibrant spectrum object over $\F(i)$ and each $\G_{n, m}(i)\lrar \F(i)$ is a marked left fibration. This means precisely that $\G$ is fibrant on the left hand side. 
\end{proof}

We are now ready to harness the above results to compute the tangent categories of $\Cat_{(\infty,2)} \simeq (\Cat^+_\Del)_{\infty}$. 

\begin{proof}[Proof of Theorem~\ref{t:main2}]
Let $\CC$ be a marked-simplicial category, let $\CC_{\Tw}$ be the marked-simplicial category obtained by applying $\Tw^+$ to its mapping objects and let 
$\Map_{\CC_{\Tw}}(-,-):\CC_{\Tw}^{\op}\times \CC^{}_{\Tw}\lrar \Set_\Del^+ $
be the mapping functor. 
Combining \cite[Corollary 3.1.16]{part2} with Proposition~\ref{p:not-there-yet}, Lemma~\ref{l:step2} and Corollary~\ref{c:last} (where $\F=\Map_{\CC_{\Tw}}$) we obtain a composable sequence of natural right Quillen functors, which are Quillen equivalences when $\CC$ is fibrant:
\begin{align}\label{m:all-equiv}
U_\CC:\T_{\CC}\Cat^+_{\Del} & \x[\text{\cite{part2}}]{}{\xrightarrow{\hspace{15pt}\simeq\hspace{15pt}}} \Lift_{\Map_{\CC}}\left(\CC^{\op} \times \CC, \T\Set^+_\Del\right)\nonumber\\
& \x[\ref{p:not-there-yet}]{}{\xrightarrow{\hspace{15pt}\simeq\hspace{15pt}}} \Lift_{\Map_{\CC_{\Tw}}}\left(\CC_{\Tw}^{\op} \times \CC^{}_{\Tw}, \LFib_{\Sp}\right)\nonumber\\
& \x[\ref{l:step2}]{}{\xrightarrow{\hspace{15pt}\simeq\hspace{15pt}}} \Sp\left(\Fun^+(\CC_{\Tw}^{\op} \times \CC^{}_{\Tw}, \Set^+_\Del)^{\cov}_{/\Map_{\CC_{\Tw}}}\right)\\
& \x[\ref{c:last}]{}{\xrightarrow{\hspace{15pt}\simeq\hspace{15pt}}} \Sp\left((\Set_{\Del})^{\cov}_{/\uline{\Un}^{\sca}(\Map_{\CC_{\Tw}})}\right) = 
\Sp^{\uline{\Tw}_2(\CC)}\nonumber
\end{align}
Unraveling the definitions, one sees that $U_{\CC}$ sends an $\NN\times\NN$-diagram $\CC\x{\iota}{\lrar} \DD_{\bullet\bullet}\lrar \CC$ to the $\NN\times\NN$-diagram
$$
\uline{\Un}^{\sca}\left(\R^+_{\Map_{\CC}(-, -)}\big(\Map_{\DD_{\bullet\bullet}}(\iota-, \iota-)\big)\right)
$$
where the scaled unstraightening is over $\CC_{\Tw}\times\CC_{\Tw}^{\op}$. Using the compatibility of the scaled unstraightening with restriction one finds that the Quillen equivalence $U_{\CC}$ depends naturally on $\CC$, as asserted.
\end{proof}
\begin{proof}[Proof of Theorem~\ref{t:main}]
Using the identification
$$ \left(\Sp^{\uline{\Tw}_2(\CC)}\right)_{\infty} \simeq \Sp\left(\Fun(\uline{\Tw}_2(\CC),\cS_{\ast}\right)) \simeq \Fun(\uline{\Tw}_2(\CC),\Sp) $$ we may conclude that the underlying $\infty$-category $(\T_{\CC}\Cat^+_\Del)_{\infty}$ is naturally equivalent to the $\infty$-category $\Fun(\uline{\Tw}_2(\CC),\Sp)$ of functors from $\uline{\Tw}_2(\CC)$ to spectra.
\end{proof}

\subsection{The cotangent complex of an $(\infty,2)$-category}
Theorem~\ref{t:main} identifies the tangent $\infty$-category to $\Cat^+_\Del$ at a marked-simplicial category $\CC$ with the $\infty$-category of spectrum-valued functors $\uline{\Tw}_2(\CC) \lrar \Sp$ from the twisted $2$-cell $\infty$-category of $\CC$. Our goal in this section is to identify the image of the cotangent complex $L_{\CC}$ of $\CC$ under this equivalence. 

Throughout, let us fix a fibrant model $\SS$ for the sphere spectrum in the model category $\Sp^\ast=\Sp((\Set_\Del)^{\cov}_{*//*})$, i.e.\ the stable model structure on $\NN\times\NN$-diagrams of pointed simplicial sets. In particular, $\SS_{n,n} \simeq \hocolim_k \Om^{k}S^{n+k}$. Let $r: \uline{\Tw}_2(\CC) \lrar \ast$ denote the terminal map. We then claim the following:
\begin{pro}\label{p:cotangent}
Under the equivalence of Theorem~\ref{t:main}, the cotangent complex $L_{\CC}$ corresponds to the constant diagram $\uline{\Tw}_2(\CC) \lrar \Sp$ on the twice desuspended sphere spectrum $\SS[-2]$. More precisely, there is a weak equivalence 
$$ \theta_\CC:r^*\SS[-2] \x{\simeq}{\lrar} \RR U_\CC(L_{\CC}) $$ 
in the model category $\Sp^{\uline{\Tw}_2(\CC)}$, where $U_{\CC}$ is the right Quillen equivalence of Theorem \ref{t:main2}.
\end{pro}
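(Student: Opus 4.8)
The plan is to compute $\RR U_\CC(L_\CC)$ by chasing the cotangent complex $L_\CC=\LL\Sig^\infty_+(\Id_\CC)$ through the composite right Quillen equivalence $U_\CC$ assembled in \eqref{m:all-equiv}, one step at a time. At each stage one uses the naturality of the straightening, unstraightening and lift constructions, together with the fact that each of the four equivalences in \eqref{m:all-equiv} is compatible with stabilization (each is either an application of $\Sp(-)$ to an equivalence of unstable model categories, or, in the case of \cite[Corollary 3.1.16]{part2}, is compatible with $\Sig^\infty_+$ by construction). The essential external input is the cotangent complex computation for marked simplicial sets from the previous paper, recalled in \S\ref{s:intro}: under the right Quillen equivalence $\R^{\Sp}_X\colon\T_X\Set^+_\Del\xrightarrow{\simeq}\Sp^{\Tw^+(X)}$ of Proposition~\ref{p:R-equiv} (that is, \cite[Theorem 3.1.14]{part2}), the cotangent complex $L_X$ of an $\infty$-category $X$ corresponds to the constant diagram $\Tw^+(X)\to\Sp$ on $\SS[-1]$, and this correspondence is natural in $X$.

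First I would identify the $\Set^+_\Del$-enriched lift of $\Map_\CC$ associated to $L_\CC$ under the first equivalence of \eqref{m:all-equiv}, namely \cite[Corollary 3.1.16]{part2}. The claim is that it is the lift $\widehat L_\CC\colon\CC^{\op}\times\CC\to\T\Set^+_\Del$ sending $(x,y)$ to the one-fold desuspension $L_{\Map_\CC(x,y)}[-1]\in\T_{\Map_\CC(x,y)}\Set^+_\Del$ of the cotangent complex of the mapping $\infty$-category, naturally in $(x,y)$. This is the analogue, for categories enriched in $\Set^+_\Del$, of the phenomenon already visible in the $(\infty,1)$-categorical case of \cite{part2}: the cotangent complex of an enriched category restricts at a pair of objects to the one-fold desuspension of the cotangent complex of the corresponding mapping object. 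Conceptually it reflects the relationship between the cotangent complex of an algebra over the monad ``$O$-graphs to $O$-enriched categories'' and the cotangent complexes of its generating mapping objects, the extra desuspension coming from the bar resolution of $\CC$ by free enriched categories. I expect this identification to be the main obstacle of the proof: one must either extract it from the methods of \cite{part2} or carry out the bar-resolution computation directly; by contrast, the remaining three steps are formal bookkeeping.

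Granting this, the rest is a chase through \eqref{m:all-equiv}. Applying $\R^{\Sp}$ fiberwise, as packaged in Proposition~\ref{p:fiberwise equivalence} and Proposition~\ref{p:not-there-yet}, carries $\widehat L_\CC$ to the lift of $\Map_{\CC_{\Tw}}$ against $\LFib_\Sp$ whose value at $(x,y)$ is $\R^{\Sp}_{\Map_\CC(x,y)}\big(L_{\Map_\CC(x,y)}\big)[-1]$; by the recalled computation this is the constant diagram over $\Tw^+(\Map_\CC(x,y))=\Map_{\CC_{\Tw}}(x,y)$ on $\SS[-2]$, and the naturality clause in that computation ensures that the structure maps of the lift act by the identity of $\SS[-2]$. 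Passing through the repackaging of Lemma~\ref{l:step2} and then unstraightening over $\CC_{\Tw}^{\op}\times\CC_{\Tw}$ via Corollary~\ref{c:last}, a lift which is fiberwise the constant spectrum on $\SS[-2]$ with identity transition maps corresponds to the genuinely constant diagram on $\uline{\Un}^{\sca}(\Map_{\CC_{\Tw}})=\uline{\Tw}_2(\CC)$: an object of $\uline{\Tw}_2(\CC)$ is a pair consisting of an object $(x,y)$ of $\CC_{\Tw}^{\op}\times\CC_{\Tw}$ and an object of $\Map_{\CC_{\Tw}}(x,y)$, the assigned spectrum there is $\SS[-2]$, and every morphism of $\uline{\Tw}_2(\CC)$ acts by the identity. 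This yields the desired weak equivalence $\theta_\CC\colon r^*\SS[-2]\xrightarrow{\simeq}\RR U_\CC(L_\CC)$ in $\Sp^{\uline{\Tw}_2(\CC)}$.

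Finally, one can observe that $\theta_\CC$ is natural in $\CC$: each equivalence in \eqref{m:all-equiv} is natural in $\CC$ by Theorem~\ref{t:main2}, the assignment $\CC\mapsto L_\CC$ is functorial through the pushforward functors $f_!$ on tangent categories, and the recalled cotangent complex computation for $\Set^+_\Del$ is natural in the mapping $\infty$-categories; granting that the identification of $\widehat L_\CC$ in the second paragraph is natural in $\CC$, naturality of $\theta_\CC$ propagates through the chase.
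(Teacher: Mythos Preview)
Your approach is sound in outline but takes a genuinely different route from the paper, and the step you flag as the main obstacle is exactly the one the paper sidesteps.

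The paper does not chase $L_\CC$ through the four equivalences of \eqref{m:all-equiv} for general $\CC$. Instead it first treats the terminal case $\CC=[0]$, where $\uline{\Tw}_2([0])=\ast$ and the chain collapses to three concrete right Quillen functors on spectra in $\Set^+_\Del$. There the identification you need is available directly: \cite[Proposition 3.2.1]{part2} gives $\RR\G^{\Sp}_{[0]}(L_{[0]})\simeq L_{\Del^0}[-1]$, one computes $L_{\Del^0}=\SS^\sharp$ since $(-)^\sharp$ is left Quillen, and then $\R^{\Sp}_{\Del^0}(\SS^\sharp)=\Tw^+(\SS^\sharp)\times_{\SS^\sharp\times\SS^\sharp}\Del^0\simeq\Om\SS^\sharp\simeq\SS^\sharp[-1]$ by direct inspection. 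This produces $\theta_{[0]}:\SS[-2]\xrightarrow{\simeq}\RR U_{[0]}(L_{[0]})$. The passage to arbitrary $\CC$ then uses no further cotangent computations at all: since $\Set^{\sca}_\Del$ is Cartesian closed, the pullback $q^*$ along the terminal map $\rN^{\sca}(\CC)\to\Del^0$ is \emph{left} Quillen as well as right, hence preserves cotangent complexes; the naturality square \eqref{e:square-tangent-2} then transports $\theta_{[0]}$ to $\theta_\CC$.

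What this buys is precisely the avoidance of your acknowledged gap. Your identification of $\widehat L_\CC$ as the lift $(x,y)\mapsto L_{\Map_\CC(x,y)}[-1]$ is plausible and, if the general form of \cite[Proposition 3.2.1]{part2} applies to arbitrary enriched categories (not just the terminal one), your argument goes through; but you would still need to verify that the lift structure maps are the expected ones, and that ``fiberwise constant with identity transitions'' really unstraightens to the globally constant diagram, which is another small naturality check. The paper's reduction-to-a-point trick trades all of this for the single observation that scaled simplicial sets are Cartesian closed.
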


\begin{cor}\label{c:twisted-arrow-quillen}
Let $\F: \uline{\Tw}_2(\CC) \lrar \Sp$ be a functor and let $M_{\F} \in \T_{\CC}\Cat^+_\Del$ be the corresponding parameterized spectrum object under the equivalence of Theorem~\ref{t:main}. Then the $n$-th Quillen cohomology group can be identified as
$$
\rH^n_Q(\CC;M_{\F})\cong \pi_{-n-2}\big(\lim{}_{\uline{\Tw}_2(\CC)}\F\big).
$$
In particular, if $A: \uline{\Tw}_2(\CC) \lrar \Ab$ is a diagram of abelian groups, then $\rH^n_Q(\CC;M_{\H A})$ is naturally isomorphic to the $(n+2)$-th derived functor $\RR^{n+2}\lim_{\uline{\Tw}_2(\CC)}(A)$.
\end{cor}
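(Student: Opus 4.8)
The plan is to reduce the corollary to Proposition~\ref{p:cotangent} by a formal manipulation of mapping spectra. First I would unwind Definition~\ref{d:Quillen-coh}: the object $\emptyset\to\CC$ is initial in $(\Cat^+_\Del)_{/\CC}$, so $\LL\Sig^\infty_+(\emptyset\to\CC)$ is the zero object of $\T_\CC\Cat^+_\Del$ and therefore $L_{\CC/\emptyset}\simeq L_\CC$; hence $\rH^n_Q(\CC;M_\F)=\pi_0\Map^\der_{\T_\CC\Cat^+_\Del}(L_\CC,\Sig^n M_\F)$. Next I would transport this expression across the equivalence of Theorem~\ref{t:main}, which is the underlying equivalence of stable $\infty$-categories of the Quillen equivalence $U_\CC$ of Theorem~\ref{t:main2}. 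By definition $M_\F$ corresponds to $\F$; by Proposition~\ref{p:cotangent} the cotangent complex $L_\CC$ corresponds to the constant diagram $r^*\SS[-2]$ with $r\colon\uline{\Tw}_2(\CC)\to\ast$; and since the equivalence is stable it intertwines $\Sig^n$, which on $\Fun(\uline{\Tw}_2(\CC),\Sp)$ is the pointwise shift. This yields
$$\rH^n_Q(\CC;M_\F)\cong\pi_0\Map^\der_{\Fun(\uline{\Tw}_2(\CC),\Sp)}\big(r^*\SS[-2],\,\Sig^n\F\big).$$

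I would then use that the constant-diagram functor $r^*$ is left adjoint to the limit functor $\lim_{\uline{\Tw}_2(\CC)}\colon\Fun(\uline{\Tw}_2(\CC),\Sp)\to\Sp$ to rewrite the right-hand side as $\pi_0\Map^\der_\Sp\big(\SS[-2],\,\lim_{\uline{\Tw}_2(\CC)}\Sig^n\F\big)$. As the shift is an autoequivalence of $\Sp$ it commutes with limits, so $\lim_{\uline{\Tw}_2(\CC)}\Sig^n\F\simeq\Sig^n\lim_{\uline{\Tw}_2(\CC)}\F$; combined with $\Map^\der_\Sp(\SS[-2],-)\simeq\Sig^2(-)$ and $\pi_0\Sig^k Y=\pi_{-k}Y$ (using that $\SS$ is the unit) this gives $\rH^n_Q(\CC;M_\F)\cong\pi_0\big(\Sig^{n+2}\lim_{\uline{\Tw}_2(\CC)}\F\big)=\pi_{-n-2}\big(\lim_{\uline{\Tw}_2(\CC)}\F\big)$, which is the first assertion.

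For the statement on abelian-group diagrams, I would feed $\F=\H A$ into the first part and evaluate $\pi_{-n-2}\big(\lim_{\uline{\Tw}_2(\CC)}\H A\big)$ via the homotopy-limit (Bousfield--Kan) spectral sequence $E_2^{s,t}=\RR^s\lim_{\uline{\Tw}_2(\CC)}\pi_t(\H A)\Rightarrow\pi_{t-s}\big(\lim_{\uline{\Tw}_2(\CC)}\H A\big)$. Since $\H A$ is an Eilenberg--MacLane diagram, $\pi_t(\H A)=0$ for $t\neq 0$ and $\pi_0(\H A)=A$, so the spectral sequence is concentrated on the line $t=0$, collapses at $E_2$, and gives $\pi_{-s}\big(\lim_{\uline{\Tw}_2(\CC)}\H A\big)\cong\RR^s\lim_{\uline{\Tw}_2(\CC)}(A)$. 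Taking $s=n+2$ yields $\rH^n_Q(\CC;M_{\H A})\cong\RR^{n+2}\lim_{\uline{\Tw}_2(\CC)}(A)$, and every step is manifestly natural in $A$. The argument is entirely formal once Proposition~\ref{p:cotangent} is available; the only thing that requires care is the bookkeeping of the first paragraph --- making sure the equivalence of Theorem~\ref{t:main} is used as an equivalence of stable $\infty$-categories matching $M_\F,\ \Sig^n M_\F,\ L_\CC$ with $\F,\ \Sig^n\F,\ r^*\SS[-2]$ respectively --- so that the adjunction computation of the second paragraph is legitimate.
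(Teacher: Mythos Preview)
Your proposal is correct and follows essentially the same route as the paper: unwind the definition of Quillen cohomology, transport across the equivalence of Theorem~\ref{t:main} using Proposition~\ref{p:cotangent} to identify $L_\CC$ with the constant $\SS[-2]$-diagram, and then invoke the constant/limit adjunction together with shift manipulations. The paper's proof is terser (it writes the computation as a single chain of isomorphisms of homotopy classes) and does not spell out the abelian-group case, whereas you supply the Bousfield--Kan collapse argument explicitly; but the substance is the same.
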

\begin{proof}
By definition we have $\rH^n_Q(\CC;M_{\F}) = \big[L_{\CC},M_{\F}[n]\big]_{\T_{\CC}\Cat^+_\Del}$. By Theorem~\ref{t:main} this can be identified with
$$ \big[\ovl{\mathbb{S}}[-2],\F[n]\big]_{\Fun(\uline{\Tw}_2(\CC),\Sp)} \cong \big[\ovl{\mathbb{S}}[-n-2],\F\big]_{\Fun(\uline{\Tw}_2(\CC),\Sp)} \cong \pi_{-n-2}\left(\lim_{\uline{\Tw}_2(\CC)}\F\right) $$
where $\ovl{\mathbb{S}}$ denotes the constant diagram with value the sphere spectrum.
\end{proof}
\begin{proof}[Proof of Proposition \ref{p:cotangent}]
Let us start by treating the special case where $\CC=[0]$ is the terminal marked-simplicial category. In that case, $\uline{\Tw}_2([0])=\ast$ is terminal as well, and we can identify $\Sp^\ast$ with the stable model structure on $\NN\times \NN$-diagrams of pointed simplicial sets. Let us denote the terminal \emph{marked} simplicial set by $\Del^0$ (to avoid confusion with the terminal simplicial set $\ast$). In this case, the functor $U_{[0]}$ can be identified with the composite
$$\xymatrix{
U_{[0]}: \T_{[0]}\Cat_\Del^+ \ar[r]^-{\G^{\Sp}_{[0]}} & \T_{\Del^0}\Set^+_\Del\ar[r]_-{\ref{p:not-there-yet}}^-{\R^{\Sp}_{\Del^0}} & \Sp^{\Del^0} \ar[r]_-{\ref{c:last}}^-{\uline{\Un}^{\cov}} & \Sp^\ast=\Sp((\Set_\Del)^{\cov}_{*//*})=\Sp((\Set_\Del^{\KQ})_\ast).
}$$
Here the functor $\G^{\Sp}_{[0]}$ is the right Quillen equivalence of \cite[Corollary 3.1.16]{part2}, which sends an $\NN\times\NN$-diagram of pointed marked-simplicial categories $[0]\x{\ast}{\lrar} \DD_{\bullet\bullet}\lrar [0]$ to the diagram of pointed marked simplicial sets $\Map_{\DD_{\bullet\bullet}}(\ast, \ast)$. By~\cite[Proposition 3.2.1]{part2}, there is a weak equivalence
\begin{equation}\label{e:G} 
\eta:L_{\Del^0}[-1]\x{\simeq}{\lrar} \RR\G^{\Sp}_{[0]}(L_{[0]}) \in \T_{\Del^0}\Set^+_\Del
\end{equation}
between the (derived) image of the cotangent complex of $[0]$ and the desuspension of the cotangent complex of the marked simplicial set $\Del^0$. To compute this cotangent complex, recall from \S\ref{s:scaledsset} that the functor $(-)^\sharp: \Set_\Del^{\KQ}\lrar \Set_\Del^+$ is a left Quillen functor. Since left Quillen functors preserve cotangent complexes, we conclude that $L_{\Del^0}$ is the image of the cotangent complex of the point in $\Set_\Del^{\KQ}$, which is $\SS^\sharp$.

Since $\SS^\sharp$ is a fibrant object of $\T_{\Del^0}\Set^+_\Del$, we have that
\begin{equation}\label{e:R-1} 
\RR \R^{\Sp}_{\Del^0}(L_{\Del^0}) \simeq \R^{\Sp}_{\Del^0}(\SS^{\sharp}) \simeq \Tw^+(\SS^{\sharp}) \times_{\SS^{\sharp} \times \SS^{\sharp}} (\Del^0 \times \Del^0) \simeq \Om\SS^{\sharp} \simeq \SS^{\sharp}[-1]
\end{equation}
where the pullback and looping are computed degreewise. Finally, the unstraightening $\uline{\Un}^{\cov}: (\Set^+_\Del)^{\cov}\lrar \Set_\Del^{\KQ}$ is naturally equivalent to the functor forgetting the marked edges by \cite[Proposition 3.6.1]{goodwillie}. It follows that there is a weak equivalence
$$
\theta_{[0]}: \SS[-2] \x{\simeq}{\lrar} \uline{\Un}^{\cov} \R^{\Sp}_{\Del^0}(\SS^\sharp[-1]) \simeq \RR \uline{\Un}^{\cov}\RR\R^{\Sp}_{\Del^0}(L_{\Del^0}) \x{\simeq}{\lrar} \RR U_{[0]}(L_{[0]})
$$
where the last equivalence is induced by the equivalence $\eta$ of \eqref{e:G}.

For a general fibrant marked-simplicial category $\CC$, let $p: \CC \lrar [0]$, $q: \rN^{\sca}(\CC) \lrar \Del^0$ and $r: \uline{\Tw}_2(\CC)\lrar \ast$ be the terminal maps. We then obtain a commuting diagram of right Quillen functors
$$ \xymatrix{
\T_{\Del^0}\Set^{\sca}_{\Del} \ar[d]_-{q^*} & \T_{[0]}\Cat^+_{\Del}\ar[l]^-\simeq_-{\rN^{\sca}}\ar[r]_-{\simeq}^-{U_{[0]}}\ar[d]^-{p^*} & \Sp^\ast\ar[d]^-{r^*}\\
\T_{\rN^{\sca}(\CC)}\Set^{\sca}_{\Del} & \T_{\CC}\Cat^+_{\Del} \ar[l]_-\simeq^-{\rN^{\sca}}\ar[r]^-{\simeq}_-{U_{\CC}} & \Sp^{\uline{\Tw}_2(\CC)}.
}$$
All vertical functors take pullbacks of parameterized spectrum objects along the indicated maps. The horizontal functors are all right Quillen equivalences (the left horizontal functors take scaled nerves). By \cite[Lemma 4.2.6]{goodwillie}, the bicategorical model structure on $\Set^{\sca}_{\Del}$ is Cartesian closed, so that the functor $q^*: \Set^{\sca}_{\Del}\lrar (\Set^{\sca}_{\Del}){}_{/\rN^{\sca}(\CC)}$ is also a left Quillen functor. It follows that $q^*$ maps the cotangent complex of $\Del^0$ in $\T_{\Del^0}\Set^{\sca}_{\Del}$ to the cotangent complex of $\rN^{\sca}(\CC)$ in $\T_{\rN^{\sca}(\CC)}\Set^{\sca}_{\Del}$. Since $r^*$ is conjugate to $q^*$ via Quillen equivalences, it follows that $r^*$ sends $\RR U_{[0]}(L_{[0]})$ to $\RR U_{\CC}(L_{\CC})$. The desired equivalence therefore arises from the equivalence $\theta_{[0]}: \SS[-2]\lrar \RR U_{[0]}(L_{[0]})$.
\end{proof}

It will be useful to record the following enhanced version of Proposition~\ref{p:cotangent}, which allows one to compute relative cotangent complexes as well. Let $f: \CC \lrar \DD$ be a map of fibrant marked-simplicial categories and let $\vphi: \uline{\Tw}_2(\CC)\lrar \uline{\Tw}_2(\DD)$ be the induced functor on twisted 2-cell $\infty$-categories. Theorem~\ref{t:main2} gives a commutative square of Quillen adjunctions
\begin{equation}\label{e:square-tangent}
\vcenter{\xymatrix@C=3pc{
\T_\CC\Cat^+_\Del\ar@<-1ex>[d]_-{f_!}\ar@<-1ex>[r]_-{U_{\CC}}^-{\upvdash}  &
\Sp^{\uline{\Tw}_2(\CC)} \ar@<-1ex>[l]_-{\F_\CC}\ar@<-1ex>[d]_-{\vphi_!} \\
\T_\DD\Cat^+_\Del\ar@<-1ex>[u]^-{\dashv}_{f^*}\ar@<-1ex>[r]_-{U_\DD}^-{\upvdash} &
\Sp^{\uline{\Tw}_2(\DD)}\ar@<-1ex>[l]_-{\F_\DD}\ar@<-1ex>[u]^-{\dashv}_{\vphi^*} 
}}
\end{equation}
where the horizontal Quillen adjunctions are Quillen equivalences and the functors $f^*$ and $\vphi^*$ take the pullback of a parameterized spectrum (of marked simplicial categories, resp.\ left fibrations) along $f$ and $\vphi$. We then have the following:
\begin{cor}\label{c:enhanced}
Let $f: \CC \lrar \DD$ be a map of fibrant marked-simplicial categories and let $r: \uline{\Tw}_2(\CC) \lrar \ast$ denote the terminal map.
Then there is a natural weak equivalence in $\Sp^{\uline{\Tw}_2(\DD)}$
$$ \theta_{f}:\LL\vphi_!(r^*\SS[-2]) \x{\simeq}{\lrar} \RR U_{\DD}\LL f_!(L_{\CC}).$$
\end{cor}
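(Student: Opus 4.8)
The plan is to combine the identification of the absolute cotangent complex from Proposition~\ref{p:cotangent} with the naturality of the equivalence $U$ recorded in Theorem~\ref{t:main2}, and then to transport the statement along the left adjoints. First I would recall that $L_\CC = \LL\Sig^\infty_+(\emptyset \lrar \CC)$ is just the derived suspension spectrum of the initial object in $(\Cat^+_\Del)_{/\CC}$, and similarly for $\DD$; since $\LL f_!$ is a left Quillen functor between the relevant slice categories and left Quillen functors preserve (derived) suspension spectra of initial objects, we have a natural equivalence $\LL f_!(L_\CC) \simeq L_\DD$ in $\T_\DD\Cat^+_\Del$. Wait --- more precisely, the composite $\emptyset \lrar \CC \lrar \DD$ equals the initial map of $\DD$, so under $\LL f_! = \LL\Sig^\infty_+ \circ (\text{pushforward along } f)$ one gets $\LL f_!(L_\CC) = \LL f_!\LL\Sig^\infty_+(\Id_\emptyset) \simeq \LL\Sig^\infty_+(\Id_\emptyset^\DD) = L_\DD$. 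Thus the right-hand side of the asserted equivalence is $\RR U_\DD(L_\DD)$.

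Next I would invoke Proposition~\ref{p:cotangent} applied to the marked-simplicial category $\DD$: it supplies a natural equivalence $\theta_\DD: r_\DD^*\,\SS[-2] \x{\simeq}{\lrar} \RR U_\DD(L_\DD)$ in $\Sp^{\uline{\Tw}_2(\DD)}$, where $r_\DD: \uline{\Tw}_2(\DD) \lrar \ast$ is the terminal map. It therefore remains to identify $r_\DD^*\,\SS[-2]$ with $\LL\vphi_!(r^*\SS[-2])$, where $r = r_\CC: \uline{\Tw}_2(\CC)\lrar \ast$. Since $r_\DD \circ \vphi = r$, we have $r^*\SS[-2] = \vphi^* r_\DD^*\SS[-2]$. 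Now $r_\DD^*\SS[-2]$ is a \emph{constant} diagram on $\uline{\Tw}_2(\DD)$; the key point is that $\LL\vphi_!$ applied to the restriction along $\vphi$ of a constant diagram recovers the constant diagram again --- this is the standard fact that for the covariant/left-fibration model of $\Fun(-,\Sp)$, left Kan extension along $\vphi$ followed by restriction is not the issue, rather one uses that $\vphi^*$ of a constant diagram is constant and that $\LL\vphi_!$ is left adjoint to $\vphi^*$, together with the unit of the $(\LL\vphi_!, \vphi^*)$-adjunction applied to the constant diagram $r^*\SS[-2]$. Concretely, the counit $\LL\vphi_!\vphi^* r_\DD^*\SS[-2] \lrar r_\DD^*\SS[-2]$ is an equivalence because it is so pointwise: at an object $y \in \uline{\Tw}_2(\DD)$ the value of $\LL\vphi_!\vphi^*(r_\DD^*\SS[-2])$ is the colimit of the constant diagram $\SS[-2]$ over the comma $\infty$-category $\vphi_{/y}$, and one checks this comma category is weakly contractible --- or, more robustly, one avoids pointwise analysis entirely by noting $r^*\SS[-2] = \vphi^* r_\DD^*\SS[-2]$ together with the fact that $r_\DD^*\SS[-2]$, being pulled back from the point, is in the image of $\LL\vphi_!$ applied to pullbacks from the point compatibly. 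I would phrase this cleanly by using the diagram of right Quillen functors from the proof of Proposition~\ref{p:cotangent}: the squares relating $r_\CC^*, r_\DD^*, \vphi^*, f^*$ and the terminal maps commute, and passing to the conjugate left adjoints gives $\LL\vphi_! \circ r_\CC^* \simeq r_\DD^*$ on the relevant objects, in particular on $\SS[-2]$.

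Assembling these: $\LL\vphi_!(r^*\SS[-2]) \simeq r_\DD^*\SS[-2] \x{\theta_\DD}{\simeq} \RR U_\DD(L_\DD) \simeq \RR U_\DD\LL f_!(L_\CC)$, and naturality in $f$ follows from naturality of each constituent equivalence (of $\theta_\DD$ by Proposition~\ref{p:cotangent}, of the square \eqref{e:square-tangent} by Theorem~\ref{t:main2}, and of $\LL f_!(L_\CC)\simeq L_\DD$ by functoriality of suspension spectra). The main obstacle I anticipate is the middle step --- verifying carefully that $\LL\vphi_!$ carries the constant diagram $r_\CC^*\SS[-2]$ to the constant diagram $r_\DD^*\SS[-2]$, i.e.\ that constancy is preserved by the derived left Kan extension in this model-categorical setup. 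The cleanest route, and the one I would pursue, is to deduce it formally from the commuting square of right Quillen functors $r_\DD^*\,q_\DD^* \cong (\text{pullback along }\vphi)\,r_\CC^*$ style identities already used in the proof of Proposition~\ref{p:cotangent}, since there $q^*$ was shown to be simultaneously left Quillen (using Cartesian closedness of $\Set^{\sca}_\Del$ from \cite[Lemma 4.2.6]{goodwillie}), and an analogous observation applies here; this sidesteps any delicate cofinality computation for comma categories.
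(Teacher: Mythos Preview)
Your argument rests on the identification $\LL f_!(L_\CC)\simeq L_\DD$, which is false. The cotangent complex $L_\CC$ is by definition (Definition~\ref{d:cotangent}) the derived suspension spectrum $\LL\Sig^\infty_+(\Id_\CC)$ of the \emph{terminal} object $\Id_\CC$ in the slice $(\Cat^+_\Del)_{/\CC}$, not of the initial object. Under $\LL f_!$ this becomes $\LL\Sig^\infty_+(f:\CC\to\DD)$ in $\T_\DD\Cat^+_\Del$, the suspension spectrum of $\CC$ viewed as an object over $\DD$ (cf.\ Remark~\ref{r:conceptual}). This agrees with $L_\DD=\LL\Sig^\infty_+(\Id_\DD)$ only when $f$ is an equivalence; in general their difference is precisely the relative cotangent complex $L_{\DD/\CC}$ (Corollary~\ref{c:relative-2}). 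Your parallel claim that $\LL\vphi_!(r^*\SS[-2])\simeq r_\DD^*\SS[-2]$ is the same mistake on the other side of the equivalence: the value of the left Kan extension at $y\in\uline{\Tw}_2(\DD)$ is $\SS[-2]\otimes|\vphi_{/y}|$, which equals $\SS[-2]$ only when $\vphi$ is coinitial --- exactly the hypothesis of Corollary~\ref{c:coinitial}, not a generality.

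The paper's proof avoids this entirely by applying Proposition~\ref{p:cotangent} to $\CC$ rather than to $\DD$. The equivalence $\theta_\CC: r^*\SS[-2]\simeq \RR U_\CC(L_\CC)$ is passed across the Quillen equivalence $\F_\CC\dashv U_\CC$ to an equivalence $\LL\F_\CC(r^*\SS[-2])\simeq L_\CC$; one then applies $\LL f_!$ and uses the commuting square~\eqref{e:square-tangent} of left adjoints, $\LL f_!\circ\LL\F_\CC\simeq \LL\F_\DD\circ\LL\vphi_!$, to obtain $\LL\F_\DD\LL\vphi_!(r^*\SS[-2])\simeq \LL f_!(L_\CC)$. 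Adjoining back across $\F_\DD\dashv U_\DD$ yields the claim. No comparison of $\LL f_!(L_\CC)$ with $L_\DD$, or of $\LL\vphi_!$ of a constant diagram with a constant diagram, is needed.
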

\begin{rem}\label{r:conceptual}
Under the equivalences of $\infty$-categories 
$$
\big(\Sp^{\uline{\Tw}_2(\CC)}\big)_{\infty} \simeq \Fun(\uline{\Tw}_2(\CC),\Sp)\qquad\qquad \big(\Sp^{\uline{\Tw}_2(\DD)}\big)_{\infty} \simeq \Fun(\uline{\Tw}_2(\DD),\Sp)
$$
the functors $\vphi^*$ and $\vphi_!$ correspond to restriction and left Kan extension along $\vphi$. Corollary~\ref{c:enhanced} should hence be read as follows: given a map $f: \CC \lrar \DD$, the suspension spectrum of the object $\CC \in (\Cat^+_\Del)_{/\DD}$ corresponds, under the equivalence of Theorem~\ref{t:main}, to the left Kan extension of the constant functor $\SS[-2]:\uline{\Tw}_2(\CC) \lrar \Sp$ along the induced map $\vphi: \uline{\Tw}_2(\CC) \lrar \uline{\Tw}_2(\DD)$.
\end{rem}
\begin{proof} 
Proposition~\ref{p:cotangent} provides a natural weak equivalence $\theta_\CC:r^*\SS[-2] \x{\simeq}{\lrar} U_\CC(L_{\CC})$. Since $F_{\CC} \dashv U_{\CC}$ is a Quillen equivalence, this map is adjoint to a weak equivalence $\theta^{\ad}_{\CC}:\LL\F_{\CC}(r^*\SS) \x{\simeq}{\lrar} L_{\CC}[2]$. Using the commutativity of~\eqref{e:square-tangent} we obtain a natural weak equivalence
$$\xymatrix@C=3pc{ 
\LL\F_{\DD}\LL\vphi_!(r^*\SS) \simeq \LL f_!\LL\F_{\CC}(r^*\SS) \ar[r]^-{\LL f_!\theta^{\ad}_{\CC}}_-{\simeq} & \LL f_!(L_\CC[2])}.$$
The equivalence $\theta_{f}$ is the weak equivalence which is adjoint to this map under the Quillen equivalence $\F_{\DD} \dashv U_{\DD}$.
\end{proof}
\begin{cor}\label{c:relative-2}
Let $f: \CC \lrar \DD$ be a map of marked-simplicial categories.
Then there is a natural homotopy cofiber sequence in $\Sp^{\uline{\Tw}_2(\DD)}$
\begin{equation}\label{e:relative-2}
\uline{\Tw}_2(\CC) \times\SS \lrar \uline{\Tw}_2(\DD) \times \SS \lrar U_{\DD}(L_{\DD/\CC}[2]) 
\end{equation}
\end{cor}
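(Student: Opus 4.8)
The plan is to deduce \eqref{e:relative-2} by applying the derived functor of the right Quillen equivalence $U_{\DD}$ of Theorem~\ref{t:main2} to the homotopy cofiber sequence that \emph{defines} the relative cotangent complex, and then to identify two of the three terms using the computations of the absolute and relative cotangent complexes already carried out. Concretely, by Definition~\ref{d:cotangent} there is a homotopy cofiber sequence $\LL f_!(L_{\CC}) \lrar L_{\DD} \lrar L_{\DD/\CC}$ in the stable model category $\T_{\DD}\Cat^+_\Del$, in which the first map is the canonical one induced by $f$ (the adjoint of the structure map $L_{\CC}\lrar f^*L_{\DD}$ on cotangent complexes). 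Shifting twice and applying $\RR U_{\DD}$ — which, since $U_{\DD}$ is a Quillen equivalence between \emph{stable} model categories, is an equivalence of the underlying stable $\infty$-categories and hence exact, so commutes with suspension and preserves cofiber sequences — yields a homotopy cofiber sequence
$$ \RR U_{\DD}\big(\LL f_!(L_{\CC})[2]\big) \lrar \RR U_{\DD}\big(L_{\DD}[2]\big) \lrar \RR U_{\DD}\big(L_{\DD/\CC}[2]\big) $$
in $\Sp^{\uline{\Tw}_2(\DD)}$, whose last term is the object appearing on the right of \eqref{e:relative-2}.

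It remains to identify the first two terms. For the middle term, Proposition~\ref{p:cotangent} applied to $\DD$ provides a natural equivalence $\theta_{\DD}: r_{\DD}^*\SS[-2] \x{\simeq}{\lrar} \RR U_{\DD}(L_{\DD})$, with $r_{\DD}: \uline{\Tw}_2(\DD)\lrar \ast$ the terminal map, so suspending twice identifies $\RR U_{\DD}(L_{\DD}[2])$ with the constant diagram $r_{\DD}^*\SS = \uline{\Tw}_2(\DD)\times\SS$. For the first term, Corollary~\ref{c:enhanced} provides a natural equivalence $\theta_f: \LL\vphi_!\big(r_{\CC}^*\SS[-2]\big) \x{\simeq}{\lrar} \RR U_{\DD}\LL f_!(L_{\CC})$, and since $\LL\vphi_!$ is exact this suspends to $\RR U_{\DD}(\LL f_!(L_{\CC})[2])\simeq \LL\vphi_!(r_{\CC}^*\SS)$; the latter object is precisely what is denoted $\uline{\Tw}_2(\CC)\times\SS$ in $\Sp^{\uline{\Tw}_2(\DD)}$, namely the left Kan extension along $\vphi$ of the constant functor at the sphere spectrum, i.e.\ the suspension spectrum of the left fibration $\vphi$ (cf.\ Remark~\ref{r:conceptual}). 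One then checks that under these identifications the first map of the cofiber sequence is the expected one: it is the image under $\RR U_{\DD}$ of $\LL f_!$ applied to $L_{\CC}\lrar f^*L_{\DD}$ followed by the counit, which — via the commuting square \eqref{e:square-tangent} underlying Corollary~\ref{c:enhanced}, together with the compatibility $\RR U_{\CC}\circ f^*\simeq \vphi^*\circ \RR U_{\DD}$ — goes over to the counit $\LL\vphi_!(r_{\CC}^*\SS)\lrar r_{\DD}^*\SS$ of the adjunction $\LL\vphi_!\dashv \vphi^*$ on the constant sphere diagram.

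Naturality of the whole cofiber sequence in $f$ then follows from the naturality of the equivalences $\theta_{\DD}$ and $\theta_f$. Thus the argument is essentially formal once Proposition~\ref{p:cotangent} and Corollary~\ref{c:enhanced} are in hand; the only points demanding a little care are the exactness of $\RR U_{\DD}$ (hence that it transforms the defining cofiber sequence of $L_{\DD/\CC}$, suitably shifted, into a cofiber sequence) and the verification that the connecting structure maps are compatible with the two separately established identifications — that is, the bookkeeping of naturality, which is the main, though mild, obstacle.
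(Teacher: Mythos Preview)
Your proposal is correct and follows essentially the same approach as the paper: apply $\RR U_{\DD}$ to the shifted cofiber sequence $\LL f_!(L_{\CC}[2]) \lrar L_{\DD}[2] \lrar L_{\DD/\CC}[2]$ defining the relative cotangent complex, then identify the first two terms via Corollary~\ref{c:enhanced} and Proposition~\ref{p:cotangent} respectively. Your write-up is in fact more careful than the paper's, which dispenses with the exactness of $\RR U_{\DD}$ and the compatibility of the connecting map in a single sentence.
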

\begin{proof}
By Corollary~\ref{c:enhanced} the left term of the above sequence can be identified with $U_\DD\LL f_!(L_{\CC}[2])$, while the middle term is given by $U_{\DD}(L_{\DD}[2])$ by Proposition \ref{p:cotangent}. This identifies the above sequence with the image of the cofiber sequence $\LL f_!(L_{\CC}[2])\lrar L_{\DD}[2] \lrar L_{\DD/\CC}[2]$ under $U_{\DD}$.
\end{proof}
The cofiber sequence~\eqref{e:relative-2} can also be rewritten as 
$$
\Sig^{\infty}_+(\vphi) \lrar \Sig^{\infty}_+(\Id_{\uline{\Tw}_2(\DD)}) \lrar U_{\DD}(L_{\DD/\CC}[2]) 
$$
Recall that a map $p: X \lrar Y$ of simplicial sets is said to be \textbf{coinitial} if $p^{\op}$ is cofinal, i.e., if $p$ is equivalent to the terminal object in $(\Set_{\Del})^{\cov}_{/Y}$ (cf.~\cite[Definition 4.1.1.1]{Lur09}). We may therefore conclude the following:
\begin{cor}\label{c:coinitial}
Let $f: \CC \lrar \DD$ be a map of fibrant marked-simplicial categories such that the induced map $\vphi:\uline{\Tw}_2(\CC) \lrar\uline{\Tw}_2(\DD)$ is coinitial. Then the relative cotangent complex of $f$ vanishes. In particular, for any coefficient system $M\in \T_{\DD}\Cat_\Del^+$, the relative Quillen cohomology groups vanish:
$$
\rH_{Q}^n(\CC, \DD; M) \cong 0.
$$
\end{cor}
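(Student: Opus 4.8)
The plan is to extract the statement directly from the cofiber sequence of Corollary~\ref{c:relative-2}. Recall that, in the form recorded immediately below that corollary, the relative cotangent complex of a map $f: \CC \lrar \DD$ of fibrant marked-simplicial categories sits in a homotopy cofiber sequence in $\Sp^{\uline{\Tw}_2(\DD)}$
$$ \Sig^{\infty}_+(\vphi) \lrar \Sig^{\infty}_+(\Id_{\uline{\Tw}_2(\DD)}) \lrar U_{\DD}(L_{\DD/\CC}[2]), $$
in which the first arrow is obtained by applying the left Quillen functor $\Sig^{\infty}_+$ to the canonical map from $\vphi$, viewed as an object of the covariant model category $(\Set_\Del)^{\cov}_{/\uline{\Tw}_2(\DD)}$, to the terminal object $\Id_{\uline{\Tw}_2(\DD)}$. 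That this is the correct identification of the map in \eqref{e:relative-2} is part of the content of Corollary~\ref{c:enhanced} together with Definition~\ref{d:cotangent} of the relative cotangent complex.

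First I would unpack the hypothesis. By the characterization recalled just before the statement, $\vphi$ being coinitial means precisely that $\vphi$ is weakly equivalent to the terminal object in $(\Set_\Del)^{\cov}_{/\uline{\Tw}_2(\DD)}$; since that terminal object $\Id_{\uline{\Tw}_2(\DD)}$ is fibrant and every object of this model category is cofibrant (the cofibrations being the monomorphisms), the unique map $\vphi \lrar \Id_{\uline{\Tw}_2(\DD)}$ — which is exactly the structure map occurring above — is then itself a covariant weak equivalence. Because all objects are cofibrant, $\Sig^{\infty}_+$ agrees here with its left derived functor and, being left Quillen, carries this weak equivalence to a weak equivalence $\Sig^{\infty}_+(\vphi) \x{\simeq}{\lrar} \Sig^{\infty}_+(\Id_{\uline{\Tw}_2(\DD)})$ in $\Sp^{\uline{\Tw}_2(\DD)}$. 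As this is the first map of the cofiber sequence, its homotopy cofiber $U_{\DD}(L_{\DD/\CC}[2])$ is a zero object of the stable model category $\Sp^{\uline{\Tw}_2(\DD)}$.

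It then remains to transport this vanishing back across the main theorem and read off the cohomological consequence. By Theorem~\ref{t:main2} the functor $U_{\DD}$ is a right Quillen equivalence, so $\RR U_{\DD}$ reflects weak equivalences, and in particular reflects zero objects; hence $L_{\DD/\CC}[2]$, and therefore $L_{\DD/\CC}$, is a zero object of $\T_{\DD}\Cat^+_\Del$. Finally, Definition~\ref{d:Quillen-coh} expresses $\rH^n_Q(\CC,\DD;M)$ as $\pi_0\Map^{\der}(L_{\DD/\CC},\Sig^n M)$ for any $M\in\T_{\DD}\Cat^+_\Del$ and any $n\in\ZZ$, and a derived mapping spectrum out of a zero object is contractible, so all these groups vanish. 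The argument is essentially bookkeeping once Corollary~\ref{c:relative-2} is available; the one point that warrants genuine care is the translation of the word \emph{coinitial} into the assertion that the first map of that cofiber sequence is a weak equivalence — that is, matching the canonical map to the terminal object in the covariant model structure with the map appearing in \eqref{e:relative-2}, and observing that equivalence to a fibrant terminal object forces this particular map (not merely some zig-zag) to be a weak equivalence.
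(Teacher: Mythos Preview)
Your proof is correct and follows essentially the same approach as the paper: the paper simply observes that the rewritten cofiber sequence $\Sig^{\infty}_+(\vphi) \lrar \Sig^{\infty}_+(\Id_{\uline{\Tw}_2(\DD)}) \lrar U_{\DD}(L_{\DD/\CC}[2])$ together with the definition of coinitial (equivalence to the terminal object in the covariant model structure) immediately gives the result. You have spelled out the details more carefully---particularly the cofibrancy of all objects, the reflection of zero objects along the Quillen equivalence $U_\DD$, and the passage to cohomology---but the underlying argument is identical.
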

\begin{rem}\label{r:coinitial}
The notion of coinital appears in the literature under various names, including \textit{right cofinal}, and \textit{initial}. By the $\infty$-categorical Quillen theorem A (see, e.g., \cite[Theorem 4.1.3.1]{Lur09}) a map $p: X \lrar Y$ from a simplicial set to an $\infty$-category is coinitial if and only if for every object $y \in Y$ the simplicial set $X \times_{Y} Y_{/y}$ is weakly contractible.
\end{rem}

\section{Application - the classification of adjunctions}\label{s:adj}
In this section we will demonstrate the above machinery on a particular example, by showing that the inclusion of $2$-categories $\iota:[1] \lrar \Adj$ has a trivial relative cotangent complex. Here $\Adj$ is the walking adjunction and $[1] = \bullet \lrar \bullet$ is considered as a $2$-category with no non-trivial $2$-cells. If $\CC$ is a fibrant marked-simplicial category then the data of a functor $\Adj \lrar \CC$ is equivalent to the data of a homotopy coherent adjunction in $\CC$, while functors $[1] \lrar \CC$ classify $1$-arrows in $\CC$. 

The triviality of the relative cotangent complex of $\iota:[1] \lrar \Adj$ means that the relative Quillen cohomology groups $\rH^n_Q(\Adj,[1];M)$ vanish for every coefficient object $M \in \T_{\Adj}\Cat^+_\Del$ (see \S\ref{s:recall}). By the obstruction theory mentioned in~\S\ref{s:intro} (see also~\cite[2.6]{part2} and~\cite{part4}) this means that a $1$-arrow $f$ in a fibrant marked-simplicial category $\CC$ extends to a homotopy coherent adjunction if and only if it extends to an adjunction in the homotopy $(3,2)$-category $\Ho_{\leq 3}(\CC)$. In fact, the space of derived lifts in the square
$$ \xymatrix{
[1] \ar[r]\ar[d] & \CC \ar[d] \\
\Adj \ar[r]\ar@{-->}[ur] & \Ho_{\leq 3}(\CC) \\
}$$
is weakly contractible. 
We note that the analogous contractibility statement for lifts of $[1] \lrar \Adj$ against $\CC \lrar \Ho_{\leq 2}(\CC)$ was established in~\cite{RV16} by using a somewhat elaborate combinatorial argument and an explicit cell decomposition of $\Adj$. As we hope to demonstrate below, the argument concerning the relative cotangent complex of $[1] \lrar \Adj$ is rather simple in comparison. 
Recall that $\Adj$ contains two objects $0,1 \in \Adj$, its $1$-morphisms are freely generated by a morphism $f: 0 \lrar 1$ (the left adjoint) and a morphism $g: 1 \lrar 0$ (the right adjoint) and its $2$-morphisms are generated (via both horizontal and vertical compositions) by a unit $2$-cell $u: \Id_0 \Rightarrow T := gf$ and counit $2$-cell $v:K := fg \Rightarrow \Id_1$ subject to the relations that the compositions 
$$
(vf)\circ (fu): f\Rightarrow fgf \Rightarrow f\qquad\qquad (gv)\circ (ug): g \Rightarrow gfg \Rightarrow g
$$
are equal to the identity $2$-cells. Our goal in this section is then to prove the following:
\begin{thm}\label{t:adj}
Let $\iota: [1] \lrar \Adj$ be the inclusion which sends the non-trivial morphism of $[1]$ to $f$. Then the map
$$ \iota_*:\uline{\Tw}_2([1]) \lrar \uline{\Tw}_2(\Adj) $$
induced by $\iota$ is coinitial. In particular (see Corollary~\ref{c:coinitial}), the relative cotangent complex of $\iota$ is trivial.
\end{thm}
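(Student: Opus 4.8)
The plan is to verify coinitiality of $\iota_*$ by applying the $\infty$-categorical Quillen theorem A, as in Remark~\ref{r:coinitial}: it suffices to show that for every object $y$ of $\uline{\Tw}_2(\Adj)$ the comma category $\uline{\Tw}_2([1]) \times_{\uline{\Tw}_2(\Adj)} \uline{\Tw}_2(\Adj)_{/y}$ is weakly contractible. Since $\Adj$ and $[1]$ are strict $2$-categories, I would first use Proposition~\ref{p:2-cat} (and the analysis of Construction~\ref{con:D}-type examples in~\S\ref{s:examples}) to replace these twisted $2$-cell $\infty$-bicategories by strict models built from the $2$-categorical Grothendieck construction. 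Concretely, the objects of $\uline{\Tw}_2(\Adj)$ are the $2$-cells of $\Adj$ (equivalently, the objects of $\Tw(\Map_{\Adj}(x,z))$ for pairs of objects $x,z$), and the mapping categories are built from factorizations of $2$-cells, twisted on both sides. The key combinatorial input is that the mapping categories of $\Adj$ — namely $\Map_{\Adj}(0,0)$, $\Map_{\Adj}(0,1)$, $\Map_{\Adj}(1,0)$, $\Map_{\Adj}(1,1)$ — are explicitly the well-known simplex-like categories $\BDel_{+}$, $\BDel_{\infty,+}$, etc.\ appearing in the string-diagram description of the walking adjunction (as in the cell structure used in~\cite{RV16}); their twisted arrow categories and the resulting comma categories are then amenable to direct analysis.

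The main step is to reduce the contractibility of each comma category over an object $y \in \uline{\Tw}_2(\Adj)$ to a statement about a single explicit poset or category, and to exhibit an initial (or terminal, or zig-zag-connected) object there. I expect the argument to run parallel to Example~\ref{e:natural}: fix $y$, corresponding to a $2$-cell $\sigma$ of $\Adj$ sitting in some $\Map_{\Adj}(x,z)$; an object of the comma category is a $2$-cell of $[1]$ (which, since $[1]$ has only trivial $2$-cells, is just an identity, so amounts to a factorization datum through the single generating arrow $f$) together with a morphism in $\uline{\Tw}_2(\Adj)$ to $y$. The point is that the objects $x,z$ must then both map into $\{0,1\}$ via $\iota$, and the $2$-cell data must factor through composites of $f$ and $g$; because $[1]$ contains only $f$, the comma category collapses to the category of ways of expressing $\sigma$ as a whiskering/composite built from $f$'s together with a chosen "pivot". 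I would identify this category with a comma category of the form $\BDel \downarrow [\text{something}]$ or a fiber of a Cartesian fibration whose fibers are filtered (terminal object $=$ the tautological maximal factorization, as in the fiber computations in Construction~\ref{con:D} and Example~\ref{e:natural}), hence weakly contractible, and then assemble via the Cartesian-fibration-plus-contractible-fibers argument used repeatedly in~\S\ref{s:examples}.

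Concretely, the proof skeleton is: (1) replace $\uline{\Tw}_2(\Adj)$ and $\uline{\Tw}_2([1])$ by the strict $2$-categorical models of Proposition~\ref{p:2-cat}, and spell out their objects and mapping categories in terms of the string-diagram combinatorics of $\Adj$; (2) fix an object $y$ of $\uline{\Tw}_2(\Adj)$ and write down the comma category $\uline{\Tw}_2([1])\times_{\uline{\Tw}_2(\Adj)}\uline{\Tw}_2(\Adj)_{/y}$ explicitly as an ordinary category (using that mapping spaces in the freely generated $\infty$-category are classifying spaces of the strict mapping categories, Remark~\ref{r:freeoo1}); (3) exhibit on this category either a terminal object, an initial object, or a coinitial weakly contractible subposet (a zig-zag of spans, as in Example~\ref{e:natural}), possibly after splitting off a Cartesian fibration with contractible fibers; (4) conclude weak contractibility for every $y$, hence coinitiality of $\iota_*$ by Quillen's theorem A, and then invoke Corollary~\ref{c:coinitial} for the vanishing of the relative cotangent complex.

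The main obstacle, I expect, is step (2)–(3): correctly bookkeeping the two-sided twisting together with the non-invertibility of the generating $2$-cells $u,v$ and the zig-zag (triangle) relations in $\Adj$, so that one identifies the comma category precisely enough to locate a contractibility witness. Unlike the abelian-monoid examples of~\S\ref{s:examples}, here the relevant mapping categories of $\Adj$ are genuinely ordinal-indexed and the factorization posets are more intricate; the heart of the argument will be a careful but ultimately elementary verification that, after fixing the target $2$-cell $y$, the space of factorizations-through-$f$ mapping to $y$ is filtered (has a terminal object given by the "longest" or tautological factorization). Once that is established, the rest is a routine application of the cofinality criteria already used in this paper.
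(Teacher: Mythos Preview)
Your overall strategy is sound and matches the paper's: reduce to Quillen's theorem A via Proposition~\ref{p:2-cat}, identify $\uline{\Tw}_2([1])$ with $\Tw([1]) = \bullet \to \bullet \leftarrow \bullet$ (Example~\ref{e:1-cat}), and check that the comma category over each object of $\Tw_2(\Adj)$ is weakly contractible. The paper likewise uses the ordinal description of the mapping categories of $\Adj$ to make this concrete.

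The gap is in your step (3). The comma category over a fixed $2$-cell $\sigma: \langle n\rangle \to \langle m\rangle$ is \emph{not} filtered and does not admit a terminal or initial object; your ``longest factorization'' heuristic does not produce one. The point is that $\Tw([1])$ has three objects, so the comma category is a homotopy pushout of three mapping spaces (one for each of $\Id_0$, $e$, $\Id_1$), and no single object dominates all three legs simultaneously. What the paper does instead is reduce each of the three mapping spaces to a \emph{discrete set}: maps out of $\iota_*(\Id_0)$ are homotopy equivalent to the set $\widehat{\langle n\rangle}$ of gaps in $\langle n\rangle$, maps out of $\iota_*(\Id_1)$ to the set $\langle m\rangle$ of marked points, and maps out of $\iota_*(e)$ to a set $\E_\sigma \subseteq \widehat{\langle n\rangle} \times \langle m\rangle$ of ``compatible'' pairs. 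The comma category is then homotopy equivalent to the bipartite graph on $\widehat{\langle n\rangle} \coprod \langle m\rangle$ with edge set $\E_\sigma$, and contractibility is established by a short Euler-characteristic count showing this graph is a tree. The intermediate reductions to discrete sets \emph{do} use terminal- and initial-object arguments of the kind you anticipate, but only after first splitting each mapping category into connected components indexed by the gap or the marked point; the final assembly over $\Tw([1])$ is genuinely graph-theoretic and cannot be shortcut by a single filteredness claim.
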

Let us start by describing the mapping categories of $\Adj$ in more detail. 
\begin{define}\label{d:ordinals}
Let us denote the finite ordinal of size $n$ by $\<n\> = \{0,...,n-1\}$. For $x,y \in \{0,1\}$, let $\Del_{x,y}$ be the following category of \textbf{$(x, y)$-ordinals}:
\begin{itemize}
 \item objects given by finite ordinals with at least $\min(x, y)$ elements.
 \item maps given by order-preserving maps that preserve the initial $x$ elements and the final $y$ elements (i.e.\ no further condition when $x=y=0$).
\end{itemize}
For $x,y,z\in \{0,1\}$, consider the functor 
$$
\otimes_y:\Del_{x,y} \times \Del_{y,z} \lrar \Del_{x,z}; \hspace{4pt} \<n\> \otimes_y \<m\> = \<n-y+m\>
$$
which concatenates $\<n\>$ and $\<m\>$ and identifies the final element of $\<n\>$ with the initial element of $\<m\>$ if $y=1$. 
\end{define}

\begin{obs}[cf.\cite{RV16}]\label{c:identification}
There is a natural identification $\Map_{\Adj}(x,y) \cong \Del_{x,y}$ such that the composition functors $\Map_{\Adj}(x,y) \times \Map_{\Adj}(y,z) \lrar \Map_{\Adj}(x,z)$ are given by $\otimes_y$.
\end{obs}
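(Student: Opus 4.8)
The plan is to use the universal property of $\Adj$: as the walking adjunction it is the free (strict) $2$-category on an adjunction, and equivalently it is presented by the $2$-computad with objects $0,1$, generating $1$-cells $f\colon 0\to 1$, $g\colon 1\to 0$, generating $2$-cells $u\colon\Id_0\Rightarrow gf$, $v\colon fg\Rightarrow\Id_1$, and the two triangle relations recalled before Theorem~\ref{t:adj}. One then compares this presentation against the explicit combinatorial $2$-category built out of the categories $\Del_{x,y}$. The first step is to assemble the data of Definition~\ref{d:ordinals} into a strict $2$-category $\mathcal{A}$ with object set $\{0,1\}$, hom-categories $\mathcal{A}(x,y)=\Del_{x,y}$, horizontal composition the functors $\otimes_y$, and identity $1$-cells the units of $\otimes_0$ and $\otimes_1$ in $\Del_{0,0}$ and $\Del_{1,1}$. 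The only things to check are that each $\otimes_y$ is functorial in both variables --- immediate from the explicit description of morphisms --- and that concatenation of ordinals, with boundary elements glued at every interface labelled $1$, is strictly associative and unital; functoriality of $\otimes_y$ in two variables then supplies the middle-four interchange law, so $\mathcal{A}$ is a strict $2$-category.

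Next I would exhibit the universal adjunction inside $\mathcal{A}$. Take $F\in\Del_{0,1}$ and $G\in\Del_{1,0}$ to be the generating one-element ordinals, so that $G\circ F=F\otimes_1 G\in\Del_{0,0}$ and $F\circ G=G\otimes_0 F\in\Del_{1,1}$ are the ordinals one expects for $gf$ and $fg$, and let $u\colon\id_0\Rightarrow G\circ F$ and $v\colon F\circ G\Rightarrow\id_1$ be the unique endpoint-preserving order-preserving maps (uniqueness is a one-line check once the boundary conditions are spelled out). A short finite computation with whiskerings in $\Del_{0,1}$ and $\Del_{1,0}$ then verifies the triangle identities $(vF)\circ(Fu)=\id_F$ and $(Gv)\circ(uG)=\id_G$ in $\mathcal{A}$. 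By the universal property of $\Adj$ this adjunction is classified by a strict $2$-functor $\Phi\colon\Adj\to\mathcal{A}$ which is the identity on objects and sends $f\mapsto F$, $g\mapsto G$, $u\mapsto u$, $v\mapsto v$.

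It remains to show $\Phi$ is an isomorphism of $2$-categories. It is a bijection on objects, so one must see that each $\Phi_{x,y}\colon\Map_{\Adj}(x,y)\to\Del_{x,y}$ is an isomorphism of categories, compatibly with vertical and horizontal composition. On $1$-cells this is pure bookkeeping: since the $1$-cells of $\Adj$ are freely generated, $\Map_{\Adj}(x,y)$ has as objects the alternating words in $f,g$ of the appropriate variance (powers of $gf$, of $fg$, of $fg$ followed by $f$, of $gf$ followed by $g$), and $\Phi$ sends the $n$-th such word to an ordinal of the corresponding size, bijectively onto the objects of $\Del_{x,y}$. The substantive point is bijectivity on $2$-cells, and here I would use the string-diagram calculus for the $2$-category generated by $f,g,u,v$: a $2$-cell between two words is a vertical composite of whiskered copies of $u$ (cups) and $v$ (caps), and the triangle relations say precisely that a zig-zag may be straightened, so every $2$-cell admits a unique \emph{non-crossing} representative. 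Non-crossing diagrams between two strands are in natural bijection with order-preserving maps between the associated finite ordinals, subject to the boundary constraints recorded by $x$ and $y$ --- i.e.\ exactly with morphisms of $\Del_{x,y}$ --- and side-by-side juxtaposition of such diagrams is visibly the gluing operation defining $\otimes_y$, which yields the compatibility with composition. The single non-formal ingredient is this normal-form theorem for $2$-cells of $\Adj$: that non-crossing string diagrams form a complete and irredundant set of representatives. This is the heart of Schanuel--Street's description of the free adjunction and is reproved combinatorially, via an explicit cell decomposition of $\Adj$, in~\cite{RV16}; in practice one either cites it directly or carries out the zig-zag-straightening argument in detail, and it is the step I expect to be the main obstacle.
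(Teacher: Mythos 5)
The paper offers no proof of this Observation at all: it is stated as a known fact with a pointer to \cite{RV16} (ultimately Schanuel--Street's description of the free adjunction), and your outline --- building the strict $2$-category with hom-categories $\Del_{x,y}$, exhibiting the universal adjunction there, and checking the induced $2$-functor is bijective on cells --- is the standard argument and is correct as far as it goes. Its one substantive ingredient, the normal-form theorem identifying $2$-cells of $\Adj$ with non-crossing string diagrams (equivalently, maps of ordinals), is precisely the result the paper is citing, so you and the paper bottom out in the same external input.
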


Recall that $\Adj$ admits a natural duality functor $\Adj \lrar \Adj^{\coop}$, where the directions of $1$-morphisms and $2$-morphisms are reversed in $ \Adj^{\coop}$. This functor switches $0$ with $1$, $f$ with $g$ and $u$ with $v$. In terms of Definition \ref{d:ordinals}, this functor can be described as follows:
\begin{define}
Let $x,y \in \{0,1\}$ and let $\<n\> \in \Del_{x,y}$ be an $(x,y)$-ordinal. A \textbf{gap} in $\<n\>$ is a map of $(x,y)$-ordinals $g: \<n\> \lrar \<2\> = \{0,1\}$. 
We denote by $\widehat{\<n\>}$ the linear order of gaps in $\<n\>$, where $g\leq g'$ if $g^{-1}(0) \subseteq (g')^{-1}(0)$.
\end{define}
\begin{rem}
The notation $\widehat{\<n\>}$ is slightly abusive: it does not reflect the dependency of the notion of a gap in $x$ and $y$. 
\end{rem}

\begin{obs}\label{o:gap}
Let $x,y \in \{0,1\}$ be elements. Then the association $\<n\right> \mapsto \widehat{\<n\right>}$ maps $(x,y)$-ordinals contravariantly to $(1-x,1-y)$-ordinals and determines an equivalence of categories 
\begin{equation}\label{e:equiv}
\Del_{x,y} \x{\simeq}{\lrar} (\Del_{1-x,1-y})^{\op}.
\end{equation}
Under the identification of Observation \ref{c:identification}, these equivalences describe the canonical duality functor $\Adj\lrar\Adj^{\coop}$.
\end{obs}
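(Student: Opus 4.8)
The plan is to check the three assertions of the statement in order --- that $\<n\>\mapsto\widehat{\<n\>}$ is a contravariant functor $\Del_{x,y}\lrar\Del_{1-x,1-y}$, that this functor is an equivalence, and that under Observation~\ref{c:identification} it realizes the canonical duality $\Adj\lrar\Adj^{\coop}$. Everything is elementary combinatorics of finite linear orders; the one point requiring attention throughout is to keep track of which endpoints of $\widehat{\<n\>}$ are marked in each of the four cases of $(x,y)$. For functoriality: a morphism $\phi\colon\<n\>\lrar\<m\>$ in $\Del_{x,y}$ induces $\widehat\phi\colon\widehat{\<m\>}\lrar\widehat{\<n\>}$ by precomposition, $g\mapsto g\circ\phi$. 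Since a composite of $(x,y)$-maps is again an $(x,y)$-map, $g\circ\phi$ is a gap in $\<n\>$, and because $(g\circ\phi)^{-1}(0)=\phi^{-1}\big(g^{-1}(0)\big)$ the assignment $g\mapsto g\circ\phi$ is order-preserving; contravariant functoriality in $\phi$ is then automatic. That $\widehat\phi$ is a morphism of $(1-x,1-y)$-ordinals requires only that it preserve the relevant endpoints: when $x=0$ the minimal gap of any ordinal is the constant function $1$ --- a legitimate gap precisely because there is no initial mark forcing anything into the $0$-fibre --- and it is fixed by precomposition; dually, when $y=0$ the maximal gap is the constant function $0$ and is preserved. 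A direct count gives $|\widehat{\<n\>}|=n+1,\,n,\,n,\,n-1$ for $(x,y)=(0,0),(1,0),(0,1),(1,1)$ respectively, which is the correct object size in $\Del_{1-x,1-y}$.

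For the equivalence I will exhibit a natural isomorphism $\id\cong\widehat{\widehat{(-)}}$, the double-gap functor being the composite $\Del_{x,y}\to(\Del_{1-x,1-y})^{\op}\to\Del_{x,y}$ (using $1-(1-x)=x$). For $i\in\<n\>$ define $G_i(g):=1-g(i)$ for $g\in\widehat{\<n\>}$: since $g\le g'$ forces $g^{-1}(0)\subseteq g'^{-1}(0)$ and hence $g(i)\ge g'(i)$, the function $G_i$ is order-preserving, and from the boundary cases above one sees it respects the marks of $\widehat{\<n\>}$ (for instance $G_i$ of the constant gap $1$ is $0$, and of the constant gap $0$ is $1$), so $G_i\in\widehat{\widehat{\<n\>}}$. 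The rule $i\mapsto G_i$ is a monotone injection $\<n\>\lrar\widehat{\widehat{\<n\>}}$ --- monotone because $i\le i'$ and $g$ monotone give $g(i)\le g(i')$, injective because a gap cutting strictly between $i$ and $i'$ separates them --- and a cardinality count shows it is onto (the number of $(1-x,1-y)$-gaps of $\widehat{\<n\>}$ is $n$ in every case). Naturality comes down to the identity $G_i\circ\widehat\phi=G_{\phi(i)}$, which is a one-line verification. As the setup is symmetric under $(x,y)\leftrightarrow(1-x,1-y)$, the reverse composite is also isomorphic to the identity, so $\widehat{(-)}$ is an equivalence.

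It remains to identify this equivalence, transported through the isomorphisms $\Map_{\Adj}(x,y)\cong\Del_{x,y}$ of Observation~\ref{c:identification}, with the hom-functors of the duality $D\colon\Adj\lrar\Adj^{\coop}$. Both are equivalences $\Del_{x,y}\to(\Del_{1-x,1-y})^{\op}$, so it suffices to check they agree on objects and on a generating set of morphisms. On objects: a reduced word of length $\ell$ in $f$ and $g$ corresponds under Observation~\ref{c:identification} to an ordinal whose size is determined by $\ell$ and $(x,y)$, and $D$ carries such a word to the word of the same length with $f$ and $g$ swapped; comparing with the object sizes of $\widehat{\<n\>}$ recorded above shows the two assignments agree on isomorphism classes. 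On morphisms: $\Del_{x,y}$ is generated by its elementary coface and codegeneracy maps, each of which is, under Observation~\ref{c:identification}, a whiskering of the unit $2$-cell $u$ and the counit $2$-cell $v$; since $D$ is the unique $2$-functor to $\Adj^{\coop}$ interchanging $u$ and $v$, it is enough to note that $\widehat{(-)}$ sends $u$ to $v$ and $v$ to $u$, which is immediate from the definition of a gap --- the unique $(0,0)$-map $\<0\>\lrar\<1\>$ pulls gaps back to the unique $(1,1)$-map $\<2\>\lrar\<1\>$, which is exactly $v$ under Observation~\ref{c:identification}, and symmetrically for $v$. The main obstacle I anticipate is not in the mathematics but in this last step: carefully matching the variance conventions of $\Adj^{\coop}$ against the contravariance of $\widehat{(-)}$; the parallel discussion in~\cite{RV16} provides a useful cross-check.
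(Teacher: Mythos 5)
The paper offers no proof of this Observation, and your argument is the standard interval-duality verification; it is correct. Two points are worth tightening: the order on $\widehat{\<n\>}$ (namely $g\le g'$ iff $g^{-1}(0)\subseteq g'^{-1}(0)$) is the \emph{reverse} of the pointwise order, so the monotonicity of $i\mapsto G_i$ is cleanest via $G_i^{-1}(0)=\{g : g(i)=1\}\subseteq G_{i'}^{-1}(0)$ for $i\le i'$ rather than via the pointwise inequality you cite; and the reduction to the generators $u,v$ in the final step tacitly uses that $\<n\>\mapsto\widehat{\<n\>}$ is compatible with the horizontal compositions $\otimes_y$ of Observation~\ref{c:identification} (so that your hom-equivalences assemble into a $2$-functor before you invoke the uniqueness of the duality), and this compatibility deserves an explicit check.
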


By Proposition~\ref{p:2-cat} and Observation \ref{c:identification}, the twisted $2$-cell $\infty$-bicategory of $\Adj$ can be modeled by the Grothendieck construction
\begin{equation}\label{e:groth-tw} 
\displaystyle\mathop{\int}_{(x,y) \in \Adj^{\op}_{\Tw} \times \Adj_{\Tw}} \Map_{\Adj_{\Tw}}(x,y)\x{\ref{c:identification}}{=} \displaystyle\mathop{\int}_{(x,y) \in \Adj^{\op}_{\Tw} \times \Adj_{\Tw}} \Tw(\Del_{x, y}).
\end{equation}
For the remainder of this section we will therefore just take~\eqref{e:groth-tw} as the \textbf{definition} of $\Tw_2(\Adj)$. 
In particular, we may represent objects in $\Tw_2(\Adj)$ as tuples $(x,y,\sig)$ where $x,y$ are objects of $\Adj$ and $\sig \in \Tw(\Del_{x, y})$ is a map of $(x, y)$-ordinals $\sigma:\<n\> \lrar \<m\>$, describing a $2$-cell between two $1$-morphisms from $x$ to $y$. Since the mapping categories of $\Adj_{\Tw}$ are all of the form $\Tw(\Del_{x, y})$, the mapping categories in $\Tw_2(\Adj)$ are then given by the Grothendieck construction
\begin{equation}\label{e:groth}
\Map_{\Tw_2(\Adj)}((x,y,\sig),(x',y',\sig')) = \hspace{-4pt}\displaystyle\mathop{\int}_{\substack{\vphi \in \Tw(\Del_{x',x}) \\ \psi \in \Tw(\Del_{y,y'})} }\hspace{-4pt}
\Map_{\Tw(\Del_{x',y'})}(\vphi \otimes_x \sig \otimes_y \psi,\sig') .
\end{equation}
By Remark~\ref{r:freeoo1}, the twisted $2$-cell \textbf{$\infty$-category} of $\Adj$ 
is equivalent to (the coherent nerve of) the simplicial category obtained from $\Tw_2(\Adj)$ by replacing each mapping category with its classifying space. On the other hand, since $[1]$ is a $2$-category with no non-trivial $2$-cells it follows from Example~\ref{e:1-cat} that the twisted $2$-cell $\infty$-category of $[1]$ is equivalent to its ordinary twisted arrow category $\Tw([1]) = \bullet \lrar \bullet \llar \bullet$. 
Theorem~\ref{t:adj} then follows from the following weak contractibility statement:
\begin{pro}\label{p:adj-2}
Let $(x,y,\sig) \in \Tw_2(\Adj)$ be an object. Then the nerve of the $1$-category
\begin{equation}\label{e:cont}
\Tw([1])_{/(x,y,\sig)} := \int_{e \in \Tw([1])^{\op}} \Map_{\Tw_2(\Adj)}(\iota_*(e),(x,y,\sig)) 
\end{equation}
is weakly contractible.
\end{pro}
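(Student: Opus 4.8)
The plan is to compute the comma category \eqref{e:cont} explicitly and then exhibit a terminal (or initial) object, or more generally a deformation retract onto a contractible subcategory. The twisted arrow category $\Tw([1])$ has three objects: $\id_0$, $\id_1$, and the nontrivial arrow $f\colon 0\to 1$, with two nonidentity morphisms $f\to \id_0$ and $f\to\id_1$ (in the convention of \eqref{e:arrowintw}). Under $\iota_*$ these map to the objects $(0,0,\id_{\<1\>})$, $(1,1,\id_{\<1\>})$ and $(0,1,\id_{\<2\>})$ of $\Tw_2(\Adj)$ respectively, where I use that the identity $1$-morphism at $x$ corresponds to the minimal $(x,x)$-ordinal and $f$ corresponds to $\<2\>\in\Del_{0,1}$. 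So the category \eqref{e:cont} is the Grothendieck construction over $\Tw([1])^{\op}$ of the three mapping categories $\Map_{\Tw_2(\Adj)}(\iota_*(e),(x,y,\sig))$, glued along the two maps of $\Tw([1])$.

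**Key steps.** First I would unwind \eqref{e:groth} in each of the three cases. For $e=\id_0$ the mapping category is $\int_{\vphi\in\Tw(\Del_{x,0}),\ \psi\in\Tw(\Del_{0,y})}\Map_{\Tw(\Del_{x,y})}(\vphi\otimes_0\id_{\<1\>}\otimes_0\psi,\sig)$; note $\vphi\otimes_0(-)\otimes_0\psi$ just concatenates, so this records a factorization of $\sig\colon\<n\>\to\<m\>$ through a "middle" ordinal lying over $\<1\>$ placed somewhere in the middle — combinatorially, a way of cutting $\sig$ at an interior point on the target side. For $e=\id_1$ one gets the mirror-image data, and for $e=f$ one gets data of a factorization of $\sig$ through $\<2\>\in\Del_{0,1}$, i.e. of "surrounding" $\sig$ with a left piece in $\Del_{x,0}$ and a right piece in $\Del_{1,y}$. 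The two face maps $f\to\id_0$, $f\to\id_1$ of $\Tw([1])$ then forget, respectively, the right and the left surrounding piece. Assembling, an object of \eqref{e:cont} is (roughly) a choice of one of the three "slots" together with the appropriate factorization datum of $\sig$.

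**Finding the contraction.** The second and main step is to produce an explicit initial object, or a zig-zag of natural transformations contracting the nerve. The natural candidate is the object sitting in the $e=f$ slot corresponding to the "trivial surrounding" of $\sig$: the factorization $\sig = (\text{min map})\otimes_0 \sig \otimes_1 (\text{min map})$ where both outer pieces are identities on the boundary elements — intuitively, recording $\sig$ itself as a $2$-cell whose source $1$-morphism is "read off by looking at where $f$ sits". One checks this object receives a canonical map from every other object: from an object in the $e=f$ slot via the universal property of the relevant Grothendieck fibration (both $\Tw(\Del_{x',x})$- and $\Tw(\Del_{y,y'})$-components admit terminal objects, namely the trivial $(x',x)$- and $(y,y')$-ordinals, and the fiber over those has a terminal object by a $1$-of-$1$ argument as in the cofinality verification in Construction~\ref{con:D} and Proposition~\ref{p:adj-2}'s analogue in \S\ref{s:examples}); and from objects in the $e=\id_0$ or $e=\id_1$ slots via the morphisms of $\Tw([1])$ together with the same kind of terminality. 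The bookkeeping is that of showing a $2$-sided Grothendieck construction over a three-object category is contractible because it has a terminal object.

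**Main obstacle.** The hard part will be the combinatorics of checking that the candidate object is genuinely terminal in the full comma category \eqref{e:cont}, rather than just terminal in the $e=f$ fiber: one must verify that the morphisms of $\Tw([1])$ are oriented the right way so that the $e=\id_0$ and $e=\id_1$ pieces admit (unique) maps \emph{into} the chosen $e=f$ object, and that the various $\otimes_y$-concatenation identities make these maps compatible. I expect this reduces, after unwinding, to the observation that forgetting an outer surrounding piece is a Cartesian (or coCartesian) projection of categories each of whose fibers has a terminal object — exactly the pattern already exploited in \S\ref{s:examples} for $\BB^2 A$ — so the same "Cartesian fibration with contractible fibers, hence cofinal" argument applies. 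If a strict terminal object does not literally exist due to the $x,y$ boundary constraints, the fallback is to produce a contracting zig-zag of natural transformations through the minimal-ordinal objects in each $\Del_{x',x}$ and $\Del_{y,y'}$, using Observation~\ref{o:gap} to handle the duality uniformly; either way the weak contractibility follows.
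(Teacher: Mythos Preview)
Your proposed approach has a genuine structural obstruction. The comma category \eqref{e:cont} is the Grothendieck construction of a functor on $\Tw([1])^{\op}$, whose shape is the span $\id_0 \llar f \lrar \id_1$ (note: in the paper's convention the arrows in $\Tw([1])$ go $\id_0 \to f \llar \id_1$, so your orientation is reversed). In the Grothendieck construction of a span $A \llar B \lrar C$ there are no morphisms from the $A$-fiber to the $C$-fiber or vice versa, and no morphisms from $A$ or $C$ back into $B$. Hence a terminal object could only live in the $A$- or $C$-fiber if the other outer fiber were empty, and could only live in the $B$-fiber if both outer fibers were empty. None of these hold here: all three mapping categories are nonempty for any $\sig$. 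Dually, an initial object would have to sit in the $B$-fiber and be initial there; but the paper's Lemma~\ref{l:step-2}(iii) shows that $\Map_{\Tw_2(\Adj)}(\iota_*(e),\sig)\simeq \Tw(\Del_{\spl})_{/\sig}$ decomposes as a disjoint union over the set $\E_\sig$, which generically has many elements. So the ``trivial surrounding'' object you describe cannot be initial or terminal in \eqref{e:cont}, and no single zig-zag of natural transformations will contract it either: the weak homotopy type is a bipartite graph (a tree), not a point with a canonical basepoint.

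The paper's actual proof takes a quite different route. It first shows (Corollary~\ref{c:step-1} and Proposition~\ref{l:step-2}) that each of the three mapping categories is weakly equivalent to a \emph{discrete set}: the set $\widehat{\<n\>}$ of gaps in the source, the set $\<m\>$ of elements of the target, and a set $\E_\sig\subseteq \widehat{\<n\>}\times\<m\>$ of ``compatible'' pairs. These identifications are obtained not by finding a single terminal object but by decomposing each category into connected components and exhibiting a terminal object in each component separately. The comma category is then weakly equivalent to the homotopy pushout $\widehat{\<n\>} \coprod^h_{\E_\sig} \<m\>$, which is a bipartite graph; contractibility is established by checking connectedness directly and then counting that $|\E_\sig| = |\widehat{\<n\>}| + |\<m\>| - 1$, so the graph is a tree. (As a minor point: your identifications of $\iota_*(\id_0)$, $\iota_*(\id_1)$, $\iota_*(f)$ with $\id_{\<1\>}$, $\id_{\<1\>}$, $\id_{\<2\>}$ are off; they should be $\id_{\<0\>}\in\Tw(\Del_{0,0})$, $\id_{\<1\>}\in\Tw(\Del_{1,1})$, and $\id_{\<1\>}\in\Tw(\Del_{0,1})$ respectively.)
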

\begin{proof}[Proof of Theorem \ref{t:adj} assuming Proposition \ref{p:adj-2}]
Let us start with the following observation: let $\CC\x{f}{\lrar} \DD\x{\sim}{\lrar} \DD'$ be a diagram of simplicial categories, where $\CC$ is a discrete category and $\DD\lrar \DD'$ replaces each mapping object by a weakly equivalent Kan complex. Fix an object $d\in \DD$, which we can equivalently consider as an object $d\in \DD'$ or an object $d\in \rN(\DD')$. Then the classifying space $|\rN(\CC)\times_{\rN(\DD')} \rN(\DD')_{/d}|$ is a model for the colimit of the restriction of the representable functor $\Map_{\rN(\DD')}(-, d)$ along $\rN(\CC)\lrar \rN(\DD')$. By \cite[Theorem 4.2.4.1]{Lur09}, this space can be modeled by the homotopy colimit
$$
\hocolim_{c\in \CC^{\op}} \Map_{\DD}(f(-), d) \simeq \hocolim_{c\in \CC^{\op}} \Map_{\DD'}(f(-), d).
$$
Now consider the case where $\CC=\Tw[1]$ and $\DD=\Tw_2(\Adj)_\rN$ is obtained by taking the nerves of all mapping categories in $\Tw_2(\Adj)$. The $\infty$-functor $\rN(\CC)\lrar \rN(\DD')$ is then equivalent to the functor $\iota_*: \Tw([1])\lrar \uline{\Tw}_2(\Adj)$. The above homotopy colimit is equivalent to the nerve of the category \eqref{e:cont} and is hence contractible by Proposition \ref{p:adj-2}. By Remark \ref{r:coinitial}, the $\infty$-functor $\iota_*$ is now coinitial, so that Theorem \ref{t:adj} follows from Corollary \ref{c:coinitial}.
\end{proof}
The remainder of the section is devoted to the proof of Proposition~\ref{p:adj-2}. Fix $x,y \in \{0,1\} = \Obj(\Adj)$ and let $\sig \in \Tw(\Map_{\Adj}(x,y)) \cong \Tw(\Del_{x,y})$ be a map of $(x,y)$-ordinals $\sig: \<n\> \lrar \<m\>$. Consider the object $\Id_0:0 \lrar 0$ of $\Tw([1])$. By~\eqref{e:groth}, the mapping category $\Map_{\Tw_2(\Adj)}(\iota_*(\Id_0),\sig)$ can be identified with the Grothendieck construction
$$ \displaystyle\mathop{\int}_{\substack{\vphi \in \Tw(\Del_{x,0}) \\ \psi \in \Tw(\Del_{0,y})}}  
\Map_{\Tw(\Del_{x,y})}(\vphi \otimes_0 \psi,\sig) \cong \big(\Tw(\Del_{x,0}) \times \Tw(\Del_{0,y})\big) \times_{\Tw(\Del_{x,y})} \big(\Tw(\Del_{x,y})_{/\sig}\big).
$$
This is just the comma category of the concatenation functor $\otimes_0: \Tw(\Del_{x,0}) \times \Tw(\Del_{0,y}) \lrar \Tw(\Del_{x,y})$ over $\sig \in \Tw(\Del_{x,y})$. A similar unfolding shows that we can identify $\Map_{\Tw_2(\Adj)}(\iota_*(\Id_1),\sig)$ with the comma category of the functor $\otimes_1: \Tw(\Del_{x,1}) \times \Tw(\Del_{1,y}) \lrar \Tw(\Del_{x,y})$ over $\sig$. 

Finally, if $e:0 \lrar 1$ is the non-identity arrow of $[1]$ then the mapping category $\Map_{\Tw_2(\Adj)}(\iota_*(e),\sig)$ identifies with the comma category over $\sig$ of the functor $\Tw(\Del_{x,0}) \times \Tw(\Del_{1,y}) \lrar \Tw(\Del_{x,y})$ given by $(\<n\>,\<m\>) \mapsto \<n\> \otimes_0 \<1\> \otimes_1 \<m\> \cong \<n+m\>$. To describe these various products of twisted arrow categories concisely, let us introduce the following terminology:

\begin{define}\label{d:pgs}
Let $x,y \in \{0,1\}$ be fixed numbers. A \textbf{gapped ordinal} is an object of the over category $\Del_{\gp} := (\Del_{x,y})_{/\<2\>}$, i.e., a pair $(\left<n\right>,g)$ where $\<n\> \in \Del_{x,y}$ is an $(x,y)$-ordinal and $g: \<n\> \lrar \<2\>$ is a gap in $\<n\>$. A \textbf{pointed ordinal} is an object of the under category $\Del_{\pt} := (\Del_{x,y})_{\<x+1+y\>/}$, i.e., a pair $(\<n\>,i)$ where $\<n\> \in \Del_{x,y}$ is an $(x,y)$-ordinal and $i:\<x+1+y\> \lrar \<n\>$ can be identified with an element $i \in \<n\> = \{0,...,n-1\}$. Finally, a \textbf{split ordinal} is a triple $(\<n\>,g,i)$ where $(\<n\>,i) \in \Del_{\pt}$ is a pointed ordinal and $g \in \widehat{\<n\>}$ is a gap such that $i$ is a minimal element of $g^{-1}(1)$. The split ordinals form a full subcategory $\Del_{\spl} \subseteq \Del_{\gp} \times_{\Del_{x,y}} \Del_{\pt}$.
\end{define}

\begin{rem}\label{r:adjoint}
The forgetful functor $\Del_{\spl} \lrar \Del_{\gp}$ admits a \textbf{left adjoint} which sends a gapped ordinal $(\<n\>,g)$ to the split ordinal $(\<n\> \cup \{a\},a,g_a)$, where $\<n\> \cup \{a\}$ is the ordinal obtained by adding to $\<n\>$ a new element $a$ and setting the order to be such that $a$ is bigger then all the elements in $g^{-1}(0)$ and smaller than all the elements in $g^{-1}(1)$. The new gap $g_a: \<n\> \cup \{a\} \lrar \<2\>$ extends $g$ by setting $g_a(a) = 1$. Similarly, the forgetful functor $\Del_{\spl} \lrar \Del_{\pt}$ admits a \textbf{right adjoint} which sends a pointed ordinal $(\<m\>,j)$ to the split ordinal $(\<m\> \cup \{b\},b,g_b)$ where $\<m\> \cup \{b\}$ is obtained by adding to $\<m\>$ a new element $b$ and setting the order to be such that $b$ is the smallest element which is bigger than $j \in \<m\>$. The gap $g_b: \<m\> \cup \{b\} \lrar \<2\>$ is defined so that $b$ is the minimal element of $g_b^{-1}(1)$. 
\end{rem}
The types of gapped, pointed and split ordinals we will come across will mostly be of the following forms: 
\begin{cons}\label{c:point-gap}
Given two ordinals $\<n\> \in \Del_{x,0},\<m\> \in \Del_{0,y}$, the concatenation $\<n\> \otimes_0 \<m\> \in \Del_{x,y}$ comes equipped with a natural gap $g: \<n\> \otimes_0 \<m\> \lrar \<2\>$ which is obtained by applying the functor $\otimes_0$ to the terminal maps $\<n\> \lrar \<1\>$ and $\<m\> \lrar \<1\>$. Explicitly, $g$ sends the first $n$ elements of $\<n\> \otimes_0 \<m\>$ to $0$ and the last $m$ elements of $\<n\> \otimes_0 \<m\>$ to $1$. Similarly, for $\<n\> \in \Del_{x,1}, \<m\> \in \Del_{1,y}$ the ordinal $\<n\> \otimes_1 \<m\> \in \Del_{x,y}$ comes equipped with a distinguished base point: the map $\<x+1+y\> \lrar \<n\> \otimes_1 \<m\>$ obtained by applying the functor $\otimes_1$ to the initial maps $\<x+1\> \lrar \<n\>$ and $\<1+y\> \lrar \<m\>$. More explicitly, this base point is the element $n-1$ in $\<n\> \otimes_1 \<m\> = \{0,...,n+m-1\}$. Finally, if we take an object $\<n\> \in \Del_{x,0}$ and an object $\<m\> \in \Del_{1,y}$ then $\<n\> \otimes_0 \<1\> \otimes_1 \<m\>$ is naturally split. It contains both a natural base point induced from the initial maps $\<x\> \lrar \<n\>, \<1\> \lrar \<1\>$ and $\<1+y\> \lrar \<m\>$ and a natural gap $g: \<n\> \otimes_0 \<1\> \otimes_1 \<m\> \lrar \<2\>$ induced from the terminal maps $\<n\> \lrar \<1\>, \Id: \<1\> \lrar \<1\>$ and $\<m\> \lrar \<1\>$.
\end{cons}

\begin{lem}
The functors $\Del_{x,0} \times \Del_{0,y} \lrar \Del_{\gp}$, $\Del_{x,1} \times \Del_{1,y} \lrar \Del_{\pt}$ and $\Del_{x,0} \times \Del_{1,y} \lrar \Del_{\spl}$ described in Construction~\ref{c:point-gap} are equivalences of categories.
\end{lem}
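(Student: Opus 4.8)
The plan is to show that each of the three functors described in Construction~\ref{c:point-gap} is an equivalence of categories by exhibiting an explicit inverse, built from the combinatorics of cutting an ordinal at its gap or its base point. The key observation is that a gapped ordinal $(\<n\>,g) \in \Del_{\gp}$ is the same data as an ordered pair $(g^{-1}(0), g^{-1}(1))$, and this decomposition is exactly the concatenation $\otimes_0$; similarly a pointed ordinal $(\<n\>,i) \in \Del_{\pt}$ is cut by its base point into an initial segment ending at $i$ and a final segment starting at $i$ (with the point $i$ shared), which is precisely the concatenation $\otimes_1$; and a split ordinal is cut into three pieces by removing the point $i$ from the left and using the gap for the remaining split.

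The steps, in order, are as follows. First I would treat the gapped case. Define the candidate inverse $\Del_{\gp} \lrar \Del_{x,0} \times \Del_{0,y}$ sending $(\<n\>,g)$ to the pair $(g^{-1}(0), g^{-1}(1))$, where $g^{-1}(0)$ carries the initial $x$-marking of $\<n\>$ and a trivial terminal marking (it is an object of $\Del_{x,0}$) and $g^{-1}(1)$ carries a trivial initial marking and the terminal $y$-marking of $\<n\>$ (an object of $\Del_{0,y}$). Functoriality is immediate since an order- and marking-preserving map over $\<2\>$ restricts to maps on the two fibers. The composite with $\otimes_0$ is the identity on both sides essentially by definition, since $\otimes_0$ concatenates and the associated gap from Construction~\ref{c:point-gap} recovers exactly the partition into first-$n$ and last-$m$ elements. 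Second, the pointed case: define $\Del_{\pt} \lrar \Del_{x,1} \times \Del_{1,y}$ sending $(\<n\>,i)$ to the pair $(\{0,\dots,i\}, \{i,\dots,n-1\})$, where the first factor gets the initial $x$-marking and the terminal element $i$ as its marked final point (an object of $\Del_{x,1}$), and the second gets $i$ as its marked initial point and the terminal $y$-marking (an object of $\Del_{1,y}$). Again functoriality is clear — a pointed map sends base point to base point, hence restricts — and the round trip with $\otimes_1$ is the identity, because $\otimes_1$ glues along the shared marked element, which is exactly the base point $n-1$ of Construction~\ref{c:point-gap}. Third, the split case: send $(\<n\>,g,i)$, where $i$ is the minimal element of $g^{-1}(1)$, to the triple $(g^{-1}(0), \<1\>, g^{-1}(1)\setminus\{i\})$ in $\Del_{x,0} \times \Del_{1,y}$ — here the middle $\<1\>$ records the point $i$ itself and the last factor is the part of the gap strictly above $i$, with $i$ serving as its marked initial element. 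One checks that this is well-defined on morphisms (using that split morphisms preserve both the gap and the base point, and that minimality of $i$ in $g^{-1}(1)$ is preserved) and that both composites with the functor $\<n\>\otimes_0\<1\>\otimes_1\<m\>$ are identities, directly from the description of the natural split in Construction~\ref{c:point-gap}.

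The main obstacle I anticipate is bookkeeping the marking conventions correctly in the split case, and in particular verifying that the candidate inverse lands in the \emph{full subcategory} $\Del_{\spl}$ rather than merely in $\Del_{\gp}\times_{\Del_{x,y}}\Del_{\pt}$: one must confirm that for a split ordinal the element $i$ is indeed forced to be minimal in $g^{-1}(1)$, and conversely that $\<n\>\otimes_0\<1\>\otimes_1\<m\>$ always produces a split (not merely gapped-and-pointed) ordinal, which is the content of the last sentence of Construction~\ref{c:point-gap}. A clean way to organize all three verifications at once is to observe that the left adjoint $\Del_{\gp}\lrar\Del_{\spl}$ and right adjoint $\Del_{\pt}\lrar\Del_{\spl}$ of Remark~\ref{r:adjoint} intertwine the three functors — the splitting of $\<n\>\otimes_0\<1\>\otimes_1\<m\>$ is the left adjoint applied to the gapped ordinal $\<n\>\otimes_0\<m\>$, and also the right adjoint applied to the pointed ordinal $\<n'\>\otimes_1\<m\>$ for the appropriate reindexing — so that the split equivalence follows formally from the gapped and pointed ones together with the observation that these adjoints exhibit $\Del_{\spl}$ as a retract of both $\Del_{\gp}$ and $\Del_{\pt}$ in a compatible way. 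I would write out the gapped and pointed equivalences in full (they are short), and then deduce the split case either by the direct inverse above or by this adjoint-functor argument, whichever reads more cleanly.
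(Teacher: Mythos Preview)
Your proposal is correct and takes essentially the same approach as the paper: exhibit explicit inverses $(\<n\>,g)\mapsto(g^{-1}(0),g^{-1}(1))$, $(\<n\>,i)\mapsto(\{j\leq i\},\{j\geq i\})$, and $(\<n\>,g,i)\mapsto(g^{-1}(0),\{j\geq i\})$. Your write-up of the split inverse as a ``triple'' $(g^{-1}(0),\<1\>,g^{-1}(1)\setminus\{i\})$ is notationally awkward since the target is the two-factor product $\Del_{x,0}\times\Del_{1,y}$---just write the second component as $g^{-1}(1)=\{j\geq i\}$ with $i$ as its marked initial element---and the alternative argument via the adjoints of Remark~\ref{r:adjoint} is unnecessary overhead for what is a direct one-line check.
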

\begin{proof}
The functor $(\<n\>,g) \mapsto (g^{-1}(0),g^{-1}(1))$ is inverse to the first functor, the functor $(\<n\>,i) \mapsto (\{j \in \<n\> | j \leq i\},\{j \in \<n\> | j \geq i\})$ is inverse to the second functor and the functor $(\left<n\>,g,i) \mapsto (g^{-1}(0),\{j \in \<n\> | j \geq i\})$ is inverse to the third.
\end{proof}

\begin{cor}\label{c:step-1}
Let $\sig: \<n\> \lrar \<m\>$ be a map of ordinals, considered as a $2$-cell in $\Adj$ from $\<n\> : x \lrar y$ to $\<m\>: x \lrar y$. Then we have natural equivalences of categories
$$ \Map_{\Tw_2(\Adj)}(\iota_*(\Id_0),\sig) \simeq \Tw(\Del_{\gp})_{/\sig} := \Tw(\Del_{\gp}) \times_{\Tw(\Del_{x,y})} \Tw(\Del_{x,y})_{/\sig}, $$
$$ \Map_{\Tw_2(\Adj)}(\iota_*(\Id_1),\sig) \simeq \Tw(\Del_{\pt})_{/\sig} := \Tw(\Del_{\pt}) \times_{\Tw(\Del_{x,y})} \Tw(\Del_{x,y})_{/\sig} $$
and
$$ \Map_{\Tw_2(\Adj)}(\iota_*(e),\sig) \simeq \Tw(\Del_{\spl})_{/\sig} := \Tw(\Del_{\spl}) \times_{\Tw(\Del_{x,y})} \Tw(\Del_{x,y})_{/\sig} .$$
\end{cor}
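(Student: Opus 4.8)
The plan is to obtain all three equivalences formally, by combining the preceding lemma and Construction~\ref{c:point-gap} with the elementary fact that the twisted arrow construction $\Tw(-)$ is compatible both with finite products and with the formation of comma categories.

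First I would record the relevant formal property of $\Tw(-)$. At the level of simplicial sets $\Tw(-)$ is precomposition along the functor $F\colon \Del \lrar \Del$, hence a right adjoint, so it preserves all limits; in particular $\Tw(\C \times \D) \cong \Tw(\C) \times \Tw(\D)$, and $\Tw$ carries pullback squares of categories to pullback squares. It follows that the composition functor $\Tw(\Del_{x,y}) \times \Tw(\Del_{y,z}) \lrar \Tw(\Del_{x,z})$ of $\Adj_{\Tw}$ is obtained by applying $\Tw$ to the concatenation functor $\otimes_y$ of $\Adj$ (Observation~\ref{c:identification}). Unwinding \eqref{e:groth} exactly as in the discussion preceding Definition~\ref{d:pgs}, this lets me recognize each of the three functors into $\Tw(\Del_{x,y})$ whose comma category over $\sig$ computes $\Map_{\Tw_2(\Adj)}(\iota_*(-),\sig)$ as $\Tw$ of an honest functor between categories of ordinals: $\Tw(\otimes_0)$ in the case $\iota_*(\Id_0)$, $\Tw(\otimes_1)$ in the case $\iota_*(\Id_1)$, and $\Tw$ of the concatenation $(\<n\>,\<m\>) \mapsto \<n\> \otimes_0 \<1\> \otimes_1 \<m\>$ in the case $\iota_*(e)$.

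Next I would observe that each of these three functors between categories of ordinals factors, on the nose, as the equivalence supplied by the preceding lemma followed by a forgetful functor. By Construction~\ref{c:point-gap}: $\otimes_0$ factors as $\Del_{x,0} \times \Del_{0,y} \x{\simeq}{\lrar} \Del_{\gp} \x{u_{\gp}}{\lrar} \Del_{x,y}$, where $u_{\gp}$ forgets the natural gap; $\otimes_1$ factors as $\Del_{x,1} \times \Del_{1,y} \x{\simeq}{\lrar} \Del_{\pt} \x{u_{\pt}}{\lrar} \Del_{x,y}$, where $u_{\pt}$ forgets the natural base point; and the mixed concatenation factors as $\Del_{x,0} \times \Del_{1,y} \x{\simeq}{\lrar} \Del_{\spl} \x{u_{\spl}}{\lrar} \Del_{x,y}$, where $u_{\spl}$ forgets both. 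Each factorization is a strict equality, since the forgetful functor post-composed with the lemma's equivalence returns the concatenation functor by the very definitions in Construction~\ref{c:point-gap}.

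Finally I would apply $\Tw$ to these three factorizations. Since $\Tw$ preserves products and carries equivalences of categories to equivalences, each $\Tw$ of a concatenation functor factors as an equivalence onto $\Tw(\Del_{\gp})$ (resp.\ $\Tw(\Del_{\pt})$, $\Tw(\Del_{\spl})$) followed by $\Tw(u_{\gp})$ (resp.\ $\Tw(u_{\pt})$, $\Tw(u_{\spl})$). Because the comma category of a functor over a fixed object is invariant, up to equivalence of categories, under precomposing that functor with an equivalence, the comma category over $\sig$ of each $\Tw$ of a concatenation is equivalent to the comma category over $\sig$ of the corresponding $\Tw(u_{-})$; and the latter is by definition $\Tw(\Del_{\gp})_{/\sig}$ (resp.\ $\Tw(\Del_{\pt})_{/\sig}$, $\Tw(\Del_{\spl})_{/\sig}$). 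Together with the identification of the three mapping categories with comma categories recalled above, this is exactly the assertion. I do not expect a genuine obstacle here: the substantive content is already contained in the preceding lemma and in Construction~\ref{c:point-gap}, and the one point deserving a careful sentence is the identification of the composition of $\Adj_{\Tw}$ with $\Tw(\otimes_y)$, which reduces to the fact that $\Tw$, being a right adjoint, preserves binary products.
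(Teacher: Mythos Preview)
Your proposal is correct and matches the paper's intended argument: the paper states this as a corollary with no proof, deriving it directly from the preceding lemma together with the identification (made in the discussion before Definition~\ref{d:pgs}) of the three mapping categories as comma categories of the $\Tw$-concatenation functors over $\sig$. Your write-up simply makes explicit the two formal facts the paper leaves to the reader---that $\Tw$ preserves finite products and equivalences, and that comma categories are invariant under precomposition with an equivalence---so there is nothing to add.
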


\begin{rem}
Under the equivalences of Corollary~\ref{c:step-1} the maps from $\Map_{\Tw_2(\Adj)}(\iota_*(e),\sig)$ to $\Map_{\Tw_2(\Adj)}(\iota_*(\Id_0),\sig)$ and $\Map_{\Tw_2(\Adj)}(\iota_*(\Id_1),\sig)$ obtained by restricting along the morphisms $\Id_0 \lrar e,\Id_1 \lrar e$ in $\Tw([1])$ correspond to the maps induced by the natural projections $\Del_{\spl} \lrar \Del_{\gp}$ and $\Del_{\spl} \lrar \Del_{\pt}$.
\end{rem}

Consider the forgetful functor $\Del_{\pt} = (\Del_{x,y})_{\<x+1+y\>/} \lrar \Del_{x,y}$. This is a left fibration, and the fiber $(\Del_{\pt})_{\<m\>}$ over the $(x,y)$-ordinal $\<m\>$ is the set of possible base points $\Map_{\Del_{x,y}}(\<x+1+y\>,\<m\>) = \{0,...,m-1\}$. 
Let $(\Del_{\pt})_{/\<m\>} := \Del_{\pt} \times_{\Del_{x,y}} (\Del_{x,y})_{/\<m\>}$ be the associated comma category. Then we have a natural functor $(\Del_{\pt})_{/\<m\>} \lrar (\Del_{\pt})_{\<m\>}$ which sends a pair $((\<k\>,i),\vphi:\<k\> \lrar \<m\>)$ to the element $\vphi(i) \in (\Del_{\pt})_{\<m\>}$. Similarly, $\Del_{\gp} \lrar \Del_{x,y}$ is a right fibration, the fiber $(\Del_{\gp})_{\<n\>}$ is the set $\widehat{\<n\>} = \Map_{\Del_{x,y}}(\<n\>,\<2\>)$ of gaps in $\<n\>$, and we have a natural functor $((\Del_{\gp})^{\op})_{/\<n\>} \lrar (\Del_{\gp})_{\<n\>}$ obtained by pulling back the gap.
 
\begin{define}\label{d:comp}
Let $\sig: \<n\> \lrar \<m\>$ be a map in $\Del_{x,y}$. We will say that an element $j \in \<m\>$ is \textbf{compatible} with a gap $g \in \widehat{\<n\>}$ if the following condition holds: for any $i \in \<n\>$ such that $\sig(i) < j$ we have $g(i) = 0$ and for any $i \in \<n\>$ such that $\sig(i) > j$ we have $g(i) = 1$. We will denote by 
$$ \E_{\sig} \subseteq \widehat{\<n\>} \times \<m\> $$ 
the subset consisting of those pairs $(g,j)$ such that $j$ is compatible with $g$.
\end{define}

The following proposition will play a key role in the proof of Proposition~\ref{p:adj-2}.
\begin{pro}\label{l:step-2}
Let $\sig: \<n\> \lrar \<m\>$ be a map in $\Del_{x,y}$. Then the following holds:
\begin{enumerate}
\item
The composed functor $\Tw(\Del_{\gp})_{/\sig} \lrar ((\Del_{\gp})^{\op})_{/\<n\>} \lrar (\Del_{\gp})_{\<n\>}$ induces a weak equivalence on nerves.
\item
The composed functor $\Tw(\Del_{\pt})_{/\sig} \lrar (\Del_{\pt})_{/\<m\>} \lrar (\Del_{\pt})_{\<m\>}$ induces a weak equivalence on nerves.
\item
The composed functor 
\begin{equation}\label{e:F} 
\Tw(\Del_{\spl})_{/\sig} \lrar \Tw(\Del_{\gp})_{/\sig} \times \Tw(\Del_{\pt})_{/\sig} \lrar (\Del_{\gp})_{\<n\>} \times (\Del_{\pt})_{\<m\>} = \widehat{\<n\>} \times \<m\> 
\end{equation}
induces a weak equivalence $\rN(\Tw(\Del_{\spl})_{/\sig}) \x{\simeq}{\lrar} \E_{\sig} \subseteq \widehat{\<n\>} \times \<m\>$.
\end{enumerate}
\end{pro}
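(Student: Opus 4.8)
The plan is to prove all three parts by one device. Each asserted equivalence is a functor into (the nerve of) a \emph{discrete} category — $\widehat{\<n\>}$, $\<m\>$, and $\E_{\sig}$ respectively — so by Quillen's Theorem~A it is enough to show that each fibre has weakly contractible nerve. I would compute these fibres by first deformation-retracting the source onto an explicit subcategory, and then observing that the retract is a \emph{disjoint union}, indexed by the target, of pieces each of which is visibly contractible. One standing observation is used throughout: passing from a (co)Cartesian fibration $p:\mathcal E\to\mathcal B$ to its fibre over $b\in\mathcal B$, through the comma category $\mathcal E\times_{\mathcal B}\mathcal B_{/b}$, is a deformation retract, the (co)Cartesian lifts of the structure maps $p(e)\to b$ assembling into the required natural transformation; applied to the slice projection $\Del_{\gp}\to\Del_{x,y}$ (a discrete fibration) and the coslice projection $\Del_{\pt}\to\Del_{x,y}$ (a discrete opfibration), this already disposes of the \emph{second} functors in (i) and (ii), namely $((\Del_{\gp})^{\op})_{/\<n\>}\to(\Del_{\gp})_{\<n\>}$ and $(\Del_{\pt})_{/\<m\>}\to(\Del_{\pt})_{\<m\>}$.

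For the first functor in (i) I would deformation-retract $\Tw(\Del_{\gp})_{/\sig}$ by collapsing the \emph{source} of the twisted arrow along its structure map: send $(f:A\to B,\,u:\<n\>\to|A|,\,w:|B|\to\<m\>)$ to the object with source $(\<n\>,u^{*}g_{A})$ (the gap of $A$ pulled back along $u$), twisted arrow $|f|\circ u$, source structure map $\id_{\<n\>}$, and target data $B,w$ unchanged; the morphism from $(f,u,w)$ to its image, with source component $u$ and target component $\id_{B}$, is natural and gives the retraction. On the retract, compatibility over $\sig$ forces the underlying source map of any morphism between two such objects to be $\id_{\<n\>}$, so the retract is the disjoint union over $g\in\widehat{\<n\>}$ of the categories of factorizations $(\<n\>,g)\to B\xrightarrow{w}\<m\>$ of $\sig$; each of these has the initial object $(\id_{(\<n\>,g)},\,\sig)$ and the composite of (i) is exactly the disjoint-union projection. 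Part (ii) is proved the same way with the roles of source and target interchanged: one collapses the \emph{target}, sending $(f:A\to B,u,w)$ to $(w\circ f:A\to(\<m\>,w(i_{B})),\,u,\,\id_{\<m\>})$, obtaining a disjoint union over $j\in\<m\>$ whose $j$-th piece has an initial object.

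For (iii) I would first check, by unwinding definitions, that the composite \eqref{e:F} lands in $\E_{\sig}$: an object gives a split ordinal $(\<c\>,g_{c},i)$ with maps $\<n\>\xrightarrow{u}\<c\>\xrightarrow{w}\<m\>$ factoring $\sig$, and produces $(g,j)=(u^{*}g_{c},\,w(i))$; if $\sig(k)<j$ then $u(k)<i$ by order-preservation, and since $i=\min g_{c}^{-1}(1)$ this forces $u(k)\in g_{c}^{-1}(0)$, i.e.\ $g(k)=0$, and symmetrically $\sig(k)>j$ forces $g(k)=1$, which is precisely the compatibility of Definition~\ref{d:comp}. For the weak equivalence onto $\E_{\sig}$ I would deformation-retract $\Tw(\Del_{\spl})_{/\sig}$ \emph{twice} — once normalizing the target using the right adjoint $\Del_{\pt}\to\Del_{\spl}$ of Remark~\ref{r:adjoint} (so that $B$ becomes $\<m\>$ with one element adjoined just after $w(i_{B})$, pointed there), once normalizing the source using the left adjoint $\Del_{\gp}\to\Del_{\spl}$ (so that $A$ becomes $\<n\>$ with one element $a$ adjoined at the gap $g$, pointed at $a$) — and then observe that on the doubly-normalized retract every morphism is forced to be an identity on underlying data, so the retract is the disjoint union over $\widehat{\<n\>}\times\<m\>$ of categories each of which is \emph{either empty or a single point}; it is nonempty exactly over $\E_{\sig}$, because the compatibility of Definition~\ref{d:comp} is precisely the condition for the normalized source and target to admit an order-, gap- and point-preserving map between them (with $a\mapsto$ the adjoined point of $B$ and $k\mapsto\sig(k)$ or the adjoined point according as $g(k)=0$ or $1$). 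This yields $\rN(\Tw(\Del_{\spl})_{/\sig})\simeq\E_{\sig}$.

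The main obstacle is not conceptual but the bookkeeping: constructing the (co)initial objects and the doubly-normalized object, and checking their universal properties, requires keeping straight three independent directions of variance simultaneously — source versus target of a twisted arrow, the $(-)^{\op}$ occurring in $\Del_{\gp}$, and the coslice defining $\Del_{\pt}$ — together with, for (iii), assembling the separate source- and target-normalizations into a single consistent split ordinal. The genuinely delicate step is that Definition~\ref{d:comp} must match, with exactly the right strict inequalities, both the condition for the assignment $a\mapsto j$ (and its companion $k\mapsto\sig(k)$) to be order-preserving and the condition forced by the minimality of $i$ in $g_{c}^{-1}(1)$; a single off-by-one, or a confusion of $<$ with $\le$, would replace $\E_{\sig}$ by the wrong subset of $\widehat{\<n\>}\times\<m\>$, and the final identification would break.
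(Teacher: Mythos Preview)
Your proposal is correct and follows essentially the same route as the paper. For (i) and (ii) the paper does exactly what you describe: it passes to the full subcategory where $\vphi$ (resp.\ $\psi$) is an isomorphism via a left adjoint, observes that this subcategory decomposes as a disjoint union indexed by $\widehat{\<n\>}$ (resp.\ $\<m\>$), and exhibits an initial object in each piece. For (iii) the only presentational difference is that the paper does your two normalizations in a single step: rather than composing two deformation retracts, it observes directly that $\Tw(\Del_{\spl})_{/\sig}$ already splits as a disjoint union over $\E_\sig$ (the functor to $\widehat{\<n\>}\times\<m\>$ is constant on morphisms, since maps of split ordinals preserve both gap and basepoint) and then exhibits in each component a \emph{terminal} object $\Psi_{(g',j')}$, built precisely as your doubly-normalized object --- source $(\<n\>\cup\{a\},a,g_a)$ from the left adjoint of Remark~\ref{r:adjoint}, target $(\<m\>\cup\{b\},b,g_b)$ from the right adjoint, and the connecting map $\tau_0$ uniquely determined by those two universal properties. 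Your observation that compatibility in the sense of Definition~\ref{d:comp} is exactly the condition for $\tau_0$ to exist, and your caution about the strict inequalities, match the paper's verification that the image lands in $\E_\sig$.
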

\begin{proof}
Let us begin with Claim (1). We will depict objects of $\Tw(\Del_{\gp})_{/\sig}$ as commutative diagram
\begin{equation}\label{e:twisted-gap} 
\vcenter{\xymatrix@R=1.8pc{
(\<l\>,g) \ar[d]_{\tau} & \<n\> \ar[l]_-{\vphi}\ar^{\sig}[d] \\
(\<k\>,h) \ar[r]^-{\psi} & \<m\> \\
}}
\end{equation}
where the horizontal arrows indicate maps which are defined just on the underlying ungapped sets.
Let $\A \subseteq \Tw(\Del_{\gp})_{/\sig}$ be the full subcategory spanned by those objects as in\eqref{e:twisted-gap} such that $\vphi:\<n\> \lrar \<l\>$ is an isomorphism. Then the inclusion $\A \subseteq \Tw(\Del_{\gp})_{/\sig}$ admits a left adjoint $\Tw(\Del_{\gp})_{/\sig} \lrar \A$ which sends an object $\Psi$ as in~\eqref{e:twisted-gap} to the object
\begin{equation}\label{e:twisted-gap-2} 
\vcenter{\xymatrix@R=1.8pc{
(\<n\>,\vphi^*g) \ar[d]_{\tau\circ \vphi} & \<n\> \ar[l]_-{\Id_{\<n\>}}\ar^{\sig}[d] \\
(\<k\>,h) \ar[r]^-{\psi} & \<m\> \\
}}
\end{equation}
It then follows that the inclusion of $\A$ induces a weak equivalence $\rN(\A) \x{\simeq}{\lrar} \rN(\Tw(\Del_{\gp})_{/\sig})$ on nerves. We now observe that the category $\A$ decomposes as the disjoint union
$$ \A \cong \coprod_{g' \in \widehat{\<n\>}} \A_{g'} $$
where $\A_{g'}$ is the full subcategory containing those objects as in~\eqref{e:twisted-gap-2} such that $\vphi^*(g) = g'$. 
The restriction of the map $\Tw(\Del_{\gp})_{/\sig} \lrar (\Del_{\gp})_{\<n\>}$ to $\A$ sends $\A_{g'}$ to the gap $g' \in (\Del_{\gp})_{\<n\>} = \widehat{\<n\>}$. It will hence suffice to show that each $\A_{g'}$ is weakly contractible. But this now holds simply because $\A_{g'}$ has an initial object, corresponding to the diagram
\begin{equation}\label{e:twisted-gap-3} 
\vcenter{\xymatrix@R=1.8pc{
(\<n\>,g') \ar[d]_{\Id} & \<n\> \ar[l]_-{\Id}\ar^{\sig}[d] \\
(\<n\>,g') \ar[r]^-{\sig} & \<m\> \\
}}
\end{equation}
Let us now prove Claim (2). The proof is essentially dual to the proof of (1). We will depict objects of $\Tw(\Del_{\pt})_{/\sig}$ as commutative diagrams
\begin{equation}\label{e:twisted-pt} 
\vcenter{\xymatrix@R=1.8pc{
(\<l\>,i) \ar[d]_{\tau} & \<n\> \ar[l]_-{\phi}\ar^{\sig}[d] \\
(\<k\>,j) \ar[r]^-{\psi} & \<m\> \\
}}
\end{equation}
Let $\B \subseteq \Tw(\Del_{\pt})_{/\sig}$ be the full subcategory spanned by those objects as in\eqref{e:twisted-pt} such that $\psi:\<k\> \lrar \<m\>$ is an isomorphism. As in the case of Claim (1) the inclusion $\B \subseteq \Tw(\Del_{\pt})_{/\sig}$ admits a left adjoint $\Tw(\Del_{\pt})_{/\sig} \lrar \B$,  and so induces a weak equivalence $\rN(\B) \x{\simeq}{\lrar} \rN(\Tw(\Del_{\pt})_{/\sig})$ on nerves. We now observe that the category $\B$ decomposes as the disjoint union
$$ \B \cong \coprod_{j' \in \<m\>} \B_{j'} $$
where $\B_{j'}$ is the full subcategory containing those objects 
such that $\psi(j) = j'$, 
and the restriction of the map $\Tw(\Del_{\pt})_{/\sig} \lrar (\Del_{\pt})_{\<m\>}$ to $\B$ sends $\B_{j'}$ to the element $j' \in (\Del_{\pt})_{\<m\>} = \<m\>$. 
Finally, each $\B_{j'}$ has an initial object and is hence weakly contractible. 

We shall now proceed to prove Claim (3). We will depict objects of $\Tw(\Del_{\spl})_{/\sig}$ as commutative diagrams
\begin{equation}\label{e:twisted-sp} 
\vcenter{\xymatrix@R=1.8pc{
(\<l\>,g,i) \ar[d]_{\tau} & \<n\> \ar[l]_-{\vphi}\ar^{\sig}[d] \\
(\<k\>,h,j) \ar[r]^-{\psi} & \<m\> \\
}}
\end{equation}
where the horizontal arrows indicate maps which are defined just on the underlying unpointed ungapped sets. 
Here $(\<l\>,g,i)$ and $(\<k\>,h,j)$ are split ordinals (see Definition~\ref{d:pgs}). In particular, $i$ is the minimal element of $g^{-1}(1)$, and similarly $j$ is the minimal element of $h^{-1}(1)$. The functor~\eqref{e:F} sends a diagram as in~\eqref{e:twisted-sp} to the pair $(\vphi^*g,\psi(j))$. Now the element $\psi(j) \in \<m\>$ is compatible with the gap $\vphi^*g \in \widehat{\<n\>}$ in the sense of Definition~\ref{d:comp}: indeed, if $i' \in \<n\>$ is such that $\sig(i') < \psi(j)$ then necessarily $\vphi(i') < i$ and so $\vphi^*g(i') = g(\vphi(i')) = 0$. Similarly, if $i' \in \<n\>$ is such that $\sig(i') > \psi(j)$ then necessarily $\vphi(i') > i$ and so $\vphi^*g(i') = g(\vphi(i')) = 1$. In particular, the image of~\eqref{e:F} is contained in $\E_\sig$. We now observe that the category $\Tw(\Del_{\spl})_{/\sig}$ splits as a disjoint union
$$ \Tw(\Del_{\spl})_{/\sig} = \coprod_{(g',j') \in \E_{\sig}} \C_{(g',j')} $$
where $\C_{(g',j')}$ denote the full subcategory spanned by those objects as in~\eqref{e:twisted-sp} such that $(\vphi^*g,\psi(j)) = (g',j')$. It will hence suffice to show that each $\C_{(g',j')}$ is weakly contractible. For this we will show that each $\C_{(g',j')}$ has a terminal object. Given $(g',j') \in \E_{\sig} \subseteq \widehat{\<n\>} \times \<m\>$ let $\Psi_{(g',j')} \in \C_{(g',j')}$ be the object corresponding to the diagram
\begin{equation}\label{e:twisted-sp-2} 
\vcenter{\xymatrix@R=1.9pc{
(\<n\> \cup \{a\},a,g_a) \ar[d]_{\tau_0} & \<n\> \ar[l]_-{\vphi_0}\ar^{\sig}[d] \\
(\<m\> \cup \{b\},b,g_b) \ar[r]^-{\psi_0} & \<m\> \\
}}
\end{equation}
where $(\<n\> \cup \{a\},a,g_a)$ and $(\<m\> \cup \{b\},b,g_b)$ are obtained by applying the adjoint functors of Remark~\ref{r:adjoint} to $(\<n\>,g')$ and $(\<m\>,j')$ respectively.
The map $\vphi_0: \<n\> \hrar \<n\> \cup \{a\}$ is the natural embedding 
and the map $\psi_0: \<m\> \cup \{b\} \lrar \<m\>$ is the identity when restricted to $\<m\>$ and sends $b$ to $j'$. 
Finally, the map $\tau_0: \<n\> \cup \{a\} \lrar \<m\> \cup \{b\}$ is uniquely determined by universal mapping properties insured by Remark~\ref{r:adjoint}. More explicitly, $\tau_0$ sends $a$ to $b$, identifies with $\sig$ on $\{i \in \<n\> | \sig(i) \neq j'\} \cup (g')^{-1}(0)$, and sends every $i \in \sig^{-1}(j') \cap (g')^{-1}(1)$ to $b$. 
It is then clear that $\Psi_{(g',j')}$ is an object of $\Tw(\Del_{\spl})_{/\sig}$ which maps to $(g',j') \in \E_{\sig}$, and is hence contained in $\C_{(g',j')}$. We now claim that $\Psi_{(g',j')}$ is terminal in $\C_{(g',j')}$. Indeed, suppose that $\Psi \in \C$ is an object as in~\eqref{e:twisted-sp} such that $(\vphi^*g,\psi(j)) = (g',j')$. Then maps $\Psi \lrar \Psi_{(g',j')}$ in $\C_{(g',j')}$ correspond to diagrams of the form
\begin{equation}\label{e:twisted-sp-3} 
\vcenter{\xymatrix@R=1.9pc{
(\<l\>,i,g) \ar[d]_-{\tau}  & (\<n\> \cup \{a\},a,g_a) \ar^-{\sig_0}[d]\ar_-{\vphi'}[l] & \<n\> \ar[l]_-{\vphi_0}\ar^{\sig}[d] \\
(\<k\>,j,h) \ar[r]^-{\psi'} & (\<m\> \cup \{b\},b,g_b) \ar^-{\psi_0}[r] & \<m\> \\
}}
\end{equation}
with $\vphi',\psi'$ maps of split $(x,y)$-ordinals and such that the external rectangle identifies with~\eqref{e:twisted-sp}. The existence of a unique such pair $\vphi',\psi'$ now follows from the universal mapping properties of $(\<n\> \cup \{a\},a,g_a)$ and $(\<m\> \cup \{b\},b,g_b)$ provided by Remark~\ref{r:adjoint}. 
\end{proof}

\begin{proof}[Proof of Proposition~\ref{p:adj-2}]
By Corollary~\ref{c:step-1} and Lemma~\ref{l:step-2} it will suffice to prove that the homotopy pushout $\widehat{\<n\>} \coprod^h_{\E_{\sig}} \<m\>$
is weakly contractible. 
Since $\widehat{\<n\>}, \<m\>$ and $\E_{\sig}$ are all discrete sets this homotopy pushout is equivalent to the underlying space of a bipartite graph $G$ whose set of vertices is $\widehat{\<n\>} \coprod \<m\>$ and such that $(g,j) \in \widehat{\<n\>} \times \<m\>$ is an edge if and only if $j$ is compatible with $g$ in the sense of Definition~\ref{d:comp}. Let us show that $G$ is connected. Let $j \in \<m\>$ be an element. If $j > 0$ then we may consider the gap $g_-: \<n\> \lrar \<2\>$ given by $g_-(i) = 0 \Leftrightarrow \sig(i) < j$. Then both $j$ and $j-1$ are compatible with $g_-$ and so $j$ is connected to $j-1$ in $G$. It then follows that all of $\<m\>$ lies in a single component of $G$. Similarly, if $g: \<n\> \lrar \<2\>$ is a gap such that $g^{-1}(0)$ is non-empty and we set $i_{\max} = \max(g^{-1}(0))$ then $g$ is compatible with $\sig(i_{\max})$. On the other hand, the gap $g':\<n\> \lrar \<2\>$ given by $g'(i)=0 \Leftrightarrow i<i_{\max}$ is also compatible with $\sig(i_{\max})$, and so $g$ and $g'$ are connected in $G$. We hence get that all of $\widehat{\<n\>}$ lies in the same component. Finally, since there are edges connecting $\widehat{\<n\>}$ and $\<m\>$ we may conclude that $G$ is connected.  

To show that $G$ is weakly contractible it will hence suffice to show that the number of edges is equal to the number of vertices minus $1$. But this just follows from the direct observation that the valency of the vertex corresponding to $j \in \<m\>$ is equal to $|\sig^{-1}(j)|+1$ if $x \leq j \leq n-1-y$, equal to $|\sig^{-1}(j)|-1$ if $j=x=y=m=1$ and is equal to $|\sig^{-1}(j)|$ in all other cases. This means that the total number of edges is $m+n-x-y$, while the total number of vertices is $|\left<m\right>| + |\widehat{\<n\>}| = m+n+1-x-y$.
\end{proof}

\section{Scaled unstraightening and the Grothendieck construction}\label{s:discrete}
In this section we give a proof of Proposition \ref{p:grothendieck}, which compares the $\infty$-categorical Grothendieck construction of a 2-functor $\F: \CC\lrar \Cat_1$ (realized by the scaled unstraightening functor) to its $2$-categorical Grothendieck construction.
Let us start by recalling the following generalization of the Grothendieck construction mentioned in \S\ref{s:straightening}, which applies to (strict) 2-functors $\F: \CC\lrar \Cat_2$ from a 2-category to the 2-category of (strict) 2-categories  (see \cite{Buc14}):

\begin{define}\label{d:2-groth}
Let $\CC$ be a $2$-category and let $\F: \CC \lrar \Cat_2$ be a $2$-functor. The \textbf{Grothendieck construction} $\int_{\CC}\F$ is is the $2$-category defined as follows:
\begin{itemize}
\item
The objects of $\int_{\CC}\F$ are pairs $(A,X)$ with $A \in \CC$ and $X \in \F(A)$.
\item
The $1$-morphisms from $(A,X)$ to $(B,Y)$ are given by pairs $(f,\vphi)$, where $f: A \lrar B$ is a $1$-morphism in $\CC$ and $\vphi: f_!X \lrar Y$ is a morphism in $\F(B)$ (here $f_!=\F(f)$).
\item
If $(f,\vphi),(g,\psi)$ are two $1$-morphisms from $(A,X)$ to $(B,Y)$ then the $2$-morphisms from $(f,\vphi)$ to $(g,\psi)$ are given by pairs $(\sig,\Sigma)$ where $\sig: f \Rightarrow g$ is a $2$-morphism in $\CC$ and $\Sigma:\vphi \Rightarrow \psi \circ \sig_!$ is a $2$-cell in the diagram
\begin{equation}\label{e:sigma}
\vcenter{\xymatrix{
f_!X \ar^{\sig_!X}[rr]\ar_{\vphi}[dr] && g_!X \ar^{\psi}[dl]\\
& Y. \twocell{ur}{\Sigma}{0.3}{1.7}{0.15} &\\
}}
\end{equation}
\end{itemize}
The various compositions of $1$-morphisms and $2$-morphisms are defined in a straightforward way, see~\cite{Buc14}. The projection $(A,X) \mapsto A$ determines a canonical functor $\pi:\int_{\CC} \F \lrar \CC$.
\end{define}
\begin{rem}\label{r:groth-base}
The Grothendieck construction is evidently compatible with base change: given $2$-functors $g: \CC \lrar \CC'$ and $\F: \CC' \lrar \Cat_2$, there is a natural isomorphism $\int_{\CC} g^*\F \cong \CC \times_{\CC'} \int_{\CC'} \F$.
\end{rem}
Let $\Fun_2(\CC,\Cat_2)$ denote the $1$-category of $2$-functors $\CC \lrar \Cat_2$. The $2$-categorical Grothendieck construction described above can then be promoted to a functor $\Fun_2(\CC,\Cat_2) \lrar \Cat_2/\CC$ (of $1$-categories) and the Grothendieck construction described in \S\ref{s:straightening} is the restriction
\begin{equation}\label{e:restrictedgroth}\xymatrix{
\Fun_2(\CC,\Cat_1) \ar[r] &  \Fun_2(\CC,\Cat_2) \ar[r]^-\int &  \Cat_2/\CC.
}\end{equation}
Let us start by describing the image of the functor \eqref{e:restrictedgroth}.
\begin{define}
Let $p: \DD \lrar \CC$ be a $2$-functor. We will say that a $1$-morphism $e: x \lrar y$ is \textbf{$p$-coCartesian} if for every object $z \in \DD$ the diagram
\begin{equation}\label{e:cocart}\vcenter{\xymatrix{
\Map_{\DD}(y,z) \ar^-{e^*}[r]\ar[d] & \Map_{\DD}(x,z) \ar[d] \\
\Map_{\CC}(p(y),p(z)) \ar^-{p(e)^*}[r] & \Map_{\CC}(p(x),p(z)) \\
}}\end{equation}
is homotopy Cartesian.
\end{define}
\begin{rem}
When all vertical arrows in \eqref{e:cocart} are right (or left) fibrations, the condition that $e: x \lrar y$ is $p$-coCartesian can be checked locally in the following sense: for every $1$-morphism $g: p(y) \lrar p(z)$ in $\CC$ one needs to verify that the induced functor
$$ \Map_{\DD}(y,z)_{g} \x{e^*}{\lrar} \Map_{\DD}(x,z)_{g \circ p(e)} $$
is an equivalence. Here $\Map_{\DD}(y,z)_{g}$ denotes the homotopy fiber of $\Map_{\DD}(y,z) \lrar \Map_{\CC}(p(y),p(z))$ over $g$ and similarly for $\Map_{\DD}(x,z)_{g \circ p(e)}$.
\end{rem}
\begin{define}\label{d:2-fib}
Let $p: \DD \lrar \CC$ be a $2$-functor. We will say that $p$ is \textbf{opfibered in categories} if the following conditions are satisfied:
\begin{enumerate}
\item
For every $x,y \in \DD$ the functor $\Map_{\DD}(x,y) \lrar \Map_{\CC}(p(x),p(y))$ is a right fibration whose fibers are sets (i.e., fibered in sets in the sense of Grothendieck).
\item
For every $x \in \DD$ and $1$-morphism $f: p(x) \lrar y$ in $\CC$ there exists a $p$-coCartesian $1$-morphism $e: x \lrar y'$ in $\CC$ such that $p(e) = f$.
\end{enumerate}
\end{define}
If $p: \DD \lrar \CC$ is opfibered in categories, then $p^{\op}:\DD^{\op} \lrar \CC^{\op}$ is in particular a $2$-fibration in the sense of~\cite{Buc14}. By \cite[Theorem 2.2.11]{Buc14}, such a $2$-fibration is an unstraightened model of a $2$-functor $\CC^{\coop} \lrar \Cat_2$, whose value at an object $C$ is the fiber of $p^{\op}$ over $C$. On the other hand, if $p^{\op}$ is a $2$-fibration, then $p$ is opfibered in categories if and only if the fibers of $p$ are $1$-categories, i.e., the corresponding $2$-functor $\CC^{\coop} \lrar \Cat_2$ lands in $\Cat_1$.
The following is then a special case of~\cite[Proposition 3.3.4]{Buc14}:
\begin{pro}[\cite{Buc14}]\label{c:fact}
Let $\CC$ be a $2$-category and $\F:\CC \lrar \Cat_1$ a $2$-functor. Then the map $\int_{\CC}\F \lrar \CC$ is opfibered in categories.
\end{pro}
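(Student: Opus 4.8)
The plan is to verify directly the two clauses of Definition~\ref{d:2-fib} for the projection $p:\int_{\CC}\F\lrar\CC$, using the explicit description of $\int_{\CC}\F$ recalled in \S\ref{s:straightening}. Note that since $\F$ lands in $\Cat_1\subseteq\Cat_2$, the $2$-cell $\Sigma$ appearing in \eqref{e:sigma} of Definition~\ref{d:2-groth} is forced to be an identity, so that a $2$-morphism $(f,\vphi)\Rightarrow(g,\psi)$ is simply a $2$-cell $\sig:f\Rightarrow g$ in $\CC$ with $\vphi=\psi\circ\sig_!(X)$. For clause (1), I would fix objects $(A,X),(B,Y)\in\int_{\CC}\F$ and observe that the functor $\Map_{\int_{\CC}\F}\big((A,X),(B,Y)\big)\lrar\Map_{\CC}(A,B)$ is a \emph{discrete fibration}: given a target object $(g,\psi)$ and a $2$-cell $\sig:f\Rightarrow g$ in $\Map_{\CC}(A,B)$, the unique object over $f$ admitting a morphism to $(g,\psi)$ covering $\sig$ is $(f,\psi\circ\sig_!(X))$, and the covering morphism---namely $\sig$ itself---is then also unique. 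A discrete fibration is in particular a right fibration whose fibers are sets, which is precisely clause (1).

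For clause (2), given $(A,X)$ and a $1$-morphism $f:A\lrar B$ in $\CC$, I would take $e:=(f,\id_{f_!X}):(A,X)\lrar(B,f_!X)$, which visibly satisfies $p(e)=f$, and show $e$ is $p$-coCartesian. Since clause (1) shows that all the relevant mapping-category projections are right fibrations, the remark following the definition of a $p$-coCartesian $1$-morphism lets me test this fibrewise: for each $(C,Z)\in\int_{\CC}\F$ and each $g:B\lrar C$ in $\CC$, I must show $e^*$ is an equivalence from the homotopy fiber of $\Map_{\int_{\CC}\F}\big((B,f_!X),(C,Z)\big)\lrar\Map_{\CC}(B,C)$ over $g$ to the homotopy fiber of $\Map_{\int_{\CC}\F}\big((A,X),(C,Z)\big)\lrar\Map_{\CC}(A,C)$ over $g\circ f$. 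Unwinding the definitions---and using that $\F$ is \emph{strict}, so $(gf)_!=g_!f_!$ on the nose---identifies both fibers with the set $\Map_{\F(C)}(g_!f_!X,Z)$, while precomposition with $(f,\id_{f_!X})$, which by the composition law of $\int_{\CC}\F$ sends $(g,\psi)$ to $\big(gf,\psi\circ g_!(\id_{f_!X})\big)=(gf,\psi)$, is literally the identity map on this set. Hence $e$ is $p$-coCartesian and clause (2) holds.

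I do not anticipate a real obstacle: the argument is bookkeeping with the $2$-categorical Grothendieck construction. The only two points requiring mild care are confirming that the mapping-category projections are genuinely \emph{discrete} fibrations (so that clause (2) may be tested fibrewise), and keeping the strict-versus-pseudo distinction straight, since it is strictness of $\F$ that makes the fibrewise comparison in clause (2) the identity rather than merely an equivalence. Alternatively, and more economically, one may simply observe that $p^{\op}$ is a $2$-fibration in the sense of~\cite{Buc14} whose fibers are the $1$-categories $\F(A)$, so that the statement is exactly the special case of~\cite[Proposition 3.3.4]{Buc14} in which the classified $2$-functor takes values in $\Cat_1$.
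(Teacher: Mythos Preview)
Your proposal is correct. The paper itself does not give an independent proof: it simply records that the statement is a special case of~\cite[Proposition 3.3.4]{Buc14}, after explaining (in the paragraph preceding the proposition) that ``opfibered in categories'' is equivalent to $p^{\op}$ being a $2$-fibration in Buckley's sense with $1$-categorical fibers. Your alternative remark at the end is exactly this argument, so you have in fact reproduced the paper's approach.

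Your direct verification goes further than the paper does, and it is sound. The identification of the mapping-category projection as a discrete fibration is clean and matches the paper's phrasing ``right fibration whose fibers are sets (i.e., fibered in sets in the sense of Grothendieck)'' in Definition~\ref{d:2-fib}(i). The fibrewise check of $p$-coCartesianness via the remark following the definition is also correct, and your use of strictness of $\F$ to make the comparison literally the identity (rather than an equivalence up to coherence isomorphism) is the right observation. So the direct route buys a self-contained argument at the cost of a small amount of bookkeeping, while the paper's route is shorter but defers entirely to~\cite{Buc14}.
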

Recall from \S\ref{s:scaledsset} that the $2$-nerve $\rN_2(\CC)$ of a strict 2-category $\CC$ is an $\infty$-bicategory, i.e.\ a fibrant scaled simplicial set. We will write $\underline{\rN}_2(\CC)$ for the underlying simplicial set of $\rN_2(\CC)$. 
\begin{lem}\label{l:compatible}
Let $p:\DD \lrar \CC$ be a $2$-functor which is opfibered in categories. Then the induced map $\rN_2(\DD) \lrar \rN_2(\CC)$ is a scaled coCartesian fibration in the sense of Definition~\ref{d:2-fib-a}.
\end{lem}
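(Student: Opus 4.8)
The plan is to check, for the map $\rN_2(\DD)\to\rN_2(\CC)$, the three conditions of Definitions~\ref{d:2-fib-a} and~\ref{d:T-lcc-fib} directly: that its underlying map of simplicial sets is an inner fibration, that its base change along every thin triangle of $\rN_2(\CC)$ is a coCartesian fibration over $\Del^2$, and that a triangle of $\rN_2(\DD)$ is thin exactly when its image in $\rN_2(\CC)$ is. The last of these is immediate. A $2$-functor $\BDel^2\to\DD$ represents a thin triangle iff it carries the unique non-identity arrow of $\Map_{\BDel^2}(0,2)$ — an arrow between two $1$-morphisms with common source and target — to an invertible $2$-morphism; and since $p$ is opfibered in categories, each functor $\Map_\DD(x,y)\to\Map_\CC(p(x),p(y))$ is a discrete fibration, hence faithful and able to lift isomorphisms uniquely, hence it both preserves and reflects isomorphisms. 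Thus $\BDel^2\to\DD$ represents a thin triangle iff its composite with $p$ does, which is precisely the condition $T_{\rN_2(\DD)}=p^{-1}(T_{\rN_2(\CC)})$.

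For the inner fibration I would transpose the lifting problem against an inner horn inclusion $\Lam^n_i\hookrightarrow\Del^n$ (with $0<i<n$) across the adjunction $\fC_2\dashv\rN_2$ of Remark~\ref{r:nerve}: it becomes the problem of extending a $2$-functor along $\fC_2(\Lam^n_i)\to\fC_2(\Del^n)=\BDel^n$ over $\CC$. This $2$-functor is the identity on objects and an isomorphism on every mapping category $\Map(j,k)$ with $(j,k)\neq(0,n)$, since $\Map(j,k)$ is generated by the interval face $\Del^{\{j,\dots,k\}}$, which lies inside $\Lam^n_i$; on $\Map(0,n)$ it is the inclusion of the subcategory obtained from the cube $\Map_{\BDel^n}(0,n)$ by deleting its interior and the open facet corresponding to the missing face $d_i$. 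Exactly as in the $(\infty,1)$-categorical computation of $\fC(\Lam^n_i)$ recalled in~\cite{Lur09}, this last inclusion is right anodyne (built from a right horn inclusion $\Lam^1_1\hookrightarrow\Del^1$ by pushout--product with boundary inclusions). Since $\Map_\DD(x,y)\to\Map_\CC(p(x),p(y))$ is a right fibration it has the right lifting property against it, and because the cells being filled in are never in the image of a composition functor of $\BDel^n$, any such filler is automatically a $2$-functor. (Only condition (1) of Definition~\ref{d:2-fib} is used here; already the case $n=2$, where $\fC_2(\Lam^2_1)$ is the poset $[2]$ sitting inside $\BDel^2$, is representative.)

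For the local coCartesian condition, fix a thin triangle $\sigma:\Del^2\to\rN_2(\CC)$, that is a $2$-functor $g:\BDel^2\to\CC$ with underlying $1$-morphisms $f_{01}=g(\{0,1\})$, $f_{12}=g(\{1,2\})$, $f_{02}=g(\{0,2\})$ and an \emph{invertible} $2$-cell $\theta: f_{02}\Rightarrow f_{12}f_{01}$, and put $P:=\underline{\rN}_2(\DD)\times_{\underline{\rN}_2(\CC)}\Del^2$. Being a pullback of an inner fibration, $P\to\Del^2$ is an inner fibration, so by the definition of a coCartesian fibration (cf.\ \cite[\S 2.4.2]{Lur09}) it suffices to produce, for any object $\tilde x$ of $P$ over the source, a coCartesian lift of each of the edges $0\to 1$, $1\to 2$, $0\to 2$ of $\Del^2$. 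Condition (2) of Definition~\ref{d:2-fib} supplies $p$-coCartesian $1$-morphisms of $\DD$ lifting $f_{01}$, $f_{12}$ and $f_{02}$ with the prescribed source, and I claim each — paired with the corresponding edge of $\Del^2$ — is a coCartesian edge of $P$. To check this, unwind the mapping spaces of the fibre product: for the edge $j\to k$ and an object $w=(\tilde A'',k)$ of $P$, the square one must show to be homotopy Cartesian reduces to the statement that the precomposition functor $\Map_\DD(\tilde A_j,\tilde A'')_{\sigma(j\to k)}\to\Map_\DD(\tilde x,\tilde A'')_{\sigma(0\to k)}$ is an equivalence, where the subscripts denote homotopy fibres over the indicated $1$-morphisms, as in the remark following the definition of a $p$-coCartesian morphism. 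Thinness of $\sigma$ is exactly what makes $\sigma(0\to 2)$ \emph{isomorphic} to $\sigma(1\to 2)\circ\sigma(0\to 1)=f_{12}f_{01}$ (via $\theta$); granting this, the local criterion for $p$-coCartesian $1$-morphisms, together with the fact that a right fibration has canonically equivalent fibres over isomorphic objects, shows each of these precomposition functors is an equivalence. An inner fibration over $\Del^2$ admitting a coCartesian lift of every edge is a coCartesian fibration, which finishes the verification.

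The step I expect to be the main obstacle is this last one: correctly identifying the mapping spaces of the scaled nerve $\rN_2$ and of the fibre product $P$, matching the resulting squares with the homotopy-Cartesian condition that defines a $p$-coCartesian $1$-morphism, and invoking the invertibility of $\theta$ exactly where needed to identify $f_{02}$ with the composite $f_{12}f_{01}$. The combinatorial input on the structure of $\fC_2(\Lam^n_i)$ used in the inner-fibration step is routine but still demands some bookkeeping.
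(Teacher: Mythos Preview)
Your treatment of the thin-triangle condition and of the inner-fibration step is essentially the paper's argument: both reduce to the observation that $\fC^{\sca}(\Lam^n_i)\to\fC^{\sca}(\Del^n)$ is the identity except on $\Map(0,n)$, where it is right anodyne, and then invoke condition~(i) of Definition~\ref{d:2-fib}.

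The local coCartesian step is where you and the paper diverge. The paper does not attempt to identify mapping spaces in the pullback $P=\uline{\rN}_2(\DD)\times_{\uline{\rN}_2(\CC)}\Del^2$ directly. Instead it observes that a thin $\sig:\BDel^2\to\CC$ lands in the maximal sub-$(2,1)$-category of $\CC$; since right fibrations over groupoids have groupoidal total space, condition~(i) forces the relevant part of $\DD$ to be a $(2,1)$-category as well. One is then in the purely simplicial-categorical setting where~\cite[Proposition~2.4.1.10]{Lur09} applies verbatim and produces the coCartesian fibration over $\Del^2$.

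Your proposed route---lifting each edge of $\Del^2$ by a $p$-coCartesian $1$-morphism and then verifying the mapping-space criterion in $P$---is sound in principle, but the sentence ``unwind the mapping spaces of the fibre product'' hides exactly the nontrivial content. The object $P$ is a quasicategory, not a $2$-nerve: $\Del^2$ is not $\uline{\rN}_2(\BDel^2)$, so $P$ is not a priori of the form $\uline{\rN}_2(-)$, and there is no immediate formula expressing $\Map_P(\tilde y,z)$ as a fibre of $\Map_\DD(\tilde y,z)\to\Map_\CC(p\tilde y,pz)$. Establishing that identification is precisely what~\cite[2.4.1.10]{Lur09} does in the Kan-enriched case, and making it available here requires first passing to the $(2,1)$-subcategories---the reduction the paper performs but you omit. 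Without it, the step you flag as ``the main obstacle'' is not merely bookkeeping but a genuine gap.
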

\begin{proof}
Let us first show that the underlying map of simplicial sets $\underline{\rN}_2(\DD) \lrar \underline{\rN}_2(\CC)$ is an inner fibration. Given an inner horn inclusion $\iota:\Lam^n_i \hrar \Del^n$, the associated functor $\iota_*:\fC^{\sca}(\Lam^n_i) \lrar \fC^{\sca}(\Del^n)$ induces a bijection on objects and an isomorphism $\Map_{\fC^{\sca}(\Lam^n_i)}(j,j') \lrar \Map_{\fC^{\sca}(\Del^n)}(j,j')$ for all $(j,j') \neq (0,n)$. On the other hand, recall that $\Map_{\fC^{\sca}(\Del^n)}(0,n)\cong (\Del^1)^{\{1, ..., n-1\}}$ is an $(n-1)$-cube. If we denote by
$$
K = \partial(\Del^1)^{\{1,...,i-1,i+1,...,n-1\}} \lrar (\Del^1)^{\{1,...,i-1,i+1,...,n-1\}}=L
$$
the inclusion of the boundary of the $(n-2)$-cube obtained by forgetting the $i$-th coordinate, then $\Map_{\fC^{\sca}(\Lam^n_i)}(0,n) \lrar \Map_{\fC^{\sca}(\Del^n)}(0,n)$ can be identified with
$$ L\times\Del^{\{1\}} \coprod_{K\times\Del^{\{1\}}} K\times\Del^1 \subseteq L\times \Del^1 = (\Del^1)^{\{1,...,n-1\}}.$$
This map is right anodyne, being the pushout-product of the right anodyne map $\Del^{\{1\}} \hrar \Del^1$ and the inclusion $K\lrar L$. It follows from Condition (i) of Definition~\ref{d:2-fib} that $\DD_{\rN^+} \lrar \CC_{\rN^+}$ has the right lifting property with respect to $\iota_*:\fC^{\sca}(\Lam^n_i) \lrar \fC^{\sca}(\Del^n)$. Consequently, $\underline{\rN}_2(\DD) \lrar \underline{\rN}_2(\CC)$ has the right lifting property with respect to $\iota:\Lam^n_i \hrar \Del^n$, as desired.

Next we claim that if $\sig: \Del^2 \lrar \uline{\rN}_2(\CC)$ is a thin triangle, then $\sig^*f: \underline{\rN}_2(\DD) \times_{\underline{\rN}_2(\CC)} \Del^2 \lrar \Del^2$ is a coCartesian fibration. Indeed, in this case $\sigma$ determines a map $\BDel^2 \lrar \CC$ with values in the maximal sub-$(2,1)$-category of $\CC$, so we may reduce to the case where $\CC$ is a $(2,1)$-category. Condition (i) of Definition~\ref{d:2-fib} now implies that $\DD$ is a $(2,1)$-category as well, so that $\CC_{\rN^+}$ and $\DD_{\rN^+}$ are fibrant marked-simplicial categories whose mapping objects have all edges marked. The desired result now follows by applying~\cite[2.4.1.10]{Lur09} to the underlying simplicial categories of $\CC_{\rN^+}$ and $\DD_{\rN^+}$ respectively.

We conclude that $p: \underline{\rN}_2(\DD) \lrar \underline{\rN}_2(\CC)$ is a $T$-locally coCartesian fibration, 
where $T$ is the collection of thin triangles in $\rN_2(\CC)$. To finish the proof we have to show that the thin triangles in $\rN_2(\DD)$ are exactly those triangles whose image in $\rN_2(\CC)$ is thin. This is a direct consequence of Condition (i) of Definition~\ref{d:2-fib}, since right fibrations detect isomorphisms. 
\end{proof}
We can now consider two different ways to ``unstraighten'' a $2$-functor $\F: \CC \lrar \Cat_1$ into a map of scaled simplicial sets. On the one hand, we can take the Grothendieck construction $\int_{\CC}\F \lrar \CC$ and apply the $2$-nerve functor $\rN_2$ to obtain a map $\rN_2(\int_{\CC}\F) \lrar \rN_2(\CC)$. On the other hand, we can form the associated enriched functor $\rN^+\F: \CC_{\rN^+} \lrar \Set^+_{\Del}$ (obtained by applying $\rN^+$ to the values of $\F$ as well as to the action maps $\Map_{\CC}(c,d) \times \F(c) \lrar \F(d)$) and take the scaled unstraightening $\wtl{\Un}^{\sca}(\rN^+\F) \lrar \rN^{\sca}(\CC_{\rN^+}) \cong \rN_2(\CC)$ (see Notation~\ref{n:variants} and Notation~\ref{n:counit}). 
We now claim the following:
\begin{pro}\label{p:construction}
For $\F:\CC \lrar \Cat_1$ there exists a natural map 
\begin{equation}\label{e:theta_F}
\Theta_{\CC}(\F): \rN_2\left(\int_{\CC}\F\right) \lrar \wtl{\Un}^{\sca}(\rN^+\F)
\end{equation}
of scaled simplicial sets over $\rN_2(\CC)$ with the following properties:
\begin{enumerate}
\item
$\Theta_{\CC}(\F)$ preserves locally coCartesian edges over $\rN_2(\CC)$.
\item
For every $2$-functor $g: \CC \lrar \CC'$ and every $\F: \CC' \lrar \Cat_1$ the diagram
$$ \xymatrix@C=4pc{
\rN_2\left(\int_{\CC'}g^*\F\right) \ar^-{\Theta_{\CC}(\F)}[r]\ar[d] & \wtl{\Un}^{\sca}(\rN^+g^*\F) \ar[d] \\
\rN_2\left(\int_{\CC'}\F\right) \ar_-{\Theta_{\CC'}(\F)}[r] & \wtl{\Un}^{\sca}(\rN^+\F) \\
}$$
commutes.
\end{enumerate}
\end{pro}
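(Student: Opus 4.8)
The plan is to construct the map $\Theta_{\CC}(\F)$ by exhibiting both sides as representing the same functor into scaled simplicial sets, or failing that, by giving an explicit simplicial-level formula and checking the required conditions. The most natural route is the second: describe an $n$-simplex of $\rN_2(\int_{\CC}\F)$ concretely and produce from it an $n$-simplex of $\wtl{\Un}^{\sca}(\rN^+\F)$. Recall that $\rN_2(\int_{\CC}\F)_n = \Fun_2(\BDel^n, \int_{\CC}\F)$, so such a simplex is a $2$-functor $G \colon \BDel^n \lrar \int_{\CC}\F$; composing with the projection $\pi$ gives a $2$-functor $\BDel^n \lrar \CC$, i.e.\ an $n$-simplex of $\rN_2(\CC)$, and the extra data of $G$ encodes, for each object $i \in \{0,\dots,n\}$, an object of $\F(G\pi(i))$ together with compatible structure maps along the $1$- and $2$-morphisms of $\BDel^n$. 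On the other side, an $n$-simplex of $\wtl{\Un}^{\sca}(\rN^+\F)$ lying over a given $n$-simplex of $\rN_2(\CC)$ is, by definition of the scaled unstraightening and Remark~\ref{r:nerve} (which identifies $\BDel^n_{\rN^+}$ with $\fC^{\sca}(\Del^n_{\flat})$), precisely a compatible choice of such data phrased in terms of the marked-simplicial category $\fC^{\sca}(\Del^n_{\flat})$ mapping into $\Set^+_\Del$ over $\rN^+\F$. Since $\rN^+$ is fully faithful on the relevant mapping categories and $\F$ takes values in $\Cat_1$, unwinding the adjunction defining $\Un^{\sca}$ shows that the two descriptions match up: the structure maps of $G$ translate directly into the sections required by unstraightening. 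I would make this bijection explicit on simplices, check it is natural in $[n]$, and thereby obtain $\Theta_{\CC}(\F)$ as a map of simplicial sets over $\uline{\rN}_2(\CC)$; that it is a map of \emph{scaled} simplicial sets is automatic since both scalings are pulled back from $\rN_2(\CC)$ (for $\wtl{\Un}^{\sca}$ by Notation~\ref{n:variants}, and for $\rN_2(\int_{\CC}\F)$ by Lemma~\ref{l:compatible} together with Proposition~\ref{c:fact}).

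For property (ii), naturality in $\CC$, I would use Remark~\ref{r:groth-base} on the left-hand side, which gives $\int_{\CC}g^*\F \cong \CC \times_{\CC'} \int_{\CC'}\F$, and Remark~\ref{r:product} / the compatibility of $\Un^{\sca}$ with base change (cited in the proof of Remark~\ref{r:product}) on the right-hand side. Both $\rN_2$ and $\wtl{\Un}^{\sca}$ commute with the relevant pullbacks, so the square in (ii) commutes as soon as the simplex-level formula for $\Theta$ is manifestly built out of the structure maps of $G$ in a way that is insensitive to enlarging the base; since my construction is simply ``read off the data of $G$,'' this should be a formal verification once the formula is in place. Concretely I would reduce to checking the two composite functors agree on a typical $n$-simplex, where both sides are computed by restricting $G$ along $\CC \lrar \CC'$.

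For property (i), that $\Theta_{\CC}(\F)$ preserves locally coCartesian edges over $\rN_2(\CC)$, I would use Lemma~\ref{l:all-the-same} to identify locally coCartesian edges concretely: over a thin triangle the base change to $\Del^2$ is a coCartesian fibration, and the locally coCartesian edges over an edge $e$ of $\rN_2(\CC)$ are detected after base change to $\Del^1$. On the left, by Proposition~\ref{c:fact} the map $\int_{\CC}\F \lrar \CC$ is opfibered in categories, and a $1$-morphism $(f,\vphi)$ is $\pi$-coCartesian exactly when $\vphi$ is an isomorphism; under $\rN_2$ these become the locally coCartesian edges of $\rN_2(\int_{\CC}\F) \lrar \rN_2(\CC)$ by the argument already used in Lemma~\ref{l:compatible}. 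On the right, the locally coCartesian edges of $\wtl{\Un}^{\sca}(\rN^+\F)$ over an edge $e$ classify the value of the functor $\F(e)_! \colon \F(\text{source}) \lrar \F(\text{target})$ on a given object, i.e.\ again an edge whose ``vertical component'' is an equivalence — and since $\F$ lands in $\Cat_1$, an equivalence of objects in a $1$-category is an isomorphism. Thus $\Theta$ sends the edges with invertible vertical component to the edges with invertible vertical component, which is the claim. The main obstacle I anticipate is bookkeeping: writing down the simplex-level formula for $\Theta_{\CC}(\F)$ precisely enough that naturality and the coCartesian-edge claim become transparent, while correctly matching Lurie's conventions for $\fC^{\sca}$, $\rN^{\sca}$ and the scaled unstraightening adjunction $\St^{\sca} \dashv \Un^{\sca}$ of~\cite{goodwillie}. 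Everything beyond that formula is formal, but getting the combinatorics of $\BDel^n$ versus $\fC^{\sca}(\Del^n_\flat)$ aligned is where the real work lies.
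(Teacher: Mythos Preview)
Your overall strategy---construct $\Theta_{\CC}(\F)$ simplex by simplex and then verify (i) and (ii)---is essentially the paper's approach, since working simplex by simplex is the same as comparing the left adjoints $\St^{\sca}$ and the left adjoint $\LL_1$ to the Grothendieck construction. But there is a real gap in how you describe the simplex-level data on the unstraightening side, and this gap hides exactly the step where the work lies.

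You write that an $n$-simplex of $\wtl{\Un}^{\sca}(\rN^+\F)$ over a given simplex of $\rN_2(\CC)$ is ``data phrased in terms of the marked-simplicial category $\fC^{\sca}(\Del^n_\flat)$.'' That is not what the adjunction says: an $n$-simplex of $\Un^{\sca}(\rN^+\F)$ over $\sigma$ is a map $\St^{\sca}\big((\Del^n)^{\flat}\big) \to \rN^+\F$, and $\St^{\sca}$ is built from the \emph{scaled cone} $\Cone((\Del^n)^\flat)$, not from $\Del^n_\flat$ alone. So the combinatorial object you must understand is $\fC^{\sca}\big(\Cone((\Del^n)^\flat)\big)$, not $\fC^{\sca}(\Del^n_\flat)\cong \BDel^n_{\rN^+}$. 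Since $\F$ lands in $\Cat_1$, maps out of $\St^{\sca}$ factor through $\Ho_{\leq 1}\St^{\sca}$, so the real comparison is between $\fC_2\big(\Cone((\Del^n)^\flat)\big)$ and the $2$-category governing $n$-simplices of $\rN_2(\int_{\CC}\F)$. The paper identifies the latter with the \emph{lax cone} $\LaxCone(\BDel^n)$ (via the description of $\LL_1$ in terms of lax cones), and the heart of the proof is Lemma~\ref{l:truncatedcone}: an explicit cosimplicial map $\fC_2\big(\Cone((\Del^\bullet)^\flat)\big) \to \LaxCone(\BDel^\bullet)$. These two cone constructions are genuinely different $2$-categories (only related by a map, not an isomorphism), and writing down that map requires computing the mapping categories in $\fC_2(\Del^n\times\Del^1,T)$ and the localization at the marked edges. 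Your phrase ``aligning $\BDel^n$ with $\fC^{\sca}(\Del^n_\flat)$'' undersells this: those two \emph{are} isomorphic by Remark~\ref{r:nerve}; it is the cone comparison that is substantive.

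Once that lemma is in place, your arguments for (i) and (ii) are fine and match the paper's: (ii) follows because the cone comparison is natural in $[n]$ and hence in the base, and (i) is checked after base change to $[1]$ by an explicit inspection of what $\Sig_{[1]}(\Del^1)$ does to the edge $\Del^{\{0,2\}}\subseteq\Lam^2_0$.
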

We will construct \eqref{e:theta_F} from a natural transformation between the associated left adjoint functors. To this end, observe that the sequence of functors \eqref{e:restrictedgroth} gives rise to a sequence of left adjoints
$$ 
\LL_1: \Cat_2/\CC \x{\LL}{\lrar} \Fun_2(\CC,\Cat_2) \x{|-|_1}{\lrar} \Fun_2(\CC,\Cat_1). 
$$ 
The functor $|-|_1$ is given pointwise by sending a 2-category $\DD$ to the 1-category $|\DD|_1$ with the same objects and hom-sets $\Hom_{|\DD|_1}(x,y) = \pi_0|\Map_{\DD}(x,y)|$ (see \eqref{e:gpdcomp}). The left adjoint $\LL$ to the 2-categorical Grothendieck construction exists by the adjoint functor theorem, but can also be described explicitly as follows (cf.\ \cite[\S 4.2]{Buc14}). Given a $2$-functor $f:\DD \lrar \CC$, let $\DD_{/c}$ be the $2$-category where 
\begin{itemize}
\item an object is a pair $(d,\alp)$, where $d$ is an object of $\DD$ and $\alp: f(d) \lrar c$ is a morphism in $\CC$.
\item a $1$-morphism is a pair $(\bet,\tau):(d,\alp) \lrar (d',\alp')$, where $\bet: d \lrar d'$ is a $1$-morphism in $\DD$ and $\tau: \alp \Rightarrow \alp' \circ f(\bet)$ is a $2$-morphism in $\CC$.
\item a $2$-cell $(\bet,\tau) \Rightarrow (\bet',\tau')$ is a $2$-cell $\sig: \bet \Rightarrow \bet'$ such that the diagram
\begin{equation}\label{e:sigma-2}
\vcenter{\xymatrix{
\alp' \circ f(\bet) \ar@{=>}^{f(\sig)}[rr] && \alp' \circ f(\bet')\\
& \alp \ar@{=>}_{\tau'}[ur] \ar@{=>}^{\tau}[ul] &\\
}}
\end{equation}
commutes in $\Map_{\CC}(\alp,\alp' \circ f(\bet'))$.
\end{itemize}
The left adjoint $\LL$ to the Grothendieck construction $\int: \Fun_2(\CC,\Cat_2)\lrar \Cat_2/\CC$ then sends $f: \DD\lrar \CC$ to the 2-functor
$$
\LL(f): \CC\lrar \Cat_2; \hspace{5pt} c \mapsto \DD_{/c}.
$$
\begin{rem}
The analogous description of the left adjoint to the $1$-categorical Grothendieck construction is well-known (see, e.g., \cite[Proposition 3.1.2]{Mal05}). The above $2$-categorical analogue can be proven in a similar fashion, by explicitly describing the unit and counit. More precisely, the unit $u: \DD\lrar \int_{c\in\CC} \DD_{/c}$ sends $d$ to the tuple $(f(d),(d,\Id_{f(d)}))$ and the counit $\nu: \LL(\int_{\CC} \F) \Rightarrow \F$ sends $(x, \alpha: c'\lrar c)$ in $(\int_{\CC} \F)_{/c}$ to $\alpha_!(x)$ in $\F(c)$.
\end{rem}
\begin{rem}\label{r:left-kan}
Remark \ref{r:groth-base} implies, by passing to left adjoints, that $\LL_1$ is compatible with ($\Cat_1$-enriched) left Kan extensions: if $f: \DD \lrar \CC$ and $g: \CC \lrar \CC'$ are $2$-functors then there is a natural isomorphism $\LL_1(gf) \cong \Lan_g(\LL_1(f))$ of functors $\CC' \lrar \Cat_1$. 
\end{rem}
We conclude that the composite left adjoint $\LL_1: (\Cat_2)_{/\CC} \lrar \Fun_2(\CC,\Cat)$ sends $f: \DD \lrar \CC$ to the functor $\LL_1(f): \CC \lrar \Cat; \hspace{2pt}c\mapsto |\DD_{/c}|_1$. We will prove Proposition~\ref{p:construction} by relating this left adjoint $\LL_1$ to the scaled straightening functor of~\cite[\S 3.5]{goodwillie}. To do this, it will be convenient to describe $\LL_1$ in terms of \textbf{lax cones}.
\begin{define}\label{d:cone}
Let $\DD$ be a $2$-category. The \textbf{lax cone} $\LaxCone(\DD)$ on $\DD$ is the $2$-category with object set $\{\ast\} \cup \Ob(\DD)$ and mapping categories 
$$
\Map_{\LaxCone(\DD)}(x,y) = \Map_{\DD}(x,y) \qquad \Map_{\LaxCone(\DD)}(x,\ast) = \varnothing \qquad \Map_{\LaxCone(\DD)}(\ast,x) = |\DD_{/x}|_1
$$
for $x, y\in \DD$. The composition is defined using the functorial dependence of $|\DD_{/x}|_1$ on $x \in \DD$. Similarly, if $f: \DD \lrar \CC$ is a $2$-functor, then the lax cone of $f$ is the 2-category $\LaxCone(f) := \LaxCone(\DD) \coprod_{\DD} \CC$.
\end{define}
\begin{rem}
The reason for the terminology of Definition~\ref{d:cone} is that for any $2$-category $\EE$ the data of a $2$-functor $\LaxCone(\DD) \lrar \EE$ is equivalent to the data of a $2$-functor $p: \DD \lrar \EE$ together with a \textbf{lax natural transformation} from a constant diagram to $p$ (see~\cite[Theorem 11]{St76}).
\end{rem}
For every 2-functor $f: \DD \lrar \CC$, there is a natural isomorphism of functors $\CC \lrar \Cat_1$
$$ \LL_1(f) \cong \Map_{\LaxCone(f)}(\ast,-). $$
Indeed, when $f$ is the identity map this holds by construction. For more general functors $f$, it follows from the universal property of pushouts that $\Map_{\LaxCone(f)}(\ast,-)$ is the ($\Cat_1$-enriched) left Kan extension of $\Map_{\LaxCone(\DD)}(\ast,-)= \LL_1(\Id_\D)$ along $f$, which can be identified with $\LL_1(f)$ by Remark \ref{r:left-kan}.

Now recall that the scaled straightening functor $\St^{\sca}$ of \cite{goodwillie} is also defined in terms of a suitable cone construction: for a marked simplicial set $X = (\uline{X},E_X)$, the \textbf{scaled cone} of $X$ (see~\cite[Definition 3.5.1]{goodwillie}) is given by 
$$ \Cone(X) = (\uline{X} \times \Del^1,T) \coprod_{(\uline{X} \times \{0\})_{\flat}}\{\ast\} ,$$
where $T$ is the collection of those triangles $(\sig,\tau): \Del^2 \lrar \uline{X} \times \Del^1$ such that $\sig$ is degenerate and such that either $\sig|_{\Del^{\{0,1\}}}$ belongs to $E_X$ or $\tau|_{\Del^{\{1,2\}}}$ is degenerate. Given a marked-simplicial category $\CC$, the scaled unstraightening functor $\St^{\sca}:(\Set^+_\Del)_{/\rN^{\sca}(\CC)} \lrar \Fun^+(\CC,\Set^+_\Del)$ is then given by
$$ \St^{\sca}(X) = \Map_{\fC^{\sca}(\Cone(X))\coprod_{\fC^{\sca}(\uline{X}_{\flat})}\CC}(\ast,-) .$$

\begin{lem}\label{l:truncatedcone}
Let $\fC_2: \Set_\Del^{\sca}\lrar \Cat_2$ be the left adjoint to the 2-nerve $\rN_2$ (see Remark \ref{r:nerve}). Then there is a natural transformation of simplicial objects in the category $(\Cat_2)_\ast$ of pointed 2-categories
\begin{equation}\label{e:sig_n}
\Psi_\bullet: \fC_2\big(\Cone((\Del^{\bullet})^\flat)\big) \lrar \LaxCone(\BDel^\bullet).
\end{equation}
\end{lem}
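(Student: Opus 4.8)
The plan is to construct the natural transformation $\Psi_\bullet$ levelwise in $n$, by understanding both sides of \eqref{e:sig_n} very explicitly and producing a map of pointed $2$-categories $\Psi_n: \fC_2(\Cone((\Del^n)^\flat)) \lrar \LaxCone(\BDel^n)$ that is simplicial in $n$. First I would unravel the target: by definition $\LaxCone(\BDel^n)$ has object set $\{\ast\}\cup\{0,\dots,n\}$, with $\Map_{\LaxCone(\BDel^n)}(i,j) = \Map_{\BDel^n}(i,j)$ (the poset of subsets of $[n]$ with min $i$ and max $j$), $\Map(\ast,j) = |(\BDel^n)_{/j}|_1$, and no morphisms into $\ast$. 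Next I would unravel the source. Since $(\Del^n)^\flat$ is $\Del^n$ with only degenerate edges marked, the scaled cone $\Cone((\Del^n)^\flat) = (\Del^n\times\Del^1, T)\coprod_{(\Del^n\times\{0\})_\flat}\{\ast\}$, and by Remark~\ref{r:nerve} we have $\fC^{\sca}(\Del^m_\flat) \cong \BDel^m_{\rN^+}$, so $\fC_2(\Del^m_\flat) = \BDel^m$ for the truncated (to $2$-categories) version. The key computation is therefore to identify $\fC_2$ of the scaled simplicial set $(\Del^n\times\Del^1, T)$ and of the cone attaching the cone point $\ast$.

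The main work is the description of $\Map_{\fC_2(\Cone)}(\ast,j)$ for $1\le j$ corresponding to the object $(j,1)\in \Del^n\times\Del^1$ (the cone point $\ast$ is glued to $(\Del^n\times\{0\})$, so every object on the $\Del^n\times\{1\}$ side receives morphisms from $\ast$ through the cone direction). Here I would use the standard formula for mapping categories in $\fC^{\sca}$ of a scaled simplicial set in terms of necklaces/simplices of the underlying simplicial set, together with the thinness data $T$: a morphism $\ast \to (j,1)$ in $\fC_2$ is generated by simplices of $\Del^n\times\Del^1$ starting at some $(0,0),\dots$ wait — more precisely, starting from the cone point and landing at $(j,1)$, which, after the collapse of $\Del^n\times\{0\}$ to $\ast$, amounts to chains $(i_0,0)\to(i_1,1)\to\dots$ subject to the thin-triangle relations in $T$. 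The payoff is that this mapping category should be computed to be (a $2$-truncation of) $|(\BDel^n)_{/j}|_1$, or at least map canonically to it; the map $\Psi_n$ on these hom-categories is then the comparison. I would handle the hom-categories $\Map(i,j)$ between two non-cone objects separately — there $\fC_2$ restricted to $\Del^n\times\{1\}\cong\Del^n_\flat$ just gives $\BDel^n$ again (the thin triangles involving only the $\{1\}$-slice are the degenerate-in-first-coordinate ones, matching the bicategorical structure), so $\Psi_n$ is the identity there.

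The hard part will be bookkeeping the thin triangles $T$ correctly and checking that the resulting assignment is genuinely functorial in a way compatible with composition in the $2$-category (i.e., that $\Psi_n$ is a strict $2$-functor, not merely a map on underlying $1$-categories), and then — crucially — checking \emph{simpliciality}, i.e. that for every $\alpha:[m]\to[n]$ in $\Del$ the square relating $\Psi_m$ and $\Psi_n$ commutes. For the latter I would note that both $\Cone((\Del^\bullet)^\flat)$ and $\LaxCone(\BDel^\bullet)$ are defined by functorial constructions out of $\Del^\bullet$ (the cone on the left, the lax cone of $\BDel^\bullet$ on the right, using $\BDel^\alpha:\BDel^m\to\BDel^n$ and the induced map on $|(\BDel)_{/j}|_1$), so it suffices to check compatibility on the generating face and degeneracy maps, which reduces to the explicit formulas already assembled. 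I do not expect to need that $\Psi_n$ is an equivalence at this stage — the lemma only asserts existence of the natural transformation — so I would defer any such claim; it is presumably used afterwards (together with the explicit formula $\St^{\sca}(X)=\Map_{\fC^{\sca}(\Cone(X))\coprod \CC}(\ast,-)$ and the identification $\LL_1(f)\cong\Map_{\LaxCone(f)}(\ast,-)$) to build $\Theta_\CC(\F)$ in Proposition~\ref{p:construction} by passing to colimits over $\CC$ and comparing with $\rN^+\F$.
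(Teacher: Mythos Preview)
Your proposal is correct and follows essentially the same approach as the paper: unpack both sides explicitly, identify $\Psi_n$ with the identity on the $\BDel^n$-part, and construct the map on $\Map(\ast,j)$ by analyzing chains in $\Del^n\times\Del^1$ modulo the thin-triangle relations. The paper's only extra ingredient is the concrete formula you are reaching for: using \cite[Remark~3.7.5]{goodwillie} it identifies $\Map_{\fC_2(\Del^n\times\Del^1,T)}((i,0),(i',1))$ with the localization $\P_{i,i'}[W^{-1}]$ of the poset of chains $C\subseteq[n]\times[1]$ from $(i,0)$ to $(i',1)$, and then defines $\Psi_n$ on such a chain by $C\mapsto \{\max(C_0)\}\cup C_1$ (with $C_\eps=C\cap([n]\times\{\eps\})$), which visibly kills the marked edges and lands in the poset model of $|\BDel^n_{/i'}|_1$.
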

\begin{rem}\label{r:hocat}
Let $\Ho_{\leq 1}: \Set^+_\Del \lrar \Cat_1$ denote the left adjoint of the marked nerve $\rN^+$, which sends a marked simplicial set $(\uline{S}, E_S)$ to the category freely generated by the simplicial set $\uline{S}$, localized at the arrows from $E_S$. If $X$ is a scaled simplicial set, then $\fC_2(X)$ is the 2-category obtained from the marked-simplicial category $\fC^{\sca}(X)$ by applying $\Ho_{\leq 1}$ to the mapping objects.
\end{rem}
\begin{proof}
Let us start by describing the $2$-category $\LaxCone(\BDel^n)$ more explicitly. For $i, j\in [n]$, the mapping category $\Map_{\LaxCone(\BDel^n)}(i, i')$ is the poset of chains $C\subseteq [n]$ starting at $i$ and ending at $i'$, ordered by inclusion. To describe the category of maps $\ast\lrar i$, observe that $\BDel^n_{/i}$ can be identified with the $2$-category whose objects are chains $C \subseteq [n]$ ending at $i$: such a chain determines a map $\min(C)\lrar i$ in $\BDel^n$. If $C$ and $C'$ are two such chains, then
$$
\Map_{\BDel^n_{/i}}(C, C') = \big\{D\subseteq [n] : \min(D)=\min(C), \max(D)=\min(C'), C\subseteq D\cup C'\big\}
$$
is a subposet of chains in $[n]$, ordered by inclusion. In particular, $\Map_{\BDel^n_{/i}}(C, C')$ is nonempty if and only if $\min(C) \leq \min(C')$ and each $j\in C$ is contained in $C'$ as soon as $j\geq \min(C')$. In that case, the poset contains a maximal chain, namely the interval $[\min(C), \min(C')]$. It follows that the associated 1-category (see \eqref{e:gpdcomp})
$$
\Map_{\LaxCone(\BDel^n)}(\ast, i) = |\BDel^n_{/i}|_1
$$
is the poset of chains $C\subseteq [n]$ ending at $i$, where $C\leq C'$ if $\min(C)\leq \min(C')$ and if each $j\in C$ with $j\geq \min(C')$ is also contained in $C'$.

To describe $\fC_2\big(\Cone((\Del^n)^\flat)\big)$, let us start by identifying $\fC^{\sca}(\Del^n\times \Del^1, T)$, where the scaling $T$ is described above Lemma \ref{l:truncatedcone}. By \cite[Remark 3.7.5]{goodwillie}, this marked-simplicial category has objects $(i,\eps) \in [n] \times [1]$, while $\Map_{\fC(\Del^n \times \Del^1)}((i,\eps),(i',\eps'))$ is the nerve of the poset of chains $C\subseteq [n]\times [1]$ starting at $(i,\eps)$ and ending at $(i',\eps')$. When $\eps=\eps'$, this is simply a poset of chains in $[n]=[n]\times\{\eps\}$. 

On the other hand, let us denote by $\P_{i,i'}$ the poset of chains from $(i,0)$ to $(i',1)$ and for each such chain $C$, let $C_0=C\cap ([n]\times \{0\})$ and $C_1=C\cap ([n]\times \{1\})$ be the associated two chains in $[n]$. Examining the scaling $T$, we see that all the marked edges $W$ lie in these $\P_{i, i'}$: an inclusion $C\subseteq C'$ determines a marked edge in $\Map_{\fC(\Del^n\times \Del^1, T)}((i, 0), (i', 1))$ if and only if $C_0=C'_0$ and $C_1'= C_1 \cup \{\max(C_0)\}$. Using Remark \ref{r:hocat}, we therefore conclude that $\fC_2(\Del^n\times \Del^1, T)$ is the 2-category with objects $(i, \eps)$ and mapping categories 
\begin{align*}
\Map_{\fC_2(\Del^n\times \Del^1, T)}((i, 0), (i', 0))&=\Map_{\fC_2(\Del^n\times \Del^1, T)}((i, 1), (i', 1))=\BDel^n(i, i')\\
\Map_{\fC_2(\Del^n\times \Del^1, T)}((i, 0), (i', 1))&= \P_{i, i'}[W^{-1}].
\end{align*}
Composition proceeds by concatenation of chains. Since the functor $\fC_2$ is a left adjoint and $\fC_2(\ast)=\ast$, there is a natural isomorphism
$$
\fC_2\big(\Cone((\Del^n)^\flat)\big) \cong \fC_2\big(\Del^n\times \Del^1, T\big)\coprod_{\fC_2(\Del^n\times\{0\})} \ast.
$$
By the above isomorphism, the natural transformation $\Psi_\bullet$ of \eqref{e:sig_n} is determined uniquely by natural functors $\Psi_n: \fC_2(\Del^n\times \Del^1, T)\lrar \LaxCone(\BDel^n)$ collapsing $\fC_2(\Del^n\times\{0\})$ to $\ast$. We simply define these functors by
\begin{itemize}
\item $\Psi_n$ sends $\fC_2(\Del^n\times\{0\})$ to $\ast\in\LaxCone(\BDel^n)$.
\item $\Psi_n$ sends $\fC_2(\Del^n\times\{1\})$ isomorphically to $\BDel^n=\fC_2(\Del^n)\subseteq \LaxCone(\BDel^n)$.
\item $\Psi_n((i, 0), (i', 1)): \P_{i, i'}[W^{-1}]\lrar \|\BDel^n_{/i'}\|_1$ arises from the functor $\P_{i, i'}\lrar \|\BDel^n_{/i'}\|_1$ sending $C\mapsto \{\max(C_0)\}\cup C_1$, which indeed sends marked edges to identities.
\end{itemize}
This determines the desired natural transformation $\Psi_\bullet$ as in \eqref{e:sig_n}.
\end{proof}

\begin{proof}[Proof of Proposition~\ref{p:construction}]
It will suffice to define $\Theta_{\CC}(\F)$ on the underlying simplicial sets since the thin triangles on both sides of~\eqref{e:theta_F} are exactly those triangles whose image in $\rN_2(\CC)$ is thin. In particular, we need to construct a natural transformation $\uline{\rN}_2\int_\CC(-) \Rightarrow \uline{\Un}^{\sca}(-)$ between two functors $\Fun_2(\CC,\Cat_1) \lrar \Set_\Del$ which is compatible with base change. 

To do this, let us consider, for each simplicial set $X$, the natural map of pointed $2$-categories
\begin{equation}\label{e:sig-3}
\Psi(X): \fC_2(\Cone(X^{\flat})) \Rightarrow \LaxCone(\fC_2(X_{\flat}))
\end{equation}
defined as follows: since both sides of~\eqref{e:sig-3} are functors on $\Set_\Del$ which commute with colimits, the natural transformation $\Psi(-)$ is uniquely determined by its value on simplices, which we take to be the natural transformation $\Psi_\bullet$  of Lemma~\ref{l:truncatedcone}. For each 2-category $\CC$, this determines a natural transformation of functors $(\Set_\Del)_{/\uline{\rN}_2(\CC)}\lrar (\Cat_2)_{\ast\coprod \CC/}$
\begin{equation}\label{e:sig-2}
\Psi_{\CC}(X): \fC_2(\Cone(X^{\flat}))\coprod_{\fC_2(X_{\flat})}\CC \Rightarrow \LaxCone(\fC_2(X_{\flat})) \coprod_{\fC_2(X_{\flat})}\CC.
\end{equation}
This natural transformation $\Psi_{\CC}(-)$ is also natural in $\CC$. Taking mapping categories out of the basepoint $\ast$, we obtain a natural transformation of functors $(\Set_\Del)_{/\uline{\rN}_2(\CC)}\lrar \Fun(\CC, \Cat_1)$
\begin{equation}\label{e:sig}
\Sig_{\CC}(X): \Ho_{\leq 1}\St^{\sca}(X^{\flat}) \Rightarrow \LL_1(\fC_2(X_{\flat}))
\end{equation}
where $\Ho_{\leq 1}$ is the functor from Remark \ref{r:hocat}. Since $\Psi_{\CC}$ depends naturally on $\CC$, the natural transformation $\Sig_\CC(X)$ is compatible with $\Cat_1$-enriched left Kan extensions along functors $\CC\lrar \CC'$. The natural transformation $\Sig_{\CC}$ is therefore adjoint to a natural transformation of functors $\Fun(\CC, \Cat_1)\lrar (\Set_\Del)_{/\uline{\rN}_2(\CC)}$
$$
\Theta_{\CC}(\F): \uline{\rN}_2\left(\int_{\CC}\F\right) \lrar \uline{\Un}^{\sca}(\rN^+\F)
$$
which is compatible with base change, as desired.

It remains to be shown that this $\Theta_\CC(\F)$ preserves coCartesian edges. In light of the compatibility with base change (ii), it will suffice to work over $\CC = [1] = \bullet \lrar \bullet$.  
Unwinding the definitions, we see that $\LL_1([1]): [1] \lrar \Cat_1$ is the diagram of categories $\{0\} \hrar [1]$. 
A natural transformation $\sig:\LL_1([1]) \Rightarrow \F$ is adjoint to a coCartesian edge of $\int_{[1]}\F \lrar [1]$ if and only if $\sig(1)$ maps $\LL_1([1])(1) = [1]$ to an isomorphism in $\F(1)$.
On the other hand, $\Ho_{\leq 1}\St^{\sca}(\Del^1):[1] \lrar \Cat_1$ is the functor depicted by the diagram 
$$ \Ho_{\leq 1}\Del^{\{1\}} \hrar \Ho_{\leq 1}\Big(\Lam^2_0 \coprod_{\Del^{\{0,1\}}} (\Del^{\{0,1\}})^{\sharp}\Big) .$$ 
A natural transformation $\tau:\Ho_{\leq 1}\St^{\sca}(\Del^1) \Rightarrow \F$ is adjoint to a marked edge of $\Un^{\sca}_{\Del^1}(\rN^+\F)$ if and only if it factors through $\Ho_{\leq 1}\St^{\sca}((\Del^1)^{\sharp}) = \Ho_{\leq 1}(\St^{\sca}(\Del^1)^{\sharp})$, i.e., if $\tau(1)$ sends $\Del^{\{0,2\}} \subseteq \Lam^2_0$ 
to an isomorphism in $\F(1)$.  
The desired result now follows from the fact that $\Sig_{[1]}(\Del^1)(1): \Ho_{\leq 1}(\Lam^2_0 \coprod_{\Del^{\{0,1\}}} (\Del^{\{0,1\}})^{\sharp}) \lrar [1]$ maps the edge corresponding to $\Del^{\{0,2\}}$ onto $[1]$. 
\end{proof}
Proposition \ref{p:grothendieck} now follows from the following:
\begin{pro}
The map $\Theta_{\CC}(\F)$ \eqref{e:theta_F}
constructed above is a bicategorical equivalence of scaled simplicial sets over $\rN_2(\CC)$.
\end{pro}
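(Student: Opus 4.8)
The strategy is to reduce the statement to a fiberwise check over $\rN_2(\CC)$, using that both $\rN_2\big(\int_{\CC}\F\big) \lrar \rN_2(\CC)$ (by Lemma~\ref{l:compatible} together with Proposition~\ref{c:fact}) and $\wtl{\Un}^{\sca}(\rN^+\F) \lrar \rN_2(\CC)$ (by Remark~\ref{r:underlying}) are scaled coCartesian fibrations. By Proposition~\ref{p:construction}(i) the map $\Theta_{\CC}(\F)$ preserves locally coCartesian edges over $\rN_2(\CC)$; it is moreover a map of scaled simplicial sets over $\rN_2(\CC)$ which is the identity on the base. A map of scaled coCartesian fibrations over a fixed $\infty$-bicategory which preserves coCartesian edges is a bicategorical equivalence as soon as it induces an equivalence on each fiber (this is the $(\infty,2)$-categorical analogue of the standard fact for coCartesian fibrations; one can either invoke it directly or deduce it from the straightening equivalence of~\cite[Theorem 3.8.1]{goodwillie}, under which $\Theta_{\CC}(\F)$ corresponds to a natural transformation of $\Set^+_\Del$-enriched functors which is a pointwise weak equivalence iff it is so on fibers). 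So the whole problem collapses to: for every object $A \in \CC$, the induced map on fibers
$$ \rN_2\big(\int_{\CC}\F\big) \times_{\rN_2(\CC)} \{A\} \lrar \wtl{\Un}^{\sca}(\rN^+\F) \times_{\rN_2(\CC)} \{A\} $$
is a bicategorical equivalence.

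Next I would identify both fibers. On the left, the fiber of $\int_{\CC}\F \lrar \CC$ over $A$ is by definition the ordinary category $\F(A)$, viewed as a locally discrete $2$-category, so after applying $\rN_2$ the left fiber is $\rN\big(\F(A)\big)$ — the ordinary nerve, all triangles thin. On the right, by the compatibility of scaled unstraightening with base change (the square just before Proposition~\ref{p:construction} is the relevant instance, specialized to the inclusion $\{A\} \hrar \CC$, or equivalently Remark~\ref{r:product} and the naturality recorded in~\cite[Remarks 3.5.16, 3.5.17]{goodwillie}), the fiber of $\wtl{\Un}^{\sca}(\rN^+\F)$ over $A$ is $\wtl{\Un}^{\sca}_{\{A\}}\big(\rN^+\F(A)\big)$, i.e.\ the scaled unstraightening over the point $\rN^{\sca}(\ast) = \Del^0$ of the single marked simplicial set $\rN^+(\F(A))$. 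By~\cite[Proposition 3.6.1]{goodwillie} (the same fact invoked in the proof of Proposition~\ref{p:cotangent} and Proposition~\ref{p:bousfield}) the functor $\Un^{\sca}_{\{A\}}: \Set^+_\Del \lrar \Set^+_\Del$ is naturally weakly equivalent on fibrant objects to the identity; since $\rN^+(\F(A))$ is a fibrant marked simplicial set (it is a quasicategory marked by its equivalences, as $\F(A)$ is an ordinary category), its scaled unstraightening over $\Del^0$ is canonically weakly equivalent to $\rN^+(\F(A))$ itself. Forgetting the marking, the underlying scaled simplicial set of this fiber is $\rN(\F(A))$ with only degenerate triangles thin.

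It then remains to check that the induced map $\rN(\F(A)) \lrar \rN(\F(A))$ agrees, up to the identifications above, with the identity — or at least with a weak equivalence. This is a direct unwinding: on the left fiber, a simplex $\Del^n \lrar \rN_2\big(\int_\CC \F\big)$ lying over the constant simplex at $A$ is precisely a chain of composable morphisms in $\F(A)$; under $\Theta_\CC(\F)$, defined via the natural transformation $\Sig_\CC$ of~\eqref{e:sig} and the explicit cone maps $\Psi_n$ of Lemma~\ref{l:truncatedcone}, this is sent to the corresponding $n$-simplex of $\uline{\Un}^{\sca}_{\{A\}}(\rN^+\F(A))$; tracing through the description of $\Psi_n$ restricted to the degenerate part $\fC_2(\Del^n \times \{1\}) \cong \BDel^n$ shows the map is the canonical comparison $\rN(\F(A)) \to \uline{\Un}^{\sca}_{\{A\}}(\rN^+\F(A))$, which is exactly the weak equivalence coming from~\cite[Proposition 3.6.1]{goodwillie}. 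The main obstacle is the bookkeeping in this last step: one must verify that the natural transformation $\Psi_\bullet$ of Lemma~\ref{l:truncatedcone}, when restricted over a point of the base, really does recover the canonical unit $\Id \Rightarrow \Un^{\sca}_{\ast}$ of~\cite{goodwillie} rather than some other self-map of $\Set^+_\Del$ — everything else is formal, being either base change, the fibration criteria already established, or the general principle that fiberwise equivalences of scaled coCartesian fibrations preserving coCartesian edges are bicategorical equivalences.
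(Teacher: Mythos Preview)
Your proposal is essentially the paper's own argument: reduce to a fiberwise check using that both sides are scaled coCartesian fibrations and that $\Theta_{\CC}(\F)$ preserves locally coCartesian edges, then use base-change compatibility to reduce to $\CC = \ast$, and finally invoke~\cite[Proposition 3.6.1]{goodwillie} to identify $\Un^{\sca}_\ast$ with the identity on fibrant marked simplicial sets. The paper makes the fiberwise reduction precise by passing through the model category $(\Set^+_\Del)^{\lcc}_{/\rN_2(\CC)}$ via Lemma~\ref{l:same}, but this is just a packaging of the principle you invoke.

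The one place where the paper is sharper is exactly the step you flag as the ``main obstacle'': verifying that the fiber map $\rN^+(\C) \lrar \Un^{\sca}_\ast(\rN^+(\C))$ really is the canonical comparison of~\cite[Proposition 3.6.1]{goodwillie} and not some other self-map. Rather than tracing through $\Psi_\bullet$, the paper observes that the adjoint natural transformation $\St^{\sca}_\ast \Rightarrow \Id_{\Set^+_\Del}$ is a natural transformation of left Quillen functors determined on simplices, and that there is \emph{only one} such natural transformation (as noted at the beginning of~\cite[\S 3.6]{goodwillie}). Hence it must coincide with the equivalence of~\cite[Proposition 3.6.1]{goodwillie}, and the bookkeeping evaporates. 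Your direct-computation route would also work, but the uniqueness argument is cleaner and avoids unwinding the cone maps.
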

\begin{proof}
By Lemma~\ref{l:compatible} and Proposition~\ref{p:construction}(i) we know that $\Theta_{\CC}(\F)$ is a map between two scaled coCartesian fibrations over $\rN_2(\CC)$ which preserves locally coCartesian edges. We may hence promote it to a natural map in the model category $(\Set^+_\Del)^{\lcc}_{/\rN_2(\CC)}$
\begin{equation}\label{e:theta-3} 
\Theta^+_{\CC}(\F): \underline{\rN}_2\left(\int_{\CC}\F\right)^{\natural} \lrar \Un^{\sca}\big(\rN^+(\F)\big).
\end{equation}
By Lemma~\ref{l:same} we see that $\Theta_{\CC}(\F)$ \eqref{e:theta_F} is an equivalence of scaled simplicial sets if~\eqref{e:theta-3} is an equivalence in $(\Set^+_\Del)^{\lcc}_{/\rN_2(\CC)}$. To show the latter it will suffice to show that for every $x \in \rN_2(\CC)$ the induced map
$$ \underline{\rN}_2\left(\int_{\CC}\F\right)^{\natural} \times_{\rN_2(\CC)}\{x\} \lrar \Un^{\sca}(\rN^+\F)\times_{\rN_2(\CC)}\{x\} $$
is a categorical equivalence of marked simplicial sets. Since $\Theta_{\CC}(\F)$ is compatible with base change we see that we now just need to prove the proposition in the case $\CC = \ast$. In this case the data of $\F$ is just a category $\C$ and~\eqref{e:theta-3} becomes a natural transformation of the form
\begin{equation}\label{e:theta-4}
\Theta^+_{\ast}(\C):\underline{\rN}(\C)^{\natural} = \rN^+(\C) \lrar \Un^{\sca}_{\ast}(\rN^+(\C))
\end{equation}
The restriction of this natural transformation to $\Del\subseteq \Cat_1$, corresponds under the adjunction $\St^{\sca}_{\ast}\dashv\Un^{\sca}_{\ast}$, to a natural transformation of cosimplicial objects in $\Set^+_\Del$
$$
\St^{\sca}_\ast((\Del^\bullet)^\flat)\lrar (\Del^\bullet)^\flat
$$
and hence extends to a natural transformation of left Quillen functors $\alpha: \St^{\sca}_{\ast}\Rightarrow \Id_{\Set^+_\Del}$. As explained in the beginning of~\cite[\S 3.6]{goodwillie}, there is \textbf{only one} such natural transformation $\alpha$, which is an equivalence by \cite[Proposition 3.6.1]{goodwillie}. Since $\rN^+$ is fully faithful, the map $\Theta^+_{\ast}(\C)$ is the component of the adjoint natural transformation $\alpha^{\ad}:\Id_{\Set^+_\Del}\Rightarrow \Un^{\sca}_{\ast}$ at $\rN^+(\C)$. Since $\rN^+(\C)$ is fibrant, we conclude that $\Theta^+_{\ast}(\C)$ is an equivalence.
\end{proof}


\begin{thebibliography}{*********}


\bibitem[Buc14]{Buc14}
M.~Buckley, Fibred 2-categories and bicategories, \emph{Journal of Pure and Applied Algebra}, 218.6, 2014, p. 1034--1074.



\bibitem[DKS86]{DKS86}
W.~G.~Dwyer, D.~M.~Kan, and J.~H.~Smith, An obstruction theory for simplicial categories, \emph{Indagationes Mathematicae}, 89.2, North-Holland, 1986.


\bibitem[Fra13]{Fr13}
Francis J., The tangent complex and Hochschild cohomology of $\mathcal{E}_n$-rings, \emph{Compositio Mathematica}, 149.3, 2013, p. 430--480.


\bibitem[Har18]{higher}
Y.~Harpaz, Gray products and lax limits, in preparation.


\bibitem[HNP17a]{part0}
Y.~Harpaz, J.~Nuiten, M.~Prasma, The tangent bundle of a model category, preprint, 2017.


\bibitem[HNP17b]{part1}
Y.~Harpaz, J.~Nuiten, M.~Prasma, Tangent categories of algebras over operads, preprint arXiv:1612.02607, 2017.


\bibitem[HNP17c]{part2}
Y.~Harpaz, J.~Nuiten, M.~Prasma, The abstract cotangent complex and Quillen cohomology of enriched categories, preprint arXiv:1612.02608, 2017.


\bibitem[HNP18]{part4}
Y.~Harpaz, J.~Nuiten, M.~Prasma, The Postnikov tower of higher categories, in preparation.


\bibitem[HP15]{HP} 
Y.~Harpaz and M.~Prasma, The Grothendieck construction for model categories, \emph{Advances in Mathematics}, 281, 2015, p. 1306-1363.


\bibitem[Hel97]{Hel}
A.~Heller, Stable homotopy theories and stabilization, \emph{Journal of Pure and Applied Algebra}, 115.2, 1997, p. 113--130.


\bibitem[Hir03]{Hir} 
P.~S.~ Hirschhorn, \emph{Model categories and their localizations}, Mathematical Surveys and Monographs, 99. American Mathematical Society, Providence, RI, xvi+457 pp. (2003).



\bibitem[Hov01]{Hov}
M.~Hovey, Spectra and symmetric spectra in general model categories, \emph{Journal of Pure and Applied Algebra}, 165.1, 2001, p. 63--127.


\bibitem[Lur06]{Lur06} 
J.~Lurie, Stable infinity categories, arXiv preprint math/0608228, 2006.


\bibitem[Lur09a]{Lur09} 
J.~Lurie, \emph{Higher topos theory}, No. 170. Princeton University Press, 2009.


\bibitem[Lur09b]{goodwillie} 
J.~Lurie, (Infinity, 2)-Categories and the Goodwillie Calculus I, available at:
\href{http://www.math.harvard.edu/~lurie/papers/GoodwillieI.pdf}{Author's Homepage} (2009).


\bibitem[Lur09c]{tft}
J.~Lurie, On the classification of topological field theories, available at
\href{http://www.math.harvard.edu/~lurie/papers/cobordism.pdf}{Author's Homepage} (2009).


\bibitem[Lur14]{Lur14} 
J.~Lurie, \emph{Higher Algebra}, available at
\href{http://www.math.harvard.edu/~lurie/papers/higheralgebra.pdf}{Author's Homepage} (2011).


\bibitem[Mal05]{Mal05}
G.~Maltsiniotis, La th\'eorie de l'homotopie de Grothendieck, \emph{Soci\'et\'e math\'ematique de France}, 2005.


\bibitem[Ngu17]{Ng17}
H.K.~Nguyen, Obstruction theory for higher categories, PhD thesis, in preparation.


\bibitem[Qui67]{Qui67}
D.~Quillen, \emph{Homotopical algebra}, Vol. 43, Lecture Notes in Mathematics, 1967.


\bibitem[RV16]{RV16}
E.~Riehl, D.~Verity, Homotopy coherent adjunctions and the formal theory of monads, \emph{Advances in Mathematics}, 286, 2016, p. 802--888.


\bibitem[Str76]{St76}
Street R., Limits indexed by category-valued 2-functors, \emph{Journal of Pure and Applied Algebra}, 8.2, 1976, p. 149--181.




\end{thebibliography}
\end{document}